\newtheorem{theorem}{Theorem}[section]
\newtheorem{lemma}[theorem]{Lemma}
\newtheorem{prop}[theorem]{Proposition}
\newtheorem{coro}[theorem]{Corollary}
\newtheorem{conj}[theorem]{Conjecture}
\newtheorem{relation}[theorem]{Relation}
\newtheorem{defn}[theorem]{Definition}
\theoremstyle{definition}
\newtheorem{remark}[theorem]{Remark}
\numberwithin{equation}{section}
\def \fk{\mathfrak}
\def \msf{\mathsf}
\def \mb{\mathbb}
\def \mc{\mathcal}
\def \scr{\mathscr}
\def \inv{^{-1}}
\def \0{\infty}
\def \v{\vskip 0.1in}
\def \n{\noindent}
\def \cplane{\mathbb{C}}
\def \integer{\mathbb{Z}}
\def \rone{\mathbb R}
\def \ration{\mathbb{Q}}
\def \qq{\quad}
\def \p{\partial}
\def \P{\mb P}
\def \rto{\rightarrow}
\def \hrto{\hookrightarrow}
\def \rrto{\rightrightarrows}
\def \one{\mathbf 1}
\def \ball{{\mathbb D}}
\def \dimc{\dim_\cplane}
\def \codimc{\text{codim}_{\cplane}}
\def \dimr{\dim_\rone}
\def \co{\colon\thinspace}
\def \C{{\msf C}}
\def \N{{\msf N}}
\def \X{{\msf X}}
\def \W{{\msf W}}
\def \Y{{\msf Y}}
\def \L{{\msf L}}
\def \E{{\msf E}}
\def \I{{\msf I}}
\def \os{\msf S}
\def \U{\msf U}
\def \D{\msf D}
\def \I{{\msf I}}
\def \f{{\msf f}}
\def \x{\msf x}
\def \y{\msf y}
\def \z{\msf z}
\def \T{\scr T}
\def \h{\fk h}
\def \wa{{\fk a}}
\def \wb{{\fk b}}
\def \uX{{\underline{\X}}}
\def \Xa{{\underline{\msf X}_{\fk a}}}
\def \Na{{\underline{\msf N}_{\fk a}}}
\def \Ea{{\underline{\msf E}_{\fk a}}}
\def \PE{{\msf P\E}}
\def \PN{{\msf P\N}}
\def \uE{{\underline{\E}}}
\def \oE{{\overline{\E}}}
\def \uN{{\underline{\N}}}
\def \cN{{\mc N}}
\def \oN{{\overline{\N}}}
\def \M{{\overline{\scr M}}}
\def \F{{\overline{\scr F}}}
\def \po{{\stackrel{\circ}{\prec} }}
\def \hol{{\msf{Hol}}}
\def \mhol{{\mathrm{Hol}}}
\def \aut{{\mathrm{Aut}}}
\def \bl{{\Big\langle}}
\def \br{{\Big\rangle}}
\def \bb{{\Big |}}
\def \simple{{\text{simple}}}
\def \vir{{\text{vir}}}
\def \uh{{\underline{\h}}}
\def \ugamma{{\underline{\Gamma}}}
\def \bgamma{{\overline{\Gamma}}}
\begin{document}

\title[Orbifold Gromov--Witten theory of weighted blowups]
{Orbifold Gromov--Witten theory of weighted blowups}

\author{Bohui Chen}
\address{Department of Mathematics and Yangtz center of Mathematics, Sichuan University, Chengdu 610064, China}
\email{chenbohui@scu.edu.cn}

\author{Cheng-Yong Du}
\address{School of mathematics and V.\thinspace C. \& V.\thinspace R. Key Lab, Sichuan Normal University, Chengdu 610068, China}
\email{cyd9966@hotmail.com}

\author{Rui Wang}
\address{Department of Mathematics, University of California, Berkeley, CA 94720-3840, USA}
\email{ruiwang@berkeley.edu}

\thanks{}

\subjclass[2010]{Primary 53D45; Secondary 14N35}
\date{}
\keywords{Orbifold Gromov--Witten theory, Leray--Hirsch result,
weighted projective bundles, weighted blowup, root stack,
blowup along complete intersection}

\begin{abstract}
Consider a compact symplectic sub-orbifold groupoid $\os$ of a compact symplectic orbifold groupoid $(\X,\omega)$. Let $\Xa$ be the weight-$\wa$ blowup of $\X$ along $\os$, and $\D_\wa=\PN_\wa$ be the exceptional divisor, where $\N$ is the normal bundle of $\os$ in $\X$. In this paper we show that the absolute orbifold Gromov--Witten theory of $\Xa$ can be effectively and uniquely reconstructed from the absolute orbifold Gromov--Witten theories of $\X$, $\os$ and $\D_\wa$, the natural restriction homomorphism $H^*_{\text{CR}}(\X)\rto H^*_{\text{CR}}(\os)$ and the first Chern class of the tautological line bundle over $\D_\wa$. To achieve this we first prove similar results for the relative orbifold Gromov--Witten theories of $(\Xa| \D_\wa)$ and $(\Na|\D_\wa)$. As applications of these results, we prove an orbifold version of a conjecture of Maulik--Pandharipande on the Gromov--Witten theory of blowups along complete intersections, a conjecture on the Gromov--Witten theory of root constructions and a conjecture on Leray--Hirsch result for orbifold Gromov--Witten theory of Tseng--You.
\end{abstract}

\maketitle

\tableofcontents

\section{Introduction}

Symplectic birational geometry, proposed by Li--Ruan \cite{Lit-Ruan2009},  studies symplectic birational cobordism invariants defined via Gromov--Witten theory. Two symplectic manifolds are called symplectic birational cobordant (cf. \cite{Guillemin-Sternberg1989,Hu-Li-Ruan2008}) if one can be obtained from the other by a sequence of Hamiltonian $S^1$ symplectic reductions.  Guillemin--Sternberg \cite{Guillemin-Sternberg1989} proved that every symplectic birational cobordism can be realized by finite times symplectic blowups/blow-downs and $\integer$-linear deformations of symplectic forms\footnote{A $\integer$-linear deformation of a symplectic form $\omega$ on a manifold $X$ is a path of symplectic form $\omega+t\kappa$, $t\in I$, where $\kappa$ is a closed $2$-form representing an integral class and $I$ is an interval. See for example \cite[Definition 2.5]{Hu-Li-Ruan2008}.}. With noticing Gromov--Witten invariants are preserved under smooth deformations of symplectic structures (see for example \cite{Ruan1999b,Chen-Ruan2002}), to understand the relation of Gromov--Witten invariants between two symplectic birational corbordant manifolds, it is enough to take care of the change of Gromov--Witten invariants after a symplectic blowup/blow-down.

The first nontrivial invariant in symplectic birational geometry is the symplectic uniruledness, which was discovered by Hu--Li--Ruan \cite{Hu-Li-Ruan2008}. Since it was proved by Koll\'ar \cite{Kollar98} and Ruan \cite{Ruan1999b} that a smooth projective variety is uniruled if and only if it is symplectically uniruled, the symplectic uniruledness can be regarded as a symplectic generalization of the uniruledness in birational algebraic geometry. Further motivated by Hu--Li--Ruan's work, people conjectured that the symplectic rational connectedness as a symplectic generalization (cf. \cite{Hu-Ruan2013,Lit-Ruan2009}) of rational connectedness for a projective variety is also a symplectic birational invariant. This conjecture is still open and reader can find some recent progress along this topic from \cite{Voisin2008,Hu-Ruan2013,Tian2012,Tian2015}

In fact, the most natural object from symplectic reduction should be symplectic \emph{orbifold} instead of symplectic manifold. Thus it is more natural and also more interesting to study symplectic birational geometry in the category of symplectic orbifolds. At the same time, though symplectic uniruledness and symplectic rational connectedness are defined using genus zero Gromov--Witten invariants only, it is natural to study symplectic geometry using \emph{higher genus} Gromov--Witten invariants and unravel the relation between the Gromov--Witten theories of a symplectic manifold/orbifold and its blowup, as blowups are building blocks of symplectic birational cobordisms.

Further, for a symplectic orbifold, besides the usual symplectic blowup along a symplectic sub-orbifold, one can also consider the \emph{weighted} blowup, and as it is shown in \cite{Chen-Du-Hu2019} by Hu and the first two authors that the weighted blowup instead of the usual one is the proper version for the orbifold version of symplectic birational geometry.

To be concrete, now we assume $(\X,\omega)$ is a compact symplectic orbifold, and $\os$ is a codimension $2n$ compact symplectic sub-orbifold of $\X$.  Denote by $\Xa$ the weight-$\wa$ blowup of $\X$ along $\os$ (cf. \S \ref{subsec weighted-blps}), where $\wa=(a_1,\ldots, a_n)\in (\integer_{\geq 1})^n$ is the blowup weight. In this paper, we focus on  studying the relation between the orbifold Gromov--Witten theory of $\X$ and of $\Xa$.

There are already some work on orbifold Gromov--Witten theory $\Xa$ for the case that $\os$ is a symplectic divisor. When $\X$ is a Deligne--Mumford stack, the weight-$\wa=(r)$ blowup $\Xa=\uX_{(r)}$ of $\X$ along $\os$ is also called a root stack (cf. \cite{Chen-Du-WangR2020a}). In \cite{Tseng-You2016a} Tseng--You studied the orbifold Gromov--Witten theory of $\uX_{(r)}$ and conjectured that the orbifold Gromov--Witten theory of $\uX_{(r)}$ is determined by the orbifold Gromov--Witten theories of $\X$ and $\os$ and the restriction map $H^*_{\text{CR}}(\X)\rto H^*_{\text{CR}}(\os)$. They proved their conjecture for the case that $\os$ is a smooth manifold. The case that $\os$ is an orbifold is still open. We will prove this conjecture in this paper. The exceptional divisor $\D_{(r)}$ of $\uX_{(r)}$ is a $\integer_r$-gerbe over $\os$. It is a natural example of root gerbe. Orbifold Gromov--Witten theory of gerbes is extensively studied by people. See \cite{Andreini-Jiang-Tseng2011,Andreini-Jiang-Tseng2015,Andreini-Jiang-Tseng2016,Tang-Tseng2016}.

We now describe our approaches and results. Denote by $\N:=\N_{\os|\X}$ the normal bundle of $\os$ in $\X$, and by $\D_\wa$ the exceptional divisor of the weighted blowup $\Xa$. Notice that the exceptional divisor $\D_\wa$ can be considered as the weight-$\wa$ projectivization $\PN_\wa$ of $\N$, it is an orbifold fiber bundle over $\os$ with fiber being the weighted projective space $\msf P_\wa$. There is a tautological line bundle over $\D_\wa=\PN_\wa$ coming from the tautological line bundle $\mc O(-1)$ over $\msf P_\wa$. We denote it by $\mc O_{\PN_\wa}(-1)$ or $\mc O_{\D_\wa}(-1)$. It is the normal line bundle of $\D_\wa$ in $\Xa$. On the other hand, let $\oN_\wa$ be the weight-$\wa$ projectification of $\N$. Then $\D_\wa$ is the infinite divisor of $\oN_\wa$. The normal line bundle of $\D_\wa$ in $\oN_\wa$ is $\mc O_{\D_\wa}(1)$, the dual line bundle of $\mc O_{\D_\wa}(-1)$. Then by \cite[Proposition 4.1]{Chen-Li-Sun-Zhao2011}, there is a degeneration of $\X$
\begin{align}\label{E degenerate-X-in-sec1}
\X\xrightarrow{\text{degenerate}} (\Xa|\D_\wa)\wedge_{\D_\wa}(\oN_\wa|\D_\wa),
\end{align}
where ``$\wedge_{\D_\wa}$'' means the gluing is along the exceptional divisor $\D_\wa\subseteq \Xa$ and the infinite divisor $\D_\wa\subseteq\oN_\wa$. Similarly, let $\overline{\mc O_{\D_\wa}(-1)}$ be projectification of $\mc O_{\D_\wa}(-1)$ with trivial weight, that is $\overline{\mc O_{\D_\wa}(-1)}=\P(\mc O_{\D_\wa}(-1)\oplus \mc O_{\D_\wa})$. Let $\D_{\wa,\0}=\P(0\oplus\mc O_{\D_\wa})\cong \D_\wa$ be the infinite divisor of $\overline{\mc O_{\D_\wa}(-1)}$, whose normal line bundle in $\overline{\mc O_{\D_\wa}(-1)}$ is $\mc O_{\D_\wa}(1)$. Then we get a degeneration of $\Xa$ as \eqref{E degenerate-X-in-sec1}
\begin{align}\label{E degenerate-Xa-in-sec1}
\Xa\xrightarrow{\text{degenerate}} (\uX_\wa|\D_\wa)\wedge_{\D_\wa} (\overline{\mc O_{\D_\wa}(-1)}|\D_{\wa,\0}).
\end{align}
As in \eqref{E degenerate-X-in-sec1}, here ``$\wedge_{\D_\wa}$'' means the gluing is along the exceptional divisor $\D_\wa\subseteq \Xa$ and the infinite divisor $\D_\wa\cong \D_{\wa,\0}\in \overline{\mc O_{\D_\wa}(-1)}$. Therefore, by the degeneration formula (cf. \cite{Chen-Li-Sun-Zhao2011,Abramovich-Fantechi2016}), to study the relation between orbifold Gromov--Witten theories of $\Xa$ and $\X$, we only need to compare the relative orbifold Gromov--Witten theories of $(\oN_\wa|\D_\wa)$ and $(\overline{\mc O_{\D_\wa}(-1)}|\D_\wa)$.

On the other hand, in \cite{Chen-Du-Hu2019}, Hu and the first two authors proved that for the degeneration \eqref{E degenerate-X-in-sec1}, there is an invertible lower triangle system which relates relative orbifold Gromov--Witten invariants of $(\Xa|\D_\wa)$ and absolute orbifold Gromov--Witten invariants of $\X$ relative to $\os$, whose entries are relative orbifold Gromov--Witten invariants of $(\oN_\wa|\D_\wa)$. Consequently if we could determine the relative orbifold Gromov--Witten invariants of $(\overline{\mc O_{\D_\wa}(-1)}|\D_{\wa,\infty})$ and of $(\oN_\wa|\D_\wa)$, we could determine the orbifold Gromov--Witten invariants of $\Xa$ from the degeneration \eqref{E degenerate-Xa-in-sec1}.

\subsection{Orbifold Gromov--Witten theories of weighted projectifications}
\label{subsec introduction-GW-of-weight--projectification-blp}

We first study relative orbifold Gromov--Witten invariants of
weighted projectifications of vector bundles, which can apply to both $(\oN_\wa|\D_\wa)$ and $(\overline{\mc O_{\D_\wa}(-1)}|\D_{\wa,\infty})$.

Let $\pi\co \E\rto \os$ be a rank $2n$ symplectic orbifold vector bundle over a compact symplectic orbifold groupoid $\os$. Let $\oE_\wa$ be the weight-$\wa$ projectification of $\E$ and $\PE_\wa$ be the infinite divisor, which is the weight-$\wa$ projectivization of $\E$ (cf. \S \ref{subsec weight--projfi-viza}). The normal bundle $\N_{\PE_\wa|\oE_\wa}$ of $\PE_\wa$ in $\oE_\wa$ is $\mc O_{\PE_\wa}(1)$, the dual line bundle of the tautological line bundle $\mc O_{\PE_\wa}(-1)$. Our first result is

\begin{theorem}\label{thm rel-GW-of-P(E)}
The relative descendent orbifold Gromov--Witten theory of the pair $(\oE_\wa| \PE_\wa)$ can be effectively and uniquely reconstructed from the absolute descendent orbifold Gromov--Witten theories of $\os$ and $\PE_\wa$, the Chern classes of $\E$ and $\mc O_{\PE_\wa}(-1)$.
\end{theorem}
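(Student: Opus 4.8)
The plan is to prove Theorem~\ref{thm rel-GW-of-P(E)} by a combination of the degeneration formula and the localization/relative-virtual-localization philosophy, following the strategy that has become standard for projective bundles (Maulik--Pandharipande, Fan--Lee, Tseng--You) but adapted to the weighted orbifold setting. First I would set up the geometry: the weighted projectification $\oE_\wa = \P_\wa(\E\oplus\mc O_\os)$ carries a fiberwise $\cplane^*$-action (acting on the $\mc O_\os$ summand with weight one and trivially on $\E$), whose fixed loci are precisely the zero-section copy of $\os$ and the infinity divisor $\PE_\wa$. Running virtual localization for this action on the relative moduli spaces $\overline{\scr M}_{g,\vec k}((\oE_\wa|\PE_\wa),\beta)$ expresses every relative GW invariant of $(\oE_\wa|\PE_\wa)$ as a sum of contributions from stable maps distributed between a ``main'' component mapping into the zero-section $\os$ (with its normal bundle $\E$ contributing the equivariant Euler-class factors via the Chern classes of $\E$) and ``rubber'' or infinity contributions living over $\PE_\wa$ with insertions involving $c_1(\mc O_{\PE_\wa}(-1))$. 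The zero-section contributions are, by the orbifold version of Mumford's relation / the structure of Hodge-integral localization, controlled by the descendent GW theory of $\os$; the infinity contributions are controlled by the descendent GW theory of $\PE_\wa$ together with rubber integrals over $\PE_\wa$.

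Next I would handle the rubber. Rubber invariants over $\PE_\wa$ (for the bundle $\mc O_{\PE_\wa}(-1)\to \PE_\wa$, or equivalently the trivially-weighted projectification of it) are reduced to genuine relative invariants of $\PE_\wa$ by the standard rigidification argument: one uses the divisor/dilaton-type relations together with the fact that on $(\P^1$-bundle$)$ rubber one can use a cotangent-line comparison (the ``topological recursion relation'' for the $\psi$-class at the rubber target) to trade one rubber marking for a descendent on $\PE_\wa$, strictly decreasing a complexity parameter (number of markings plus genus, or the rubber degree). Iterating, every rubber integral is expressed in terms of absolute descendent invariants of $\PE_\wa$ and the class $c_1(\mc O_{\PE_\wa}(-1))$. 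This is the orbifold analogue of \cite{Maulik-Pandharipande2006} Section~1 and of Tseng--You's treatment, and the key new input needed is that all of this goes through for Deligne--Mumford target stacks and weighted (rather than ordinary) projective fibers — which is legitimate because virtual localization and rubber calculus have been established in the orbifold setting (Chen--Li--Sun--Zhao, Abramovich--Cadman--Fantechi, etc.), and the weighted projective space $\msf P_\wa$ has the same simple torus-fixed-point structure as $\P^n$ at the two relevant ends.

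Then I would assemble the reconstruction. One organizes all relative invariants of $(\oE_\wa|\PE_\wa)$ into a generating function and shows that the localization identity, read in the right order, becomes an invertible triangular system: the highest-complexity relative invariant appears on the left-hand side with an invertible leading coefficient (essentially a product of nonzero equivariant weights, after taking a suitable limit in the equivariant parameter), while the right-hand side contains only lower-complexity relative invariants of $(\oE_\wa|\PE_\wa)$ together with absolute descendent invariants of $\os$ and $\PE_\wa$ and characteristic classes of $\E$ and $\mc O_{\PE_\wa}(-1)$. Effectiveness and uniqueness of the reconstruction follow by induction on that complexity parameter. I would also record that the non-equivariant limit exists (no poles survive), which is the usual sanity check in these arguments.

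The main obstacle I expect is twofold. First, controlling the orbifold structure at the relative divisor: the markings mapping to $\PE_\wa$ carry isotropy, and the contact orders must be matched with the age/sector data, so the triangularity must be set up with respect to a complexity that simultaneously dominates genus, number of markings, the total relative contact order, and the twisted-sector decorations — getting this ordering right so that the induction actually closes is the delicate combinatorial heart of the proof. Second, the weighted projectivization $\msf P_\wa$ is itself a nontrivial orbifold (with isotropy along coordinate sub-loci), so the fixed-point analysis on $\oE_\wa$ is not literally two copies of $\os$ but may include contributions from the isotropy strata of the fiber; one must check that these extra fixed loci still only produce lower-order terms and data already listed in the theorem. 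Once these are dealt with, the argument is a faithful orbifold-and-weighted upgrade of the classical projective-bundle reconstruction.
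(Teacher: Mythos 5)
Your overall architecture---fiberwise $\cplane^*$-localization on $\oE_\wa$, reduction of the zero-section contributions to $\E$-twisted invariants of $\os$ (removed by orbifold quantum Riemann--Roch), rubber calculus at the infinity end, and a triangular induction on a complexity order---is the same as the paper's. But there is a genuine gap in your second paragraph: you assert that after the rigidification and $\Psi_\0$-removal steps, ``every rubber integral is expressed in terms of absolute descendent invariants of $\PE_\wa$ and the class $c_1(\mc O_{\PE_\wa}(-1))$.'' That is not where the rubber calculus lands. Rigidification (the analogue of Maulik--Pandharipande's Lemma~2, here Lemma \ref{lem rigidification}) converts a rubber invariant into a \emph{relative} invariant of the three-fold pair $(\D_0|\Y|\D_\0)$ with $\Y=\P(\mc O_{\PE_\wa}(1)\oplus\mc O_{\PE_\wa})$, carrying an extra insertion divisible by $[\D_0]$ --- a \emph{distinguished type II invariant} in the sense of \eqref{E distinguished-type-II-3.3}. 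These are relative invariants of a $\P^1$-bundle over $\PE_\wa$ with two relative divisors; they are not a priori known from the absolute theory of $\PE_\wa$, and no cotangent-line comparison turns them into one. Determining them is the technical core of the paper (\S \ref{subsec dertermine-dis-type-ii}): one needs a second induction, with respect to the partial order $\po$ of Definition \ref{def partial-order}, driven by Relations \ref{relation 1}--\ref{relation 3}, which combine the degeneration of $\Y$ along $\D_\0$ with the weighted-blowup absolute/relative correspondence of \cite{Chen-Du-Hu2019} and a further round of localization. Your single ``localization identity read in the right order'' does not supply these relations.

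A second, related omission is the base case of that induction. The initial values are the fiber class invariants of $(\D_0|\Y|\D_\0)$, which are nontrivial in the orbifold setting: the paper devotes all of \S \ref{sec fiber-class-inv} to showing that the fiber-class moduli space fibers over a multi-sector $\D'_\ugamma$ of $\D$ with fiber a relative moduli space of maps to a $\P^1$-orbifold $[\P^1\rtimes K_\ugamma]$, and then invokes Tseng--You's computation of double ramification cycles on admissible covers plus orbifold quantum Riemann--Roch. You gesture at ``extra fixed loci from the isotropy strata of the fiber'' producing ``lower-order terms,'' but the fiber-class invariants are not lower-order noise --- they are the coefficients $C$, $C_{\ast,\ast}$ of the triangular system (including the leading coefficient $C=\prod_j 1/(\mathrm{ord}(h_j)\,\nu_j!)$ whose nonvanishing makes the system invertible), and they must be computed, not merely bounded. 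With these two ingredients supplied --- the distinguished type II induction and the fiber-class computation --- your outline matches the paper's proof.
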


By virtual localization, we could first reduce the determination of relative descendent orbifold Gromov--Witten invariants of the pair $(\oE_\wa| \PE_\wa)$ to orbifold Gromov--Witten invariants of $\os$ twisted by $\E$ and rubber invariants with $\Psi_\0$-integrals associated to the orbifold line bundle $\mc O_{\PE_\wa}(1)\rto\PE_\wa$, i.e. rubber invariants with $\Psi_\0$-integrals of the projectification of $\mc O_{\PE_\wa}(1)$:
\[
(\D_0|\Y|\D_\0):=\big(\P(0\oplus\mc O_{\PE_\wa})\mid \P(\mc O_{\PE_\wa}(1)\oplus\mc O_{\PE_\wa}) \mid \P(\mc O_{\PE_\wa}(1)\oplus 0)\big).
\]
In \S \ref{subsec rubber-calculus} by rubber calculus we could remove those $\Psi_\0$-integrals, and reduce the resulting rubber invariants without $\Psi_\0$-integrals into certain relative descendent orbifold Gromov--Witten invariants of $(\D_0|\Y|\D_\0)$, called {\em distinguished type II invariants}. Then we use an induction algorithm to determine all these distinguished type II invariants of $(\D_0|\Y|\D_\0)$. Among them, fiber class invariants are the initial values of this inductive algorithm. We will determine fiber class invariants for general orbifold $\P^1$-bundles in \S \ref{sec fiber-class-inv}. The inductive algorithm is described in \S \ref{subsec dertermine-dis-type-ii}. This inductive algorithm depends on a partial order over all distinguished type II invariants and the weighted-blowup correspondence in \cite{Chen-Du-Hu2019}. Finally we prove Theorem \ref{thm rel-GW-of-P(E)} in \S \ref{subsec proof-thm-1.1}.

Recently, Janda--Pandharipande--Pixton--Zvonkine computed the double ramification cycles with general smooth algebraic varieties as targets in \cite{Janda-Pandharipande-Pixton-Zvonkine2018}. As a direct consequence, rubber invariants without $\Psi_\0$-integrals associated to a smooth line bundle $L\rto X$ are determined by the Gromov--Witten theory of $X$ and $c_1(L)$. It is natural to expect a formula for the double ramification cycles with orbifold targets. Such a formula would also implies that rubber invariants without $\Psi_\0$-integrals associated to the orbifold line bundle $\mc O_{\PE_\wa}(1)\rto\PE_\wa$ are determined by the orbifold Gromov--Witten theory of $\PE_\wa$ and $c_1(\mc O_{\PE_\wa}(1))$. We will study this in \cite{Chen-Du-WangR2020a}.

\subsection{Orbifold Gromov--Witten theory of weighted blowups}
\label{subsec introduction-orb-GW-of-weight--blp}
Now we can determine the orbifold Gromov--Witten theory of $\Xa$, the weight-$\wa$ blowup of $\X$ along $\os$. There is a natural orbifold morphism
\[
\kappa\co\uX_\wa\rto\X,
\]
which induces a morphism on inertia spaces
\[
\I \kappa=\coprod_{(h)\in \T^{\uX_\wa}} \kappa_{(h)}\co \coprod_{(h)\in \T^{\uX_\wa}}\uX_\wa(h)\rto\coprod_{(h)\in \T^{\uX_\wa}}\X(\kappa_t(h)),
\]
where $\kappa_t$ is the induced map on the index sets of connected components of inertia spaces. In Definition \ref{def admissible-abs-insert-K}, \S \ref{subsec cohomology} we set
\[
\mc K:=\bigoplus_{(h)\in \T^{\uX_\wa}} \kappa_{(h)}^* H^*(\X(\kappa_t(h))),
\]
to be the image of the induced homomorphism on cohomologies. The map $\kappa_{(h)}^*$ is injective for each $(h)\in\T^{\uX_\wa}$. We fix in \S \ref{subsec cohomology} a basis $\Sigma_\star$ of $H^*_{\text{CR}}(\D_\wa)=H^*_{\text{CR}}(\PN_\wa)$ (actually for the Chen--Ruan cohomology of $\PE_\wa$ for a general symplectic orbifold vector bundle $\E\rto\os$) by choosing a basis $\sigma_\star$ of $H^*_{\text{CR}}(\os)$. We denote the dual basis of $\Sigma_\star$ by $\Sigma^\star$.

\begin{defn}\label{def admissible-rel-inv}
We call a relative descendent orbifold Gromov--Witten invariant of ${(\uX_\wa| \D_\wa)}$
\[
\bl \prod_i \tau_{k_i}\gamma_i\Big|\mu\br^{ {(\uX_\wa\mid \D_\wa)}}_{g,\beta}
\]
{\em admissible} if $\gamma_i\in\mc K$ and $\mu$ is a relative insertion weighted by the chosen basis $\Sigma^\star$, i.e.
\[
\check\mu=((\mu_1,\theta_{(h_1)}),\ldots,(\mu_{\ell(\mu)}, \theta_{(h_{\ell(\mu)})}))
\]
with $\theta_{(h_i)}\in\Sigma_\star$.
\end{defn}

Given an admissible relative orbifold Gromov--Witten invariant of $(\uX_\wa\mid \D_\wa)$
\begin{align}\label{E an-admissible-rel-inv}
\bl \prod_i \tau_{k_i}\gamma_i\bb\mu\br^{ {(\uX_\wa\mid \D_\wa)}}_{g,\beta},
\end{align}
we assign it an absolute orbifold Gromvo--Witten invariant of $\X$
\begin{align}\label{E the-corresponding-abs-inv}
\bl \prod_i \tau_{k_i}\bar\gamma_i\cdot \, \mu_\os\br^\X_{g,p_*\beta}
\end{align}
by the weighted-blowup absolute/relative correspondence in \cite[\S 6.2]{Chen-Du-Hu2019}, where $\bar\gamma_i$ is the inverse image of $\gamma_i$ under $\kappa_{(h)}^*$, $\mu_\os$ supports over $\I\os$ and is determined by $\mu$ and the restriction map $H^*_{\text{CR}}(\X)\rto H^*_{\text{CR}}(\os)$ (see \S \ref{sec rel-GW-of-weit-blp} for the explicit expression of $\mu_\os$). There is a partial order over all admissible relative invariants of $(\Xa|\D_\wa)$ given in \cite[\S 6.1]{Chen-Du-Hu2019} (see \S \ref{sec rel-GW-of-weit-blp} for the presentation). It was proved in \cite{Chen-Du-Hu2019} that the degeneration formula \cite{Chen-Li-Sun-Zhao2011,Abramovich-Fantechi2016} gives us an invertible lower triangle system that related all admissible relative invariants of the form \eqref{E an-admissible-rel-inv} and the corresponding absolute invariant \eqref{E the-corresponding-abs-inv}, whose entries are relative invariants of $(\oN_\wa|\D_\wa)$. With Theorem \ref{thm rel-GW-of-P(E)} we prove in \S \ref{sec rel-GW-of-weit-blp} that

\begin{theorem}\label{thm rel-GW-of-weighted-blp}
The admissible relative descendent orbifold Gromov--Witten theory of $ {(\uX_\wa| \D_\wa)}$ can be uniquely and effectively reconstructed from the orbifold Gromov--Witten theories of $\X$, $\os$ and $\D_\wa$, the restriction map $H^*_{\text{\em CR}}(\X)\rto H^*_{\text{\em CR}}(\os)$ and the first Chern class of $\mc O_{\D_\wa}(-1)$.
\end{theorem}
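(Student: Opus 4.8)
The plan is to feed Theorem \ref{thm rel-GW-of-P(E)} into the invertible lower-triangular system of \cite{Chen-Du-Hu2019}. Recall that the degeneration \eqref{E degenerate-X-in-sec1} of $\X$ along $\os$, together with the degeneration formula, was shown in \cite[\S 6]{Chen-Du-Hu2019} to express each admissible absolute invariant \eqref{E the-corresponding-abs-inv} of $\X$ as a sum whose leading term is the corresponding admissible relative invariant \eqref{E an-admissible-rel-inv} of $(\uX_\wa|\D_\wa)$ and whose remaining terms involve admissible relative invariants of $(\uX_\wa|\D_\wa)$ that are strictly lower in the partial order of \cite[\S 6.1]{Chen-Du-Hu2019}, with all coefficients built from relative orbifold Gromov--Witten invariants of $(\oN_\wa|\D_\wa)$. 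Since $\oN_\wa$ is exactly the weight-$\wa$ projectification of the symplectic orbifold vector bundle $\N\rto\os$ with $\PN_\wa=\D_\wa$ as its infinite divisor, Theorem \ref{thm rel-GW-of-P(E)} applies verbatim: the entries of this system are effectively and uniquely reconstructed from the absolute orbifold Gromov--Witten theories of $\os$ and $\D_\wa$, the Chern classes of $\N$, and $c_1(\mc O_{\D_\wa}(-1))$. Finally I would observe that $c(\N)$ is itself recovered from the restriction map $H^*_{\text{CR}}(\X)\rto H^*_{\text{CR}}(\os)$ via the exact sequence $0\to T\os\to T\X|_{\os}\to\N\to 0$, so no extra input beyond the data listed in the statement is needed.

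With the system and its coefficients in hand, the proof is an inversion argument. First I would check that the partial order on admissible relative invariants of $(\uX_\wa|\D_\wa)$ is well-founded, so that induction on it is legitimate; this is recorded in \cite[\S 6.1]{Chen-Du-Hu2019} and reviewed in \S \ref{sec rel-GW-of-weit-blp}. Then, for a fixed topological type $(g,\beta)$ and a fixed admissible relative invariant $I$ of $(\uX_\wa|\D_\wa)$, the corresponding equation of the degeneration formula reads $\langle\,\cdot\,\rangle^\X = c_I\cdot I + (\text{lower-order relative invariants of }(\uX_\wa|\D_\wa))$, with $c_I$ an explicit nonzero multiplicative constant (a product of the weights $a_j$ and orders of isotropy, coming from the normal-bundle contribution of $\D_\wa$). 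Solving for $I$ expresses it in terms of the absolute invariant of $\X$ — already part of the allowed data — and strictly lower admissible relative invariants, which are known by the inductive hypothesis, with all coefficients reconstructed as above. This both proves that the admissible relative theory of $(\uX_\wa|\D_\wa)$ is \emph{determined} by the listed data and, because each step is a finite explicit linear solve, that the reconstruction is \emph{effective}; uniqueness is immediate since the linear system has the leading coefficient $c_I\ne0$.

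The one point requiring care — and the main obstacle — is matching conventions so that the hypotheses of Theorem \ref{thm rel-GW-of-P(E)} are genuinely available for $(\oN_\wa|\D_\wa)$ and, more subtly, that the ``absolute theory of $\D_\wa$'' really is an admissible input rather than something that itself needs reconstructing. Here I would use that $\D_\wa=\PN_\wa$ is a weighted projective bundle over $\os$, so a Leray--Hirsch-type description of $H^*_{\text{CR}}(\D_\wa)$ in terms of $H^*_{\text{CR}}(\os)$ and $c_1(\mc O_{\D_\wa}(-1))$ is available (this is the basis $\Sigma_\star$ fixed in \S \ref{subsec cohomology}), and that the admissibility constraint $\gamma_i\in\mc K$ together with the weighting of relative insertions by $\Sigma^\star$ is exactly what makes the absolute/relative correspondence of \cite[\S 6.2]{Chen-Du-Hu2019} produce insertions $\bar\gamma_i$ on $\X$ and $\mu_\os$ on $\I\os$ that lie in the allowed classes. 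A second, bookkeeping-level obstacle is that invariants with disconnected relative target or with rubber components appear in intermediate steps of the degeneration formula; these are handled by the standard reduction to connected invariants plus rubber calculus, already carried out for the relevant targets in \S \ref{subsec rubber-calculus}. Once these identifications are in place, the induction closes and Theorem \ref{thm rel-GW-of-weighted-blp} follows.
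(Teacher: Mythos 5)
Your proposal is correct and follows essentially the same route as the paper: invert the lower-triangular system of the weighted-blowup correspondence \eqref{eq lower-triangle} by induction on the partial order, with the nonzero leading coefficient $C$ of \eqref{eq 1+1-point-rel-inv} and with the entries (relative invariants of $(\oN_\wa|\D_\wa)$) supplied by Theorem \ref{thm rel-GW-of-P(E)} and $c(\N)$ recovered from the restriction map via \eqref{eq determine-c(N)}. The only cosmetic difference is your description of the leading coefficient; in the paper it is the explicit product of $(1+1)$-point fiber-class relative invariants rather than literally a product of the $a_j$ and isotropy orders, but its nonvanishing is all that is used.
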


In particular, when $\os$ is a divisor and the blowup weight $\wa=(1)$, the map $\kappa\co {(\uX_\wa| \D_\wa)\rto(\X| \os)}$ is identity, hence $\mc K=H^*_{\text{CR}}(\uX_\wa)=H^*_{\text{CR}}(\X)$. Therefore all relative invariants of $ {(\uX_\wa| \D_\wa)=(\X| \os)}$ are admissible. Since now $c_1(\N_{\os|\X})$ is determined by the restriction map $H^*_{\text{CR}}(\X)\rto H^*_{\text{CR}}(\os)$ (cf. \eqref{eq determine-c(N)}) and $\mc O_{\D_\wa}(-1)=\N_{\os|\X}$, as a direct consequence of Theorem \ref{thm rel-GW-of-weighted-blp} we have

\begin{coro}\label{coro rel-inv-X-S}
When $\os$ is a divisor, the relative descendent orbifold Gromov--Witten theory of $ {(\X| \os)}$ can be uniquely and effectively reconstructed from the orbifold Gromov--Witten theories of $\X$, $\os$, and the restriction map $H^*_{\text{\em CR}}(\X)\rto H^*_{\text{\em CR}}(\os)$.
\end{coro}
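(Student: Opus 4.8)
The plan is to derive Corollary~\ref{coro rel-inv-X-S} as a special case of Theorem~\ref{thm rel-GW-of-weighted-blp} by checking that, under the hypothesis that $\os$ is a symplectic divisor and the blowup weight is $\wa=(1)$, all the data appearing in Theorem~\ref{thm rel-GW-of-weighted-blp} collapse to the data listed in the corollary. First I would observe that for a divisor $\os\subseteq\X$ with $\wa=(1)$ the weight-$\wa$ blowup $\uX_\wa$ is simply $\X$ itself and the exceptional divisor $\D_\wa$ coincides with $\os$, so that the blowdown morphism $\kappa\co(\uX_\wa|\D_\wa)\rto(\X|\os)$ is the identity. Consequently the inertia index map $\kappa_t$ is the identity and each $\kappa_{(h)}^*$ is an isomorphism, whence the subspace $\mc K=\bigoplus_{(h)}\kappa_{(h)}^*H^*(\X(\kappa_t(h)))$ equals all of $H^*_{\text{CR}}(\uX_\wa)=H^*_{\text{CR}}(\X)$. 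This means the admissibility restriction on the cohomology insertions $\gamma_i\in\mc K$ in Definition~\ref{def admissible-rel-inv} is vacuous, so every relative descendent invariant of $(\X|\os)$ is admissible and Theorem~\ref{thm rel-GW-of-weighted-blp} applies to the full relative theory.

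Next I would address the two remaining pieces of input data in Theorem~\ref{thm rel-GW-of-weighted-blp}: the Chern class $c_1(\mc O_{\D_\wa}(-1))$ and the absolute orbifold Gromov--Witten theory of $\D_\wa$. For the first, since $\wa=(1)$ the tautological bundle $\mc O_{\D_\wa}(-1)$ is exactly the normal line bundle $\N_{\os|\X}$ of $\os$ in $\X$; I would then invoke the standard identity (referred to as \eqref{eq determine-c(N)} in the text) expressing $c_1(\N_{\os|\X})$ in terms of the restriction homomorphism $H^*_{\text{CR}}(\X)\rto H^*_{\text{CR}}(\os)$ — concretely, the Poincar\'e dual of the fundamental class $[\os]$ in $\X$ restricts on $\os$ to the Euler class of the normal bundle, which for a line bundle is $c_1$. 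Hence $c_1(\mc O_{\D_\wa}(-1))$ is not an independent datum but is already determined by the restriction map. For the second, $\D_\wa=\os$ as orbifolds, so the absolute orbifold Gromov--Witten theory of $\D_\wa$ is literally that of $\os$, already among the listed inputs; likewise the restriction map $H^*_{\text{CR}}(\X)\rto H^*_{\text{CR}}(\D_\wa)$ becomes $H^*_{\text{CR}}(\X)\rto H^*_{\text{CR}}(\os)$.

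Having reduced every input of Theorem~\ref{thm rel-GW-of-weighted-blp} to the inputs named in the corollary, I would conclude by stating that the uniqueness-and-effectiveness reconstruction guaranteed by Theorem~\ref{thm rel-GW-of-weighted-blp} for the admissible relative theory of $(\uX_\wa|\D_\wa)$ becomes, in this case, a reconstruction of the entire relative orbifold Gromov--Witten theory of $(\X|\os)$ from the orbifold Gromov--Witten theories of $\X$ and $\os$ together with the restriction homomorphism, which is precisely the assertion of Corollary~\ref{coro rel-inv-X-S}. I expect the only subtle point — and thus the main thing to verify carefully rather than a genuine obstacle — to be the claim that $\mc K$ fills out all of $H^*_{\text{CR}}(\X)$: one must track the definition of $\mc K$ through the inertia decomposition and check that when $\kappa$ is the identity the twisted sectors $\uX_\wa(h)$ and $\X(\kappa_t(h))$ match up sector by sector with $\kappa_{(h)}^*$ an isomorphism, so that no descendent insertion is excluded by admissibility; everything else is a direct specialization of the already-established theorem.
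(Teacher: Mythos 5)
Your proposal is correct and follows essentially the same route as the paper: specialize Theorem \ref{thm rel-GW-of-weighted-blp} to $\wa=(1)$ and $\os$ a divisor, note that $\kappa$ is the identity so $\mc K=H^*_{\text{CR}}(\X)$ and admissibility is vacuous, identify $\mc O_{\D_\wa}(-1)$ with $\N_{\os|\X}$ whose first Chern class is determined by the restriction map via \eqref{eq determine-c(N)}, and absorb the Gromov--Witten theory of $\D_\wa=\os$ into the listed inputs. The only cosmetic difference is that your ``concrete'' justification for recovering $c_1(\N_{\os|\X})$ appeals to the self-intersection formula rather than the Chern-class identity $c(\N_{\os|\X})\cup c(T\os)=\iota^*(c(T\X))$ actually stated in \eqref{eq determine-c(N)}, but both are standard and equivalent for this purpose.
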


This result extends \cite[Theorem 2]{Maulik-Pandharipande2006} to the orbifold case.

After determining all admissible relative invariants of $(\Xa|\D_\wa)$, we could determine the absolute invariants of $\Xa$. The inertia orbifold groupoid $\I\D_\wa$ of $\D_\wa$ is a sub-orbifold groupoid of the inertia orbifold groupoid $\I\Xa$ of $\Xa$. We see in \S \ref{subsec cohomology} that $H^*_{\text{CR}}(\Xa)$ is generated by $\mc K$ and forms that support over $\I\D_\wa$. Consider an absolute invariant of $\uX_\wa$
\begin{align}
\label{eq abs-inv-Xa-in-introduction}
\bl \prod_i\tau_{k_i}\gamma_i\cdot\prod_j \tau_{k'_j}\theta_j\br^{\Xa}_{g,\beta}.
\end{align}
with $\gamma_i$ belonging to $\mc K$, and $\theta_j$ supporting over $\I\D_\wa$. We use degeneration formula to calculate this invariant. We degenerate $\Xa$ as \eqref{E degenerate-Xa-in-sec1}. For $\gamma_i\in \mc K$ we could take the extension over $(\Xa|\D_\wa)$ to be $\gamma_i$ itself and an appropriate extension $\gamma_i^+$ over $(\overline{\mc O_{\PN_\wa}(-1)}|\D_{\wa,\0})$. For $\theta_j$ we could take the extension over $(\Xa|\D_\wa)$ to be $0$, and the extension over $(\overline{\mc O_{\PN_\wa}(-1)}|\D_{\wa,\0})$ to be $\theta_j$. Then by the degeneration formula, the absolute invariant \eqref{eq abs-inv-Xa-in-introduction} is determined by admissible relative invariants of $(\Xa|\D_\wa)$ and relative invariants of $(\overline{\mc O_{\PN_\wa}(-1)}|\D_{\wa,\0})$. By the linearity of Gromov--Witten invariants, all absolute descendent orbifold Gromov--Witten invariants of $\Xa$ are determined by absolute descendent orbifold Gromov--Witten invariants of $\Xa$ of the form \eqref{eq abs-inv-Xa-in-introduction}. Since $\D_{\wa,\0}\cong \D_\wa$, as a direct consequence of Theorem \ref{thm rel-GW-of-P(E)} and Theorem \ref{thm rel-GW-of-weighted-blp} we have proved

\begin{theorem}\label{thm abs-GW-of-weighted-blp}
The absolute descendent orbifold Gromov--Witten theory of $\Xa$ can be uniquely and effectively reconstructed from the absolute descendent orbifold Gromov--Witten theories of $\X$, $\os$ and $\D_\wa$, the restriction map $H^*_{\text{\em CR}}(\X)\rto H^*_{\text{\em CR}}(\os)$ and the first Chern class of $\mc O_{\PN_\wa}(-1)$.
\end{theorem}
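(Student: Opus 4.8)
The plan is to reduce the absolute theory of $\Xa$ to relative theories that are already under control, via the degeneration \eqref{E degenerate-Xa-in-sec1} and the orbifold degeneration formula. First I would invoke the description of $H^*_{\text{CR}}(\Xa)$ from \S\ref{subsec cohomology}: this ring is generated by $\mc K$ together with classes supported over $\I\D_\wa$. By multilinearity of Gromov--Witten invariants it then suffices to reconstruct all absolute descendent invariants of the form \eqref{eq abs-inv-Xa-in-introduction}, with each $\gamma_i\in\mc K$ and each $\theta_j$ supported over $\I\D_\wa$. Next I degenerate $\Xa$ to $(\uX_\wa|\D_\wa)\wedge_{\D_\wa}(\overline{\mc O_{\D_\wa}(-1)}|\D_{\wa,\0})$ as in \eqref{E degenerate-Xa-in-sec1} and choose compatible extensions of the insertions: each $\gamma_i\in\mc K$ is extended by $\gamma_i$ itself over $(\uX_\wa|\D_\wa)$ and by the pullback $\gamma_i^+$ of $\gamma_i|_{\D_\wa}$ along the bundle projection $\overline{\mc O_{\D_\wa}(-1)}\rto\D_\wa$ over the other piece, so that the two restrictions agree on the gluing divisor $\D_\wa\cong\D_{\wa,\0}$; while each $\theta_j$ is extended by $0$ over $(\uX_\wa|\D_\wa)$ and by $\theta_j$ over $(\overline{\mc O_{\D_\wa}(-1)}|\D_{\wa,\0})$. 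Applying the degeneration formula of \cite{Chen-Li-Sun-Zhao2011,Abramovich-Fantechi2016} expresses \eqref{eq abs-inv-Xa-in-introduction} as a finite sum, indexed by splittings of $g$ and $\beta$, by partitions of the marked points, and by relative cohomology weights along $\D_\wa$ taken from the dual bases $\Sigma_\star,\Sigma^\star$, of products of a relative descendent orbifold Gromov--Witten invariant of $(\uX_\wa|\D_\wa)$ with one of $(\overline{\mc O_{\D_\wa}(-1)}|\D_{\wa,\0})$, up to the standard combinatorial gluing factors.

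It then remains to reconstruct the two families of relative invariants. Because every $\gamma_i$ lies in $\mc K$ and the gluing weights lie in $\Sigma_\star$, each relative invariant of $(\uX_\wa|\D_\wa)$ occurring is admissible in the sense of Definition~\ref{def admissible-rel-inv}, hence is effectively and uniquely reconstructed by Theorem~\ref{thm rel-GW-of-weighted-blp} from the orbifold Gromov--Witten theories of $\X$, $\os$ and $\D_\wa$, the restriction map $H^*_{\text{CR}}(\X)\rto H^*_{\text{CR}}(\os)$ and $c_1(\mc O_{\PN_\wa}(-1))$. For the other factor I observe that $\overline{\mc O_{\D_\wa}(-1)}=\P(\mc O_{\D_\wa}(-1)\oplus\mc O_{\D_\wa})$ is precisely the weight-$(1)$ projectification of the line bundle $\mc O_{\D_\wa}(-1)\rto\D_\wa$, with infinite divisor $\D_{\wa,\0}\cong\D_\wa$ and tautological line bundle $\mc O_{\D_\wa}(-1)$ itself; hence Theorem~\ref{thm rel-GW-of-P(E)}, applied with base $\D_\wa$ and bundle $\E=\mc O_{\D_\wa}(-1)$, reconstructs the relative descendent orbifold Gromov--Witten theory of $(\overline{\mc O_{\D_\wa}(-1)}|\D_{\wa,\0})$ from the absolute orbifold Gromov--Witten theory of $\D_\wa$ and $c_1(\mc O_{\D_\wa}(-1))$. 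Thus every term of the finite sum is determined by the stated data and the whole procedure is algorithmic, which proves the theorem.

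The Gromov--Witten content here is entirely carried by Theorems~\ref{thm rel-GW-of-P(E)} and \ref{thm rel-GW-of-weighted-blp}, so the genuine work lies elsewhere. The first delicate point is the cohomological input of the first step: establishing that $H^*_{\text{CR}}(\Xa)$ is generated by $\mc K$ and by classes supported over $\I\D_\wa$, and then checking that the extensions $\gamma_i^+$ and the $0/\theta_j$ extensions really are admissible choices in the orbifold degeneration formula — i.e. that they restrict compatibly to the gluing divisor and account for the full class on $\Xa$ — which is where the specific geometry of the weighted blowup and of the exceptional divisor $\D_\wa=\PN_\wa$ enters. The second, and I expect the more laborious, point is the bookkeeping of the orbifold degeneration formula itself: twisted sectors and ages, the gerbe structure and the twisted contact orders along $\D_\wa$, and the precise form of the gluing term must be handled carefully enough to be certain that only admissible relative invariants of $(\uX_\wa|\D_\wa)$ and only relative invariants of the projectified line bundle $(\overline{\mc O_{\D_\wa}(-1)}|\D_{\wa,\0})$ appear, with no stray contributions falling outside the scope of the two theorems.
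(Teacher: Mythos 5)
Your proposal is correct and follows essentially the same route as the paper: reduce via the generation of $H^*_{\text{CR}}(\Xa)$ by $\mc K$ and classes supported over $\I\D_\wa$, degenerate as in \eqref{E degenerate-Xa-in-sec1} with the extensions $(\gamma_i,\gamma_i^+)$ and $(0,\theta_j)$, and then invoke Theorem \ref{thm rel-GW-of-weighted-blp} for the admissible relative invariants of $(\uX_\wa|\D_\wa)$ and Theorem \ref{thm rel-GW-of-P(E)} (with $\E=\mc O_{\D_\wa}(-1)$ over $\D_\wa$) for the other factor. The paper's own argument is exactly this sketch, so no further comparison is needed.
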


Our results have several applications.

\subsubsection{Gromov--Witten theory of blowups along complete intersections}
Here we consider the orbifold version of a conjecture of Maulik--Pandharipande \cite{Maulik-Pandharipande2006} on Gromov--Witten theory of blowups along complete intersections. Let $\X$ be a compact symplectic orbifold groupoid, and $\W_1,\ldots,\W_m$ be $m$ symplectic divisors of $\X$ which intersect transversely such that
\[
\os:=\bigcap_{i=1}^m \W_i
\]
is a symplectic sub-orbifold groupoid of $\X$. Let $\uX$ be the blowup of $\X$ along $\os$ with trivial weight $\wa=(1,\ldots,1)$ and $\D$ be the exceptional divisor. Denote the normal bundle of $\W_i$ in $\X$ by $\N_i$. Then the normal bundle of $\os$ in $\X$ is a direct sum of $m$ line bundles
\[
\N_{\os|\X}=\bigoplus_{i=1}^m \N_i\big|_{\os}.
\]
Therefore
\[
\D=\P(\N_{\os|\X})=\P\Big(\bigoplus_{i=1}^m \N_i\big|_{\os}\Big).
\]
Each line bundle $\N_i|_\os$ gives us a section of $\D\rto\os$
\[
\os_i=\P(0\oplus\ldots\oplus \N_i\big|_\os\oplus\ldots\oplus 0), \qq 1\leq i\leq m.
\]
Moreover, $\os_i\cong\os$ for $1\leq i\leq m$.

\begin{remark}
When $\X=V$ is a smooth nonsingular projective variety, $\os=Z$ is a smooth nonsingular complete intersection of $m$ smooth nonsingular divisors $W_1,\ldots,W_m\subset V$ and $\tilde V$ is the blowup of $V$ along $Z$, Maulik and Pandharipande conjectured that (cf. \cite[Conjecture 2]{Maulik-Pandharipande2006})
\begin{quote}
The Gromov--Witten theory of $\tilde V$ is actually determined by the Gromov--Witten theories of $V$ and $Z$ and the restriction map $H^*(V,\ration)\rto H^*(Z,\ration)$.
\end{quote}
The following Theorem \ref{thm GW-of-blp-along-complete-intersection} is an orbifold version of this conjecture. This conjecture was also proved by Fan \cite{Fan2017} and Du \cite{Du2019}.
\end{remark}

\begin{theorem}\label{thm GW-of-blp-along-complete-intersection}
The orbifold Gromov--Witten theory of $\uX$ is uniquely and effectively determined by the Gromov--Witten theories of $\X$ and $\os$, and the restriction map $H^*_{\text{\em CR}}(\X,\ration)\rto H^*_{\text{\em CR}}(\os,\ration)$.
\end{theorem}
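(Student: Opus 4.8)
The plan is to obtain Theorem~\ref{thm GW-of-blp-along-complete-intersection} directly from Theorem~\ref{thm abs-GW-of-weighted-blp} applied to the blowup of $\X$ along $\os$ with the trivial weight $\wa=(1,\ldots,1)$, in which case $\Xa=\uX$ and the exceptional divisor is the honest projective bundle $\D=\D_\wa=\P(\N_{\os|\X})=\P\big(\bigoplus_{i=1}^m\N_i|_\os\big)\rto\os$. By Theorem~\ref{thm abs-GW-of-weighted-blp} the absolute descendent orbifold Gromov--Witten theory of $\uX$ is effectively and uniquely reconstructed from the orbifold Gromov--Witten theories of $\X$, $\os$ and $\D$, the restriction map $H^*_{\mathrm{CR}}(\X,\ration)\rto H^*_{\mathrm{CR}}(\os,\ration)$, and the class $c_1(\mc O_{\D}(-1))$. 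So it suffices to verify that (i) $c_1(\mc O_{\D}(-1))$ is itself determined by the restriction map, and (ii) the orbifold Gromov--Witten theory of $\D$ is determined by that of $\os$ together with the restriction map; feeding (i) and (ii) back into Theorem~\ref{thm abs-GW-of-weighted-blp} then proves the theorem.

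For (i): the first Chern class of the normal bundle $\N_i=\N_{\W_i|\X}$ is the restriction to $\W_i$ of the Poincar\'e dual of $\W_i$, so $c_1(\N_i|_\os)=[\W_i]|_\os$ is the image of the class $[\W_i]\in H^2(\X)$ under $H^*_{\mathrm{CR}}(\X,\ration)\rto H^*_{\mathrm{CR}}(\os,\ration)$. The Chen--Ruan cohomology of $\D$ is generated over $H^*_{\mathrm{CR}}(\os,\ration)$ by $\xi=c_1(\mc O_{\D}(1))$ subject, sector by sector, to the relation $\prod_{i=1}^m\big(\xi+c_1(\N_i|_\os)\big)=0$ (each twisted sector of $\D$ being a projective bundle over the corresponding twisted sector of $\os$); this is exactly the structure made explicit by the compatible bases $\Sigma_\star,\Sigma^\star$ of \S\ref{subsec cohomology}. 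Hence $c_1(\mc O_{\D}(-1))=-\xi$, and in fact all of $H^*_{\mathrm{CR}}(\D,\ration)$, is determined by $H^*_{\mathrm{CR}}(\os,\ration)$ and the classes $[\W_i]|_\os$, i.e.\ by the restriction map.

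For (ii): since projectivization is unchanged by twisting, $\D=\P\big(\mc O\oplus\bigoplus_{i=2}^m\N_i\N_1^{-1}|_\os\big)$; that is, $\D=\oE_\wa$ is the weight-$(1,\ldots,1)$ projectification of the bundle $\E:=\bigoplus_{i=2}^m\N_i\N_1^{-1}|_\os\rto\os$, whose infinite divisor $\PE_\wa$ is a projective bundle over $\os$ of fiber dimension one less than $\D$. Degenerating $\D=\oE_\wa$ to the normal cone of $\PE_\wa$ and applying the degeneration formula expresses the absolute theory of $\D$ in terms of the relative theory of $(\oE_\wa\mid\PE_\wa)$ and of the $\P^1$-bundle $\overline{\mc O_{\PE_\wa}(1)}\rto\PE_\wa$ relative to a section isomorphic to $\PE_\wa$; by Theorem~\ref{thm rel-GW-of-P(E)} both are reconstructed from the orbifold Gromov--Witten theories of $\os$ and $\PE_\wa$ and the relevant Chern classes, which by (i) lie in the image of the restriction map. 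Since $\PE_\wa$ is again a projective bundle of the same type over $\os$, induction on $m$ completes (ii): the base case $m=1$ is $\D\cong\os$ (equivalently $\uX=\X$), and the step $m=2$ reduces directly to the orbifold Gromov--Witten theory of $\os$. Thus (ii) is the orbifold Leray--Hirsch property for $\D$.

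The main obstacle should be step (ii). Unlike the smooth case of \cite{Maulik-Pandharipande2006}, one cannot simply invoke a classical projective-bundle (Leray--Hirsch) formula, because the Chen--Ruan cohomology of $\D$ acquires twisted sectors fibered in projective spaces over the twisted sectors of $\os$; the full reconstruction must be run --- virtual localization on the weighted projectification, rubber calculus to clear the $\Psi_\0$-integrals, the inductive determination of the distinguished type~II invariants, and the fiber-class initial values of \S\ref{sec fiber-class-inv} --- all while tracking the age and ramification data, and one must check at each stage that the algorithm stays effective. Once (ii) is established, (i) and the reduction through Theorem~\ref{thm abs-GW-of-weighted-blp} are formal.
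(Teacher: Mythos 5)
Your proposal is correct, and its outer skeleton (reduce via Theorem \ref{thm abs-GW-of-weighted-blp} to the two claims that $c_1(\mc O_\D(-1))$ and the theory of $\D$ are determined) coincides with the paper's proof; but your treatment of the key sub-step --- determining the orbifold Gromov--Witten theory of the exceptional divisor $\D=\P\big(\bigoplus_{i=1}^m\N_i|_\os\big)$ from that of $\os$ --- is genuinely different. The paper exploits the splitting of $\N_{\os|\X}$ into line bundles to put a $(\cplane^*)^m$-action on $\D$ with fixed loci $\os_i\cong\os$, runs virtual localization once to reduce everything to Hodge integrals over the $\os_i$, and removes those by Tseng's orbifold quantum Riemann--Roch. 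You instead use the splitting to write $\D=\P\big(\mc O\oplus\bigoplus_{i=2}^m\N_i\N_1^{-1}|_\os\big)$ as a projectification $\oE$ with infinite divisor $\PE$ of one lower fiber rank, degenerate to the normal cone of $\PE$, invoke Theorem \ref{thm rel-GW-of-P(E)} for the two resulting relative theories, and induct on $m$. Both work; the paper's route is a single localization and is computationally more direct, while yours recycles the paper's hardest theorem and an induction but avoids introducing a new localization computation on $\D$ (it is essentially the same mechanism the paper already uses to pass from Theorem \ref{thm rel-GW-of-weighted-blp} to Theorem \ref{thm abs-GW-of-weighted-blp}). The one point you should make explicit in step (ii) is the lifting of insertions in the degeneration formula: as in \S\ref{subsec introduction-orb-GW-of-weight--blp}, classes of $H^*_{\text{CR}}(\D)$ pulled back from $\I\os$ extend over both pieces of the central fiber, while classes divisible by $[\PE]$ extend by zero on the $\oE$ side, so there is no vanishing-cohomology obstruction. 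Your step (i) is also a slight variant of the paper's: you read off each $c_1(\N_i|_\os)$ as the image of $[\W_i]$ under restriction, whereas the paper only uses the total Chern class $c(\N_{\os|\X})$ obtained from adjunction \eqref{eq determine-c(N)} together with the projective-bundle relation; either suffices.
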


\begin{proof}
By Theorem \ref{thm abs-GW-of-weighted-blp}, the orbifold Gromov--Witten theory of $\uX$ is determined by the orbifold Gromov--Witten theories of $\X$, $\D$ and $\os$, the first Chern class of the normal bundle $\mc O(-1)\rto \D$ of $\D$ in $\uX$ and the restriction map $H^*_{\text{CR}}(\X)\rto H^*_{\text{CR}}(\os)$. We next show that
\begin{enumerate}
\item[(a)] the orbifold Gromov--Witten theory of $\D$ is determined by the orbifold Gromov--Witten theory of $\os$ and,
\item[(b)] $c_1(\mc O(-1)$ is determined by the restriction map $H^*_{\text{CR}}(\X)\rto H^*_{\text{CR}}(\os)$.
\end{enumerate}
There is a $T=(\cplane^\ast)^m$ action on $\N_{\os|\X}$ which descends to a $T$-action on $\D$. The fixed loci of this $T$-action on $\D$ consist of $\os_i\cong\os,\, 1\leq i\leq m$. The fixed lines connecting these $\os_i$ are special lines in the fiber of $\D=\P(\N_{\os|\X})\rto\os$.

The $T$-action on $\D$ induces a $T$-action on the moduli space of orbifold stable maps to $\D$. The fixed loci are determined by those graphes, of which vertices represent moduli spaces of orbifold stable maps to $\os_i$, and edges represent maps from orbifold Riemann spheres to $\D$ that are totally ramified over certain $\os_i,\os_j\subset\D$ with $i\neq j$. By the virtual localization (cf. \cite{Kontsevich1995,Graber-Pandharipande1999,Chen-Li2006,Liu2013}), the orbifold Gromov--Witten invariants of $\D$ are determined by Hodge integrals in the orbifold Gromov--Witten theories of $\os_i\cong \os$ (see \S \ref{subsec localization} for similar localization computations of contributions of simple fixed loci of relative moduli spaces). Hodge integrals in the orbifold Gromov--Witten theories of $\os_i\cong \os$ can be removed by the orbifold quantum Riemann--Roch of Tseng \cite{Tseng2010}. This proves (a).

On the other hand, the normal bundle of $\D$ in $\uX$ is the tautological line bundle $\mc O(-1)$ over $\D=\P(\N_{\os|\X})=\P(\bigoplus_{i=1}^m \N_i|_{\os})$, whose first Chern class $x$ is determined by the classical relation (cf. Bott and Tu \cite[(20.6)]{Bott-Tu82})
\[
(-x)^m+c_1(\N_{\os|\X})(-x)^{m-1}+\ldots +c_{m-1}(\N_{\os|\X})(-x)+c_m(\N_{\os|\X})=0.
\]
The total Chern class $c(\N_{\os|\X})=1+\sum_{i=1}^m c_i(\N_{\os|\X})$ is determined inductively by the restriction map $\iota^*\co H^*_{\text{CR}}(\X)\rto H^*_{\text{CR}}(\os)$ via
\begin{align}\label{eq determine-c(N)}
c(\N_{\os|\X})\cup c(T\os)=\iota^*(c(T\X)).
\end{align}
See also \S \ref{sec rel-GW-of-weit-blp}. This proves (b). The proof is complete.
\end{proof}

\subsubsection{Orbifold Gromov--Witten theory of root constructions}\label{SSS OGW-root-stack}

We next study a conjecture of Tseng--You on orbifold Gromov--Witten theory of root constructions. Let $\X$ be a smooth Deligne--Mumford stack, and $\D$ be a divisor, Tseng--You \cite{Tseng-You2016a} considered the $r$-th root construction $\X_r$ of $\X$ along $\D$. They conjectured \cite[Conjecture 1.1]{Tseng-You2016a} that
\begin{conj}\label{Conj Tseng-You}
The Gromov--Witten theory of $\X_r$ is determined by the Gromov--Witten theories of $\X, \D$, and the restriction map $H^*_{\mathrm{CR}}(\X)\rto H^*_{\mathrm{CR}}(\D)$.
\end{conj}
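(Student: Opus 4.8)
The plan is to deduce Conjecture \ref{Conj Tseng-You} from Theorem \ref{thm abs-GW-of-weighted-blp} by recognizing the $r$-th root stack $\X_r$ of $\X$ along a smooth divisor $\D$ as the weight-$\wa=(r)$ blowup of $\X$ along $\D$. Indeed, when $\os=\D$ is itself a divisor, $\codim \D=2$ in the real sense (complex codimension $1$), so $n=1$ and the only admissible blowup weight is $\wa=(r)$ for some $r\geq 1$; the corresponding weighted blowup $\uX_{(r)}$ is exactly the root stack $\X_r$, as recalled in the introduction. Thus I would first record this identification $\uX_{(r)}=\X_r$ explicitly (citing \cite{Chen-Du-WangR2020a} or the discussion in \S\ref{subsec weighted-blps}), so that Theorem \ref{thm abs-GW-of-weighted-blp} applies verbatim with $\os$ replaced by $\D$.

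With that identification in hand, Theorem \ref{thm abs-GW-of-weighted-blp} tells us that the absolute descendent orbifold Gromov--Witten theory of $\X_r=\uX_{(r)}$ is uniquely and effectively reconstructed from the orbifold Gromov--Witten theories of $\X$, $\D$, and the exceptional divisor $\D_{(r)}=\PN_{(r)}$, together with the restriction map $H^*_{\mathrm{CR}}(\X)\rto H^*_{\mathrm{CR}}(\D)$ and $c_1(\mc O_{\D_{(r)}}(-1))$. So the work reduces to two points: (i) the exceptional divisor $\D_{(r)}$ and its GW theory must be shown to be determined by the GW theory of $\D$ together with the restriction data; and (ii) $c_1(\mc O_{\D_{(r)}}(-1))$ must be shown to be determined by the same data. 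For (i), since $\os=\D$ has complex codimension $1$ in $\X$, the normal bundle $\N=\N_{\D|\X}$ is a single orbifold line bundle over $\D$, and $\D_{(r)}=\PN_{(r)}=\P_{(r)}(\N)$ is the weight-$(r)$ projectivization of a line bundle, which is nothing but the $\integer_r$-gerbe over $\D$ obtained by taking the $r$-th root of $\N$. Its orbifold GW theory is governed by the GW theory of $\D$ via the known results on GW theory of (root) gerbes --- e.g. \cite{Andreini-Jiang-Tseng2011,Andreini-Jiang-Tseng2015,Andreini-Jiang-Tseng2016,Tang-Tseng2016} --- so I would invoke those to conclude that the GW theory of $\D_{(r)}$ is determined by the GW theory of $\D$ and $c_1(\N)$. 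For (ii), the normal line bundle $\N_{\D|\X}$ is determined by the restriction map via $c(\N_{\D|\X})\cup c(T\D)=\iota^*(c(T\X))$, exactly as in \eqref{eq determine-c(N)}, and $\mc O_{\D_{(r)}}(-1)$ is the $r$-th root line bundle of (the pullback of) $\N_{\D|\X}$ on the gerbe $\D_{(r)}$, so its first Chern class is $\tfrac{1}{r}$ times the pullback of $c_1(\N_{\D|\X})$ and is therefore determined by the restriction data.

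The main obstacle I expect is bookkeeping rather than a genuine new difficulty: one must be careful that the hypotheses of Theorem \ref{thm abs-GW-of-weighted-blp} are literally met in the root-stack setting --- in particular that $\D$ is a \emph{smooth} sub-orbifold groupoid (true by assumption, $\D$ being a smooth divisor) and that the symplectic/almost-complex structures line up so that the weighted blowup construction of \S\ref{subsec weighted-blps} produces precisely $\X_r$. A secondary subtlety is point (i): one needs that the relevant gerbe GW results cover the orbifold base $\D$ (not just manifold bases) and are phrased as ``the GW theory of $\D_{(r)}$ is reconstructed from that of $\D$'' in the effective/uniqueness sense demanded by the conjecture; if the cited gerbe results are stated only in a structural form, I would instead observe that $\D_{(r)}$ is itself a weight-$(r)$ weighted blowup situation (the trivial one, or handled directly by the localization/twisting arguments used in the proof of Theorem \ref{thm GW-of-blp-along-complete-intersection}) and extract the needed reconstruction from the body of the paper. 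Once these checks are in place, the conjecture follows immediately, and I would also note the parallel with Theorem \ref{thm GW-of-blp-along-complete-intersection}: the case $m=1$ of that theorem is precisely this root-construction statement, so Conjecture \ref{Conj Tseng-You} can alternatively be seen as the $m=1$ specialization.
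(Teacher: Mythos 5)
Your proposal is correct and follows essentially the same route as the paper: apply Theorem \ref{thm abs-GW-of-weighted-blp} to $\uX_{(r)}=\X_r$, determine $c_1(\mc O_{\D_{(r)}}(-1))=c_1(\sqrt[r]{\L})=\tfrac1r\kappa^*c_1(\L)$ from the restriction map via \eqref{eq determine-c(N)}, and reduce the Gromov--Witten theory of the trivial-band $\integer_r$-gerbe $\D_{(r)}$ to that of $\D$. The only difference is that where you defer the gerbe step to the literature, the paper makes it explicit: using the Tang--Tseng virtual class comparison $\pi_*[\M_{g,\vec h,\beta}(\D_{(r)})]^{\vir}=r^{2g-1}[\M_{g,\vec g,\beta}(\D)]^{\vir}$ and the projection formula, each invariant of $\D_{(r)}$ equals $r^{2g-1}$ times the corresponding invariant of $\D$ (with insertions pulled back under the Chen--Ruan cohomology identification induced by $\kappa$), which is exactly the effective reconstruction the conjecture demands. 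One small correction: your closing remark that Conjecture \ref{Conj Tseng-You} is the $m=1$ case of Theorem \ref{thm GW-of-blp-along-complete-intersection} is not right, since that theorem concerns blowups with \emph{trivial} weight $(1,\ldots,1)$, whereas the root construction requires weight $(r)$ with $r>1$; the $m=1$, trivial-weight case is just $\X$ itself.
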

See \cite{Tseng2019} for more discussions on this conjecture and related conjectures.

The root stack construction corresponds to the weighted blowup along symplectic divisors in the symplectic category. Let $\D\subseteq\X$ be a symplectic divisor with normal line bundle $\L:=\N_{\D|\X}$ and $\uX_{(r)}$ be the weight-$\wa=(r)$ blowup of $\X$ along $\D$. We have a natural projection $\kappa\co\uX_{(r)}\rto \X$ with $\D_{(r)}:=\kappa\inv(\D)=\msf P\L_{(r)}$ being the exceptional divisor. $\D_{(r)}$ is the $r$-th root gerbe $\sqrt[r]{\L/\D}$ of the line bundle $\L$. Its normal line bundle $\N_{\D_{(r)}|\uX_{(r)}}$ in $\uX_{(r)}$ is the $r$-th root $\sqrt[r]{\L}$ of $\L$. As a $\integer_r$-gerbe over $\D$ it is a trivial band gerbe. We denote the restriction of $\kappa$ on $\D_{(r)}$ also by $\kappa\co\D_{(r)}\rto\D$.

\begin{theorem}\label{thm GW-of-weight--blp-along-divisor}
Conjecture \ref{Conj Tseng-You} is true, i.e. the Gromov--Witten theory of $\uX_{(r)}$ is determined by the Gromov--Witten theories of $\X,\D$ and the restriction map $H^*_{\text{\em CR}}(\X)\rto H^*_{\text{\em CR}}(\D)$.
\end{theorem}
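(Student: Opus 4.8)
The plan is to derive Theorem~\ref{thm GW-of-weight--blp-along-divisor} from Theorem~\ref{thm abs-GW-of-weighted-blp} along the same lines as the proof of Theorem~\ref{thm GW-of-blp-along-complete-intersection}; the only new point is that the exceptional divisor $\D_{(r)}$ is here a $\integer_r$-gerbe over $\D$ rather than a projective bundle, so its orbifold Gromov--Witten theory must be eliminated by gerbe duality instead of by localization. Explicitly, apply Theorem~\ref{thm abs-GW-of-weighted-blp} with $\os=\D$, $\N_{\os|\X}=\L$ and blowup weight $\wa=(r)$: the weight-$(r)$ projectivization $\msf P\L_{(r)}$ of the line bundle $\L$ is exactly the root gerbe $\D_{(r)}=\sqrt[r]{\L/\D}$, and $\mc O_{\PN_{(r)}}(-1)=\sqrt[r]{\L}$. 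Hence the absolute descendent orbifold Gromov--Witten theory of $\uX_{(r)}$ is reconstructed from the absolute descendent orbifold Gromov--Witten theories of $\X$, $\D$ and $\D_{(r)}$, the restriction map $H^*_{\mathrm{CR}}(\X)\rto H^*_{\mathrm{CR}}(\D)$ and $c_1(\sqrt[r]{\L})$. It therefore suffices to prove three reductions: (a) the descendent orbifold Gromov--Witten theory of $\D_{(r)}$ is determined by that of $\D$ together with $c_1(\L)$; (b) $c_1(\sqrt[r]{\L})$ is determined by $c_1(\L)$; and (c) $c_1(\L)=c_1(\N_{\D|\X})$ is determined by the restriction map.

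Reductions (b) and (c) are routine. For (c), the normal bundle sequence $0\rto T\D\rto\iota^*T\X\rto\L\rto 0$ on $\D$ gives $c(\L)\cup c(T\D)=\iota^*(c(T\X))$, exactly the analogue of \eqref{eq determine-c(N)}, so $c_1(\L)=\iota^*c_1(T\X)-c_1(T\D)$ is the image of a canonical class under the (untwisted part of the) restriction map, hence determined. For (b), from $(\sqrt[r]{\L})^{\otimes r}\cong\kappa^*\L$ one gets $c_1(\sqrt[r]{\L})=\tfrac{1}{r}\kappa^*c_1(\L)$ on the untwisted sector, while on the twisted sectors the remaining age-type contributions are canonically built out of the $\integer_r$-gerbe structure of $\D_{(r)}$, which depends only on $\L$; so $c_1(\sqrt[r]{\L})$ is determined by $c_1(\L)$.

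The substantive point is (a). Since $\D_{(r)}=\sqrt[r]{\L/\D}$ is a gerbe over $\D$ banded by the constant group $\integer_r$ (a trivial band gerbe), the decomposition/duality theorems for \'etale gerbes over orbifolds apply: by \cite{Tang-Tseng2016} (and \cite{Andreini-Jiang-Tseng2011,Andreini-Jiang-Tseng2015,Andreini-Jiang-Tseng2016} in the root gerbe case) the orbifold Gromov--Witten theory of such a gerbe decomposes as a finite sum, indexed by the characters of $\integer_r$, of copies of the orbifold Gromov--Witten theory of $\D$ twisted by a flat (torsion) $B$-field built from the gerbe class, and the latter is the reduction of $c_1(\L)$ modulo $r$. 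Since $\psi$-classes on the moduli stack of orbifold stable maps to $\D_{(r)}$ are pulled back from those of $\D$ under the \'etale projection on moduli stacks, the identification is compatible with descendants. Thus the descendent orbifold Gromov--Witten theory of $\D_{(r)}$ is reconstructed from that of $\D$ and $c_1(\L)$, which is (a).

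Combining (a), (b) and (c) with Theorem~\ref{thm abs-GW-of-weighted-blp} shows that the descendent orbifold Gromov--Witten theory of $\uX_{(r)}$ is reconstructed from the orbifold Gromov--Witten theories of $\X$ and $\D$ and the restriction map $H^*_{\mathrm{CR}}(\X)\rto H^*_{\mathrm{CR}}(\D)$, which is the assertion of Conjecture~\ref{Conj Tseng-You}. I expect the main obstacle to lie in making (a) precise in exactly the form required: one must check that the gerbe duality is an equivalence at the level of \emph{descendent} (not merely primary) invariants, and that all twist data on the $\D$-side is controlled purely by $c_1(\L)$ rather than by finer information about $\L$. Once the right reference statement is set up, the remaining argument is bookkeeping parallel to the proof of Theorem~\ref{thm GW-of-blp-along-complete-intersection}.
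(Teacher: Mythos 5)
Your overall architecture coincides with the paper's: invoke Theorem \ref{thm abs-GW-of-weighted-blp} with $\os=\D$, $\wa=(r)$, observe $c_1(\sqrt[r]{\L})=\tfrac1r\kappa^*c_1(\L)$ with $c_1(\L)$ recovered from the restriction map via \eqref{eq determine-c(N)}, and reduce everything to showing that the Gromov--Witten theory of the root gerbe $\D_{(r)}$ is controlled by that of $\D$. Where you diverge is precisely at the step you flag as the main obstacle. You propose to handle $\D_{(r)}$ via the full gerbe decomposition/duality theorem — a character sum of copies of $\D$ twisted by torsion $B$-fields coming from the gerbe class — which raises exactly the two worries you name (validity for descendants, and whether the twist data is pinned down by $c_1(\L)$). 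The paper sidesteps the decomposition entirely: since $\kappa\co\D_{(r)}\rto\D$ is a trivially banded $\integer_r$-gerbe, each twisted sector $\D_{(r)}(h)$ maps homeomorphically (on coarse spaces) onto a sector $\D(g)$, so \emph{every} insertion is a pullback $\kappa_{(h)}^*\alpha$; one then uses only the virtual-class pushforward $\pi_*[\M_{g,\vec h,\beta}(\D_{(r)})]^\vir=r^{2g-1}[\M_{g,\vec g,\beta}(\D)]^\vir$ from \cite[Section 5]{Tang-Tseng2016} together with $\pi^*\bar\psi_j=\bar\psi_j$ and the projection formula to get $\langle\cdots\rangle^{\D_{(r)}}_{g,\vec h,\beta}=r^{2g-1}\langle\cdots\rangle^{\D}_{g,\vec g,\beta}$. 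This disposes of both of your concerns at once — descendants are automatic, and no $B$-field bookkeeping is needed because no character decomposition is performed. So your route is viable provided you locate the decomposition statement in the descendant form you need, but the paper's direct comparison of virtual classes is more elementary and self-contained; I would recommend replacing your step (a) by that computation.
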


\begin{proof}
By Theorem \ref{thm abs-GW-of-weighted-blp}, the Gromov--Witten theory of $\uX_{(r)}$ is determined by the Gromov--Witten theories of $\X,\D_{(r)}$ and $\D$, the restriction map $H^*_{\text{CR}}(\X)\rto H^*_{\text{CR}}(\D)$ and $c_1(\N_{\D_{(r)}|\uX_{(r)}})=c_1(\sqrt[r]\L)$. We have $c_1(\sqrt[r]\L)=\frac{1}{r}\kappa^*c_1(\L)$ as $(\sqrt[r]\L)^{\otimes r}=\kappa^*\L$. Therefore since $c_1(\L)$ is determined by the restriction map $H^*_{\text{CR}}(\X)\rto H^*_{\text{CR}}(\D)$ by \eqref{eq determine-c(N)}, $c_1(\sqrt[r]\L)$ is also determined by the restriction map $H^*_{\text{CR}}(\X)\rto H^*_{\text{CR}}(\D)$.

We next consider the orbifold Gromov--Witten theory of $\D_{(r)}$. The projection $\kappa\co\D_{(r)}\rto\D$ induces a map on inertia spaces (cf. \S\ref{SSS inertia-of-Ea})
\[
\msf I\kappa:= \bigsqcup_{(h)=(g,e^{2\pi \sqrt{-1} R})\in\scr T^{\D_{(r)}}}\kappa_{(h)}\co \bigsqcup_{(h)=(g,e^{2\pi \sqrt{-1} R})\in\scr T^{\D_{(r)}}}H^*(\D_{(r)}(h))\rto \bigsqcup_{(h)=(g,e^{2\pi \sqrt{-1} R})\in\scr T^{\D_{(r)}}} H^*(\D(g))
\]
where for each $\kappa_{(h)}$, the corresponding map $|\kappa_{(h)}|\co |\D_{(r)}(h)|\rto|\D(g)|$\footnote{Here for an orbifold groupoid $\X$, $|\X|$ is its coarse space.} on coarse spaces is a homeomorphism. The $\kappa$ induces an isomorphism of Chen--Ruan cohomology groups between $\D_{(r)}$ and $r$ copies of $\D$.

Consider an orbifold Gromov--Witten invariant of $\D_{(r)}$
\begin{align}\label{E inv-of-Dr}
\bl \tau_{a_1} \kappa_{(h_1)}^*\alpha_1,\ldots, \tau_{a_n}\kappa_{(h_n)}^*\alpha_n\br_{g,\vec h,\beta}^{\D_{(r)}}:=\int_{[\M_{g,\vec h,\beta}(\D_{(r)})]^\vir}\msf{ev}_{\D_{(r)}}^* \left(\prod_{j=1}^n\kappa_{(h_j)}^* \alpha_j\right)\cup \prod_{j=1}^n\bar\psi_j
\end{align}
where
\begin{itemize}
\item $g\geq 0$ is the genus, and $\beta\in H_2(|\D_{(r)}|;\integer)$ is a degree 2 homology class,
\item $\alpha_i\in H^*(\D(g_i))$ with $\D(g_i)$ obtained from $\kappa_{(h_i)}\co\D_{(r)}(h_i)\rto\D(g_i)$,
\item $\vec h=((h_1),\ldots,(h_n))$ indicates the twisted sectors of $\D_{(r)}$ that the images of the evaluation map $\msf{ev}_{\D_{(r)}}\co \M_{g,\vec h,\beta}(\D_{(r)})\rto (\msf I\D_{(r)})^n$ are located,
\item $\bar\psi_j$ is the psi-class of the line bundle over $\M_{g,\vec h,\beta}(\D_{(r)})$ whose fiber over a point $[\f\co(\C,\x_1,\ldots,\x_n)\rto\D_{(r)}]$ is the cotangent space of the coarse space of $\C$ at the $j$-th marked point. See also \S \ref{subsec notation-for-rel-inv-DYD}.
\end{itemize}
From $\kappa\co\D_{(r)}\rto\D$ we get a moduli space $\M_{g,\vec g,\beta}(\D)$ of orbifold stable maps to $\D$ with $\vec g=((g_1),\ldots,(g_n))$, and a natural map $\pi\co \M_{g,\vec h,\beta}(\D_{(r)})\rto\M_{g,\vec g,\beta}(\D)$ which sits in the following commutative diagram
\[
\xymatrix{\M_{g,\vec h,\beta}(\D_{(r)})\ar[r]^-{\msf{ev}_{\D_{(r)}}} \ar[d]_-\pi & (\msf I\D_{(r)})^n\ar[d]^-{(\msf I\kappa)^n}\\ \M_{g,\msf \pi(\vec h),\beta}(\D)\ar[r]^-{\msf{ev}_{\D}}& (\msf I\D)^n,
}
\]
where the horizontal maps are evaluation maps. Recall that $\kappa\co\D_{(r)}\rto\D$ is a trivial band $\integer_r$-gerbe over $\D$. By the computation of Tang--Tseng \cite[Section 5]{Tang-Tseng2016} we have
\[
\pi_*[\M_{g,\vec h,\beta}(\D_{(r)})]^\vir=r^{2g-1}\cdot [\M_{g,\vec g,\beta}(\D)]^\vir.
\]
Therefore from $\msf I\kappa\circ ev_{\D_{(r)}}=ev_\D\circ \pi$ and $\pi^*\bar\psi_j=\bar\psi_j$ for $1\leq j\leq n$, we have
\begin{align*}
\langle \tau_{a_1} \kappa_{(h_1)}^*\alpha_1,\ldots, \tau_{a_n}\kappa_{(h_n)}^*\alpha_n\rangle_{g,\vec h,\beta}^{\D_{(r)}}
&=\left[\msf{ev}_{\D_{(r)}}^*\left(\prod_{j=1}^n \kappa_{(h)}^*\alpha_j\right)\cup \prod_{j=1}^n \bar\psi_j^{a_j}\right]\cap[\M_{g,\vec h,\beta}(\D_{(r)})]^\vir\\
&=\pi^*\left[ \msf{ev}_\D^*\left(\prod_{j=1}^n (\alpha_j)\right)\cup\prod_{j=1}^n\bar\psi_j^{a_j}\right]\cap [\M_{g,\vec h,\beta}(\D_{(r)})]^\vir\\
&=\left[\msf{ev}_\D^*\left(\prod_{j=1}^n (\alpha_j)\right)\cup\prod_{j=1}^n\bar\psi_j^{a_j}\right]\cap \pi_*([\M_{g,\vec h,\beta}(\D_{(r)})]^\vir)\\
&=r^{2g-1}\cdot \left[\msf{ev}_\D^*\left(\prod_{j=1}^n (\alpha_j)\right)\cup\prod_{j=1}^n\bar\psi_j^{a_j}\right]\cap [\M_{g,\vec g,\beta}(\D)]^\vir\\
&=r^{2g-1}\cdot \langle \tau_{a_1}\alpha_1,\ldots,\tau_{a_n}\alpha_n\rangle_{g,\vec g,\beta}^{\D}.
\end{align*}
Therefore, the orbifold Gromov--Witten invariants of $\D_{(r)}$ are determined by the orbifold Gromov--Witten invariants of $\D$. This finishes the proof of this theorem, hence Conjecture \ref{Conj Tseng-You}.
\end{proof}

\subsection{Topological view for orbifold Gromov--Witten theory}

Theorem \ref{thm rel-GW-of-P(E)} is proved by using virtual localization of relative invariants and an analogue of the rubber calculus of Maulik--Pandharipande \cite{Maulik-Pandharipande2006}. As a byproduct, we could generalize the Leray--Hirsch result for line bundles and Mayer--Vietoris result for symplectic cutting in Gromov--Witten theory to orbifold Gromov--Witten theory.

\subsubsection{Orbifold Leray--Hirsch}\label{subsec introcutrion-orb-LH}

Let $\D$ be a compact symplectic orbifold groupoid. Let $\L$ be a symplectic orbifold line bundle over $\D$. Consider the projectification $\Y=\P(\L\oplus \mc O_\D)$. $\Y$ has the zero section $\D_0=\P(0\oplus\mc O_\D)$ and the infinity section $\D_\0=\P(\L\oplus 0).$ We have $\D_0\cong\D_\0\cong\D$. The normal bundles of $\D_0$ and $\D_\0$ in $\Y$ are $\L$ and $\L^\ast$ respectively.

Consider four orbifold Gromov--Witten theories: the absolute descendent orbifold Gromov--Witten theory of $\Y$ and the relative descendent orbifold Gromov--Witten theories of the three pairs
\[
(\Y|\D_0),\ (\Y|\D_\0), \text{   and   }\ (\D_0|\Y|\D_\0).
\]

\begin{theorem} \label{thm orb-LH}
All four theories can be uniquely and effectively reconstructed from the absolute descendent orbifold Gromov--Witten theory of $\D$ and the first Chern class of the line bundle $\L$.
\end{theorem}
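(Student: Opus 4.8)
The plan is to treat Theorem~\ref{thm orb-LH} as the special case of Theorem~\ref{thm rel-GW-of-P(E)} in which the base orbifold is $\D$ itself and the vector bundle is the rank-one bundle $\L\rto\D$, and then to upgrade the single conclusion about $(\oE_\wa|\PE_\wa)$ to all four theories by a sequence of elementary reductions. First I would observe that with $\E=\L$ and trivial weight $\wa=(1)$, the weighted projectification $\oE_{\wa}$ is exactly $\Y=\P(\L\oplus\mc O_\D)$ and the infinite divisor $\PE_{\wa}$ is the infinity section $\D_\0\cong\D$, whose tautological line bundle $\mc O_{\PE_\wa}(-1)$ is canonically $\L^\ast$ (so its first Chern class is $-c_1(\L)$, determined by $c_1(\L)$). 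Since $\D$ is its own base here, the ``absolute orbifold Gromov--Witten theory of $\os$'' and ``of $\PE_\wa$'' appearing in Theorem~\ref{thm rel-GW-of-P(E)} are both just the absolute theory of $\D$, and the Chern classes of $\E=\L$ and of $\mc O_{\PE_\wa}(-1)$ are both determined by $c_1(\L)$. Hence Theorem~\ref{thm rel-GW-of-P(E)} immediately yields that the relative theory of $(\Y|\D_\0)$ is reconstructed from the absolute theory of $\D$ and $c_1(\L)$.

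The remaining three theories are then obtained by standard manipulations. For $(\Y|\D_0)$: the involution of $\P^1$ swapping $0$ and $\0$ gives an isomorphism $\Y\cong\P(\L^\ast\oplus\mc O_\D)$ carrying $\D_0$ to the infinity section, so $(\Y|\D_0)$ is the relative theory of the weighted projectification of $\L^\ast$ with its infinite divisor, and applying Theorem~\ref{thm rel-GW-of-P(E)} again (with $\E=\L^\ast$, whose Chern class is still controlled by $c_1(\L)$) reconstructs it. For the absolute theory of $\Y$: I would degenerate $\Y$ as in \eqref{E degenerate-Xa-in-sec1} to $(\Y|\D_0)\wedge_{\D_0}(\overline{\L}|\D_{0,\0})$ — or more simply use that $H^*_{\mathrm{CR}}(\Y)$ is generated by $\mc K$ (pulled back from $\D$) together with classes supported on $\I\D_0$, exactly as in the argument preceding Theorem~\ref{thm abs-GW-of-weighted-blp} — and split every absolute invariant via the degeneration formula into relative invariants of $(\Y|\D_0)$ and of $(\overline{\L}|\D_{0,\0})$; the latter is again of the form covered by the previous cases since $\overline{\L}=\P(\L\oplus\mc O_\D)\cong\Y$. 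Finally, for the double-relative theory $(\D_0|\Y|\D_\0)$: this is precisely the rubber-target building block that already appears inside the proof of Theorem~\ref{thm rel-GW-of-P(E)}, where (as sketched in \S\ref{subsec rubber-calculus}) virtual localization plus rubber calculus reduce $(\oE_\wa|\PE_\wa)$ to distinguished type~II invariants of $(\D_0|\Y|\D_\0)$ and an inductive algorithm determines all of them from the absolute theory of $\D$ and $c_1(\mc O_{\PE_\wa}(1))=c_1(\L^\ast)$; invoking that part of the machinery with base $\D$ gives the claim directly. Alternatively one can deduce $(\D_0|\Y|\D_\0)$ from $(\Y|\D_\0)$ and $(\Y|\D_0)$ by a further degeneration of $\Y$ that separates the two sections onto different components.

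The genuinely substantive input is Theorem~\ref{thm rel-GW-of-P(E)} itself, together with the rubber-calculus and fiber-class computations it rests on; granting those, the present theorem is a bookkeeping exercise. The one point requiring a little care is \emph{effectivity and uniqueness} of the reconstruction in the two places where the degeneration formula is used: one must check that the degeneration formula expresses each absolute (resp. double-relative) invariant as a finite sum in which the ``new'' ingredient always carries strictly fewer relative marked points or strictly smaller curve class, so that the reconstruction is genuinely inductive and terminating — this is the same triangularity phenomenon exploited in \cite{Chen-Du-Hu2019} and in the proof of Theorem~\ref{thm abs-GW-of-weighted-blp}, and I expect verifying it in the rank-one setting to be routine but is the step where the argument could hide a subtlety (for instance, ensuring the extensions $\gamma_i^+$ of pulled-back classes exist and behave as claimed for the bundle $\L$ of arbitrary degree). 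I would therefore organize the proof as: (i) identify $\Y$, $\D_0$, $\D_\0$ with weighted projectifications of $\L$ and $\L^\ast$; (ii) apply Theorem~\ref{thm rel-GW-of-P(E)} to get $(\Y|\D_0)$ and $(\Y|\D_\0)$; (iii) run the degeneration argument to get the absolute theory of $\Y$; (iv) extract $(\D_0|\Y|\D_\0)$ from the rubber analysis inside the proof of Theorem~\ref{thm rel-GW-of-P(E)} (or by a further degeneration), checking triangularity at each use of the degeneration formula.
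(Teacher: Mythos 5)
Your proposal is correct in substance and reaches the theorem, but it is organized differently from the paper's proof. The paper's proof of Theorem \ref{thm orb-LH} in \S\ref{subsec proof-thm-1.1} is localization-based throughout: it applies the $\cplane^*$-virtual localization directly to each of the four theories (the absolute theory of $\Y$ reduces to Hodge integrals over $\D_0$ and $\D_\0$; each of $(\Y|\D_0)$, $(\Y|\D_\0)$, $(\D_0|\Y|\D_\0)$ reduces to twisted Hodge integrals over $\D$ plus rubber invariants of $(\D_0|\Y|\D_\0)$), and then invokes the rubber calculus of \S\ref{subsec rubber-calculus} together with \S\ref{sec fiber-class-inv} and \S\ref{subsec dertermine-dis-type-ii} to finish. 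You instead cite Theorem \ref{thm rel-GW-of-P(E)} for the two single-relative theories (which the paper itself endorses in the remark following the statement of the theorem), use the degeneration formula to get the absolute theory of $\Y$ from the two single-relative ones, and handle $(\D_0|\Y|\D_\0)$ either by invoking the internal rubber machinery or by a further degeneration. Your degeneration route for the absolute theory is actually simpler than the paper's localization argument and needs no triangularity check, since both factors appearing in the degeneration formula are already-determined single-relative invariants (this is the same mechanism as the derivation of Theorem \ref{thm abs-GW-of-weighted-blp}). What the paper's uniform localization approach buys is that the double-relative case comes for free as item (c) of the same computation, whereas in your scheme the double-relative case is the one place that still genuinely requires either the localization step (to reduce a \emph{general} double-relative invariant, not just a rubber or distinguished type II one, to rubber invariants) or the unverified triangularity of your proposed extra degeneration.

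Two small corrections. First, with $\E=\L$ and $\wa=(1)$ one has $\PE_\wa=\P(\L)\cong\D$ and the tautological bundle $\mc O_{\PE_\wa}(-1)$ is $\L$ itself, not $\L^\ast$ (its dual $\mc O_{\PE_\wa}(1)=\L^\ast$ is the normal bundle of $\D_\0$ in $\Y$, consistent with \S\ref{subsec porjfi-line--bundle}); this does not affect your argument since either way the class is determined by $c_1(\L)$. Second, for $(\D_0|\Y|\D_\0)$ your phrase ``invoking that part of the machinery gives the claim directly'' glosses over the fact that the rubber calculus and the distinguished type II induction only cover special double-relative invariants; an arbitrary invariant of $(\D_0|\Y|\D_\0)$ must first be fed through the relative virtual localization (as in item (c) of the paper's proof) to be expressed in terms of twisted Hodge integrals over $\D$ and rubber invariants. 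With that step made explicit, your argument closes.
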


We call this the {\em orbifold Leray--Hirsch result}. The relative theories of $(\Y|\D_0)$ and $(\Y|\D_\0)$ are special case of Theorem \ref{thm rel-GW-of-P(E)}. On the other hand
\begin{enumerate}
\item[(1)] when $\D$ is a compact symplectic manifold and $\L$ is a symplectic line bundle over $\D$, this theorem is just the Leray--Hirsch result of Maulik--Pandharipande \cite[Theorem 1]{Maulik-Pandharipande2006}.
\item[(2)] when $\D=BG$ is the classifying groupoid for a finite group $G$, this theorem was proved by Tseng--You \cite{Tseng-You2016b} by an explicit computation of the double ramification cycles on the moduli spaces of admissible covers, following the approach of Janda, Pandharipande, Pixton and Zvonkine \cite{Janda-Pandharipande-Pixton-Zvonkine2017}.
\end{enumerate}

This orbifold Leray--Hirsch result for orbifold Gromov--Witten theory was also conjectured by Tseng and You \cite[Conjecture 2.2]{Tseng-You2016a}. See also Tseng \cite{Tseng2019}.

\subsubsection{Orbifold Mayer--Vietoris}\label{subsec introduction-orb-MV}

Consider a general symplectic cutting on a compact symplectic orbifold groupoid $\X$. We get a family $\epsilon\co\mc D\to\ball$ of symplectic orbifold groupoids (see Chen--Li--Sun--Zhao \cite[\S 4.1]{Chen-Li-Sun-Zhao2011}), where
\begin{enumerate}
\item[(i)] $\ball$ is the unit ball in $\cplane$,
\item[(ii)] for all $t\neq 0$, $\epsilon\inv(t)\cong \X$,
\item[(iii)] $\X_0:=\epsilon\inv(0)= \X^+\wedge_\D\X^-$, with $\X^\pm$ intersect with each other at the common divisor $\D$ normal crossingly.
\end{enumerate}
This is a degeneration of $\X$. There is an induced homomorphism $H_2(|\X|;\integer)\rto H_2(|\X_0|;\integer)$ on homologies, and classes in the kernel of the this induced homomorphism are called {\em vanishing cycles}. There is also an induced homomorphism on Chen--Ruan cohomologies
\[
H^*_{\text{CR}}(\X^+\wedge_\D\X^-)\rto H^*_{\text{CR}}(\X)
\]
with image called the {\em non-vanishing} cohomology of $\X$.

\begin{theorem}\label{thm orb-mv}
If there is no vanishing cycles, then the absolute descendent orbifold Gromov--Witten theory of the non-vanishing cohomology of $\X$ can be uniquely and effectively reconstructed from the absolute descendent orbifold Gromov--Witten theories of $\X^\pm$ and $\D$, and the restriction maps
\[
H^*_{\text{\em CR}}(\X^+)\rto H^*_{\text{\em CR}}(\D),\qq H^*_{\text{\em CR}}(\X^-)\rto H^*_{\text{\em CR}}(\D).
\]
\end{theorem}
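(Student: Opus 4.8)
The plan is to deduce Theorem~\ref{thm orb-mv} from the degeneration formula together with the already-established reconstruction results, in exact analogy with the manifold case of Maulik--Pandharipande. First I would degenerate $\X$ as in (iii), so that by the orbifold degeneration formula (cf.\ \cite{Chen-Li-Sun-Zhao2011,Abramovich-Fantechi2016}) any absolute descendent invariant of $\X$ with insertions pulled back from $H^*_{\text{CR}}(\X^+\wedge_\D\X^-)$ — that is, from the non-vanishing cohomology — can be written as a sum over splittings of the curve class and distributions of the marked points of products of relative descendent invariants of $(\X^+|\D)$ and $(\X^-|\D)$, summed over a basis of $H^*_{\text{CR}}(\I\D)$ for the relative insertions and over the genus splitting dictated by the gluing. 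The hypothesis of no vanishing cycles is what guarantees this sum is finite and, more importantly, that every curve class on $\X$ lifts to a pair of curve classes on $\X^\pm$, so the degeneration formula is genuinely invertible in the required direction: the absolute non-vanishing theory of $\X$ is \emph{determined} by the relative theories of $(\X^+|\D)$ and $(\X^-|\D)$.

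The second step is to reconstruct each relative theory $(\X^\pm|\D)$ from absolute data. For this I would invoke the degeneration to the normal cone: deforming $\X^\pm$ to $(\X^\pm|\D)\wedge_\D(\oN_{\D|\X^\pm}|\D)$ as in \eqref{E degenerate-X-in-sec1}, the degeneration formula (again invertible since the normal-cone degeneration introduces no vanishing cycles) expresses absolute invariants of $\X^\pm$ in terms of relative invariants of $(\X^\pm|\D)$ and of the projective completion $(\oN_{\D|\X^\pm}|\D)$ of the normal line bundle; conversely, this is exactly the invertible lower-triangular system of \cite{Chen-Du-Hu2019}, which lets one solve for the relative invariants of $(\X^\pm|\D)$ provided one knows the absolute invariants of $\X^\pm$ and the relative invariants of $(\oN_{\D|\X^\pm}|\D)$. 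The latter are handled by Theorem~\ref{thm rel-GW-of-P(E)}: they are reconstructed from the absolute orbifold Gromov--Witten theories of $\D$ and of $\PN_{\D|\X^\pm}=\D$ (since a line bundle has $\PE_\wa\cong\os$ here) together with $c_1(\N_{\D|\X^\pm})$. Finally, $c_1(\N_{\D|\X^\pm})$ is itself determined by the restriction map $H^*_{\text{CR}}(\X^\pm)\rto H^*_{\text{CR}}(\D)$ via the orbifold version of \eqref{eq determine-c(N)}, namely $c(\N_{\D|\X^\pm})\cup c(T\D)=\iota^{\pm*}c(T\X^\pm)$, so it does not need to be listed as separate input.

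Chaining these reductions gives the theorem: the non-vanishing absolute theory of $\X$ is reconstructed from the absolute theories of $\X^\pm$, the relative theories of $(\X^\pm|\D)$, which in turn come from the absolute theories of $\X^\pm$ and $\D$ and the two restriction maps. I would also need to check the compatibility point that the extensions of non-vanishing cohomology classes across the degeneration exist and are well defined (the non-vanishing cohomology is by construction the image of $H^*_{\text{CR}}(\X^+\wedge_\D\X^-)$, so each such class restricts consistently to $\X^+$, $\X^-$ and $\D$, giving the needed relative insertions), exactly the orbifold analogue of the splitting-of-diagonal bookkeeping in \cite[\S 3]{Maulik-Pandharipande2006}.

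I expect the main obstacle to be the bookkeeping of the effectiveness and finiteness claims rather than any new geometric input: one must verify that, with no vanishing cycles, only finitely many curve-class splittings contribute to each fixed total degree and genus, that the induction on curve degree and Euler characteristic used to invert the degeneration formula terminates, and that the reconstruction is genuinely algorithmic (``effectively'') and independent of the chosen bases (``uniquely''). Since all three invertibility statements — degeneration of $\X$, degeneration to the normal cone, and the lower-triangular system of \cite{Chen-Du-Hu2019} — are already available or are direct orbifold transcriptions of \cite{Maulik-Pandharipande2006}, the remaining work is to assemble them carefully and to spell out the partial order on invariants making each step lower-triangular; this is where I would spend the bulk of the write-up.
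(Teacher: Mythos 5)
Your proposal is correct and follows essentially the same route as the paper: the paper's proof is exactly ``degeneration formula plus Corollary \ref{coro rel-inv-X-S}'', and your second step is nothing but the proof of that corollary (deformation to the normal cone, the lower-triangular system of \cite{Chen-Du-Hu2019}, and Theorem \ref{thm rel-GW-of-P(E)} for the normal-bundle projectification) written out rather than cited. The only cosmetic point is that no inversion of the degeneration formula is needed in step one — the formula already runs in the required direction, with the no-vanishing-cycles hypothesis serving to ensure individual curve classes of $\X$ (rather than sums over fibers of $H_2(|\X|)\rto H_2(|\X_0|)$) are determined.
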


\begin{proof}
This is a direct consequence of the degeneration formula \cite{Chen-Li-Sun-Zhao2011,Abramovich-Fantechi2016} and Corollary \ref{coro rel-inv-X-S}.
\end{proof}

It is well-known that when the symplectic cutting is proceeded by (weighted) blowup along a symplectic sub-orbifold, there is no vanishing cycle. Therefore for these cases, this theorem applies.

\subsection{Organization of the paper}
This paper is organized as follows. In \S \ref{sec weitblp-and-chom}, we review the weighted projectification and weighted projectivization of orbifold bundles, and weighted blowup of symplectic orbifold groupoids. Then we describe the Chen--Ruan cohomologies of the projectifications of orbifold bundles, and the Chen--Ruan cohomologies of the weighted blowups and the exceptional divisors. In \S \ref{sec fiber-class-inv} we determine all fiber class relative orbifold Gromov--Witten invariants of the projectification of an orbifold line bundle. The main content of \S \ref{sec rel-GW-of-weight--proj} is to prove Theorem \ref{thm rel-GW-of-P(E)}. In the end of \S \ref{sec rel-GW-of-weight--proj} we give a proof of Theorem \ref{thm orb-LH} for completeness. Then in \S \ref{sec rel-GW-of-weit-blp} we prove Theorem \ref{thm rel-GW-of-weighted-blp}.

\subsection*{Acknowledgements}

This work was supported by the National Natural Science Foundation of China (No. 11890663, No. 11821001, No. 11826102, No. 11501393, No. 12071322), by the Sichuan Science and Technology Program (No. 2019YJ0509), and by a joint research project of Laurent Mathematics Research Center of Sichuan Normal University and V.\thinspace C. \& V.\thinspace R. Key Lab of Sichuan Province.

\section{Weighted blowups and Chen--Ruan cohomology}\label{sec weitblp-and-chom}

In this paper, we study orbifolds (\cite{Satake1956}) via proper \'etale Lie groupoids, which are called orbifold groupoids. There are some nice references on orbifold groupoids. See for example Adem--Leida--Ruan \cite{Adem-Leida-Ruan2007} and Moerdijk--Pronk \cite{Moerdijk-Pronk1997}. One can see also \cite[\S 2]{Chen-Du-Hu2019} for a brief introduction of orbifold groupoids and Chen--Ruan cohomology etc.

We use $\wa=(a_1,\ldots,a_n)$ to denote the blowup weight, where $a_i\in\integer_{\geq 1}$, for $1\leq i\leq n$.

\subsection{Weighted projectification and projectivization}
\label{subsec weight--projfi-viza}

Let $S^1$ act on $\cplane^n$ by
\[
t\cdot(z_1,\ldots,z_n)=(t^{a_1}z_1,\ldots, t^{a_n}z_n).
\]
Denote this action by $S^1(\wa)$. The {\em weight-$\wa$ projective space} is
\[
\msf P_\wa:=\P_\wa(\cplane^n)=S^{2n-1}/{S^1(\wa)}.
\]
The {\em weight-$\wa$ blowup of $\cplane^n$ along the origin} is
\[
[\underline{\cplane^n}]_\wa=S^{2n-1}\times_{S^1(\wa,-1)}\cplane,
\]
it is the total space of the tautological line bundle $\mc O(-1)$ over $\msf P_\wa$.

Now let $\os=(S^1\rrto S^0)$ be an orbifold groupoid. Consider a rank $2n$ symplectic orbifold vector bundle
\[
\pi=(\pi^0,\pi^1) \co\E=(E^1\rrto E^0)\rto\os=(S^1\rrto S^0).
\]
So both $\pi^i\co E^i\rto S^i$ are symplectic vector bundles for $i=0,1$ and $s^*E^0\cong E^1\cong t^*E^0$ for the source and target maps $s,t\co S^1\rto S^0$ of $\os$. This is equivalent to an action of $\os=(S^1\rrto S^0)$ on the symplectic vector bundle $\pi^0\co E^0\rto S^0$, i.e. there is an action map
\[
\mu\co S^1\times_{s,S^0,\pi^0}E^0\rto E^0
\]
that is compatible with $\pi^0\co E^0\rto S^0$, which is called the {\em anchor map} for this action, and satisfies the rule of group actions, for instance
\[
\mu(gh,v)=\mu(h,\mu(g,v))
\]
for two arrows $g,h\in S^1$ and $v\in E^0$. Then one has
\[
E^1\cong S^1\times_{s,S^0,\pi}E^0.
\]
We also write
\[
\E=\os\ltimes E^0.
\]

Take a compatible complex structure. Let $\pi\co \msf P\rto\os$ be the corresponding principle bundle with structure group $K<U(n)$. Then
\[
\E=\msf P\times_K\cplane^n.
\]
The {\em weight-$\wa$ projectivization of $\E$} is
\[
\PE_\wa:=\msf P\times_K \msf P_\wa,
\]
The {\em weight-$\wa$ projectification of $\E$} is
\[
\oE_\wa:=\msf P\times_K \msf P_{\wa,1},
\]
where $\P_{\wa,1}$ is the weight-$(\wa,1)$ projective space.
The {\em weight-$\wa$ blowup of $\E$} is
\[
\uE_\wa:=\msf P \times_K [\underline{\cplane^n}]_\wa.
\]
When the weight $\wa=(1,\ldots,1)$ is trivial, we omit the subscript $\wa$.

$\PE_\wa$ is the exceptional divisor of the blowup $\Ea$. $\Ea$ is the tautological line bundle over $\PE_\wa$, whose restriction on fiber of $\pi\co \PE_\wa\rto\os$ is the tautological line bundle $\mc O(-1)$ over $\msf P_\wa$. So as in the introduction we also write $\Ea$ as $\mc O_{\PE_\wa}(-1)$. $\PE_\wa$ is also the infinite divisor of the weight-$\wa$ projectification $\oE_\wa$, its normal line bundle in $\oE_\wa$ is $\mc O_{\PE_\wa}(1)$, the dual line bundle of $\mc O_{\PE_\wa}(-1)$.

\subsection{Weighted blowups}\label{subsec weighted-blps}

Let $\X$ be a compact symplectic orbifold groupoid with $\os$ being a codimension $\text{codim}_\rone\os=2n$ compact symplectic sub-orbifold groupoids. Denote the normal bundle of $\os$ in $\X$ by $\N$. The symplectic neighborhood theorem holds for $(\X,\os)$ (cf. \cite{Du-Chen-Wang2018,Chen-Du-Hu2019}). Locally, there is an open neighborhood $\U$ of $\os$ in $\X$, which is symplectomorphic to the $\epsilon$-disk bundle
\[
\phi\co\U\rto \mb D_\epsilon(\N)
\]
of the normal bundle $\N$ of $\os$ in $\X$. We call $(\X,\os)$ a symplectic pair.

The {\em weight-$\wa$ blowup} $\Xa$ of $\X$ along $\os$ is obtained by gluing $\X\backslash\U$ with the $\epsilon$-disk bundle of $\Na$, the weight-$\wa$ blowup of $\N$ along the zero section $\os$. The exceptional divisor in $\Xa$ is $\D_\wa:=\PN_\wa$, the weight-$\wa$ projectivization of $\N$. So as \eqref{E degenerate-X-in-sec1}, the weight-$\wa$ blowup of $\X$ along $\os$ gives rise to a degeneration of $\X$
\begin{align*}
\X\xrightarrow{\text{degenerates}}(\Xa| \D_\wa)\wedge_{\D_\wa}(\oN_\wa| \D_{\wa}).
\end{align*}
There is a natural projection
\[
\kappa\co\Xa\rto\X
\]
which restricts to
\[
\kappa\co\D_\wa=\PN_\wa\rto\os.
\]

\subsection{Chen--Ruan Cohomology}\label{subsec cohomology}

We next describe the Chen--Ruan cohomologies of $\oE_\wa$, $\Ea$, $\PE_\wa$, $\Xa$ and $\D_\wa$. For an orbifold groupoid $\W$ we use $\I\W$ to denote its inertia space (see for example \cite[Definition 2.10]{Chen-Du-Hu2019}) and $\T^\W$ to denote the index set of its twisted sectors. For a $\delta \in\T^\W$, we denote the corresponding twisted sector by $\W(\delta)$. When we use local chart, we also treat the conjugate class of isotropy groups as the index of twisted sectors.

\subsubsection{Inertia space of $\Ea$ and $\PE_\wa$}
\label{SSS inertia-of-Ea}

We first focus on the local case. Since $\Ea=\msf P\times_K [\underline{\cplane^n}]_\wa$, we get a projection
\[
\pi\co\uE_\wa\rto\os
\]
which is also an orbifold groupoid morphism and restricts to
\[
\pi\co\PE_\wa\rto\os.
\]
Hence there are induced morphisms between their inertial spaces
\[
\I\pi\co \I\uE_\wa\rto\I\os,\qq \mbox{and}\qq \I\pi\co \I\PE_\wa\rto\I\os.
\]
In particular, there are induced maps on the index sets of twisted sectors
\[
\pi_t \co \T^{\uE_\wa}\rto\T^\os,\qq \mbox{and}\qq \pi_t\co \T^{\PE_\wa}\rto\T^\os.
\]
When restricting on twisted sectors, we write the restriction of $\I\pi$ as
\[
\pi_{(h)}\co \uE_\wa(h)\rto \os(\pi_t(h)),\qq \mbox{and}\qq \pi_{(h)}\co \PE_\wa(h)\rto \os(\pi_t(h)).
\]

\begin{remark}
Note that $\pi\co\uE_\wa\rto\PE_\wa$ is an orbifold line bundle, hence $\T^{\uE_\wa}=\T^{\PE_\wa}$. Moreover, either $\pi_{(h)}\co\uE_\wa(h)\rto\PE_\wa(h)$ is an orbifold line bundle or $\uE_\wa(h)=\PE_\wa(h)$ depending on the action of $(h)$ on the fiber of $\uE_\wa$ is trivial or non-trivial.
\end{remark}

For a point $x\in S^0$, locally near $x_0$, $\os$ is modeled by $G_x\ltimes U_x$ with $G_x$ being the local (or isotropy) group of $x$ in $\os$. Then locally, $\uE_\wa$ and $\PE_\wa$ are of the forms
\[
\frac{U_x\times (S^{2n-1}\times \cplane)}{G_x\times S^1(\wa,-1)},\qq\mbox{and}\qq
\frac{U_x\times S^{2n-1}}{G_x\times S^1(\wa)}
\]
respectively. Now consider a $(g)\in\T^\os$ with representative $g\in G_x$, i.e. $g\cdot x=x$. Suppose the $g$-action on the fiber $E^0|_x$ is given by
\[
g\cdot(z_1,\ldots,z_n)=(e^{2\pi \sqrt{-1} \frac{\fk b(g)^1_\E}{\fk o(g)}} z_1,\ldots,e^{2\pi \sqrt{-1} \frac{\fk b(g)^n_\E}{\fk o(g)}} z_n)
\]
with $\fk o(g)=\text{ord}(g)$ being the order of $g$, and $1\leq \fk b(g)^i_\E\leq \fk o(g)$ being the action weights. The order $\fk o(g)$ and action weights $\fk b(g)_\E^i$ do not depend on the choices of representative $g$ of $(g)$.

If an $(h)\in \T^{\Ea}=\T^{\PE_\wa}$ satisfies $\pi_t(h)=(g)$, then $(h)$ has a representative of the form
\begin{align*}
h=(g,e^{2\pi \sqrt{-1} R})\in G_x\times S^1
\end{align*}
for some $R\in \ration\cap [0,1)$. The $h$-action on $S^{2n-1}\times\cplane$ is given by
\begin{align}\label{E h-action}
h\cdot(z_1,\ldots,z_n,w)=(e^{2\pi \sqrt{-1} (\frac{\fk b(g)^1_\E}{\fk o(g)}+Ra_1)}z_1,\ldots, e^{2\pi \sqrt{-1} (\frac{\fk b(g)^n_\E}{\fk o(g)}+Ra_n)}z_n, e^{-2\pi \sqrt{-1} R}w),
\end{align}
which restricts to the $h$-action on $S^{2n-1}=S^{2n-1}\times\{0\}$.

Then the fiber of $\PE_\wa(h)\rto\os(g)$ is given by
\[
\msf P_{\wa_{I(h)}}/C_{G_x}(g)
\]
with
\[
I(h):=\left\{i\left|\frac{\fk b(g)^i_\E}{\fk o(g)}+Ra_i\in\integer,1\leq i\leq n\right.\right\},
\]
and $\wa_{I(h)}$ being a sub-weight of $\wa$ obtained from the inclusion $I(h)\subseteq\{1,\ldots,n\}$.

It is direct to see from \eqref{E h-action} that
\begin{enumerate}
\item[(a)] $\pi_{(h)}\co\uE_\wa(h)\rto\PE_\wa(h)$ is a line bundle when $R=0$,
\item[(b)] $\uE_\wa(h)=\PE_\wa(h)$ when $R\neq 0$.
\end{enumerate}

Set $\fk r(h):=\# I(h)$. Summarizing above discussions we get the following result.

\begin{lemma}\label{lem tiwsted-sec-of-bl-Ea}
$\pi_{(h)}\co\PE_\wa(h)\rto\os(g)$ is a weight-$\wa_{I(h)}$ projectivization of a rank $\fk r(h)$ sub-bundle of the pull-back bundle of $\E$ over $\os(g)$ via the natural evaluation morphism $e_{(g)}\co \os(g)\subseteq \I\os\rto \os$.

On the other hand, since $\pi\co\E\rto\os$ is a vector bundle, we also have $\T^\E=\T^\os$, and
\begin{enumerate}
\item[(a)] when $(h)=(g,1)$, i.e. $R=0$,
\[
\pi\co\uE_\wa(h)\rto\E(g)
\]
is the weight-$\wa_{I(h)}$ blowup of $\E(g)$ along $\os(g)$,

\item[(b)] when $(h)\neq (g,1)$, $\uE_\wa(h)=\PE_\wa(h)$.
\end{enumerate}
\end{lemma}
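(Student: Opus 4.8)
The plan is to promote the local computation above---which already describes the fibre of $\PE_\wa(h)\rto\os(g)$ as $\msf P_{\wa_{I(h)}}/C_{G_x}(g)$---to a global statement over the twisted sector $\os(g)$, by first constructing the sub-bundle appearing in the conclusion. The key structural input is that $\E$ splits as an orthogonal direct sum $\E=\bigoplus_a\E_a$ according to the $S^1(\wa)$-weight, with $\E_a=\msf P\times_K V_a$ and $V_a$ the span of the coordinates $z_i$ having $a_i=a$; this is forced by the associated-bundle constructions $\PE_\wa=\msf P\times_K\msf P_\wa$ and $\uE_\wa=\msf P\times_K[\underline{\cplane^n}]_\wa$, since $K$ must normalise $S^1(\wa)$ in $U(n)$ and any such $K$ preserves the $S^1(\wa)$-weight spaces. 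Pulling back along $e_{(g)}\co\os(g)\rto\os$ and diagonalising the fibrewise action of the universal automorphism $g$ refines each $e_{(g)}^*\E_a$ into $g$-eigenbundles; as $g$ is central in $C_{G_x}(g)$, each of these is locally $C_{G_x}(g)$-invariant and so descends to a well-defined orbifold sub-bundle of $e_{(g)}^*\E$. I would then set $\E_{I(h)}$ to be the sum of those pieces on which $g$ acts by the scalar $\zeta$ with $\zeta\,e^{2\pi\sqrt{-1}Ra}=1$ (where $R$ is the rotation number of $(h)$); this is an orbifold sub-bundle of $e_{(g)}^*\E$ over $\os(g)$ of rank $\fk r(h)=\#I(h)$, it is $C_{G_x}(g)$-invariant, and in the local chart $\tfrac{U_x\times S^{2n-1}}{G_x\times S^1(\wa)}$ it is exactly the span of the coordinates indexed by $I(h)$.

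With $\E_{I(h)}$ available, the identifications follow from the explicit $h$-action \eqref{E h-action}. Locally, a class $[z_1:\cdots:z_n]\in\msf P_\wa$ is $h$-fixed modulo $S^1(\wa)$ exactly where $z_i=0$ for all $i\notin I(h)$, a copy of $\msf P_{\wa_{I(h)}}$; passing to the associated bundle and the $C_{G_x}(g)$-quotient yields $\msf P_{\wa_{I(h)}}(\E_{I(h)})$ fibrewise over $\os(g)$, which is the first assertion. For $\uE_\wa$, write $[\underline{\cplane^n}]_\wa=S^{2n-1}\times_{S^1(\wa,-1)}\cplane$ and read from \eqref{E h-action} that $h$ acts on the tautological coordinate $w$ by $e^{-2\pi\sqrt{-1}R}$: if $R\ne 0$ this is nontrivial, the fixed locus forces $w=0$, and $\uE_\wa(h)=\PE_\wa(h)$, giving (b); if $R=0$, i.e.\ $(h)=(g,1)$, then $I(h)$ is precisely the set of coordinates on which $g$ acts trivially, so $\E_{I(h)}=\E(g)$, the $g$-fixed sub-bundle, while the $h$-fixed locus in $[\underline{\cplane^n}]_\wa$ is the weight-$\wa_{I(h)}$ blowup of $V_{I(h)}$ at the origin, hence globally $\uE_\wa(h)$ is the weight-$\wa_{I(h)}$ blowup of $\E(g)$ along $\os(g)$, giving (a). The index-set equalities $\T^{\PE_\wa}=\T^{\uE_\wa}=\T^\os$ and $\T^\E=\T^\os$ are, as already observed in the discussion above, immediate from $\pi$ being a line (resp.\ vector) bundle together with the matching of rotation numbers.

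The step I expect to be the main obstacle is the globalisation in the first paragraph: the local fixed-locus computation only produces the coordinatewise subspace $\langle z_i:i\in I(h)\rangle$, and one must verify that this is independent of the trivialisation and patches to a genuine orbifold sub-bundle over $\os(g)$. This is precisely what the weight-splitting $\E=\bigoplus_a\E_a$ and the $C_{G_x}(g)$-invariance of the $g$-eigenbundles secure: once $\E_{I(h)}$ is exhibited as a sum of $g$-eigen-sub-bundles of the weight-graded pieces $e_{(g)}^*\E_a$, it is automatically a well-defined orbifold sub-bundle, and everything else reduces to routine manipulations with weighted projective spaces and weighted blowups.
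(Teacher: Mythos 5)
Your argument is correct and follows essentially the same route as the paper: the paper's proof of this lemma \emph{is} the local fixed-locus computation of the $h$-action \eqref{E h-action} immediately preceding the statement (the fiber $\msf P_{\wa_{I(h)}}/C_{G_x}(g)$, and the dichotomy $R=0$ versus $R\neq 0$ for the tautological coordinate $w$), which the paper then simply ``summarizes'' into the lemma. Your additional step --- exhibiting the rank-$\fk r(h)$ sub-bundle globally as a sum of $g$-eigenbundles of the $S^1(\wa)$-weight-graded pieces of $e_{(g)}^*\E$, using that the structure group $K$ must preserve the weight decomposition for the associated-bundle constructions $\msf P\times_K\msf P_\wa$ and $\msf P\times_K[\underline{\cplane^n}]_\wa$ to be defined --- is sound and fills in a globalization detail the paper leaves implicit.
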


\subsubsection{Basis of Chen--Ruan cohomology of $\PE_\wa$}

For each $(g)\in\T^\os$ let
\begin{align*}
\sigma_{(g)}:=\{\delta_{(g)}^1,\ldots,\delta_{(g)}^{\fk k(g)}\}
\end{align*}
be a basis of $H^*(\os{(g)})$, then
\begin{align}\label{eq basis-of-S}
\sigma_\star:=\bigsqcup_{(g)\in\T^\os}\sigma_{(g)}=
\bigsqcup_{(g)\in\T^\os}\{\delta_{(g)}^1,\ldots,\delta_{(g)}^{\fk k(g)}\}
\end{align}
is a basis of
\[
H^*_{\text{CR}}(\os)=\bigoplus\limits_{(g)\in\T^\os} H^{*-2\iota(g)}(\os{(g)}).
\]
Here we assume that each $\delta_{(g)}^i$ is of homogenous degree and $\delta_{(1)}^1$ is the identity elements in $H^*(\os)$, denoted by $\one_\os$, or simply $\one$. Denote the dual basis with respect to the orbifold Poincar\'e duality by
\begin{align}\label{eq dual-basis-of-S}
\sigma^\star:=\bigsqcup_{(g)\in\T^\os}\check\sigma_{(g)}:=
\bigsqcup_{(g)\in\T^\os}\{\check \delta_{(g)}^1,\ldots,\check \delta_{(g)}^{\fk k(g)}\},
\end{align}
i.e. $\check \delta^i_{(g)}$ is the dual of $\delta^i_{(g)}$, hence $\check \delta^i_{(g)}\in H^*(\os(g\inv))$.

Set
\begin{align}\label{eq basis-of-PE}
\Sigma_{(h)}:=\{\delta_{(g)}^j\cup H^m_{(h)}|
\delta_{(g)}^j \in \sigma_{(g)}, 0\leq m\leq \fk r(h)-1\}
\end{align}
with $(g)=\pi_t(h)$. Here $H_{(h)}$ is the hyperplane class of the projective bundle $\pi_{(h)}\co\PE_\wa(h)\rto\os(g)$. Then $\Sigma_{(h)}$ is a basis of $H^*(\PE_\wa(h))$. Set $\Sigma_\star$ to be the union of $\Sigma_{(h)}$ over all $(h)\in\T^{\PE_\wa}$. Denote elements in $\Sigma_{(h)}$ by $\theta_{(h)}^l$, $1\leq l\leq \fk k(g)\fk r(h)$. Let $\Sigma^\star$ denote the dual basis of $\Sigma_\star$ with respect to the orbifold Poincar\'e duality.

\subsubsection{Basis of Chen--Ruan cohomology of $\oE_\wa$}

Similarly as $\PE_\wa$, $\oE_\wa$ is the weight-$(\wa,1)$ projectivization of $\E\oplus\mc O_\os$. Therefore there is a basis of $H^*_{\text{CR}}(\oE_\wa)$ as \eqref{eq basis-of-PE}, which consists of
\begin{align}\label{eq basis-of-oE}
\Xi_{(\bar h)}:=\{\delta_{(g)}^j\cup H^m_{(\bar h)}|
\delta_{(g)}^j \in \sigma_{(g)}, 0\leq m\leq \fk r(\bar h)-1\},
\end{align}
where $(\bar h)$ is the index of twisted sectors of $\oE$ and $\pi_t(\bar h)=(g)$. The definition of $\fk r(\bar h)$ is similar as $\fk r(h)$, just note that the $g$-action on $\mc O_\os$ is trivial. Denote elements in $\Xi_{(\bar h)}$ by $\gamma^{l}_{(\bar h)}$, $1\leq l\leq \fk k(g)\fk r(\bar h)$. Theorem \ref{thm rel-GW-of-P(E)} concerns relative invariants of $(\oE_\wa|\PE_\wa)$ whose  absolute insertions come from the basis \eqref{eq basis-of-oE} and relative insertions come from the basis \eqref{eq basis-of-PE}. We will prove Theorem \ref{thm rel-GW-of-P(E)} in \S \ref{sec rel-GW-of-weight--proj}.

\subsubsection{Chen--Ruan cohomology of $\uX_\wa$}

Now consider a symplectic pair $(\X, \os)$. Let $\N$ be the normal bundle of $\os$, and $(\Xa,\D_\wa=\PN_\wa)$ be its the weight-$\wa$ blowup along $\os$.

The projection $\kappa\co \Xa\rto\X$ also induces morphisms on twisted sectors
\[
\kappa_{(h)}\co \Xa(h)\rto\X(\kappa_t(h)).
\]
The main difference between $\I\Xa$ and $\I\X$ are those twisted sectors of $\Xa$ intersecting with the inertia space $\I\D_\wa$ of the exceptional divisor $\D_\wa=\PN_\wa$. By Lemma \ref{lem tiwsted-sec-of-bl-Ea} there are two kinds of such twisted sectors of $\Xa$.

\begin{lemma}
We have the following description of twisted sectors of $\Xa$. Let $\kappa_t(h)=(g)$.
\begin{enumerate}
\item[(a)] When $\uX_\wa(h)\cap \I\D_\wa=\varnothing$, we have $\X(g)\cap \I\os=\varnothing$ and $\kappa_{(h)}\co\uX_\wa(h)\rto \X(g)$ is identity.

\item[(b)] When $\uX_\wa(h)\cap \I\D_\wa\not= \varnothing$, we have $\X(g)\cap \I\os\not =\varnothing$, and there are two cases:
    \begin{enumerate}
    \item[(b.i)] $\Xa(h)\supseteq \Na(h)$ is the weighted blowup of $\N(g)$ along $\os(g)$, i.e. $(h)=(g,1)$, then $\kappa_{(h)}\co \uX_\wa(h)\rto\X(g)$, the sequence
\[
0\rto H^*(\X(g))\xrightarrow{\kappa_{(h)}^*} H^*(\uX_\wa(h))\rto \mc A(h)\rto 0
\]
is exact, where
\[
\mc A(h)=H^*(\os(g))\{H_{(h)},\ldots,H_{(h)}^{\fk r(h)-1}\},
\]
is a free module over $H^*(\os(g))$ with basis $\{H_{(h)},\ldots,H_{(h)}^{\fk r(h)-1}\}$, $H_{(h)}$ is the hyperplane class of $\PN_\wa(h)=\D_\wa(h)$. Moreover, $H^*(\uX_\wa(h))$ is generated by $\kappa_{(h)}^* H^*(\X(g))$ and
\[
\left(H^*(\os(g))\{1,H_{(h)},\ldots,H_{(h)}^{\fk r(h)-1}\}\right)\cup [\PN_\wa(h)],
\]
where $[\PN_\wa(h)]$ is the Poincar\'e dual of $\PN_\wa(h)$ in $\uX_\wa(h)$.

\item[(b.ii)] $\uX_\wa(h)=\uN_\wa(h)=\PN_\wa(h)$, i.e. $(h)\neq (g,1)$. Then
\[
H^*(\uX_\wa(h))=H^*(\os(g))\{1,H_{(h)},\ldots, H^{\fk r(h)-1}_{(h)}\}.
\]
\end{enumerate}
\end{enumerate}
\end{lemma}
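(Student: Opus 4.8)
The plan is to localise around the exceptional divisor. Write $\U$ for the symplectic tubular neighbourhood of $\os$ in $\X$, identified with the $\epsilon$-disk bundle of $\N$, and let $\U_\wa\subseteq\uX_\wa$ be its preimage under $\kappa$, identified with the $\epsilon$-disk bundle of $\Na$. Then $\kappa$ restricts to an isomorphism $\uX_\wa\setminus\D_\wa\xrightarrow{\ \sim\ }\X\setminus\os$, hence to an isomorphism of inertia groupoids $\I(\uX_\wa\setminus\D_\wa)\cong\I(\X\setminus\os)$, while over $\U$ the blowup is the weight-$\wa$ blowup $\Na$ of the vector bundle $\N\rto\os$. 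Since $\uX_\wa$ and $\X$ are compact and every twisted sector is connected, this reduces the whole statement to (i) matching sectors via $\kappa$ away from $\I\D_\wa$ and (ii) the local computation of Lemma \ref{lem tiwsted-sec-of-bl-Ea}, applied to $\E=\N$, for the sectors meeting $\I\D_\wa$.

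I would first dispose of part (a) together with the dichotomy. Suppose $\uX_\wa(h)\cap\I\D_\wa=\varnothing$. Then $\uX_\wa(h)$ is a connected component of the open suborbifold $\I(\uX_\wa\setminus\D_\wa)\cong\I(\X\setminus\os)$; the corresponding component $C\subseteq\I(\X\setminus\os)$ is compact because $\uX_\wa$ is, hence $C$ is open, closed and connected in $\I\X$, i.e.\ a single twisted sector. It must be $\X(\kappa_t(h))$ since $\kappa_{(h)}$ maps into that sector, so $\X(g)=C$ with $g=\kappa_t(h)$, $\X(g)\cap\I\os=\varnothing$, and $\kappa_{(h)}\co\uX_\wa(h)\rto\X(g)$ is the identity; this is (a). Conversely, if $\uX_\wa(h)$ meets $\I\D_\wa$ then, as $\D_\wa=\kappa\inv(\os)$, the image of any such point under $\kappa_{(h)}$ lies in $\I\os\cap\X(g)$, so $\X(g)\cap\I\os\neq\varnothing$; and running the previous paragraph in reverse shows that every sector $\X(g)$ with $\X(g)\cap\I\os=\varnothing$ has a unique preimage sector, of the type described in (a). This determines which $(h)$ belong to (a) and which to (b).

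Next I would identify the sectors in (b) geometrically, using Lemma \ref{lem tiwsted-sec-of-bl-Ea} with $\E=\N$ on the local model $\U_\wa=\Na$. If $(h)\neq(g,1)$, that lemma gives $\uN_\wa(h)=\PN_\wa(h)$; since this holds on the local model near every point of $\uX_\wa(h)\cap\I\D_\wa$, the closed subset $\PN_\wa(h)=\D_\wa(h)$ is also open in the connected orbifold $\uX_\wa(h)$, so $\uX_\wa(h)=\PN_\wa(h)=\D_\wa(h)$, which is the geometric content of (b.ii). If $(h)=(g,1)$, the lemma identifies $\uN_\wa(h)\rto\N(g)$ with the weight-$\wa_{I(h)}$ blowup of the rank-$\fk r(h)$ bundle $\N(g)\rto\os(g)$ along its zero section, with exceptional divisor $\PN_\wa(h)$; combining with the isomorphism $\uX_\wa\setminus\D_\wa\cong\X\setminus\os$ away from $\I\D_\wa$ shows that $\kappa_{(h)}\co\uX_\wa(h)\rto\X(g)$ is precisely the weight-$\wa_{I(h)}$ blowup of $\X(g)$ along $\os(g)$, whose normal bundle is $\N(g)$ and whose exceptional divisor is $\PN_\wa(h)$. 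In particular $\uX_\wa(h)\supseteq\uN_\wa(h)$ as claimed in (b.i).

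It remains to read off cohomology with $\ration$-coefficients. For (b.ii), $\PN_\wa(h)\rto\os(g)$ is a weight-$\wa_{I(h)}$ projectivization of a rank-$\fk r(h)$ bundle; since $H^*(\msf P_{\wa_{I(h)}};\ration)\cong H^*(\P^{\fk r(h)-1};\ration)$ (weighted projective spaces are rational homology projective spaces), Leray--Hirsch with $H_{(h)}=c_1(\mc O_{\PN_\wa(h)}(1))$ gives $H^*(\uX_\wa(h))=H^*(\os(g))\{1,H_{(h)},\ldots,H_{(h)}^{\fk r(h)-1}\}$ as a free $H^*(\os(g))$-module, which is (b.ii). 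For (b.i), I would run Mayer--Vietoris for $\uX_\wa(h)=\big(\X(g)\setminus\os(g)\big)\cup T$, where $T$ is an open tubular neighbourhood of $\PN_\wa(h)$ — the total space of $\mc O_{\PN_\wa(h)}(-1)$ — with intersection homotopy equivalent to the sphere bundle of $\N(g)$ over $\os(g)$; feeding in the Gysin sequence of that sphere bundle together with the description of $H^*(\PN_\wa(h))$ just obtained yields the short exact sequence
\[
0\rto H^*(\X(g))\xrightarrow{\ \kappa_{(h)}^*\ }H^*(\uX_\wa(h))\rto\mc A(h)\rto 0,\qq \mc A(h)=H^*(\os(g))\{H_{(h)},\ldots,H_{(h)}^{\fk r(h)-1}\},
\]
of graded $H^*(\X(g))$-modules, and at the same time shows $H^*(\uX_\wa(h))$ is generated by $\kappa_{(h)}^*H^*(\X(g))$ together with $\big(H^*(\os(g))\{1,H_{(h)},\ldots,H_{(h)}^{\fk r(h)-1}\}\big)\cup[\PN_\wa(h)]$, where $[\PN_\wa(h)]$ restricts to $-H_{(h)}$ on $\PN_\wa(h)$ because the normal bundle is $\mc O_{\PN_\wa(h)}(-1)$. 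This is the weighted-blowup, orbifold analogue of the classical blowup cohomology formula; alternatively it follows from the decomposition theorem. I expect this last step to be the main obstacle: the geometric identification of the sectors in (b) is essentially bookkeeping once Lemma \ref{lem tiwsted-sec-of-bl-Ea} is available, whereas making the weighted-blowup cohomology computation clean in the groupoid setting — and correctly tracking the exceptional class $[\PN_\wa(h)]$ and the weighted projective bundle relation satisfied by $H_{(h)}$ — requires the most care.
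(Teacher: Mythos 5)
Your plan is correct and follows the same route the paper (implicitly) takes: the lemma is stated there without proof as a direct consequence of Lemma \ref{lem tiwsted-sec-of-bl-Ea} applied to $\E=\N$ on the tubular neighbourhood, combined with the isomorphism $\uX_\wa\setminus\D_\wa\cong\X\setminus\os$ and the standard (weighted) blowup cohomology formula over $\ration$. Your write-up simply supplies the details the authors omit, and the individual steps (openness/closedness of sectors, Leray--Hirsch for the rational-homology projective fibre, Mayer--Vietoris/Gysin for the exact sequence) are all sound.
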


\begin{defn}\label{def admissible-abs-insert-K}
We set
\begin{align}\label{eq admissible-abs-insert-K}
\mc K:=\bigoplus_{(h)\in\T^\Xa}\kappa^*_{(h)} \big(H^*(\X(\kappa_t(h)))\big).
\end{align}
\end{defn}

In this paper we will deal with relative invariants of $(\Xa|\D_\wa)$ with absolute insertions coming from $\mc K$, i.e. admissible relative invariants in Definition \ref{def admissible-rel-inv}.

\section{Fiber class invariants of projectification of orbifold line bundles}\label{sec fiber-class-inv}

In the study of relative orbifold Gromov--Witten theories of weighted blowups or weighted projectifications, we need to study fiber class orbifold Gromov--Witten invariants of projectifications of orbifold line bundles. In this section we study these fiber class invariants.

\subsection{Projectification of line bundles}
\label{subsec porjfi-line--bundle}

Let $\pi\co\L\rto\D$ be a symplectic line bundle. That is if we write $\L=\D\ltimes L^0$ with $\D=(D^1\rrto D^0)$, then $L^0\rto D^0$ is a symplectic line bundle and the linear $\D$-action on $L^0\rto D^0$ preserve the symplectic structure over $L^0$. The projectification of $\L$ is
\[
\Y:=\P(\L\oplus\mc O_\D)=\msf P\times_{U(1)}\msf P_{(1,1)}\rto\D,
\]
where $\sf P$ is the principle $U(1)$-bundle of $\L$, and $\msf P_{(1,1)}=\P^1$ is the one dimensional projective space. In terms of notations in \S \ref{subsec weight--projfi-viza}, $\Y=\overline{\L}_{(1)}$, i.e. $\Y$ is the projectification of $\L$ with trivial weight $\wa=(1)$. $\Y$ has the zero section $\D_0:=\P(0\oplus\mc O_\D)$ and the infinity section $\D_\0:=\P(\L\oplus 0)$. Both are isomorphic to $\D$. The normal bundle of $\D_0$ and $\D_\0$ in $\Y$ are $\L$ and $\L^*$ respectively.

As the description of inertia space and Chen--Ruan cohomology of $\oE_\wa$ in \S \ref{subsec cohomology} we have a similar but simpler description of the inertia space and Chen--Ruan cohomology of $\Y$.

The bundle $\pi\co\L\rto\D$ induces a bundle
\begin{align}\label{eq IL-ID}
\I\pi=\bigsqcup_{(h)\in\T^\D} \pi_{(h)}\co\I\L=\bigsqcup_{(h)\in\T^\D} \L(h)\rto\I\D=\bigsqcup_{(h)\in\T^\D}\D(h)
\end{align}
of inertia spaces, which may have different ranks over different components of $\I\D$. Over each twisted sector $\D(h)$, for every representative $h$ of $(h)$ in local groups, there is an action of $h$ on the fiber of $\L$. As in \S \ref{subsec cohomology}, denote by $\fk o(h)=\text{ord}(h)$ the order of $h$. Then the action is given by
\[
h\cdot z=e^{2\pi \sqrt{-1}\frac{\fk b(h)_\L}{\fk o(h)}}z
\]
with $1\leq \fk b(h)_\L \leq \fk o(h)$ being the action weight of $h$ on $\L$. The order $\fk o(h)$ and action weight $\fk b(h)_\L$ are independent of the choice of the representative $h$ of $(h)$ in local groups, and $\fk b(h)_\L$ is called the weight of $(h)$ on $\L$.

\begin{defn}\label{def L}
For each $(h)\in \T^\D$. Define
\begin{align*}
\fk l_{(h)}:=\left\{\begin{array}{ll}
0 &\textrm {if\,  $1\leq \wb(h)_\L<\fk o(h)$,}\\
1 &\textrm {if\,  $\wb(h)_\L=\fk o(h)$.}
\end{array}\right.
\end{align*}
\end{defn}

Since $\Y=\P(\L\oplus\mc O_\D)$ is the projectification of $\L$ with trivial weight $\wa=(1)$, the analysis in \S \ref{subsec cohomology} shows that
\begin{lemma}\label{lem CR-of-Y}
A component\footnote{Since $\wa=(1)$ we identify $(h,e^{-2\pi \sqrt{-1}\frac{\fk b(h)_\L}{\fk o(h)}})$ with $(h)$.} $\Y(h)=\pi_{(h)}\inv(\D(h))$ of the fiber bundle $\I\pi\co\I\Y\rto\I\D$ is determined as follows:
\begin{enumerate}
\item[(a)] if $\fk l_{(h)}=0$, then $\L(h)=\D(h)$ and $\Y(h)$ is a disjoint union of two zero bundles over $\D(h)$, i.e. $\Y(h)=\D_0(h)\sqcup\D_\0(h)$, and $\D_0(h)\cong\D_\0(h)\cong\D(h)$;
\item[(b)] if $\fk l_{(h)}=1$, then $\L(h)$ is a line bundle over $\D(h)$ and $\Y{(h)}=\P(\L(h)\oplus\mc O_{\D(h)})$.
\end{enumerate}
For the latter case, we have
\[
\L(h)=e_{(h)}^*\L
\]
with $e_{(h)}\co \D(h)\rto\D$ being the evaluation morphism. Moreover, $\Y{(h)}$ also contains the zero and infinity sections $\D_0(h)$ and $\D_\0(h)$, both are isomorphic to $\D(h)$.
\end{lemma}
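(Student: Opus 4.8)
The plan is to specialize the local description of inertia spaces given in \S\ref{subsec cohomology} — the computation preceding Lemma~\ref{lem tiwsted-sec-of-bl-Ea} — to the present rank-one situation, i.e. to take $\E=\L$ with $n=1$ and blowup weight $\wa=(1)$, together with the extra trivial summand $\mc O_\D$ that turns the weighted projectification into $\Y=\overline{\L}_{(1)}$. Fix $(h)\in\T^\D$, a representative $h$ in a local group, and a point $x\in D^0$ fixed by $h$. Then $\D$ is locally modelled by $G_x\ltimes U_x$ and $\Y=\msf P\times_{U(1)}\msf P_{(1,1)}$ is locally modelled by $(U_x\times\P^1)/G_x$, where $\P^1=\P(L^0|_x\oplus\cplane)$, the group $G_x$ acts on $U_x$ as for $\D$ and on $\P^1$ through its linear action on the $\L$-fibre $L^0|_x$ composed with the trivial action on $\cplane$, and in homogeneous coordinates $[z:w]$ (with $z$ the $\L$-direction and $w$ the trivial direction) the $h$-action on the fibre is
\[
h\cdot[z:w]=\big[\,e^{2\pi\sqrt{-1}\,\fk b(h)_\L/\fk o(h)}\,z\,:\,w\,\big].
\]
This is formula~\eqref{E h-action} with $n=1$, $a_1=1$ (plus the $\mc O_\D$-summand), so the statement is a matter of reading off that picture.

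The first step is to describe the part of $\I\Y$ lying over $\D(h)$, which in the chart is $\big(U_x^h\times(\P^1)^h\big)/C_{G_x}(h)$, where $U_x^h$ is the fixed locus of $h$ (the local model of $\D(h)$) and $(\P^1)^h$ is the fixed locus of $h$ on the fibre $\P^1$. If $1\leq\fk b(h)_\L<\fk o(h)$, i.e. $\fk l_{(h)}=0$, then $h$ rotates the $z$-coordinate nontrivially, so $(\P^1)^h=\{[1:0],[0:1]\}$; correspondingly $h$ has only the origin as fixed point in the fibre of $\L$, so the inertia bundle $\L(h)$ is the zero bundle $\D(h)$, and the part of $\I\Y$ over $\D(h)$ is the disjoint union of the zero and infinity sections $\D_0(h)\sqcup\D_\0(h)$ with $\D_0(h)\cong\D_\0(h)\cong\D(h)$. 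If instead $\fk b(h)_\L=\fk o(h)$, i.e. $\fk l_{(h)}=1$, then $h$ acts trivially on the whole fibre $\P^1$; thus $\L(h)$ is a genuine line bundle over $\D(h)$, $(\P^1)^h=\P^1$, and the part of $\I\Y$ over $\D(h)$ is the $\P^1$-bundle $\P(\L(h)\oplus\mc O_{\D(h)})$, which again carries the zero and infinity sections $\D_0(h),\D_\0(h)\cong\D(h)$ cut out fibrewise by $w=0$ and $z=0$.

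It remains to identify $\L(h)$ with $e_{(h)}^*\L$ when $\fk l_{(h)}=1$. In the chart above $\L(h)$ is $L^0|_{U_x^h}$ endowed with the residual $C_{G_x}(h)$-action, which is exactly the restriction (pullback) of $L^0\rto D^0$ along the morphism $e_{(h)}\co\D(h)\rto\D$ refining $U_x^h\hrto U_x$; since these local identifications are compatible with the transition data of $\L$, they glue to a global isomorphism $\L(h)\cong e_{(h)}^*\L$. The whole argument is a direct specialization of Lemma~\ref{lem tiwsted-sec-of-bl-Ea}; the only point that needs a little care — and it is the closest thing to an obstacle here — is keeping track of the centralizer $C_{G_x}(h)$-actions and checking that the local descriptions patch across charts, which is routine and introduces no new idea.
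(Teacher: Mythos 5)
Your proposal is correct and follows exactly the route the paper intends: the paper gives no separate argument for this lemma but simply asserts that it follows from "the analysis in \S\ref{subsec cohomology}", i.e.\ from the local computation \eqref{E h-action} leading to Lemma~\ref{lem tiwsted-sec-of-bl-Ea}, specialized to $n=1$, $\wa=(1)$ with the extra trivial summand $\mc O_\D$. Your working-out of the fibrewise $h$-fixed locus ($\{[1:0],[0:1]\}$ versus all of $\P^1$ according to $\fk l_{(h)}$) and the identification $\L(h)=e_{(h)}^*\L$ supplies precisely the details the paper leaves implicit, and is consistent with the footnote's identification of the two sector indices $(h,1)$ and $(h,e^{-2\pi\sqrt{-1}\fk b(h)_\L/\fk o(h)})$ lying over $\D(h)$.
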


Take a basis of $H^*_{\text{CR}}(\D)$
\begin{align}\label{eq basis-of-D}
\Sigma_\star:=\bigsqcup_{(h)\in\T^\D}\Sigma_{(h)}:=
\bigsqcup_{(h)\in\T^\D}\{\theta_{(h)}^1,\ldots,\theta_{(h)}^{\fk k(h)}\}.
\end{align}
(Since in this paper in most cases $\D$ is $\PE_\wa$, so for simplicity, we use the same notation as the basis \eqref{eq basis-of-PE} of $H^*_{\text{CR}}(\PE_\wa)$ to denote the basis of $H^*_{\text{CR}}(\D)$).  Then we get a basis of $H^*_{\text{CR}}(\Y)$ as follows. When $\fk l_{(h)}=0$, each $\theta_{(h)}^i$ contributes two elements:
\[
\theta_{(h)}^{0,i}\,\,\,\mbox{for}\,\,\, \D_0(h),\qq \mbox{and} \,\,\,
\theta_{(h)}^{\0,i}\,\,\, \mbox{for}\,\,\, \D_\0(h).
\]
When $\fk l_{(h)}=1$, denote the Poincar\'e dual of $\D_0(h)$ and $\D_\0(h)$ in $\Y(h)=\P(\L(h)\oplus\mc O_{\D(h)})$ by
\[
[\D_0(h)]\qq \text{and}\qq [\D_\0(h)]
\]
respectively. We have
\begin{align}\label{eq D0=D1+c1L}
[\D_0(h)]=[\D_\0(h)]+c_1(\L(h))=[\D_\0(h)]+e_{(h)}^*(c_1(\L)).
\end{align}
Then each $\theta_{(h)}^i$ contributes two elements for $\Y(h)$:
\[
\theta_{(h)}^i \qq\mbox{and}\qq \theta_{(h)}^i\cdot[\D_0(h)].
\]
Combining these together we get a basis of $H^*_{\text{CR}}(\Y)$:
\begin{align}\label{eq basis-of-Y}
\bigsqcup_{(h)\in\T^\D}\left\{\begin{array}{ll}
\theta_{(h)}^{0,1} ,\ldots,\theta^{0,\fk k(h)}_{(h)};
\theta^{\0,1}_{(h)} ,\ldots,\theta^{\0,\fk k(h)}_{(h)}
&\textrm {if\, $\fk l_{(h)}=0$;}\\
\theta_{(h)}^1 ,\ldots,\theta_{(h)}^{\fk k(h)};
\theta_{(h)}^1\cdot [\D_0(h)] ,\ldots,\theta_{(h)}^{\fk k(h)}\cdot[\D_0(h)]
&\textrm {if\, $\fk l_{(h)}=1$.}
\end{array}\right.
\end{align}

\subsection{Notations for relative orbifold Gromov--Witten invariants of $(\D_0|\Y|\D_\0)$}\label{subsec notation-for-rel-inv-DYD}

Now we fix the notation for relative orbifold Gromov--Witten invariants of $(\D_0|\Y|\D_\0)$. A relative invariant of $(\D_0|\Y|\D_\0)$ is of the form
\begin{align}\label{E notation-for-rel-inv-DYD}
&\bl \mu \bb  \prod_{i=1}^m\tau_{k_i} \gamma_{(\bar h_i)}^{l_i}\bb\nu\br_\Gamma^{(\D_0|\Y|\D_\0)}
:=\frac{1}{|\mbox{Aut}(\mu)|\cdot|\mbox{Aut}(\nu)|}\\
&\int_{[\M_\Gamma(\D_0|\Y|\D_\0)]^\vir}
\prod_{i=1}^m\overline\psi_i^{k_i}
{\sf ev}_i^*(\gamma_{(\bar h_i)}^{l_i})
\cup
\prod_{j=1}^{\ell(\mu)} {\sf rev}^{\D_0,*}_j(\theta_{(h_j)}^{s_j})
\cup
\prod_{k=1}^{\ell(\nu)} {\sf rev}^{\D_\0,*}_k(\theta_{(h_k')}^{s_k'})\nonumber
\end{align}
with topological data (or type) $\Gamma=(g,\beta,(\bar\h),\vec\mu,\vec\nu)$, where
\begin{itemize}
\item $g$ is the genus, and $\beta\in H_2(|\Y|;\integer)$ is the homology class,
\item $\bar\h=\big((\bar h_1),\ldots, (\bar h_m)\big)\in (\T^\Y)^m$, and $\gamma_{(\bar h_i)}^{l_i}$ belongs to the basis \eqref{eq basis-of-Y}; denote by
    \[
    \varpi=(\tau_{k_1} \gamma_{(\bar h_1)}^{l_1},\ldots,
    \tau_{k_m} \gamma_{(\bar h_m)}^{l_m})
    \]
    the absolute insertions; denote the number of insertions in $\varpi$ by $\|\varpi\|=m$;

\item $\mu=\left((\mu_1,\theta_{(h_1)}^{s_1}),\ldots, (\mu_{\ell(\mu)}, \theta_{(h_{\ell(\mu)})}^{s_{\ell(\mu)}})\right)$ is a relative weighted partition and is weighted by the chosen basis \eqref{eq basis-of-D} of the Chen--Ruan cohomology of $\D$ with
\[
\vec\mu= \left((\mu_1,(h_1)),\ldots, (\mu_{\ell(\mu)},(h_{\ell(\mu)}))\right),
\]
and
\[
\sum_j \mu_j=\int_\beta^{\mathrm{orb}} [\D_0]\geq 0
\]
is the sum of orbifold contact orders along $\D_0$ of the relative maps;

\item $\nu=\left((\nu_1,\theta_{(h'_1)}^{s_1'}),\ldots, (\nu_{\ell(\nu)}, \theta_{(h_{\ell(\nu)}')}^{s_{\ell(\nu)}'})\right)
$ is a relative weighted partition and is weighted by the chosen basis \eqref{eq basis-of-D} of the Chen--Ruan cohomology of $\D$ with
\[
\vec\nu= \left((\nu_1,(h_1')),\ldots, (\nu_{\ell(\nu)},(h_{\ell(\nu)}'))\right),
\]
and
\[
\sum_j \nu_j=\int_\beta^{\mathrm{orb}} [\D_\0]\geq 0
\]
is the sum of orbifold contact orders along $\D_\0$ of the relative maps;

\item ${\sf ev}_i$, ${\sf rev}_j^{\D_0}$ and ${\sf rev}_j^{\D_\0}$ are the evaluation maps from $\M_\Gamma(\D_0|\Y|\D_\0)$ to $\sf I\Y$, $\sf I\D_0$ and $\sf I\D_\0$ at absolute marked points and relative marked points respectively;

\item $\overline\psi_i$ is the first Chern class of the $i$-th cotangent line bundle $\overline{\mc L_i}$ over the moduli space $\M_\Gamma(\D_0|\Y|\D_\0)$ of relative stable orbifold maps, whose fiber over a relative stable map is the cotangent line of the coarse moduli space of the domain curve at the $i$-th absolute marked point. There is another $i$-th cotangent line bundle $\mc L_i$ over $\M_\Gamma(\D_0|\Y|\D_\0)$, whose fiber over a relative stable map is the cotangent line of the $i$-th absolute marked point on the domain curve. The first Chern class of $\mc L_i$ is denoted by $\psi_i$ usually; we have
\[
\overline{\mc L_i}=\mc L_i^{\otimes r_i},
\qq \mbox{hence}\qq \overline\psi_i=r_i\psi_i
\]
where $r_i$ is the order of the local group of the $i$-th absolute marked point.
\end{itemize}

For the relative weighted partition $\mu=\left((\mu_1,\theta_{(h_1)}^{s_1}),\ldots, (\mu_{\ell(\mu)},\theta_{(h_{\ell(\mu)})}^{s_{\ell(\mu)}})\right)$ above we set
\begin{itemize}
\item[(i)] $\check\mu$ to be the dual relative weighted partition, which is
\[
\check\mu:=\left((\mu_1,\check\theta_{(h_1)}^{s_1}),\ldots,
(\mu_{\ell(\mu)}, \check\theta_{(h_{\ell(\mu)})}^{s_{\ell(\mu)}})\right),
\]
therefore
\[
\vec{\check\mu}=
\left((\mu_1,(h_1\inv)),\ldots, (\mu_{\ell(\mu)},(h_{\ell(\mu)}\inv))\right),
\]
we will also denote $\vec{\check\mu}$ by $\check{\vec\mu}$;
\item[(ii)] $\fk z(\mu):=|\aut(\mu)|\cdot\prod_i\mu_i$;
\item[(iii)] $\deg_{\text{CR}}\mu:=\sum\limits_{i=1}^{\ell(\mu)}(\deg \theta_{(h_i)}+2\iota^{\D}(h_i))$ with $\iota^{\D}(h_i)$ being the degree shifting number of the twisted sector $\D(h_i)$ in $\D$.
\end{itemize}
Similar notations apply to $\nu$.

For the invariant \eqref{E notation-for-rel-inv-DYD} we also set
\[
(\h)=\left((h_1),\ldots,(h_{\ell(\mu)})\right)\qq \mbox{and}\qq
(\h')=\left((h'_1),\ldots,(h'_{\ell(\mu)})\right).
\]
Suppose
\[
\I\pi\co \Y(\bar h_i)\rto\D(\underline{h_i})
\]
i.e. $\pi_t(\bar h_i)=(\underline {h_i})$ (cf. Lemma \ref{lem CR-of-Y}). Then we set $(\uh)=\left((\underline {h_1}),\ldots,(\underline {h_m})\right)$ and
\[
 \ugamma=\left((\uh),(\h),(\h')\right).
\]
Denote by
\[
n=\#(\bar h)+\#(\h)+\#(\h')=m+\ell(\mu)+\ell(\nu)
\]
the number of absolute marked points and relative marked points. Finally we set
\begin{align*}
\vec r =(r_1,\ldots,r_n):=\big(\mathrm{ord}(\underline {h_1}),\ldots,  \mathrm{ord}(\underline {h_m}),\mathrm{ord}(h_1), \ldots,\mathrm{ord}(h_{\ell(\mu)}), \mathrm{ord}(h'_1),\ldots,\mathrm{ord}(h'_{\ell(\nu)}) \big)
\end{align*}
where for example the order $\mathrm{ord}(\underline {h_1})$ is the order of a representative of $(\underline {h_1})$ in local groups and does not depend on the choices of representatives.

Similar notations also apply to relative invariants of $(\Y|\D_0)$ and $(\Y|\D_\0)$. We will also deal with rubber invariants of $(\D_0|\Y|\D_\0)$, where we use a superscript ``$\sim$'' to indicate rubber invariants.

The orbifold fibration $\pi\co\Y=\P(\L\oplus \mc O_\D)\rto \D$ induces a fibration of coarse space $|\pi|\co|\Y|\rto |\D|$, which is a topological $\P^1$-fiber bundle. A class $\beta\in H_2(|\Y|;\integer)$ is called a fiber class if
\[
|\pi|_*(\beta)=0.
\]

For a relative invariant of $(\D_0|\Y|\D_\0)$ as \eqref{E notation-for-rel-inv-DYD}, we call it a {\em fiber class invariant} if the homology class $\beta$ is a fiber class in $H_2(|\Y|;\integer)$. We next study fiber class invariants of $(\D_0|\Y|\D_\0)$.

\subsection{The moduli spaces of fiber class invariants}
\label{subsec 3.3}

Now consider a fiber class invariant of the form as \eqref{E notation-for-rel-inv-DYD}
\[
\bl \mu \bb  \prod_{i=1}^m\tau_{k_i} \gamma_{(\bar h_i)}^{l_i}\bb\nu\br_\Gamma^{(\D_0|\Y|\D_\0)}.
\]
So the homology class $\beta$ in the topological data $\Gamma=(g,\beta,(\bar\h),\vec\mu,\vec\nu)$ is a fiber class.

We next analyze the structure of the corresponding moduli space $\M_{\Gamma}(\D_0|\Y|\D_\0)$, which we denote simply by $\M_\Gamma$. It consists of equivalence classes of stable representable pseudo-holomorphic morphisms from (nodal) orbifold Riemann surfaces to $\Y$ of topological type indicated by $\Gamma$.

We next describe a typical element in $\M_\Gamma$. We first recall the construction of expanded (or degenerate) targets. Given two nonnegative integers $l_0,l_\0$, let $\Y[l_0,l_\0]$ be the degenerate orbifold groupoid (cf. \cite{Chen-Li-Sun-Zhao2011})
\begin{align}\label{E expanded-Y}
\Y[l_0,l_\0]:=\underbrace{\Y\wedge_{\D_\0}\ldots\wedge_{\D_\0}\Y}_{l_0} \wedge_{\D_\0}\Y\wedge_{\D_0}\underbrace{\Y \wedge_{\D_0} \ldots\wedge_{\D_0}\Y}_{l_\0},
\end{align}
i.e. we glue $l_0$ copies of $\Y$ to the original $\Y$ along the infinite section and $l_\0$ copies of $\Y$ to the original $\Y$ along the zero section. To distinguish them we number them as follows
\[
\Y[l_0,l_\0]=\Y_{-l_0}\wedge_{\D_\0}\ldots\wedge_{\D_\0}\Y_{-1} \wedge_{\D_\0}\Y_0\wedge_{\D_0}\Y_1\wedge_{\D_0} \ldots\wedge_{\D_0}\Y_{l_\0}.
\]
We call $\Y[l_0,l_\0]$ {\em an expanded target} when $l_0+l_\0>0$. The $\Y_0$ in $\Y[l_0,l_\0]$ is called the {\em root}, and the rest part is called the {\em rubber}. When $l_0=l_\0=0$, $\Y[0,0]=\Y_0=\Y$ is called {\em the unexpanded target}. Denote the zero section of $\Y_i$ by $\D_{0,i}$ and the infinite section of $\Y_i$ by $\D_{\0,i}$ for $-l_0\leq i\leq l_\0$. So from \eqref{E expanded-Y} we see that $\Y[l_0,l_\0]$ is obtained by gluing $\D_{0,i}$ in $\Y_i$ with $\D_{\0,i-1}$ in $\Y_{i-1}$ for $-l_0+1\leq i\leq l_\0$. So the singular set of $\Y[l_0,l_\0]$ is
\[
\text{Sing}\Y[l_0,l_\0]=\sqcup_{i=-l_0}^{l_\0-1} \D_{\0,i}=\sqcup_{i=-l_0+1}^{l_\0} \D_{0,i}.
\]
Let $\aut^{\text{rel}}_{l_0,l_\0}:= \aut(\Y[l_0,l_\0],\text{Sing}\Y[l_0,l_\0] \sqcup\Y_0)$ be the group of automorphisms of $\Y[l_0,l_\0]$ preserving the singular set $\text{Sing}\Y[l_0,l_\0]$ and the root $\Y_0$. Then $\aut^{\text{rel}}_{l_0,l_\0}\cong (\cplane^*)^{l_0+l_\0}$, where each factor of $(\cplane^*)^{l_0+l_\0}$ dilates the fibers of the $i$-th orbifold $\P^1$-bundle for $i\neq 0$. We have a natural map from $\Y[l_0,l_\0]$ to the root, $\Y[l_0,l_\0]\rto \Y_0=\Y$, which contracts all $\Y_i$ to $\D_{0,0}$ for $i<0$ and all $\Y_i$ to $\D_{\0,0}$ for $i>0$.

A map to $(\D_0|\Y|\D_\0)$ of topological type $\Gamma$ is a triple
\[
(\epsilon,\C',\f)\co\C\xleftarrow{\epsilon}\C'\xrightarrow{\f} \Y[l_0,l_\0]
\]
consists of the following ingredients:
\begin{enumerate}
\item[(1)] $\C$ is a genus $g$ (nodal) orbifold Riemann surface with (possible orbifold) absolute marked points $\x=(\x_1,\ldots,\x_m)$ and (possible orbifold) relative marked points $\y=(\y_1,\ldots,\y_{\ell(\mu)})$ and $\z=(\z_1,\ldots,\z_{\ell(\nu)})$,
\item[(2)] $\epsilon$ is a refinement of orbifold groupoid by an open cover of the object space,
\item[(3)] $\f$ is pseudo-holomorphic and the induced map $|\f|\co|\C|\rto|\Y[l_0,l_\0]|\rto|\Y_0|=|\Y|$ on coarse spaces satisfies $|\f|_*[|\C|]=\beta\in H_2(|\Y|;\integer)$,
\item[(4)] the marked points (resp. nodal points) in the coarse space $|\C|$ are divided into absolute marked points (resp. nodal points) and relative marked points (resp. nodal points) as follows,
    \begin{enumerate}
\item[(4.1)] the absolute marked points $|\x|$ and absolute nodal points are mapped into the nonsingular part of $|\Y[l_0,l_\0]|$, i.e. $|\Y[l_0,l_\0]|-|\text{Sing}(\Y[l_0,l_\0])|$,
\item[(4.2)] the relative marked points $|\y|$ are mapped into $|\D_{0,-l_0}|$ and $|\f|\inv(|\D_{0,-l_0}|)$ consists of only relative marked points $|\y|$, and the intersection multiplicities, i.e. contact orders, are given by $\mu$ such that the sum of all contact orders equals to $\D_0\cdot \beta$,
\item[(4.3)] the relative marked points $|\z|$ are mapped into $|\D_{\0,l_\0}|$ and $|\f|\inv(|\D_{\0,l_\0}|)$ consists of only relative marked points $|\z|$, and the intersection multiplicities, i.e. contact orders, are given by $\nu$ such that the sum of all contact orders equals to $\D_\0\cdot \beta$,
\item[(4.4)] the relative nodal points are mapped into $|\text{Sing}(\Y[l_0,l_\0])|$ and $|\f|\inv(|\text{Sing}(\Y[l_0,l_\0])|)$ consists of only relative nodal points,
\item[(4.5)] the relative nodal points in $|\f|\inv(|\text{Sing}(\Y[l_0,l_\0])|)$ satisfy the balanced condition that for each node $q\in |\f|\inv(|\D_{0,i}|=|\D_{\0,i-1}|)$, $i=-l_0+1,\ldots, l_\0$, the two branches of the domain curve $|\C|$ at the nodal point $q$ are mapped to different irreducible components of $|\Y[l_0,l_\0]|$ and the contact orders to $\D_{0,i}=\D_{\0,i-1}$ are equal,
    \end{enumerate}

\item[(5)] $\f$ is representable, i.e. the induced maps on local groups are injective.
\end{enumerate}

The equivalence relation between such maps are generated by the following relations:
\begin{enumerate}
\item[(i)] We say $\C\xleftarrow{\epsilon}\C'\xrightarrow{\f} \Y[l_0,l_\0]$ is equivalent to $\C\xleftarrow{\tilde \epsilon}\tilde{\C'}\xrightarrow{\tilde{\f}} \Y[l_0,l_\0]$ if there is a natural transformation (see for example \cite{Chen-Du-WangR2019})
    \[
    \alpha\co\f\circ\pi_1\Rightarrow \tilde{\f}\circ\pi_2 \co \C'\times_{\epsilon,\C,\tilde\epsilon}\tilde{\C'}\rto \Y[l_0,l_\0].
    \]
\item[(ii)] For an automorphism $\phi\in \aut_{l_0,l_\0}^{\text{rel}}$, we say $\C\xleftarrow{\epsilon}\C'\xrightarrow{\f} \Y[l_0,l_\0]$ is equivalent to $\C\xleftarrow{\epsilon}\C'\xrightarrow{\phi\circ\f} \Y[l_0,l_\0]$.
\end{enumerate}

A map $\C\xleftarrow{\epsilon}\C'\xrightarrow{\f} \Y[l_0,l_\0]$ is called a \emph{stable} map if its self-equivalences are finite. For simplicity, we will also denote such a map by $(\epsilon,\C',\f)\co(\C,\x,\y,\z)\rto (\D_0|\Y|\D_\0)$ or $(\epsilon,\C',\f)$. $\M_\Gamma(\D_0|\Y|\D_\0)$ is the space (in fact groupoid) of stable maps of topological type $\Gamma$. The arrow space of $\M_\Gamma(\D_\0|\Y|\D_\0)$ consists of equivalences between stable maps.

\begin{theorem}\label{thm structure-fiber-cls-moduli}
The moduli space $\M_\Gamma(\D_0|\Y|\D_\0)$ is a fibration over a certain multi-sector $\D_\ugamma'$ of $\D$ that determined by $\Gamma$, whose fiber is the relative moduli space of stable maps into
\[
([0\rtimes K_\ugamma]| [\P^1\rtimes K_\ugamma]|[\0\rtimes K_\ugamma])
\]
for certain $\P^1$-orbifold $[\P^1\rtimes K_\ugamma]=(K_\ugamma\times \P^1\rrto \P^1)$. The finite group $K_\ugamma$, the $\P^1$-orbifold $[\P^1\rtimes K_\ugamma]$ and the topological data of the relative moduli space of $([0\rtimes K_\ugamma]| [\P^1\rtimes K_\ugamma]|[\0\rtimes K_\ugamma])$ are all determined by $\Gamma$.
\end{theorem}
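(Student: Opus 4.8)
The plan is to reduce an arbitrary stable map in $\M_\Gamma(\D_0|\Y|\D_\0)$ to one whose coarse image lies in a single fiber of $|\pi|\co|\Y|\rto|\D|$, and then to recognize the resulting groupoid as a relative moduli space of $\P^1$-orbifold type fibered over a multi-sector of $\D$. First I would prove the fiber-support statement. For a fiber class $\beta$ (so that $|\pi|_*\beta=0$) and any stable map $(\epsilon,\C',\f)$ of type $\Gamma$, the composition $\pi\circ\f\co\C'\rto\Y[l_0,l_\0]\rto\Y_0=\Y\rto\D$ is pseudo-holomorphic, so the image homology class of each irreducible component of $|\C'|$ under $|\pi|\circ|\f|$ is effective; since the total class $(|\pi|\circ|\f|)_*[|\C'|]=|\pi|_*\beta=0$, each such component class vanishes, so $|\pi|\circ|\f|$ contracts every component, and by connectedness of $|\C'|$ it is constant, say equal to $p\in|\D|$. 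Running the same argument componentwise along the expanded target $\Y[l_0,l_\0]$ — whose irreducible pieces $\Y_i$ are $\P^1$-bundles over $\D$ glued along the sections $\D_{0,i},\D_{\0,i}$ — shows that $|\f|(|\C'|)$ lies in the chain of $\P^1$'s over $p$, i.e. in the fiber of $\Y[l_0,l_\0]\rto\D$ over $p$. In particular $\beta=d\,[\text{fiber}]$ with $d=\sum_j\mu_j=\sum_k\nu_k$, all read off from $\Gamma$.

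Next I would package the orbifold data. The type $\Gamma$ records, via $\ugamma=((\uh),(\h),(\h'))$, the conjugacy classes of the isotropy elements at the absolute marked points (pushed to $\D$ by $\pi_t$), at the relative marked points over $\D_0$ and over $\D_\0$, and at the relative nodes (which are determined once $\Gamma$ is fixed). Let $\D'_\ugamma$ be the corresponding multi-sector of $\D$: the component, determined by $\ugamma$, of the iterated fiber product over $\D$ of the twisted sectors $\D(\underline{h_i})$, $\D(h_j)$, $\D(h'_k)$, whose points are a point $p$ of $\D$ together with a compatible tuple of elements of $G_p$ in the prescribed classes. Using the evaluation maps ${\sf ev}_i,{\sf rev}^{\D_0}_j,{\sf rev}^{\D_\0}_k$ together with the fact just proved that all marked points of a fiber-class map sit over the same $p$, one obtains a well-defined morphism $\M_\Gamma(\D_0|\Y|\D_\0)\rto\D'_\ugamma$. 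Over $\D'_\ugamma$ the subgroup of $G_p$ acting on the fiber $\P^1=\P(\L_p\oplus\mc O)$ through the linear action on $\L_p$ is locally constant; call it $K_\ugamma$. Then the fiber of $\Y\rto\D$ over $\D'_\ugamma$ is the action groupoid $[\P^1\rtimes K_\ugamma]$, with zero and infinity sections $[0\rtimes K_\ugamma]$ and $[\0\rtimes K_\ugamma]$.

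Then I would identify the fiber of $\M_\Gamma(\D_0|\Y|\D_\0)\rto\D'_\ugamma$ over a point with the relative moduli space of stable maps to $([0\rtimes K_\ugamma]|[\P^1\rtimes K_\ugamma]|[\0\rtimes K_\ugamma])$ of an induced type $\Gamma'$. This is a dictionary: the restriction of an expansion $\Y[l_0,l_\0]$ to the fiber over $p$ is exactly an expansion of $[\P^1\rtimes K_\ugamma]$ along its two sections; representability of $\f$ into $\Y$ over $p$ is representability into $[\P^1\rtimes K_\ugamma]$; the contact orders $\vec\mu$ along $\D_0$ and $\vec\nu$ along $\D_\0$, the relative nodal balancing, the genus $g$, the fiber degree $d$ and the sector data $\bar\h$ translate term by term into the topological type $\Gamma'$, the twisted sectors of $\Y$ over a given sector of $\D$ corresponding to the twisted sectors of $[\P^1\rtimes K_\ugamma]$; the $\overline\psi$-insertions and absolute insertions are unaffected since the coarse domain is unchanged. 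Each of $K_\ugamma$, the orbifold $[\P^1\rtimes K_\ugamma]$, and $\Gamma'$ is thus read off combinatorially from $\Gamma$. Finally, since $\pi\co\Y\rto\D$ is an orbifold fiber bundle — étale-locally over $\D$ it is $\msf P\times_{U(1)}\P^1$ — the moduli space of fiber-class maps is étale-locally over $\D'_\ugamma$ the product of $\D'_\ugamma$ with this fixed relative moduli space, so $\M_\Gamma(\D_0|\Y|\D_\0)\rto\D'_\ugamma$ is the asserted fibration.

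The main obstacle is the orbifold bookkeeping in the third step: making the definitions of $\D'_\ugamma$ and $K_\ugamma$ precise enough that $K_\ugamma$ is genuinely locally constant on $\D'_\ugamma$, and verifying the equivalence of groupoids between fiber-class stable maps to $(\D_0|\Y|\D_\0)$ lying over a point of $\D'_\ugamma$ and relative stable maps to $([0\rtimes K_\ugamma]|[\P^1\rtimes K_\ugamma]|[\0\rtimes K_\ugamma])$ — one must match expanded targets, the representability and balancing conditions, and the correspondence of twisted sectors on the nose, including at the relative nodes. By contrast, the local triviality invoked at the end is routine once one uses the fiber-bundle structure of $\Y\rto\D$.
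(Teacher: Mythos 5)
There is a genuine gap in your second step: your base $\D'_\ugamma$ only records the isotropy classes at the marked points (and nodes), but a representable map from a genus-$g$ orbifold curve $\C'$ into a fiber $[\P(L^0_p\oplus\cplane)\rtimes G_p]$ is classified (Chen--Ruan) by a homomorphism $\rho_\f\co\pi_1^{\mathrm{orb}}(\C')\rto G_p$, and $\pi_1^{\mathrm{orb}}(\C')$ has, besides the generators $\lambda_i$ at the marked points, the $2g$ generators $\alpha_j$ of $\pi_1(|\C'|)$. The group over which the fiberwise $\P^1$-orbifold is formed is $\langle\h_\f\rangle$ with $\h_\f=(\rho_\f(\lambda_1),\ldots,\rho_\f(\lambda_n),\rho_\f(\alpha_1),\ldots,\rho_\f(\alpha_{2g}))$, so it depends on the monodromy around the handles, not only on the marked-point data. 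Consequently your $K_\ugamma$ is not well defined (your phrase ``the subgroup of $G_p$ acting on the fiber through the linear action on $\L_p$'' does not single out a subgroup, and the subgroup generated by the marked-point isotropy elements is the wrong one for $g>0$), and the fiber of your proposed map over a point of the marked-point multi-sector is a disjoint union, over all choices of $(\tilde h_1,\ldots,\tilde h_{2g})$, of relative moduli spaces with \emph{different} targets $[\P^1\rtimes\langle h_1,\ldots,h_n,\tilde h_1,\ldots,\tilde h_{2g}\rangle]$ --- not a single relative moduli space of one $\P^1$-orbifold. This is exactly why the paper fibers $\hol_\Gamma(\C,\Y)$ over (the image $D^0_\ugamma$ inside) the $(n+2g)$-fold multi-sector $\D^{[n+2g]}$, restricts to a component so that $\langle\h\rangle$ is constant up to conjugation, and only then sets $K_\ugamma=\langle\h_\ugamma\rangle$. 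Your argument is correct as stated only for $g=0$, where $\pi_1^{\mathrm{orb}}$ is generated by the $\lambda_i$ alone.

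A second, smaller issue: even with the correct base, the fiber of $\hol_\Gamma(\C,\Y)\rto\D_\ugamma$ is only the \emph{set} $\widetilde\mhol^0_\bgamma(\C,[\P^1\rtimes K_\ugamma])$, whereas the relative moduli space of $([0\rtimes K_\ugamma]|[\P^1\rtimes K_\ugamma]|[\0\rtimes K_\ugamma])$ is the groupoid $C_{\langle\h\rangle}(\h)\ltimes\widetilde\mhol^0_\bgamma$. To get the theorem's statement one must pass to the quotient groupoid $\D'_\ugamma$ obtained by identifying arrows $(\h,h)\sim(\h,kh)$ for $k\in C_{\langle\h\rangle}(\h)$ (so $\D_\ugamma\rto\D'_\ugamma$ is a $C_{K_\ugamma}(\h_\ugamma)$-gerbe); only over $\D'_\ugamma$ does the fiber acquire the correct automorphisms. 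You flag ``orbifold bookkeeping'' as the main obstacle, but this quotient is a substantive step, not just bookkeeping. Your first step (fiber support via effectivity of component classes) and the extension to nodal domains and expanded targets are fine and agree in spirit with the paper, which invokes \cite[Theorem 7.4]{Chen-Du-WangY2020} for the former.
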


The rest of this subsection is devoted to explain and prove this theorem.

\subsubsection{Universal curve of orbifold Riemann surfaces}

Let $\scr T_{g,n}, 2g-3+n\geq 0$ be the Techim\"uller space of genus $g$, $n$ marked Riemann surfaces (without nodal points). Let $\pi\co \scr C_{g,n}\rto \scr T_{g,n}$ be the corresponding universal curve, equipped with $n$ canonical sections $\sigma_i,1\leq i\leq n$ corresponding to those $n$ marked points.

Take a point $b\in \scr T_{g,n}$, then we have a stable curve
\[
C_b:=(\pi\inv(b),\sigma_1(b),\ldots,\sigma_n(b)).
\]
Consider the punctured surface $C_b^\circ:=C_b\setminus\{\sigma_1(b),\ldots,\sigma_n(b)\}$. It has a canonical hyperbolic metric of constant curvature $-1$. Then around every puncture we have a series of horocycles. The mapping class group $MP_{g,n}$ acts on $\scr T_{g,n}$ and also on $\scr C_{g,n}$. We could take $n$ $MP_{g,n}$-invariant positive functions $\delta_i$ over $\scr T_{g,n}$, and use them as the radius of the horocycles at the $n$ punctures. Then for each $C_b^\circ$, at the $i$-th puncture we remove a horodisc whose horocycle is of length $\delta_i$.

In other words, for each $C_b$ we pick out $n$ discs around the $n$ marked points such that these discs are $MP_{g,n}$-invariant. These discs together give rise to a tubular neighborhood $U_i$ of $\sigma_i(\scr T_{g,n})$ in $\scr C_{g,n}$ for $1\leq i\leq n$. See for example \cite{Chen-Li-Wang2016}.

Now we construct a family of orbifold Riemann surfaces. We first cut out the sections of marked points
\[
\bigsqcup_{i=1}^n \sigma_i(\scr T_{g,n})
\]
from $\scr C_{g,n}$ to get $\scr C_{g,n}^\circ$, and then patch the
\[
\integer_{r_i}\ltimes U_i
\]
back to $\scr C_{g,n}^\circ$. Here we identify
\begin{align}\label{E Z-r-i-zeta-i}
\integer_{r_i}=\langle\zeta_i\rangle\qq \mathrm{where} \qq \zeta_i=e^{\frac{2\pi\sqrt{-1}}{r_i}},
\end{align}
and $\integer_{r_i}$ acts on $U_i$ by rotating the horodiscs. Then we get an effective orbifold $\C_{g,n}$. Moreover, from $\scr C_{g,n}^\circ$ and $\integer_{r_i}\ltimes U_i$ we get an orbifold groupoid that represents $\C_{g,n}$, which we still denote by $\C_{g,n}$. The object space of this groupoid is
\[
C^0_{g,n}:=\scr C_{g,n}^\circ \sqcup \bigsqcup_{i=1}^n U_i.
\]
The arrow space $C^1_{g,n}$ is obtained accordingly, since $\C_{g,n}$ is effective. So this orbifold groupoid is $\C_{g,n}=(C^1_{g,n}\rrto C^0_{g,n})$.

We have a natural projection
\[
\pi\co \C_{g,n}\rto \scr T_{g,n}.
\]
which on $C^0_{g,n}$ is obtained from the natural projections $\scr C^\circ_{g,n}\hrto \scr C_{g,n}\rto \scr T_{g,n}$ and $U_i\rto\scr T_{g,n}$. The fiber of this projection $\pi\co \C_{g,n}\rto \scr T_{g,n}$ is an orbifold Riemann surface with $n$ orbifold points whose local groups are $\integer_{r_i},1\leq i\leq n$. This is a family of orbifold Riemann surfaces over $\scr T_{g,n}$.

Moreover by the choices of $U_i$, $MP_{g,n}$ acts on $\C_{g,n}$. First of all, it acts on $C^0_{g,n}=\scr C^\circ_{g,n}\sqcup \bigsqcup_i U_i$. Secondly, since the horodiscs are $MP_{g,n}$-equivariant, and the $\integer_{r_i}$-action also commutes with the $MP_{g,n}$-action on horodiscs, $MP_{g,n}$ acts on $\C_{g,n}$. Again, by the choices of horodiscs, the projection $\C_{g,n}\rto\scr T_{g,n}$ is $MP_{g,n}$-equivariant. Note that $MP_{g,n}$-action on $C^0_{g,n}$ and $C^1_{g,n}$ are both free. So we get a family of orbifold Riemann surface
\begin{align}\label{E family-of-orb-sur-over-mgn}
\C_{g,n}/MP_{g,n}\rto \scr T_{g,n}/MP_{g,n}.
\end{align}
Note that the quotient $\scr T_{g,n}/MP_{g,n}$ is the top strata $\scr M_{g,n}$ of $\M_{g,n}$, the Deligne--Mumford moduli space of Riemann surfaces. The groupoid corresponding to this top strata is
\[
\msf M_{g,n}=\scr T_{g,n}\rtimes MP_{g,n}=(MP_{g,n}\times \scr T_{g,n}\rrto\scr T_{g,n}).
\]
In terms of groupoid, we could write \eqref{E family-of-orb-sur-over-mgn} as
\[
(C^1_{g,n}\times MP_{g,n}\rrto C^0_{g,n})\rto(MP_{g,n}\times \scr T_{g,n}\rrto\scr T_{g,n})=\msf M_{g,n}.
\]
That is $\C_{\scr M}:=(C^1_{g,n}\times MP_{g,n}\rrto C^0_{g,n})$ corresponds to $\C_{g,n}/MP_{g,n}$. So $\C_{\scr M}$ is a family of orbifold Riemann surfaces over $\scr M_{g,n}$. We can view it as a universal curve of orbifold Riemann surfaces over $\scr M_{g,n}$. In the same way we could construct the universal curve of orbifold Riemann surfaces over lower strata of $\M_{g,n}$.

\subsubsection{The top strata $\scr M_\Gamma$}\label{subsubsec top-strata}

We next first study the top strata $\scr M_\Gamma$ of $\M_\Gamma(\D_0|\Y|\D_\0)$. By top strata we mean that for each stable maps in $\scr M_\Gamma$ its domain curve has no nodal points. Hence the target is the unexpanded target, i.e. $\Y$ itself. Take a fiber $\C_b=\pi\inv(b)$ of $\C_{g,n}$ over a point $b\in\scr T_{g,n}$. So $\C_b$ is a genus $g$ orbifold Riemann surface with $n$ (possible orbifold) marked points. As above we denote these $n=m+\ell(\mu)+\ell(\nu)$ marked points orderly by $(\x,\y,\z)$ with $\x=(\x_1,\ldots,\x_m)$, $\y=(\y_1,\ldots,\y_{\ell(\mu)})$ and $\z=(\z_1,\ldots,\z_{\ell(\nu)})$.

Now consider the groupoid of stable morphisms from $(\C_b,\x,\y,\z)$ to $(\D_0|\Y|\D_\0)$ of topological type $\Gamma$
\[
\hol_\Gamma(\C_b,\Y)=(\mhol^1_\Gamma(\C_b,\Y)\rrto \mhol^0_\Gamma(\C_b,\Y)),
\]
where
\begin{enumerate}
\item[(i)]
$\mhol^0_\Gamma(\C_b,\Y)$ is the space of stable morphisms from $\C_b$ to $\Y$ of topological type $\Gamma$,

\item[(ii)]
$\mhol^1_\Gamma(\C_b,\Y)$ is the space of natural transformations of stable morphisms in $\mhol^0_\Gamma(\C_b,\Y)$.
\end{enumerate}

As a groupoid, the object space of the top strata $\scr M_\Gamma$ is
\[
\bigcup_{b\in\scr T_{g,n}}\mhol^0_\Gamma(\C_b,\Y).
\]
The arrow space consists of two parts. The first part is
\[
\bigcup_{b\in\scr T_{g,n}}\mhol^1_\Gamma(\C_b,\Y).
\]
The second part comes from the action of mapping class group. The action of mapping class group commutes with the action of $\bigcup_{b\in\scr T_{g,n}}\mhol^1_\Gamma(\C_b,\Y)$. This is similar to the arrow space of $\C_{\scr M}$.

We next study the structure of holomorphic morphisms in $\bigcup_{b\in\scr T_{g,n}}\mhol^0_\Gamma(\C_b,\Y)$.

In the next, we omit the subscript $b$ of $\C_b$ to simplify notations. Since the curve class $\beta$ in $\Gamma$ is a fiber class of the fiber bundle $\pi\co\Y=(Y^1\rrto Y^0)=\D\ltimes Y^0\rto \D=(D^1\rrto D^0)$, by \cite[Theorem 7.4]{Chen-Du-WangY2020} we could assume that every morphism
\[
\C\xleftarrow{\epsilon}\C'=(C^{'1}\rrto C^{'0})\xrightarrow{\f=(f^0,f^1)}\Y
\]
in $\mhol^0_\Gamma(\C,\Y)$ satisfies that $\pi(f^0(C^{'0}))$ is a point in $D^0$, i.e. the image $f^0(C^{'0})$ lies in a single fiber of $Y^0$.

Now take a morphism $\C\xleftarrow{\epsilon}\C'=(C^{'1}\rrto C^{'0})\xrightarrow{\f=(f^0,f^1)}\Y\in \mhol^0_\Gamma(\C,\Y)$. Suppose $\pi\circ f^0(C^{'0})=p_{\f}\in D^0$. Let $G_{p_\f}$ be the local group of $p_\f$ in $\D$. The fiber of $Y^0\rto D^0$ over $p_\f$ is $\P(L^0_{p_\f}\oplus \cplane)$, and the fiber of $\Y\rto\D$ over $p_\f$ is modeled by $[\P(L^0_{p_\f}\oplus\cplane)\rtimes G_{p_\f}]$ where $G_{p_\f}$ acts on $\P(L^0_{p_\f}\oplus\cplane)$ via acting on $L^0_{p_\f}$. So $\C\xleftarrow{\epsilon}\C'=(C^{'1}\rrto C^{'0})\xrightarrow{\f=(f^0,f^1)}\Y$ factors through
\[
\xymatrix{
\C& \C'\ar[l]_-{\epsilon}\ar[r]^-{\f} & [\P(L^0_{p_\f}\oplus\cplane)\rtimes G_{p_\f}] \ar@{^(->}[r] & \Y.}
\]

By a result of Chen--Ruan (cf. \cite[Theorem 2.54]{Adem-Leida-Ruan2007}), the representable pseudo-holomorphic map $\f\co \C'\rto [\P(L^0_{p_\f}\oplus\cplane)\rtimes G_{p_\f}] \hrto\Y$ is determined by the homomorphism
\[
\rho_\f:\pi^{\text{orb}}_1(\C')\rto G_{p_\f},
\]
where $\pi^{\text{orb}}_1(\C')=\pi_1^{\text{orb}}(\C)$ is the orbifold fundamental group of $\C'$ and has a representation
\[
\pi^{\text{orb}}_1(\C')=\left\langle \lambda_1,\ldots,\lambda_n, \alpha_1,\ldots,\alpha_{2g}\left | \prod_{i=1}^n\lambda_i \prod_{j=1}^{g}(\alpha_{2j-1}\alpha_{2j}\alpha_{2j-1}\inv \alpha_{2j}\inv)=1,\lambda_i^{r_i}=1 \right. \right\rangle,
\]
with $\lambda_i$ corresponding to those marked points, i.e. to the generators $\zeta_i$ of local groups of those marked points (cf. \eqref{E Z-r-i-zeta-i}), and $\alpha_j$ corresponding to the generators of the fundamental group of the coarse space $|\C'|=|\C|$ (a smooth genus $g$ Riemann surface). Suppose the images of the generators of $\pi^{\text{orb}}_1(\C')$ w.r.t $\rho_\f$ are
\[
\rho_\f(\lambda_i)=h_i\in G_{p_\f},\qq \rho_\f(\alpha_j)=\tilde h_j\in G_{p_\f}.
\]
Then $\pi\circ\f$ is determined by the $(n+2g)$-tuple\footnote{As the morphism is representable, the orders of $h_i$ is the same as the orders of $\lambda_i$, i.e. $r_i$ for $1\leq i\leq n$.}
\[
\h_\f:=(h_1,\ldots,h_n,\tilde h_1,\ldots, \tilde h_{2g}).
\]
This $(n+2g)$-tuple gives rise to a point in the object space of the $(n+2g)$-th multiple-sector\footnote{The multiple-sector $\D^{[n+2g]}$ is defined as follows (cf. \cite{Adem-Leida-Ruan2007}). The object space is
\[
(\D^{[n+2g]})^0:=\{(g_1,\ldots, g_{n+2g})\in (D^1)^{n+2g}\mid s(g_i)=t(g_j),1\leq i,j\leq n+2g\}.
\]
The $\D$-action on $(\D^{[n+2g]})^0$ is given by the anchor map $e\co(\D^{[n+2g]})^0\rto D^0, e(g_1,\ldots, g_{n+2g})=s(g_1)$ and the action map
\[
(\D^{[n+2g]})^0\times_{e,s} D^1\rto (\D^{[n+2g]})^0,\qq (g_1,\ldots,g_{n+2g};h)\mapsto (h\inv g_1 h,\ldots, h\inv g_{n+2g}h).
\]
So $(\D^{[n+2g]})^1=(\D^{[n+2g]})^0\times_{e,s} D^1$.
} $\D^{[n+2g]}=((\D^{[n+2g]})^1\rrto (\D^{[n+2g]})^0)=\D\ltimes (\D^{[n+2g]})^0$ of $\D$. So $p_\f\in e((\D^{[n+2g]})^0)\subseteq D^0$.

So we have a map
\[
\rho\co\mhol^0_\Gamma(\C,\Y)\rto (\D^{[n+2g]})^0,\qq (\epsilon,\C',\f)\mapsto \h_\f.
\]
Moreover, an $\C\xleftarrow{\epsilon}\C'=(C^{'1}\rrto C^{'0})\xrightarrow{\f=(f^0,f^1)}\Y$ in $\mhol^0_\Gamma(\C,\Y)$ further factors through
\begin{align}\label{eq the-actual-fiber-rel-maps}
\xymatrix{
\C &\C'\ar[l]_-\epsilon     \ar[r]^-{\bar{\f}}&
[\P(L^0_{p_\f}\oplus\cplane)\rtimes \langle \h_\f\rangle ]  \ar@{^{(}->}[r] &
[\P(L^0_{p_\f}\oplus\cplane)\rtimes G_{p_\f}  ]       \ar@{^{(}->}[r] &
\Y.}
\end{align}
Meanwhile, $\Gamma$ determines a topological data $\bgamma$ such that the induced morphism $\C\xleftarrow{\epsilon}\C'\xrightarrow{\bar\f} [\P(L^0_{p_\f}\oplus\cplane)\rtimes \langle \h_\f\rangle ]$ to the $\P^1$-orbifold $[\P(L^0_{p_\f}\oplus\cplane)\rtimes \langle \h_\f\rangle ]$ is of topological type $\bgamma$. Explicitly, the genus, homology class and marked points of $\bgamma$ are the same as the $\Gamma$; the only difference is that the twisted sectors of $\bgamma$ are induced from those of $\Gamma$ via viewing those $h_i$ as elements in $\langle \h_\f\rangle$ instead of $G_{p_\f}$. By this further factorization \eqref{eq the-actual-fiber-rel-maps} we see

\begin{lemma}\label{lem fiber-of-rho}
$\rho\co\mhol^0_\Gamma(\C,\Y)\rto (\D^{[n+2g]})^0$ is a fibration over its image whose fiber over a point $\h=(h_1,\ldots,h_n,\tilde h_1,\ldots, \tilde h_{2g})$ is
\begin{align*}
&\widetilde\mhol^0_\bgamma(\C,[\P(L^0_{e(\h)}\oplus\cplane)\rtimes \langle \h\rangle])\\
:=&\left\{
\C\xleftarrow{\epsilon}\C'\xrightarrow{\f} [\P(L^0_{e(\h)}\oplus\cplane)\rtimes \langle \h\rangle]\left|(\epsilon,\C',\f)\text{ is of type }\bgamma \text{ and } \rho_\f(\lambda_i)=h_i, \rho_\f(\alpha_j)=\tilde h_j\right.
\right\}.
\end{align*}
\end{lemma}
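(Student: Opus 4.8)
The plan is to establish the three assertions packaged into the statement: that $\rho$ is a well-defined continuous map on $\mhol^0_\Gamma(\C,\Y)$, that the preimage $\rho^{-1}(\h)$ is canonically the space of type-$\bgamma$ maps into the fiber orbifold $[\P(L^0_{e(\h)}\oplus\cplane)\rtimes\langle\h\rangle]$ with prescribed monodromy $\h$, and that these preimages glue into a fibration over the image of $\rho$ inside $(\D^{[n+2g]})^0$. Everything is a repackaging of the fiber-class reduction and the Chen--Ruan classification already recalled in the text, so the work is mostly bookkeeping.

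First I would check well-definedness. Given $\C\xleftarrow{\epsilon}\C'\xrightarrow{\f}\Y$ in $\mhol^0_\Gamma(\C,\Y)$, the hypothesis that $\beta$ is a fiber class forces, by the cited structure theorem for fiber-class maps, the image $f^0(C^{'0})$ to sit inside the single fiber of $Y^0\rto D^0$ over $p_\f=\pi\circ f^0(C^{'0})$; hence $\f$ factors through $[\P(L^0_{p_\f}\oplus\cplane)\rtimes G_{p_\f}]$. The Chen--Ruan classification of representable pseudo-holomorphic maps into a quotient orbifold then attaches to $\f$ the homomorphism $\rho_\f\co\pi^{\text{orb}}_1(\C)\rto G_{p_\f}$, and evaluating $\rho_\f$ on the once-and-for-all fixed generators $\lambda_1,\ldots,\lambda_n,\alpha_1,\ldots,\alpha_{2g}$ yields $\h_\f$, with representability pinning the order of $h_i$ to $r_i$. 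Since the presentation of $\pi^{\text{orb}}_1(\C)$ is fixed, $\h_\f$ depends only on $\f$; continuity of $\rho$ follows because $p_\f$ varies continuously and the monodromy of a flat orbifold cover is locally constant.

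Next I would identify the fiber. The factorization \eqref{eq the-actual-fiber-rel-maps} shows that every $\f\in\rho^{-1}(\h)$ factors through $[\P(L^0_{e(\h)}\oplus\cplane)\rtimes\langle\h\rangle]$, with induced map $\bar\f$ of topological type $\bgamma$ and $\rho_{\bar\f}(\lambda_i)=h_i$, $\rho_{\bar\f}(\alpha_j)=\tilde h_j$; this gives an element of $\widetilde\mhol^0_\bgamma(\C,[\P(L^0_{e(\h)}\oplus\cplane)\rtimes\langle\h\rangle])$. Conversely, post-composing any such map with the \'etale inclusion $[\P(L^0_{e(\h)}\oplus\cplane)\rtimes\langle\h\rangle]\hrto\Y$ yields a morphism in $\mhol^0_\Gamma(\C,\Y)$ with $\rho$-image $\h$; here one checks that the type is restored from $\bgamma$ to $\Gamma$, as genus, homology class and contact orders are untouched and the twisted sectors of $\bgamma$ are by definition the images of those of $\Gamma$ under $\langle\h\rangle\hrto G_{e(\h)}$. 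The two assignments are mutually inverse, giving the asserted description of $\rho^{-1}(\h)$.

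Finally I would upgrade this pointwise decomposition to a fibration. Over a connected component of the image of $\rho$, the abstract data $(\langle\h\rangle,(h_i),(\tilde h_j))$ is locally constant, while $e(\h)$ sweeps out a locus in $D^0$ over which $L^0$ restricts to an honest line bundle; hence the family of fiber orbifolds $[\P(L^0_{e(\h)}\oplus\cplane)\rtimes\langle\h\rangle]$ is locally trivial, and so are the associated families of type-$\bgamma$ stable-map spaces. (After quotienting by the residual $\D$-action on $(\D^{[n+2g]})^0$ and assembling over Teichm\"uller space, the base becomes the multi-sector $\D_\ugamma'$ of Theorem \ref{thm structure-fiber-cls-moduli}.) The main obstacle I expect is precisely this last step made precise at the level of orbifold groupoids: one must track how the arrow spaces of $\mhol_\Gamma(\C,\Y)$ and of $\widetilde\mhol_\bgamma$ — and the residual mapping-class-group symmetry — interact with the fibration, and verify that the monodromy of $\langle\h\rangle$ around loops in the base is absorbed into the structure group rather than obstructing local triviality. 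The rest is routine given the ingredients already recalled above.
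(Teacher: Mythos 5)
Your proposal is correct and follows essentially the same route as the paper: the paper's own "proof" of this lemma is the one-line appeal to the factorization \eqref{eq the-actual-fiber-rel-maps} together with the Chen--Ruan classification of representable maps via $\rho_\f\co\pi_1^{\mathrm{orb}}(\C')\rto G_{p_\f}$, which is exactly the content of your first two paragraphs. Your third paragraph's care about local triviality is also consistent with the paper, which defers that point until after introducing the $\D_\ugamma$-action and observing that all $\h$ in a component of $D^0_\ugamma$ are conjugate, so the fibers are canonically identified.
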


Denote the image of $\rho$ by $D^0_\ugamma\subseteq (\D^{[n+2g]})^0$. Then we get a sub-groupoid $\D^{[n+2g]}|_{D^0_{\ugamma}}$, which we denoted by $\D_{\ugamma}=(D_\ugamma^1\rrto D_\ugamma^0)$. So $D_\ugamma^1\subseteq (\D^{[n+2g]})^0\times_{e,s}D^1$.

\begin{lemma}\label{lem D-Gamma-action-on-Hol}
With $\rho\co\mhol^0_\Gamma(\C,\Y)\rto D^0_\ugamma$ as the anchor map, there is a $\D_\ugamma$-action on $\mhol^0_\Gamma(\C,\Y)$ given as follows. For an arrow $(\h,h)\in D_\ugamma^1$ and a morphism $\C\xleftarrow{\epsilon}\C'\xrightarrow{\f} \Y$ with $\h_\f=\h$ (i.e. $\C\xleftarrow{\epsilon}\C'\xrightarrow{\f} \Y$ belongs to the fiber of $\mhol^0_\Gamma(\C,\Y)$ over $\h$), $h$ acts on $\C\xleftarrow{\epsilon}\C'\xrightarrow{\f} \Y$ by transforming the image of $f^0$ from the fiber of $Y^0$ over $s(h)$ to the fiber of $Y^0$ over $t(h)$ and conjugate $f^1$ by $h$, in particular, it transfer $\h_\f=\h$ into $h\inv\cdot \h\cdot h$.
\end{lemma}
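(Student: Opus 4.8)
The plan is to exhibit the $\D_\ugamma$-action concretely, by post-composing a relative stable map with the canonical groupoid isomorphism between two ``stacky fibres'' of $\Y$ attached to an arrow of $\D$, and then to check the groupoid-action axioms and the equivariance of the anchor $\rho$ one at a time. First I would attach, to each arrow $h\in D^1$ with $s(h)=x$ and $t(h)=y$, an isomorphism of orbifold groupoids between the stacky fibres of $\pi\co\Y\rto\D$ over $x$ and over $y$. Since $\Y=\D\ltimes Y^0$, so that $Y^1\cong D^1\times_{s,D^0,\pi}Y^0$, the arrow $h$ produces a biholomorphism $L_h\co Y^0_x\rto Y^0_y$, $v\mapsto h\cdot v$, of the fibres of $Y^0\rto D^0$, together with the conjugation isomorphism $c_h\co G_x\rto G_y$, $k\mapsto h\inv k h$, of isotropy groups; moreover $L_h$ is $c_h$-equivariant. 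Under the identifications $Y^0_x=\P(L^0_x\oplus\cplane)$ and $Y^0_y=\P(L^0_y\oplus\cplane)$ these data assemble into an isomorphism
\[
\Phi_h\co[\P(L^0_x\oplus\cplane)\rtimes G_x]\xrightarrow{\ \sim\ }[\P(L^0_y\oplus\cplane)\rtimes G_y]
\]
that is $L_h$ on objects and $(k,v)\mapsto(h\inv k h,\,h\cdot v)$ on arrows --- precisely the ``conjugate $f^1$ by $h$'' recipe in the statement --- and that is compatible with the embeddings of both stacky fibres into $\Y$. Two features of $\Phi_h$ I expect to use: because $x$ and $y$ lie in the same $\D$-orbit they have the same image $\bar p\in|\D|$, so that $|Y^0_x/G_x|$ and $|Y^0_y/G_y|$ are the \emph{same} fibre of the topological $\P^1$-bundle $|\Y|\rto|\D|$ over $\bar p$ and $|\Phi_h|$ is the identity of that fibre; and $\Phi_h$ carries the zero and infinity sections $\D_0,\D_\0$ to themselves, since the $\D$-action on $Y^0$ preserves them.

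Next I would define the action: for $(\epsilon,\C',\f)\in\mhol^0_\Gamma(\C,\Y)$ with $\h_\f=\h$ --- which, by the discussion preceding Lemma \ref{lem fiber-of-rho}, means that $\f$ factors through $[\P(L^0_x\oplus\cplane)\rtimes G_x]\hrto\Y$ with $x=e(\h)$ --- and an arrow $(\h,h)\in D^1_\ugamma$, so $s(h)=e(\h)=x$, put
\[
(\h,h)\cdot(\epsilon,\C',\f):=(\epsilon,\C',\Phi_h\circ\f),
\]
viewing $\Phi_h\circ\f$ as a morphism to $\Y$ via the compatible embedding. I would then verify well-definedness: $\Phi_h$ is a biholomorphic isomorphism of groupoids, so $\Phi_h\circ\f$ is again representable and pseudo-holomorphic; by the previous paragraph it has the same coarse map $|\f|$, hence the same genus and the same fibre class $\beta$, and the same contact orders along $\D_0$ and $\D_\0$; on monodromy, $\Phi_h$ replaces $\rho_\f\co\pi^{\text{orb}}_1(\C')\rto G_x$ by $c_h\circ\rho_\f$, so $\h_{\Phi_h\circ\f}=h\inv\h h$; and since $c_h$ preserves orders, degree-shifting numbers and all the data recorded by $\Gamma$ (equivalently $\bgamma$), the topological type of $\Phi_h\circ\f$ is again $\Gamma$. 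In particular $h\inv\h h$ again lies in $D^0_\ugamma$, the image of $\rho$; hence $D^0_\ugamma$ is a union of $\D^{[n+2g]}$-orbits, $\D_\ugamma=\D^{[n+2g]}|_{D^0_\ugamma}$ is a genuine sub-groupoid, and equivariance of $\rho$ is built in. Finally I would check the action axioms: $\Phi_{\mathrm{id}_x}=\mathrm{id}$, so units act trivially, and for composable arrows $(\h,h),(h\inv\h h,h')$ in $D^1_\ugamma$ the identity $L_{hh'}=L_{h'}\circ L_h$ (the action rule for the $\D$-action on $Y^0$) together with $c_{hh'}=c_{h'}\circ c_h$ (associativity in $\D$) give $\Phi_{hh'}=\Phi_{h'}\circ\Phi_h$, whence $(\h,hh')\cdot(\epsilon,\C',\f)=(h\inv\h h,h')\cdot\big((\h,h)\cdot(\epsilon,\C',\f)\big)$; this is exactly the composition rule for a groupoid action of $\D_\ugamma$ with anchor $\rho$, which together with Lemma \ref{lem fiber-of-rho} is what underlies the fibration structure of Theorem \ref{thm structure-fiber-cls-moduli}.

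The hard part will be the well-definedness step: showing that the naive prescription ``move the fibre by $h$, conjugate the gluing/transition data by $h$'' genuinely produces a morphism of orbifold groupoids of the \emph{same} topological type $\Gamma$. Concretely one must check that $\Phi_h$ respects objects, arrows, units and composition and is compatible with the embedding into $\Y$, and that representability, pseudo-holomorphicity, the homology class $\beta$, all contact orders, and every twisted-sector invariant survive --- the last point being where the assumption that $\beta$ is a fibre class (so that $|\f|$ lands in a single fibre of $|\Y|\rto|\D|$ and $x,y$ lie over the same point of $|\D|$) is essential. Once $\Phi_h$ and its naturality $\Phi_{hh'}=\Phi_{h'}\circ\Phi_h$ are in place, the remaining axioms and the equivariance of $\rho$ are purely formal.
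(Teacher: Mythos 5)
Your construction is exactly the one the paper intends: the lemma is stated as a construction (post-compose with the isomorphism of stacky fibres attached to the arrow $h$, which is $L_h$ on objects and conjugation by $h$ on arrows), and the paper offers no further proof, so your careful verification of well-definedness, of the monodromy rule $\h_{\Phi_h\circ\f}=h\inv\h h$, and of the groupoid-action axioms is a correct fleshing-out of the same argument. No gaps.
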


Denote the resulting action groupoid by
\[
\D_\ugamma\ltimes \mhol^0_\Gamma(\C,\Y).
\]
Then again by the factorization \eqref{eq the-actual-fiber-rel-maps} we have
\begin{lemma}\label{L 3.6}
\[
\hol_\Gamma(\C,\Y)\cong \D_\ugamma\ltimes \mhol^0_\Gamma(\C,\Y)
\]
\end{lemma}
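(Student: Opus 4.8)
The plan is to use that the two groupoids in the statement have literally the same object space $\mhol^0_\Gamma(\C,\Y)$, so that it is enough to produce a bijection between their arrow spaces which is compatible with source, target, identities and composition; I would build this as an explicit pair of mutually inverse assignments.

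First recall that, since $\beta$ is a fibre class, every stable morphism $(\epsilon,\C',\f)$ in $\mhol^0_\Gamma(\C,\Y)$ has $f^0(C^{'0})$ contained in a single fibre of $Y^0\rto D^0$, say over $p_\f\in D^0$ (this is \cite[Theorem 7.4]{Chen-Du-WangY2020} together with the factorization \eqref{eq the-actual-fiber-rel-maps} used above), so $\pi\circ\f\co\C'\rto\D$ is the constant orbifold morphism at $p_\f$ with holonomy $\rho_\f$. Given a natural transformation $\alpha$ from $\f_1$ to $\f_2$ (by the equivalence (i) used to build $\M_\Gamma$, this is a transformation between the pullbacks of $\f_1,\f_2$ to a common refinement of the domain, which I suppress for now), I would compose it with the projection $Y^1\rto D^1$, $(d,v)\mapsto d$, to get a natural transformation between the constant morphisms $\pi\circ\f_1$ and $\pi\circ\f_2$ into $\D$. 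Because the domain orbifold curve is connected, such a transformation is a single arrow $h_\alpha\in D^1$ with $s(h_\alpha)=p_{\f_1}$, $t(h_\alpha)=p_{\f_2}$, and testing naturality on the standard generators $\lambda_i,\alpha_j$ of $\pi_1^{\mathrm{orb}}(\C')$ shows that $h_\alpha$ conjugates $\rho_{\f_1}$ to $\rho_{\f_2}$; in particular $h_\alpha\in D^1_\ugamma$. Since every arrow of $\Y=\D\ltimes Y^0$ has the form $(d,v)$, the transformation $\alpha$ is recovered from $h_\alpha$ and $\f_1$ by $\alpha(x)=(h_\alpha,f_1^0(x))$, and the naturality squares become precisely $f_2^0=h_\alpha\cdot f_1^0$ and $f_2^1=h_\alpha\,f_1^1\,h_\alpha\inv$, i.e.\ $\f_2=h_\alpha\cdot\f_1$ for the $\D_\ugamma$-action of Lemma \ref{lem D-Gamma-action-on-Hol}. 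Thus $\alpha\mapsto(\f_1,h_\alpha)$ is a well-defined map from $\mhol^1_\Gamma(\C,\Y)$ to the arrow space of $\D_\ugamma\ltimes\mhol^0_\Gamma(\C,\Y)$, landing on arrows with source $\f_1$ and target $\f_2$.

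The inverse assignment would send an arrow $(\f,h)$ of $\D_\ugamma\ltimes\mhol^0_\Gamma(\C,\Y)$ to the tautological transformation $\alpha_h$ defined by $\alpha_h(x)=(h,f^0(x))$, which is a natural transformation $\f\Rightarrow h\cdot\f$: its source is $f^0(x)$ by the definition of the $\D$-action on $Y^0$, its target is $(h\cdot\f)^0(x)$ by Lemma \ref{lem D-Gamma-action-on-Hol}, and the naturality squares hold because $(h\cdot\f)^1=h\,f^1\,h\inv$. By the computation in the previous paragraph these two assignments are mutually inverse, and each visibly preserves source and target; identities clearly go to identities, and compatibility with composition is immediate once one observes that the $\D$-component of a vertical composite of natural transformations is the composite of the $\D$-components. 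This yields the isomorphism of groupoids $\hol_\Gamma(\C,\Y)\cong\D_\ugamma\ltimes\mhol^0_\Gamma(\C,\Y)$.

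The step I expect to require the most care is the claim that a natural transformation between two constant morphisms into $\D$ over a connected orbifold curve is a single arrow of $\D$: this is a statement about the orbifold fundamental groupoid and relies on $\D$ being proper \'etale. The other point needing attention is purely bookkeeping, namely carrying the refinements $\epsilon$ and the fibre-product definition of natural transformations through the argument so that identities such as ``$\f_2=h_\alpha\cdot\f_1$'' are correctly interpreted as equalities of objects of $\mhol^0_\Gamma(\C,\Y)$, i.e.\ equalities after pullback to a common refinement of the domain. Everything else is formal.
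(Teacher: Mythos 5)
The paper offers no argument for this lemma beyond ``by the factorization \eqref{eq the-actual-fiber-rel-maps}'', so your write-up is the only complete proof on the table; unfortunately its crucial step --- the one you yourself flag --- fails in the generality in which you state it. Write $\alpha(x)=(h(x),f_1^0(x))$ for a transformation $\f_1\Rightarrow\f_2$, with $h\co C^{'0}\rto D^1$. Properness and \'etaleness of $\D$ only make $h$ \emph{locally} constant; the naturality square for an arrow $\gamma\co x\rto y$ of $\C'$ reads
\[
h(y)\,\bar f_1^1(\gamma)=\bar f_2^1(\gamma)\,h(x),
\]
where $\bar f_i^1(\gamma)\in D^1$ denotes the $\D$-component of $f_i^1(\gamma)$. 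Hence $h$ is a flat section of a local system twisted by the cocycles $\bar f_i^1$, and it is globally constant only when those cocycles are trivial on the transition arrows of the refinement. For a general strict morphism $\f\co\C'\rto\Y$ the arrow part is an arbitrary cocycle \emph{representing} the holonomy $\rho_\f$, not the holonomy itself: already for $\C=\P^1$ smooth and $\Y=\P^1\times BG$ the same map can be presented by a cohomologous nontrivial cocycle, and the resulting gauge transformation is a natural transformation whose $\D$-component is genuinely non-constant and is not of the form $(\f,h)$ for any single $h$. With $\mhol^0_\Gamma(\C,\Y)$ taken to be all strict morphisms from all refinements, $\hol_\Gamma(\C,\Y)$ therefore has strictly more arrows (and identifies more objects) than $\D_\ugamma\ltimes\mhol^0_\Gamma(\C,\Y)$; the two groupoids are Morita equivalent but not isomorphic.

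The repair, which is what the paper implicitly does by invoking \cite[Theorem 2.54]{Adem-Leida-Ruan2007} and parametrizing morphisms by $\h_\f$, is to first put the objects of $\mhol^0_\Gamma(\C,\Y)$ in Chen--Ruan normal form: a morphism is recorded as the homomorphism $\rho_\f\co\pi^{\text{orb}}_1(\C')\rto G_{p_\f}$ together with the $\rho_\f$-equivariant lift to the associated cover, so that the arrow component \emph{is} the holonomy. After this rigidification your naturality computation does show that $h$ is determined by its value at a base point, that this value conjugates $\rho_{\f_1}$ to $\rho_{\f_2}$, and that your two assignments are mutually inverse; the remainder of your argument (the inverse map $\alpha_h(x)=(h,f^0(x))$, compatibility with source, target, identities and composition) is correct. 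You should either prove the lemma as a Morita equivalence or make the normal form explicit before running the bijection; as written, the assertion that a natural transformation between two fiberwise morphisms over a connected curve is a single arrow of $\D$ is false.
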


Therefore we have a groupoid projection
\[
\hol_\Gamma(\C,\Y)\rto\D_\ugamma.
\]
Note that $\D_\ugamma$ may have several connected components (i.e. connected components of its coarse space). In a single component, the group $\langle \h\rangle$ is invariant up to conjugation. {\em For simplicity, in the following we assume that $\D_\ugamma$ has only one component, hence the group $\langle \h\rangle $ is invariant up to conjugation. Otherwise, we only need to deal with components separately.}

Now we fix an $\h_\ugamma\in D_\ugamma^0$ and set $K_\ugamma=\langle\h_\ugamma\rangle$. Then for every $\h\in D_\ugamma^0$, $K_\ugamma\cong \langle \h\rangle$ via conjugation. Moreover, this isomorphism also identifies the $K_\ugamma$-action  on $L^0_{e(\h_\ugamma)}$ with the $\langle \h\rangle$-action on $L^0_{e(\h)}$. Then by Lemma \ref{lem fiber-of-rho} and Lemma \ref{lem D-Gamma-action-on-Hol} we see that the fiber of $\rho \co\mhol^0_\Gamma(\C,\Y)\rto D^0_\ugamma$ are all isomorphic to
\begin{align}\label{E unified-fiber-of-stable-maps}
&\widetilde\mhol^0_\bgamma(\C,[\P(L^0_{e(\h_\ugamma)}\oplus\cplane) \rtimes K_\ugamma])\\
:=&\left\{
\C\xleftarrow{\epsilon}\C'\xrightarrow{\f} [\P(L^0_{e(\h_\ugamma)}\oplus\cplane)\rtimes K_\ugamma] \left|
(\epsilon,\C',\f)\text{ is of type }\bgamma \text{ and }
(\ldots,\rho_\f(\lambda_i),\ldots, \rho_\f(\alpha_j), \ldots)=\h_\ugamma
\right.
\right\}.\nonumber
\end{align}
Moreover, this fibration $\rho \co\mhol^0_\Gamma(\C,\Y)\rto D^0_\ugamma$ is locally trivial.

By viewing $\hol_\Gamma(\C,\Y)\rto\D_\ugamma$ as a groupoid fibration (cf. \cite{Chen-Du-WangY2020}), we see that $\hol_\Gamma(\C,\Y)$ is a fibration over $\D_\ugamma$ with fiber being the unitary/trivial groupoid (cf. \cite{Moerdijk-Mrcun2003})
\[
\widetilde\mhol^0_\bgamma(\C,[\P(L^0_{e(\h_\ugamma)}\oplus\cplane) \rtimes K_\ugamma])\rrto \widetilde\mhol^0_\bgamma(\C,[\P(L^0_{e(\h_\ugamma)}\oplus\cplane) \rtimes K_\ugamma])
\]
associated to the space $\widetilde\mhol^0_\bgamma(\C,[\P(L^0_{e(\h_\ugamma)}\oplus\cplane) \rtimes K_\ugamma])=\widetilde\mhol^0_\bgamma(\C,[\P^1\rtimes K_\ugamma])$.

We next interpret $\hol_\Gamma(\C,\Y)$ as a groupoid fibration over another groupoid $\D'_\ugamma$ with fiber being groupoids that correspond to stable maps to the $\P^1$-orbifold $[\P^1\rtimes K_\ugamma]$. We first construct the groupoid $\D'_\ugamma$. It is similar to the construction of the effective orbifold groupoid for an ineffective orbifold groupoid (cf. \cite[Definition 2.33]{Adem-Leida-Ruan2007}).

Consider the following subspace of $D_\ugamma^1$:
\[
\ker D_\ugamma^1:=\{(\h,h)\in D_\ugamma^1\mid h\in C_{\langle\h\rangle}(\h)\subseteq C_{G_{e(\h)}}(\h)\},
\]
where, as above, $G_{e(\h)}$ is the local (or isotropy) group of $e(\h)\in D^0$ in $\D$ and $\langle\h\rangle$ is the subgroup generated by $\h$, $C_{G_{e(\h)}}(\h)$ is the centralizers of $\h$ in $G_{e(\h)}$, and $C_{\langle\h\rangle}(\h)$ is the centralizers of $\h$ in $\langle \h\rangle$, hence the center of $\langle \h\rangle$ as $\langle\h\rangle$ is generated by $\h$. We define a relation on $D_\ugamma^1$ by
\begin{align}\label{E equivalence-relation-on-D-ugamma1}
(\h,h)\sim (\h,kh)
\end{align}
for all $k\in C_{\langle\h\rangle}(\h)$. This is obvious an equivalence relation.

\begin{remark}
In fact, the restrictions of structure maps of $D^1_\ugamma$ over $\ker D_\ugamma^1$ gives rise to an orbifold groupoid $\ker \D_\ugamma:=(\ker D^1_\ugamma\rrto D_\ugamma^0)$.
Then \eqref{E equivalence-relation-on-D-ugamma1} gives rise to an action of $\ker \D_\ugamma^1$ on $D^1_\ugamma$, whose anchor map is the source map $s\co D^1_\ugamma\rto D^0_\ugamma$. So \eqref{E equivalence-relation-on-D-ugamma1} is an equivalence relation.
\end{remark}

We set
\[
D^{',1}_\ugamma:=D_\ugamma^1/\sim.
\]
Then one see that all structure maps of $\D_\ugamma$ descends to $D^{',1}_\ugamma$ and $D_\ugamma^0$. Moreover,
\[
\D'_\ugamma:=(\D^{',1}_\ugamma\rrto D^0_\ugamma)
\]
is an orbifold groupoid.

\begin{remark}
As all $\h\in D^0_\ugamma$ are conjugate to each other, $\ker D_\ugamma$ is a trivial bundle over $D^0_\ugamma$ with fiber being $C_{K_\ugamma}(\h_\ugamma)$. So $\D_\ugamma\rto \D'_\ugamma$ is a $C_{K_\ugamma}(\h_\ugamma)$-gerbe over $\D'_\ugamma$.
\end{remark}

On the other hand, for a fixed $\h\in D^0_\ugamma$, $C_{\langle \h\rangle}(\h)$ is a normal subgroup of $C_{G_{e(\h)}}(\h)$ and
\[
C_{\langle \h\rangle}(\h)\ltimes \widetilde\mhol^0_\bgamma(\C,\P(L^0_{e(\h)}\oplus\cplane)\rtimes \langle \h\rangle)
=\hol_\bgamma(\C,\P(L^0_{e(\h)}\oplus\cplane)\rtimes \langle \h\rangle)
\]
is the groupoid of morphisms of type $\bgamma$ from $\C$ to $\P(L^0_{e(\h)}\oplus\cplane)\rtimes \langle \h\rangle$. As every $\h$ is conjugate to $\h_\ugamma$ and $K_\ugamma=\langle\h_\ugamma\rangle$, we have
\[
\hol_\bgamma(\C,\P(L^0_{e(\h)}\oplus\cplane)\rtimes \langle\h\rangle )\cong
\hol_\bgamma(\C,\P^1\rtimes K_\ugamma).
\]
\begin{lemma}\label{lem fiber-moduli-top-strata}
The composition of projections
\[
\hol_\Gamma(\C,\Y)\rto \D_\ugamma\rto \D_\ugamma'
\]
makes $\hol_\Gamma(\C,\Y)$ a fibration over $\D'_\ugamma$ with fiber $\hol_\bgamma(\C,\P^1\rtimes K_\ugamma)$. We denote this groupoid fibration by
\[
\hol_\bgamma(\C,\P^1\rtimes K_\ugamma)\hrto \hol_\Gamma(\C,\Y)\rto\D_\ugamma'.
\]
\end{lemma}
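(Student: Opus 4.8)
The plan is to deduce the lemma from the presentation $\hol_\Gamma(\C,\Y)\cong\D_\ugamma\ltimes\mhol^0_\Gamma(\C,\Y)$ of Lemma~\ref{L 3.6}, by composing the fibration over $\D_\ugamma$ that it gives with the gerbe projection $\D_\ugamma\rto\D'_\ugamma$. By Lemma~\ref{L 3.6} the projection $\hol_\Gamma(\C,\Y)\rto\D_\ugamma$ is the structure morphism of the action groupoid $\D_\ugamma\ltimes\mhol^0_\Gamma(\C,\Y)$, whose object-level map is the anchor $\rho\co\mhol^0_\Gamma(\C,\Y)\rto D^0_\ugamma$ of Lemma~\ref{lem fiber-of-rho}. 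Combined with the local triviality of $\rho$ recorded after \eqref{E unified-fiber-of-stable-maps}, this already exhibits $\hol_\Gamma(\C,\Y)$ as a fibration over $\D_\ugamma$ whose fiber over $\h\in D^0_\ugamma$ is the unit (trivial) groupoid on $\rho\inv(\h)=\widetilde\mhol^0_\bgamma(\C,[\P(L^0_{e(\h)}\oplus\cplane)\rtimes\langle\h\rangle])$, which the conjugation identification $\langle\h\rangle\cong K_\ugamma$ carries onto $\widetilde\mhol^0_\bgamma(\C,[\P^1\rtimes K_\ugamma])$. It remains to push this forward along $\D_\ugamma\rto\D'_\ugamma$ inside the groupoid-fibration formalism of \cite{Chen-Du-WangY2020}.

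First I would identify the fiber of the composite $\hol_\Gamma(\C,\Y)\rto\D_\ugamma\rto\D'_\ugamma$ over an object $\h\in D^0_\ugamma$, now regarded in $\D'_\ugamma$. Since $\D_\ugamma\rto\D'_\ugamma$ is a $C_{K_\ugamma}(\h_\ugamma)$-gerbe (Remark following the construction of $\D'_\ugamma$), its fiber over $\h$ is the one-object groupoid with automorphism group $C_{\langle\h\rangle}(\h)\cong C_{K_\ugamma}(\h_\ugamma)$; hence the fiber of the composite is the restriction of $\hol_\Gamma(\C,\Y)\rto\D_\ugamma$ to this gerbe fiber, that is, the unit groupoid on $\widetilde\mhol^0_\bgamma(\C,[\P(L^0_{e(\h)}\oplus\cplane)\rtimes\langle\h\rangle])$ with the $C_{\langle\h\rangle}(\h)$-part of the $\D_\ugamma$-action of Lemma~\ref{lem D-Gamma-action-on-Hol} adjoined. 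An arrow $(\h,k)$ with $k\in C_{\langle\h\rangle}(\h)$ satisfies $k\inv\h k=\h$, so by Lemma~\ref{lem D-Gamma-action-on-Hol} it sends the fiber $\rho\inv(\h)$ to itself and acts there by the automorphism of $[\P(L^0_{e(\h)}\oplus\cplane)\rtimes\langle\h\rangle]$ induced by $k$, which is exactly the canonical $C_{\langle\h\rangle}(\h)$-action on that fiber space. Therefore, by the two displayed identities preceding the statement,
\[
C_{\langle\h\rangle}(\h)\ltimes\widetilde\mhol^0_\bgamma(\C,\P(L^0_{e(\h)}\oplus\cplane)\rtimes\langle\h\rangle)=\hol_\bgamma(\C,\P(L^0_{e(\h)}\oplus\cplane)\rtimes\langle\h\rangle)\cong\hol_\bgamma(\C,\P^1\rtimes K_\ugamma),
\]
the fiber of the composite is $\hol_\bgamma(\C,\P^1\rtimes K_\ugamma)$, as claimed.

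Finally I would check local triviality of the composite in the sense of \cite{Chen-Du-WangY2020}: over a chart of $\D'_\ugamma$ simultaneously trivializing $\rho$ and the gerbe $\D_\ugamma\rto\D'_\ugamma$, the resulting family of fibers is the constant family with value $\hol_\bgamma(\C,\P^1\rtimes K_\ugamma)$, since the conjugation isomorphism $\langle\h\rangle\cong K_\ugamma$ carries the $\langle\h\rangle$-representation $L^0_{e(\h)}$ onto the fixed $K_\ugamma$-representation $L^0_{e(\h_\ugamma)}$ --- precisely the normalization already used in forming \eqref{E unified-fiber-of-stable-maps}. I expect the main obstacle to be the formal bookkeeping at this last step: one must verify that simultaneously quotienting the base arrows by $\ker\D_\ugamma$ and re-adjoining exactly those centralizer arrows to the fibers assembles into a single honest groupoid fibration, with no arrows lost or double counted. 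The structural fact that makes this work is that $C_{\langle\h\rangle}(\h)$ --- the portion of $\ker D^1_\ugamma$ based at $\h$ --- is normal in the full $\h$-isotropy $C_{G_{e(\h)}}(\h)$ in $\D_\ugamma$, so the residual $\D'_\ugamma$-action on the enriched fibers is well defined and compatible with the $\ker\D_\ugamma$-enrichment; granting this, the fibration $\hol_\bgamma(\C,\P^1\rtimes K_\ugamma)\hrto\hol_\Gamma(\C,\Y)\rto\D'_\ugamma$ of the statement follows.
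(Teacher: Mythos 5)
Your proposal is correct and follows essentially the same route as the paper: both reduce to Lemma \ref{L 3.6}, compute the preimage of the object $\h$ and of the identity class $[\h,1_{p_\h}]$ under the composed projection, and identify the resulting fiber as the action groupoid $C_{\langle\h\rangle}(\h)\ltimes\widetilde{\mhol}^0_\bgamma(\C,\P(L^0_{e(\h)}\oplus\cplane)\rtimes\langle\h\rangle)\cong\hol_\bgamma(\C,\P^1\rtimes K_\ugamma)$. Your extra remarks on local triviality and the normality of $C_{\langle\h\rangle}(\h)$ are consistent with what the paper establishes before the lemma.
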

See for \cite[\S 3]{Chen-Du-WangY2020} for the definition of groupoid fibration. One can think it as a groupoid fiber bundle with fiber being groupoids.
\begin{proof}
By Lemma \ref{L 3.6} we only have to show that the composed projection
\[
\hol_\Gamma(\C,\Y)\rto \D_\ugamma\rto \D_\ugamma'
\]
has fiber being isomorphic to $\hol_\bgamma(\C,\P^1\rtimes K_\ugamma)$.

First of all, the projection on object space is
\[
\rho\co\mhol^0_\Gamma(\C,\Y)\rto D_\ugamma^0.
\]
Secondly, on arrows, the projection is the composition
\[
\rho^1\co D_\ugamma^1\times_{s,D_\ugamma^0,\rho} \mhol^0_\Gamma(\C,\Y)\rto D_\ugamma\rto D_\ugamma^{',1}
\]
which is
\[
((\h,h);\C\xleftarrow{\epsilon}\C'\xrightarrow{\f}\Y)\mapsto [\h,h],
\]
where $[\h,h]$ is the equivalence class of $(\h,\h)$ in $D_\ugamma^{',1}=D_\ugamma^1/\sim$.

Now we consider the fiber of this projection. Take a point $\h\in D_\ugamma^0$ and consider the identity arrow $[\h,1_{p_\h}]\in D_\ugamma^{',1}$, where $p_\h=e(\h)\in D^0$. Then the inverse images of $\h$ and $[\h,1_{p_\h}]$ are
\[
(\rho^0)\inv(\h)= \widetilde{\mhol}^0_\bgamma(\C,\P(L^0_{p_\h}\oplus\cplane)\rtimes \langle \h\rangle),
\]
and
\[
(\rho^1)\inv([\h,1_{p_\h}])=C_{\langle \h\rangle}(\h)\times \widetilde{\mhol}^0_\bgamma(\C,\P(L^0_{p_\h}\oplus\cplane)\rtimes \langle \h\rangle)
\]
respectively. Therefore, the fiber of $\rho$ over $([\h,1_{p_\h}]\rrto \h)$ is
\[
C_{\langle \h\rangle}(\h)\ltimes \widetilde{\mhol}^0_\bgamma(\C,\P(L^0_{p_\h}\oplus\cplane)\rtimes \langle \h\rangle)\cong\hol_\bgamma(\C,\P^1\rtimes K_\ugamma).
\]
This finishes the proof.
\end{proof}

Now we vary $\C$ in $\C_{g,n}$. Note that the $MP_{g,n}$-action on $\bigcup_{b\in\scr T_{g,n}}\mhol^0_\Gamma(\C_b,\Y)$ commutes with the $\D_\ugamma$-action, we see that the top strata $\scr M_\Gamma(\D_0|\Y|\D_\0)$ is a fibration over $\D'_\ugamma$ with fiber $\scr M_\bgamma([0\rtimes K_\ugamma]|[\P^1\rtimes K_\ugamma]|[\0\rtimes K_\ugamma])$.

\subsubsection{The moduli space $\M_\Gamma$}
Now for the whole moduli space $\M_\Gamma$ we have
\begin{lemma}\label{lem fiber-moduli-extension}
The moduli space $\M_\Gamma(\D_0|\Y|\D_\0)$ is a groupoid fibration over $\D_\ugamma'$ with fiber being $\M_\bgamma([0\rtimes K_\ugamma]|[\P^1\rtimes K_\ugamma]|[\0\rtimes K_\ugamma])$.
\end{lemma}
\begin{proof}
Consider a general stable map $\C\xleftarrow{\epsilon}\C'\xrightarrow{\f} \Y[l_0,l_\0]$ of topological type $\Gamma$. As above by \cite[Theorem 7.4]{Chen-Du-WangY2020} we could assume that the image of objects $f^0(C^{'0})$ is a point $p_\f$ in $D^0$. Then as the analysis in \S \ref{subsubsec top-strata} the restriction of $\f$ over each irreducible component of $\C'$ is determined by the corresponding homomorphism between orbifold fundamental group of the irreducible component and the local group $G_{p_\f}$ of $p_\f$ in $\D$. Each nodal point $q$ in $\C$ contributes a generator, say $\lambda_{q,+}$ or $\lambda_{q,-}$ of finite order (determined by the twisted sectors of $\D_0,\D_\0$ or $\Y$ that this nodal point mapped into), to the orbifold fundamental groups of the two irreducible branches of $\C'$ at $q$ respectively. Then one see that the induced homomorphisms on orbifold fundamental groups of irreducible components of $\C'$ must satisfy that the images of $\lambda_{q,+},\lambda_{q,-}$ are inverse to each other for each nodal point $q$. Conversely, if the homomorphisms on orbifold fundamental groups of irreducible components of $\C'$ satisfy that the images of $\lambda_{q,+},\lambda_{q,-}$ are inverse to each other for all nodal points, then they determine an $\f$. Then one see that similar results as Lemma \ref{lem fiber-of-rho}, Lemma \ref{lem D-Gamma-action-on-Hol} and Lemma \ref{L 3.6} hold for general stable maps. Therefore we have the analogue of Lemma \ref{lem fiber-moduli-top-strata}.
\end{proof}

This also finishes the proofs of Theorem \ref{thm structure-fiber-cls-moduli}.

\begin{remark}\label{R 3.11}
The moduli spaces for fiber class relative invariants of $(\Y|\D_0)$ and $(\Y|\D_\0)$ have similar structures.

More generally, one can consider the weighted projectification $\oE_\wa$ of general orbifold vector bundles $\E\rto\os$ and the fiber class relative Gromov--Witten invariants of $(\oE_\wa|\PE_\wa)$. The corresponding moduli spaces have similar structure as Theorem \ref{thm structure-fiber-cls-moduli} and Lemma \ref{lem fiber-moduli-extension} state, that is they are fibrations over certain multi-sectors of $\os$ with fibers being relative moduli spaces of quotients of $(\msf P_{\wa,1}|\msf P_\wa)$ by certain finite groups.
\end{remark}

\subsection{The invariants}
We next compute the invariants. Consider the deformation-obstruction theory of $\M_\Gamma(\D_0|\Y|\D_\0)$, which at a point $(\epsilon,\C',\f)\co(\C,\x,\y,\z)\rto(\D_0|\Y|\D_\0)$ is
\begin{align}\label{E def-obs-complex-1}
0&\rto \aut(\C,\x,\y,\z) \rto \text{Def}(\f) \rto T^1_{(\C,\x,\y,\z,\f)}\rto\\
&
\rto \text{Def}(\C,\x,\y,\z) \rto \text{Obs}(\f) \rto T^2_{(\C,\x,\y,\z,\f)}\rto 0,\nonumber
\end{align}
where $\aut(\C,\x,\y,\z)$ is the space of infinitesimal automorphism of the domain $(\C,\x,\y,\z)$, $\text{Def}(\C,\x,\y,\z)$ is the space of infinitesimal deformation of the domain $(\C,\x,\y,\z)$,  $\text{Def}(\f)=H^0(\C,\f^* T\Y(-\D_0-\D_\0))$ is the space of infinitesimal deformation of the map $\f$, and $\text{Obs}(\f)=H^1(\C,\f^* T\Y(-\D_0-\D_\0))$ is the space of obstruction to deforming $\f$.

According to the splitting of $T\Y(-\D_0-\D_\0)$ into fiber part and base part, the \eqref{E def-obs-complex-1} splits into
\begin{align}\label{E fiber-def-obs-complex}
0&\rto \aut(\C,\x,\y,\z) \rto H^0(\C,\f^* T_{\mathrm{fiber}}\Y(-\D_0-\D_\0)) \rto \overline T^1_{(\C,\x,\y,\z,\f)}\rto\\
&\rto \text{Def}(\C,\x,\y,\z) \rto H^1(\C,\f^* T_{\mathrm{fiber}}\Y(-\D_0-\D_\0)) \rto \overline T^2_{(\C,\x,\y,\z,\f)}\rto 0,\nonumber
\end{align}
and
\begin{align}\label{E base-def-obs-complex}
0&\rto 0\rto H^0(\C,(\pi\circ\f)^*T\D) \rto \underline T^1_{(\C,\x,\y,\z,\f)}\rto\\
&\rto 0 \rto H^1(\C,(\pi\circ\f)^*T\D) \rto \underline T^2_{(\C,\x,\y,\z,\f)}\rto 0,\nonumber
\end{align}
where $\pi:\Y\rto\D$.

Fiberwisely, $\overline T^1_{(\C,\x,\y,\z,\f)}-\overline T^2_{(\C,\x,\y,\z,\f)}$ is the deformation-obstruction theory of $\M_\bgamma([0\rtimes K_\ugamma]\mid[\P^1\rtimes K_\ugamma]\mid[\0\rtimes K_\ugamma])$. On the other hand from \eqref{E base-def-obs-complex} we have
\[
\underline T^1_{(\C,\x,\y,\z,\f)}- \underline T^2_{(\C,\x,\y,\z,\f)} =H^0(\C,(\pi\circ\f)^*T\D)-H^1(\C,(\pi\circ\f)^*T\D).
\]
It gives rise to the obstruction theory of stable maps of genus $g$, degree zero and type $\ugamma$ in $\D$. By the index theorem of Chen--Ruan \cite[Theorem 4.2.2]{Chen-Ruan2004} and the same proof of \cite[Theorem 3.2]{Hu-Wang2013} (see also \cite[Proposition 1]{Chen-Hu2006}) we have
\begin{align*}
\text{rank}\,(\underline T^1_{(\C,\x,\y,\z,\f)}- \underline T^2_{(\C,\x,\y,\z,\f)} )
&=\dim\D(1-g)-\iota^\D(\h)-\iota^\D(\h')-\iota^\D(\uh)\\
&=-\dim\D\cdot g+(\dim\D-\iota^\D(\h)-\iota^\D(\h')-\iota^\D(\uh))\\
&=-\dim\D\cdot g+\dim\D_\ugamma-\mbox{rank}\ E_\ugamma
\end{align*}
where $E_\ugamma$ is the obstruction bundle over $\D_\ugamma$ defined in \cite[\S 4.2]{Chen-Ruan2004}, whose fiber is the co-kernel of $\bar\p$ operator.

The fiberwise deformation-obstruction theory gives rise to the $\pi$-relative virtual fundamental class
\[
[\M_\Gamma(\D_0|\Y|\D_\0)]^{\vir_\pi},
\]
and we have
\[
[\M_\Gamma(\D_0|\Y|\D_\0)]^\vir
=(c_{\mathrm{top}}(\mathbb E\boxtimes T\D)\cup
c_{\mathrm{top}}(E_{\ugamma})) \cap[\M_\Gamma(\D_0|\Y|\D_\0)]^{\vir_\pi}
\]
where $\mb E$ is the Hodge bundle over
$\M_\bgamma([0\rtimes K_\ugamma]|[\P^1\rtimes K_\ugamma]|[\0\rtimes K_\ugamma])$.

Note that
\[
c_{\mathrm{top}}(\mathbb E\boxtimes T\D)
=\sum_q h_q(c_1(\mathbb E),c_2(\mathbb E),\ldots)
t_q(c_1(T\D),c_2(T\D),\ldots).
\]
where $h_q$ and $t_q$ are polynomials.

Therefore the invariant \eqref{E notation-for-rel-inv-DYD} is
\begin{align*}
&\int_{[\M_\Gamma(\D_0|\Y|\D_\0)]^\vir} \prod_{i=1}^m\overline\psi_i^{k_i} {\sf ev}_i^*(\gamma_{(\bar h_i)}^{l_i}) \cup \prod_{j=1}^{\ell(\mu)} {\sf rev}^{\D_0,*}_j(\theta_{(h_j)}^{s_j})
\cup \prod_{k=1}^{\ell(\nu)} {\sf rev}^{\D_\0,*}_k(\theta_{(h_k')}^{s_k'})\\
=&\int_{[\M_\Gamma(\D_0|\Y|\D_\0)]^{\vir_\pi}} \prod_{i=1}^m\overline\psi_i^{k_i} {\sf ev}_i^*(\gamma_{(\bar h_i)}^{l_i}) \cup \prod_{j=1}^{\ell(\mu)} {\sf rev}^{\D_0,*}_j(\theta_{(h_j)}^{s_j})
\cup \prod_{k=1}^{\ell(\nu)} {\sf rev}^{\D_\0,*}_k(\theta_{(h_k')}^{s_k'})\\
&\qq\qq\qq\qq\qq\qq\qq\qq\qq \cup c_{\mathrm{top}}(\mathbb E\boxtimes T\D)\cup c_{\mathrm{top}}(E_{\ugamma})
\end{align*}
Denote the integrand
\[
\prod_{i=1}^m
{\sf ev}_i^*(\gamma_{(\bar h_i)}^{l_i})
\cup
\prod_{j=1}^{\ell(\mu)} {\sf rev}^{\D_0,*}_j(\theta_{(h_j)}^{s_j})
\cup
\prod_{k=1}^{\ell(\nu)} {\sf rev}^{\D_\0,*}_k(\theta_{(h_k')}^{s_k'})
\]
by $\Xi$. As $\gamma_{\bar h_i}^{l_i}$ belongs to the basis \eqref{eq basis-of-Y}, so it is of the form $\theta_{(h)}^l$ or $\theta_{(h)}^l\cdot[\D_0(h)]$. Let $\Xi^F$ denote those possible factor $[\D_0(h_i)]$ coming from $\gamma_{\bar h_i}^{l_i}$, and $\Xi^\D$ denote the rest part. So all classes in $\Xi^\D$ are pullback classes from $H^*_{\text{CR}}(\D)$. Then the invariant \eqref{E notation-for-rel-inv-DYD} is
\begin{align}\label{eq cput-fiber-inv}
&\int_{[\M_\Gamma(\D_0|\Y|\D_\0)]^\vir}
\prod_{i=1}^m\overline\psi_i^{k_i}\wedge \Xi\\
=&\int_{[\M_\Gamma(\D_0|\Y|\D_\0)]^{\vir_\pi}}
\prod_{i=1}^m\overline\psi_i^{k_i}
\wedge \Xi \wedge
c_{\mathrm{top}}(\mathbb E\boxtimes T\D)\cup c_{\mathrm{top}}(E_{\ugamma})\nonumber\\
=&\sum_q \int_{\D_\ugamma'}\Big\{ \Xi^\D \cup c_{\mathrm{top}}(E_{\ugamma})\cup t_q
\int_{[\M_\bgamma([0\rtimes K_\ugamma]|[\P^1\rtimes K_\ugamma]|[\0\rtimes K_\ugamma])]^{\mathrm{vir}}}
\Xi^{F}\cup \prod_{i=1}^n\overline\psi_i^{k_i}\cup h_q\Big\}
\nonumber
\end{align}
The Hodge integrals in the relative Gromov--Witten invariants of $([0\rtimes K_\ugamma]|[\P^1\rtimes K_\ugamma]|[\0\rtimes K_\ugamma])$ can be computed via virtual localizations, and the computation of the double ramification cycles on the moduli spaces of admissible covers of Tseng and You \cite{Tseng-You2016b}. The Hodge integrals reduces to the Hodge integrals over $\M(BK_\ugamma)$, which can be removed by the orbifold quantum Riemann--Roch of Tseng \cite{Tseng2010}. Finally the descendent integrations over the moduli spaces of stable curves are determined by Witten's conjecture \cite{Witten1990}, equivalently Kontsevich's theorem \cite{Kontsevich1992}.

\begin{remark}
Similar analysis applies to fiber class relative invariants of $(\Y|\D_0)$ and $(\Y|\D_\0)$, and an analogue of \eqref{eq cput-fiber-inv} holds for fiber class relative invariants of $(\Y|\D_0)$ and $(\Y|\D_\0)$. Therefore, every fiber class relative invariant of $(\Y|\D_0)$ or $(\Y|\D_\0)$ reduces respectively to relative invariants of $([\P^1\rtimes G]|[0\rtimes G])$ or $([\P^1\rtimes G]|[\0\rtimes G])$, where $G$ is a finite group determined by the invariant and acts on $\P^1=\P(\cplane\oplus\cplane)$ by acting on the first $\cplane$ linearly and on the second $\cplane$ trivially. All relative invariants of such $([\P^1\rtimes G]|[0\rtimes G])$ and $([\P^1\rtimes G]|[\0\rtimes G])$ were determined by Tseng and You \cite{Tseng-You2016b}.
\end{remark}

\section{Relative orbifold Gromov--Witten theory of weighted projectification}
\label{sec rel-GW-of-weight--proj}

In this section we determine relative orbifold Gromov--Witten theory of $(\oE_\wa|\PE_\wa)$. As in \S \ref{subsec notation-for-rel-inv-DYD}, \eqref{E notation-for-rel-inv-DYD} consider a relative invariant of $(\oE_\wa|\PE_\wa)$
\begin{align}\label{eq a-rel-inv-of-oE-PE}
\bl \prod_{i=1}^m \tau_{k_i} \gamma_{(\bar h_i)}^{l_i}
\bb \mu\br^{(\oE_\wa|\PE_\wa)}_\Gamma:=&\frac{1}{|\mbox{Aut}(\mu)|}
\int_{[\M_\Gamma(\oE_\wa|\PE_\wa)]^\vir}
\prod_{i=1}^m\overline\psi_i^{k_i}
{\sf ev}_i^*(\gamma_{(\bar h_i)}^{l_i})
\cup\prod_{j=1}^{\ell(\mu)} {\sf rev}^*_j(\theta_{(h_j)}^{r_j})
\end{align}
with topological data  $\Gamma=(g,\beta,(\bar\h),\vec\mu)$, where $\vec\mu=\left((\mu_1,(h_1)),\ldots,(\mu_{\ell(\mu)}, (h_{\ell(\mu)}))\right)$, and $\sum_j \mu_j=\int_\beta^{\mathrm{orb}} [\PE_\wa]\geq 0$ is the sum of orbifold contact orders along $\PE_\wa$ of the relative maps. The absolute insertions $\gamma_{(\bar h_i)}^{l_i}$ belong to the basis \eqref{eq basis-of-oE} of $H^*_{\text{CR}}(\oE_\wa)$, and the relative insertions $\theta_{(h_j)}^{r_j}$ belong to the basis \eqref{eq basis-of-PE} of $H^*_{\text{CR}}(\PE_\wa)$. As in \S \ref{subsec notation-for-rel-inv-DYD}, \eqref{E notation-for-rel-inv-DYD} we denote by
\[
\varpi=(\tau_{k_1} \gamma_{(\bar h_1)}^{l_1},\ldots,
\tau_{k_m} \gamma_{(\bar h_m)}^{l_m})
\]
the absolute insertions and set $\|\varpi\|=m$ to be the number of insertions in $\varpi$.

\subsection{Localization}\label{subsec localization}

There is a fiberwise $\cplane^*$-action on $\oE_\wa$ coming from the fiberwise $\cplane^*$-dilation on $\E$. We next apply the relative virtual localization with respect to this $\cplane^*$-action to compute the relative invariant \eqref{eq a-rel-inv-of-oE-PE}. The fixed loci in $\oE_\wa$ consist of the zero section $\os$, and the infinity section $\PE_\wa$. The fixed lines connecting them are lines in the fiber of $\oE_\wa\rto\os$ that connect a point in $\os$ and points in $\PE_\wa$, which correspond to lines in $\msf P_{\wa,1}$ connecting $[0,\ldots,0,1]$ with $[z_1,\ldots,z_n,0]$.

Consider the relative invariant \eqref{eq a-rel-inv-of-oE-PE}. Denote the moduli space simply by $\M_\Gamma$, with $\Gamma=(g,\beta,(\bar\h),\mu)$ denoting the topological data. As in \S \ref{subsec 3.3}, stable maps in $\M_\Gamma$ consists of two types, those mapped to the unexpanded target $(\oE_\wa|\PE_\wa)$, and those mapped to an expanded target $(\oE_\wa[l]|\PE_\wa)$. Here $\oE_\wa[l]$ is obtained in a parallel way of \eqref{E expanded-Y} as follows.

Denote the normal line bundle of $\PE_\wa$ in the weight-$\wa$ projectification $\oE_\wa$ of $\E$ by $\L$ (which is $\mc O_{\PE_\wa}(1)$, the dual line bundle of $\mc O_{\PE_\wa}(-1)$). Then we have the projectification $\Y=\P(\L\oplus\mc O_{\PE_\wa})$ (cf. \S \ref{subsec porjfi-line--bundle}). It has the zero section $\D_0$ and the infinity section $\D_\0$. Both are isomorphic to $\PE_\wa$. Take $l$ copies of $\Y$. Denote the zero section and infinity section of the $i$-th copy of $\Y$ by $\D_{0,i}$ and $\D_{\0,i}, 1\leq i\leq l$. We first glue the $l$ copies of $\Y$ together to get $\Y[l]$ via identifying $\D_{0,i}$ with $\D_{\0,i+1}$ for $1\leq i\leq l-1$. Then $\oE_\wa[l]$ is obtained by gluing $\oE_\wa$ with $\Y[l]$ by identifying $\PE_\wa$ with $\D_{\0,1}\in\Y[l]$. We also denote the $\PE_\wa$ in $\oE_\wa$ by $\D_{0,0}$. Then
\[
\mbox{Sing}(\oE_\wa[l])= \bigsqcup_{i=0}^{l-1}\D_{0,i} =\bigsqcup_{i=1}^l \D_{\0,i}.
\]
As for $\Y[l_0,l_\0]$, the $\oE_\wa$ in $\oE_\wa[l]$ is called the {\em root}, and the rest $\Y[l]$ is called the {\em rubber}.

Therefore there are two types of fixed loci of the induced $\cplane^*$-action on $\M_\Gamma$. A component of the fixed loci consisting of stable maps with target $\oE_\wa$ is call a {\bf simple} fixed locus. Otherwise, it is called a {\bf composite} fixed locus. We denote the simple fixed locus by $\M_\Gamma^{\simple}$. Denote the virtual normal bundle of $\M_\Gamma^\simple$ in $\M_\Gamma$ by $\cN_\Gamma$.

Every element of a composite fixed locus is of the form $\f\co \C'\cup\C''\rto \oE_\wa[l]~(l\geq 1)$\footnote{Here and in the following for simplicity we omit the refinement of domain curve by open covers of object spaces.}, such that the restrictions $\f'\co \C'\rto\oE_\wa$ and $\f''\co \C''\rto\Y[l]$ agree over the nodal points $\{\msf n_1,\cdots,\msf n_k\}=\C'\cap\C''$.
Suppose $\msf n_i$ is mapped into $\PE(h'_i)$, and the contact order of $\f'$ at $\msf n_i$, i.e. at $\D_{\0,1}(=\oE_\wa\cap \Y[l])$ in $\Y[l]$, is $\eta_i$ for $1\leq i\leq k$. Let $\Gamma'$ be the topological data corresponding to $\f'$ and $\Gamma''$ the topological data corresponding to $\f''$. (Here $\Gamma''$ denote the genus, absolute marked points, degree and contact orders of relative marked points relative to both $\D_{0,l}$ in $\Y[l]$ and contact orders at those nodes $\msf n_i, 1\leq i \leq k$.) Any two of $\{\Gamma,\Gamma',\Gamma''\}$ determine the third, and $\Gamma'$ gives us a simple fixed locus $\M_{\Gamma'}^\simple$.

Denote by $\M_{\Gamma''}^\sim$ the moduli space of relative stable maps to the rubber (see for example \cite{Graber-Vakil2005,Maulik-Pandharipande2006}). Then the composite fixed locus $\F_{\Gamma',\Gamma''}$ corresponding to a given $\Gamma'$ and $\Gamma''$ is canonically isomorphic to the quotient of the moduli space
\[
\M_{\Gamma',\Gamma''}:= \M_{\Gamma'}^\simple\times_{(\I\PE)^\ell}\M_{\Gamma''}^\sim
\]
by the finite group $\aut(\vec\eta)$, which consists of permutations of $\{1,\ldots,k\}$ preserving
\[
\vec\eta=\left((\eta_1,(h'_1)),\ldots, (\eta_k,(h'_k))\right).
\]
Denote the quotient map by $gl$:
\[
gl\co \M_{\Gamma',\Gamma''}\rto \F_{\Gamma',\Gamma''}.
\]
Set
\[
[\M_{\Gamma',\Gamma''}]^\vir :=\Delta^!([\M_{\Gamma'}^\simple]^\vir\times [\M_{\Gamma',\Gamma''}^\sim]^\vir)
\]
where
\[
\Delta\co (\I\PE_\wa)^k\rto (\I\PE_\wa)^k\times (\I\PE_\wa)^k
\]
is the diagonal map. Then we have
\[
[\F_{\Gamma',\Gamma''}]^\vir= \frac{1}{|\aut(\vec\eta)|}gl_\ast[\M^{\Gamma',\Gamma''}]^\vir.
\]

The virtual normal bundle of composite locus $\F_{\Gamma',\Gamma''}$ consists of two parts. The first part is the virtual normal bundle $\cN_{\Gamma'}$ of $\M_{\Gamma'}^\simple$ in $\M_{\Gamma'}$. The second part is a line bundle $\mc L$ corresponding to the deformation (i.e. smoothing) of the singularity $D_{\0,1}(=\oE_\wa\cap\Y[l])$ in $\Y[l]$. The fiber of this line bundle over a point in the fixed locus is canonically isomorphic to $H^0(\PE_\wa,\N_{\PE_\wa|\oE}\otimes\N_{\D_{\0,1}|\Y[l]})$. The line bundle
$\N_{\PE_\wa|\oE_\wa}\otimes\N_{\D_{\0,1}|\Y[l]}=\L\otimes \L^*=\mc O_{\PE_\wa}$
is trivial over $\PE_\wa$, so its space of global sections is one-dimensional, and we can canonically identify this space of sections with the fiber of the line bundle at a generic point $pt$ of $\PE_\wa$. Thus we can write the bundle $\mc L$ as a tensor product of bundles pulled back from the two factors separately. The one coming from $\M_{\Gamma'}$ is trivial, since it is globally identified with $H^0(pt,\N_{\PE_\wa|\oE_\wa}\big|_{pt})$, but it has a nontrivial torus action; we denote this weight by $t$. The line bundle coming from $\M_{\Gamma''}^\sim$ is a nontrivial line bundle, which has fiber $H^0(pt,\N_{\D_{\0,1}|\Y[l]}\big|_{pt})$, but has trivial torus action. We denote its first Chern class by $\Psi_\0$\footnote{In some literatures, $-\Psi_\0$ is referred as ``target psi class''. $-\Psi_\0$ correspond to the $\psi$ in \cite[\S 2.5 and \S3.3]{Graber-Vakil2005} and the $\Psi_0$ in \cite[\S 1.5.2]{Maulik-Pandharipande2006}. There is also another one $\Psi_0$ out of $\N_{\D_{0,l}|\Y[l]}|_{pt}$, corresponding to the $\Psi_\0$ in \cite[\S 1.5.2]{Maulik-Pandharipande2006}.}.

The relative virtual localization for relative orbifold Gromov--Witten invariants reads
\begin{align}\label{eq vir-local-formula}
[\M_\Gamma]^\vir=\frac{[\M_\Gamma^\simple]^\vir}{e(\cN_\Gamma)}+
\sum_{\substack{\M_{\Gamma',\Gamma''} \text{ composite}}}
\frac{(\prod_i \eta_i) gl_*[\M_{\Gamma',\Gamma''}]^\vir}
{|\aut(\vec\eta)|e(\cN_{\Gamma'})(t+\Psi_\0)}.
\end{align}
Here $\prod_i \eta_i$ is the ``mapping degree'' of the gluing maps (cf. \cite[\S 5.3.2]{Chen-Li-Sun-Zhao2011}), and $t$ is the equivariant weight of the $\cplane^*$--action.

We next describe explicitly the fixed loci of $\M_\Gamma$. We first consider the simple fixed locus $\M_\Gamma^\simple$. A map $\f\co (\C,\x,\y)\rto(\oE_\wa|\PE_\wa)$ in the simple fixed locus must have the following form.
\begin{itemize}
\item[(1)]
       $(\C,\x,\y)$ with absolute marked points $\x=(\x_1,\ldots,\x_m)$ and relative marked points $\y=(\y_1,\ldots,\y_{\ell(\mu)})$ is of the form
       \[
       \C=\C_0\cup\C_1\cup\ldots\cup \C_{\ell(\mu)},
       \]
       where $\C_0\cap \C_i=\{\msf n_i\}$ is a nodal point, called a {\em distinguished nodal point}, and $\C_i\cap \C_j=\varnothing$ for
       $1\leq i<j\leq \ell(\mu)$. Moreover $\x_i\in\C_0, 1\leq i\leq m$,
       and $\y_j\in\C_j,1\leq j\leq \ell(\mu)$.
\item[(2)]
       $\C_0$ is a genus $g$ pre-stable curves with $m$ marked points $\x$ and $\ell(\mu)$ marked points $\msf n=(\msf n_1,\cdots,\msf n_{\ell(\mu)})$ corresponding to the $\ell(\mu)$ distinguished nodes.
\item[(3)]
       For $1\leq i\leq \ell(\mu)$, each $\C_i$ is an (orbifold) Riemann sphere with a marked point $\y_i$ and a marked point $\msf n_i$ corresponding to the $i$-th distinguished nodal point.
\item[(4)]
      $\f\co (\C_0,\x\sqcup\msf n)\rto\os$ is a genus $g$ degree $\pi_*(\beta)$ stable maps to $\os$, and belongs to $\M_{g,\pi_*(\beta),\pi_t(\bar\h\sqcup\h)}(\os)$.
\item[(5)]
      For $1\leq i\leq \ell(\mu)$, $\f\co (\C_i,\y_i,\msf n_i)\rto\oE_\wa$ is a total
      ramified covering of a line in the fiber of $\oE_\wa$ that connects a point in $\PE_\wa(h_i)$ and a point in $\os(\pi_t(h_i\inv))$, the degree is determined by the contact order at $\PE_\wa$. Hence it is in the simple fixed loci of the moduli space
      \[
      \M_{0,\mu_i[F],\pi_t(h_i\inv), (\mu_i,(h_i))}(\oE_\wa|\PE_\wa),
      \]
      the moduli space of fiber class $\mu_i[F]$ stable maps from orbifold Riemann spheres with exactly one absolute marked point mapped to $\oE_\wa(\pi_t(h_i\inv))\supseteq\os(\pi_t(h_i\inv))$ and one relative marked point mapped to $\PE_\wa(h_i)$ with contact order $\mu_i$. For simplicity, we denote this simple fixed locus by $\M_{\mu_i,(h_i)}^\simple$. Denote the disconnected union of these $\M_{0,\mu_i[F],\pi_t(h_i\inv), (\mu_i,(h_i))}(\oE_\wa|\PE_\wa)$ by $\M^\bullet_{\vec \mu}$, and the disconnected union of simple fixed locus of them by $\M^{\bullet,\simple}_{\vec \mu}$.
\end{itemize}

Therefore, the simple fixed locus $\M_\Gamma^\simple$ is obtained by gluing stable maps in $\M_{g,\pi_*(\beta),\pi_t(\bar\h\sqcup\h)}(\os)$, and stable maps in $\M^{\bullet,\simple}_{\vec \mu}$ along the absolute marked points of $\C_0$ and $\C_i$ corresponding to those distinguished nodal points. Moreover
\[
gl\co \M_{\vec \mu}^{\bullet,\simple}\times_{(\I\os)^{\ell(\mu)}}
\M_{g,\pi_*\beta,\pi_t(\bar\h\sqcup\h)}(\os)\rto \M_\Gamma^\simple
\]
is a degree $|\aut(\vec\mu)|$ cover. The fiber product is taken with respect to the evaluation maps at the marked points out of the $\ell(\mu)$ distinguished nodal points of $\C$.

The tangent space $T^1_{(\C,\x,\y,\f)}$ and the obstruction space $T^2_{(\C,\x,\y,\f)}$ at a point $\f\co(\C,\x,\y)\rto(\oE_\wa|\PE_\wa)$ in $\M_\Gamma^\simple$ fit in the following long exact sequence of $\cplane^*$--representations:
\begin{align*}
0&\rto \aut(\C,\x,\y) \rto \text{Def}(\f) \rto T^1_{(\C,\x,\y,\f)}\rto\\
&\rto \text{Def}(\C,\x,\y) \rto \text{Obs}(\f) \rto T^2_{(\C,\x,\y,\f)}\rto 0,
\end{align*}
where
\begin{enumerate}
\item[(a)]  $\aut(\C,\x,\y)=\text{Ext}^0(\Omega_\C(\sum_i\x_i+\sum_j\y_j),
       \mc O_\C)$ is the space of infinitesimal automorphism of the domain
       $(\C,\x,\y)$. We have
       \[
       \aut(\C,\x,\y)=\aut(\C_0,\x,\msf n)\oplus\bigoplus_{i=1}^{\ell(\mu)}\aut(\C_i,\y_i,\msf n_i),
       \]
\item[(b)] $\text{Def}(\C,\x,\y)=\text{Ext}^1(\Omega_\C(\sum_i \x_i+\sum_j\y_j),\mc O_\C)$ is the space of infinitesimal deformation of the domain $(\C,\x,\y)$. We have a short exact sequence of $\cplane^*$-representations:
\[
0 \rto \text{Def}(\C_0,\x,\msf n)\rto \text{Def}(\C,\x,\y)\rto
\bigoplus_{i=1}^{\ell(\mu)}T_{\msf n_i}\C_0\otimes T_{\msf n_i}\C_i\rto 0,
\]
\item[(c)] $\text{Def}(\f)=H^0(\C,\f^*(T\oE_\wa(-\PE_\wa)))$ is the space of infinitesimal deformation of the map $\f$, and
\item[(d)] $\text{Obs}(\f)=H^1(\C,\f^*(T\oE_\wa(-\PE_\wa)))$ is the space of obstruction to deforming $\f$.
\end{enumerate}

For $i=1,2,$ let $T^{i,f}$ and $T^{i,m}$ be the fixed and moving parts of $T^i|_{\M_\Gamma^\simple}$. Then
\[
T^1=T^{1,f}+T^{1,m},\qq T^2=T^{2,f}+T^{2,m}.
\]
The virtual normal bundle of $\M_\Gamma^\simple$ in $\M_\Gamma$ is
\[
\cN_\Gamma=T^{1,m}-T^{2,m}.
\]
Let
\[
B_1=\aut(\C,\x,\y),\, B_2=\text{Def}(\f),\, B_4=\text{Def}(\C,\x,\y),\, B_5= \text{Obs}(\f)
\]
and let $B^f_i$ and $B^m_i$ be the fixed and moving parts of $B_i$. Then
\[
\frac{1}{e_{\cplane^*}(\cN_\Gamma)}=\frac{e_{\cplane^*}(B^m_5) e_{\cplane^*}(B^m_1)} {e_{\cplane^*}(B^m_2)e_{\cplane^*}(B^m_4)}.
\]

On the other hand we have the following exact sequence
\begin{align}\label{eq exact-sequence-O}
0\rto \mc O_\C\rto\bigoplus_{0\leq i\leq \ell(\mu)} \mc O_{\C_i} \rto \bigoplus_{1\leq i\leq \ell(\mu)} \mc O_{\C_{\msf n_i}} \rto 0.
\end{align}
Denote by $V=\f^*(T\oE_\wa(-\PE_\wa))$. Then the exact sequence \eqref{eq exact-sequence-O} gives us the following exact sequence
\begin{align*}
0&\rto H^0(\C,V)\rto \bigoplus_{0\leq i\leq \ell(\mu)} H^0(\C_i,V) \rto \bigoplus_{1\leq i\leq \ell(\mu)} V_{\msf n_i}\\
&\rto H^1(\C,V)\rto \bigoplus_{0\leq i\leq \ell(\mu)} H^1(\C_i,V) \rto0.
\end{align*}

\n Then the virtual normal bundle $\cN_\Gamma$ of $\M_\Gamma^\simple$ in $\M_\Gamma$ consists of
\begin{itemize}
\item[(i)] the normal bundle $\cN_{\vec \mu}$ of $\M_{\vec \mu}^{\bullet,\simple}$ in $\M_{\vec \mu}^\bullet$;
\item[(ii)] the contribution from deforming maps into $\os$, i.e.  $H^0(\C_0,\f^*\E)- H^1(\C_0,\f^*\E)$;
\item[(iii)] the contribution from deforming the distinguished nodal points: $T_{\msf n_i}\C_0\otimes T_{\msf n_i}\C_i-\E|_{\f(\msf n_i)}, 1\leq i\leq\ell(\mu)$, (note that only when $\msf n_i$ is a smooth nodal point, i.e. is not an orbifold nodal point, we have the term $-\E|_{\f(\msf n_i)}$).
\end{itemize}

Denote the last two contribution by $\Theta_\Gamma$, it contains psi-class out of $T_{\msf n_i}\C_0$ and Chern class of $\E$. We could write it in the form
\[
\Theta_\Gamma=\sum_{d\geq 0} \sum_{j+k=d} \left(\Theta_{\Gamma,\os}^j \prod_{i=1}^{\ell(\mu)} \Theta_{\Gamma,i}^k\right)
\]
with $\Theta_{\Gamma,\os}^j$ living over $\M_{g,\pi_*(\beta),\pi_t(\bar\h\sqcup \h)}(\os)$ and $\Theta_{\Gamma,i}^k$ living over each $\M_{\mu_i,(h_i)}^{\simple}$, where the contribution of $H^0(\C_0,\f^*\E)- H^1(\C_0,\f^*\E)$ is contained in $\Theta_{\Gamma,\os}^j, j\geq 0$, the contribution of $-\E|_{\f(\msf n_i)}$ is contained in $\Theta_{\Gamma,i}^k, k\geq 0$, and the contribution of $T_{\msf n_i}\C_0\otimes T_{\msf n_i}\C_i$ splits into both $\Theta_{\Gamma,\os}^j, j\geq0$ and $\Theta_{\Gamma,i}^k, k\geq 0$.

Then for the invariant \eqref{eq a-rel-inv-of-oE-PE}, the contribution from the simple fixed loci is
\begin{align*}
&\int_{[\M_\Gamma^\simple]^\vir} \frac{ev_\x^*\varpi\cup ev_\y^*\mu}{e_{\cplane^*}(\cN_\Gamma)} =\frac{1}{|\aut(\vec\mu)|}\cdot
\\&\sum_{ \substack{ \delta=(\delta_{\pi_t(h_1)},\ldots,\delta_{\pi_t(h_{\ell(\mu)})})\\
\text{in the chosen basis } \\ \eqref{eq basis-of-S} \text{ of } H^*_{\mathrm{CR}}(\os)\\ d\geq 0,\,j+k=d}}
\left( \int_{[\M_{\vec \mu}^{\bullet,\simple}]^\vir}
\frac{ev_\y^*\mu\cup ev_{\msf n}^*\check\delta\cup \prod_{i=1}^{\ell(\mu)}\Theta_{\Gamma,i}^k}{e_{\cplane^*}(\cN_{\vec \mu})}
\cdot \int_{[\M_{g,\pi_*(\beta),\pi_t(\bar\h\sqcup \h)}(\os)]^{\vir}}
ev_\x^*\varpi\cup ev_{\msf n}^*\delta\cup \Theta_{\Gamma,\os}^j \right)
\end{align*}
where the sum is taken over all possible $\ell(\mu)$-tuple of the chosen basis $\sigma_\star$ of $H^*_{\mathrm{CR}}(\os)$ in \eqref{eq basis-of-S}. The integration
\[
\int_{[\M_{\vec \mu}^{\bullet,\simple}]^\vir} \frac{ev_\y^*\mu\cup ev_{\msf n}^*\check\delta\cup \prod_{i=1}^{\ell(\mu)} \Theta_{\Gamma,i}^k}{e_{\cplane^*}(\cN_{\vec \mu})}
=\prod_{i=1}^{\ell(\mu)} \int_{[\M_{\mu_i,(h_i)}^\simple]^\vir}
\frac{ev_{\msf y_i}^*(\theta_{(h_i)}^{s_i})\cup ev_{\msf n_i}^*(\check\delta_{\pi_t(h_i)})\cup\Theta_{\Gamma,i}^k}
{e_{\cplane^*}(\cN_{\mu_i})}
\]
is a product of fiber class (1+1)-point relative invariants of $(\oE_\wa|\PE_\wa)$ and was computed by Hu and the first two authors  \cite[\S 5]{Chen-Du-Hu2019} (see also Remark \ref{R 3.11}). See \eqref{eq 1+1-point-rel-inv} in \S \ref{sec rel-GW-of-weit-blp} for explicit expressions of (1+1)-point fiber class relative invariants of weighted projectification of orbifold vector bundles related to the infinity divisors. These invariants are related to (1+1)-point relative invariants of weighted projective spaces.

The invariant
\[
\int_{[\M_{g,\pi_*\beta,\pi_t(\bar\h\sqcup \h)}(\os)]^{\vir}} ev_\x^*\varpi\cup ev_{\msf n}^*\delta\cup\Theta_{\Gamma,\os}^j
\]
is a Hodge integral in the twisted Gromov--Witten invariant of $\os$ with twisting coming from the bundle $\E$, which by the orbifold quantum Riemann--Roch of Tseng \cite{Tseng2010} is determined by the orbifold Gromov--Witten theory of $\os$
and the total Chern class of $\E\rto \os$.

We next consider the composite fixed loci. Recall that a composite fixed locus is of the form
\[
\F_{\Gamma',\Gamma''} =gl(\M_{\Gamma'}^\simple\times_{(\I\PE_\wa)^{\ell(\vec\eta)}} \M_{\Gamma''}^\sim).
\]
with contribution being
\[
\frac{\prod_i \eta_i}{|\aut(\vec\eta)|}\cdot \frac{gl_*\Delta^!([\M_{\Gamma'}^\simple\times
\M_{\Gamma''}^\sim]^{\vir})}{e(\cN_{\Gamma'})(t+\Psi_\0)}.
\]
By the analysis for simple fixed locus, the contribution from $\M_{\Gamma'}^\simple$ reduces to Gromov--Witten theory of $\os$. Therefore, the contribution of $\F_{\Gamma',\Gamma''}$ reduces to rubber invariants corresponding to $\M_{\Gamma''}^\sim$, which are rubber invariants with $\Psi^k_\0$-integrals of
\[
(\D_0| \Y| \D_\0)=(\PE_{\wa,0}| \P(\L\oplus\mc O_{\PE_\wa})| \PE_{\wa,\0}).
\]
We denote these rubber invariants by
\begin{align}\label{eq notation-of-rubber-invariants}
&\bl\mu\bb\varpi\cdot \Psi_\0^k\bb\nu\br^{ {(\D_0\mid\Y\mid \D_\0)},\sim}_{g,\beta}\\
&:= \frac{1}{|\aut(\mu)||\aut(\nu)|}
\int_{[\M_{g,(\bar\h),\beta,\vec\mu,\vec\nu}^\sim (\D_0|\Y|\D_\0)]^{\vir}}
\msf{ev}^*\varpi\cup \msf{rev}^{\D_0,*}\mu\cup \msf{rev}^{\D_\0,*} \nu\cup\Psi_\0^k.
\nonumber
\end{align}
Note that here the $\varpi$ is different from the absolute insertions in \eqref{eq a-rel-inv-of-oE-PE}, as here $\varpi$ consists of cohomology classes in $H^*_{\text{CR}}(\Y)$, not $H^*_{\text{CR}}(\oE_\wa)$. We will determine these rubber invariants in the following subsections.

\subsection{Rubber calculus}\label{subsec rubber-calculus}

As above denote $\PE_\wa$ by $\D$. Then in $\Y=\P(\L\oplus\mc O_{\PE_\wa})=\P(\L\oplus\mc O_\D)$ we have $\D\cong \D_0\cong\D_\0$. The projection $\pi\co \Y\rto\D$ induces a topological fiber bundle $|\pi|\co |\Y|\rto|\D|$ over the coarse spaces, whose fiber is $\P^1$.

In this subsection we relate rubber invariants \eqref{eq notation-of-rubber-invariants} of $ {(\D_0|\Y| \D_\0)}$ to two kinds of relative invariants of $ {(\D_0|\Y| \D_\0)}$, which are
\begin{enumerate}
\item[(a)] fiber class invariants of the form
\begin{align}\label{E fiber-class-invaraint-3.3}
\bl\mu\bb \tau_1([\D_0]\cdot \theta)\cdot\varpi\bb \nu\br^{ (\D_0\mid\Y\mid \D_\0)}_{g,\beta}
\end{align}
with $\beta$ being a fiber class, $\theta\in H^*(\D)$, and
\item[(b)] general class invariants of the form
\begin{align}\label{E distinguished-type-II-3.3}
\bl\mu\bb \tau_0([\D_0]\cdot \theta)\cdot\varpi \bb \nu\br^{ (\D_0\mid\Y\mid \D_\0)}_{g,\beta}
\end{align}
with $\theta\in H^{>0}(\D)$.
\end{enumerate}
Here cohomology classes in $\varpi$ come from the chosen basis \eqref{eq basis-of-Y}, $[\D_0]$ is the Poincar\'e dual of $\D_0$ in $\Y$, and $\mu$ and $\nu$ denote the relative weights corresponding to $\D_0$ and $\D_\0$ with cohomological weights coming from the chosen basis \eqref{eq basis-of-D}. The invariants in \eqref{E distinguished-type-II-3.3} are orbifold case analogues of {\em Distinguished Type II invariants} in \cite{Maulik-Pandharipande2006}. Here we also call them Distinguished Type II invariants. In the following we omit the superscript $(\D_0|\Y|\D_\0)$ to simplify the notations.

The fiber class invariants in (a) have been determined in \S \ref{sec fiber-class-inv}. We will determine Distinguished Type II invariants in (b) in \S \ref{subsec dertermine-dis-type-ii}.

Since we will encounter disconnected rubber invariants, we will also consider disconnected distinguished Type II invariants. We use a superscript ``$\bullet$'' to decorate disconnected invariants. However, as noted in \cite{Maulik-Pandharipande2006}, there is no product rule for rubber invariants.

\subsubsection{Rigidification}

Given a (possibly disconnected) rubber invariant of $(\D_0|\Y|\D_\0)$
\[
\bl\mu\bb\theta\cdot\varpi \bb\nu\br^{\bullet,\sim}_{g,\beta}
\]
with $\theta\in H^*(\Y)$ being an insertion form the non-twisted sector $\Y$ itself, we denote by
\[
\M^{\bullet,\sim}_\Gamma :=\M^{\bullet,\sim}_{g,(\bar\h),\beta,\vec\mu,\vec\nu}(\D_0|\Y|\D_\0)
\]
the moduli space of stable maps to the rubber target with $\Gamma$ denote the topological data. We also denote by
\[
\M^\bullet_\Gamma :=\M^\bullet_{g,(\bar\h),\beta,\vec\mu,\vec\nu}(\D_0|\Y|\D_\0)
\]
the moduli space of stable maps to $\Y$ relative to both $\D_0$ and $\D_\0$ with the same topological data. Here $g$ is the arithmetic genus. Without loss of generality we assume $(h_1)=(1)$ in $(\bar\h)$ is the index of the non-twisted sector $\Y$ itself, which corresponds to $\theta$.

We have a canonical forgetful map
\begin{align}\label{Eq forgetful-map}
\epsilon\co \M_\Gamma^\bullet\rto\M_\Gamma^{\bullet,\sim},
\end{align}
which is $\cplane^*$-equivariant with respect to the canonical $\cplane^*$-action on $\M^\bullet_\Gamma$ induced from the fiber-wise $\cplane^*$-action on $\Y$ and the trivial $\cplane^*$-action on $\M^{\bullet,\sim}_\Gamma$.

\begin{lemma}\label{lem rigidification}
Let $q$ be an absolute marked point corresponding to $(h_1)$, and the corresponding evaluation map be
\[
ev_q\co \M_\Gamma^\bullet\rto\Y.
\]
Then
\begin{align}\label{E rigidification-1}
[\M^{\bullet,\sim}_\Gamma]^{\text{\em vir}}= \epsilon_*(ev_q^*[\D_0]\cap [\M_\Gamma^\bullet]^{\text{\em vir}})=\epsilon_*(ev_q^*[\D_\0]\cap [\M_\Gamma^\bullet]^{\text{\em vir}}).
\end{align}
Therefore
\[\begin{split}
\bl\mu\bb\theta\cdot\varpi\bb\nu\br^{\bullet,\sim}_{g,\beta}
&=\bl\mu\bb([\D_0]\cdot\theta)\cdot\varpi \bb\nu\br^{\bullet}_{g,\beta}
=\bl\mu\bb([\D_\0]\cdot\theta)\cdot\varpi \bb\nu\br^{\bullet}_{g,\beta}
\end{split}\]
\end{lemma}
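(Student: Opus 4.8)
The plan is to run the \emph{rigidification} argument of Maulik--Pandharipande \cite{Maulik-Pandharipande2006} and Graber--Vakil \cite{Graber-Vakil2005}, adapted to the orbifold expanded-target setting, using the $\cplane^*$-action on $\M^\bullet_\Gamma$ recorded above. \emph{Step 1 (fibres of $\epsilon$).} A point of $\M^{\bullet,\sim}_\Gamma$ is a stable map to a rubber target, and a lift of it to $\M^\bullet_\Gamma$ is the choice of a rigidification: which copy of $\Y$ in the (possibly expanded) target is the root, together with a trivialization of the residual fibrewise $\cplane^*$-scaling on it. I would show that over a fixed rubber map the space of such choices is canonically, and $\cplane^*$-equivariantly, the fibre of $\pi\co\Y\rto\D$ over the base point of that rubber map (the $\cplane^*$-orbit of rigidifications together with its two limit points, where the root collapses onto $\D_0$, resp.\ onto $\D_\0$). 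Thus $\epsilon$ is, $\cplane^*$-equivariantly, modelled on the $\P^1$-bundle obtained by pulling $\Y$ back to $\M^{\bullet,\sim}_\Gamma$, and under this identification $ev_q$ restricted to a fibre of $\epsilon$ is the tautological map to that $\P^1$; hence $ev_q^*[\D_0]$, and likewise $ev_q^*[\D_\0]$, restricts to the class of a single reduced point on each fibre of $\epsilon$.

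\emph{Step 2 (virtual classes and \eqref{E rigidification-1}).} I would check that $\epsilon$ carries a perfect relative obstruction theory — the one governing the choice of rigidification — which is unobstructed of virtual rank $1$ and compatible with the absolute obstruction theories of the two moduli spaces, so that $[\M^\bullet_\Gamma]^{\vir}=\epsilon^![\M^{\bullet,\sim}_\Gamma]^{\vir}$ as a virtual pullback. Combining this with Step~1 and the projection formula, and using that $ev_q^*[\D_0]$ restricts to a reduced point on the fibres of $\epsilon$, gives
\[
\epsilon_*\big(ev_q^*[\D_0]\cap[\M^\bullet_\Gamma]^{\vir}\big)=[\M^{\bullet,\sim}_\Gamma]^{\vir},
\]
and the same computation with $\D_\0$; this is \eqref{E rigidification-1}. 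For a direct proof of the second equality in \eqref{E rigidification-1} on its own, I would instead use the relation $[\D_0]=[\D_\0]+\pi^*c_1(\L)$ on $\Y$ (the analogue of \eqref{eq D0=D1+c1L}), whence $ev_q^*[\D_0]-ev_q^*[\D_\0]$ is pulled back through $\epsilon$; pushing forward and applying the projection formula reduces the difference to $c_1(\L)$, pulled back to $\M^{\bullet,\sim}_\Gamma$, capped with $\epsilon_*[\M^\bullet_\Gamma]^{\vir}$, which vanishes because $\epsilon$ has positive relative virtual dimension, so $\epsilon_*[\M^\bullet_\Gamma]^{\vir}=\epsilon_*\epsilon^![\M^{\bullet,\sim}_\Gamma]^{\vir}=0$.

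\emph{Step 3 (the invariant identity).} Since $\epsilon$ only changes the target and leaves the domain curve, its marked points, the cotangent classes $\overline\psi_i$, and the relative evaluation maps $\msf{rev}^{\D_0}$, $\msf{rev}^{\D_\0}$ untouched, every insertion class occurring in the rubber invariant pulls back under $\epsilon$ to the corresponding class on $\M^\bullet_\Gamma$; then \eqref{E rigidification-1} together with the projection formula yields $\bl\mu\bb([\D_0]\cdot\theta)\cdot\varpi\bb\nu\br^{\bullet}_{g,\beta}=\bl\mu\bb\theta\cdot\varpi\bb\nu\br^{\bullet,\sim}_{g,\beta}$, and identically with $\D_\0$ replacing $\D_0$.

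The step I expect to be the main obstacle is Step~2: making precise, in the orbifold expanded-target setting, that $\epsilon$ has a perfect relative obstruction theory that is unobstructed of rank $1$ and compatible with the absolute ones, so that the virtual-pullback identity holds. The delicate point is the locus of $\M^\bullet_\Gamma$ where the root degenerates, along which $\epsilon$ is not literally a $\P^1$-bundle; one must verify that these boundary strata do not affect the identity of virtual classes — exactly the subtlety handled in Graber--Vakil's treatment — and in the orbifold case one must additionally keep track of the automorphism (gerbe) factors along the twisted relative markings. A route that avoids part of this is to argue by $\cplane^*$-virtual localization directly on $\M^\bullet_\Gamma$: identify the $\cplane^*$-fixed loci (the root must map into $\D_0\sqcup\D_\0$) and match the localization contributions on the two sides of \eqref{E rigidification-1} after capping with $ev_q^*[\D_0]$, using that $[\D_0]$ restricts to $0$ on the $\D_\0$-fixed locus and to the equivariant Euler class of $\L$ on the $\D_0$-fixed locus.
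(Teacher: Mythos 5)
Your primary route (Steps 1--2: describing $\epsilon$ as a $\P^1$-bundle and invoking a rank-one unobstructed relative obstruction theory to get $[\M^\bullet_\Gamma]^{\vir}=\epsilon^![\M^{\bullet,\sim}_\Gamma]^{\vir}$) is genuinely different from the paper's proof, and it contains a gap that you flag yourself but do not close: $\epsilon$ is \emph{not} a $\P^1$-bundle over the locus where the rubber target is expanded (there the fibre is a chain of rational curves recording which level carries the root), and you give no construction of the compatible relative perfect obstruction theory there, nor a verification that the boundary strata do not spoil the virtual-pullback identity. Your ``direct proof of the second equality'' inherits the same problem, since the vanishing $\epsilon_*[\M^\bullet_\Gamma]^{\vir}=\epsilon_*\epsilon^![\M^{\bullet,\sim}_\Gamma]^{\vir}=0$ again presupposes the unproven Step~2 identity. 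As written, the argument is a plan rather than a proof.

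The one-sentence fallback you offer at the end is in fact the paper's proof, and it is the route that works with the tools already established: apply the relative virtual localization formula \eqref{eq vir-local-formula} to $\epsilon_*(ev_q^*[\D_0]\cap[\M^\bullet_\Gamma]^{\vir})$. Two concrete points, absent from your sketch, carry the argument. First, a dimension count eliminates almost all fixed loci: simple fixed loci are excluded by the presence of the absolute marked point, and a fixed locus whose target degenerates on \emph{both} sides of $\Y$ has virtual codimension $2$, while the class being computed has codimension only $1$; the constraint $ev_q^*[\D_0]$ then forces the surviving degeneration to be along $\D_0$, leaving a single composite locus $gl(\M^{\bullet,\sim}_\Gamma\times_{(\I\D)^{\ell(\nu)}}\M^{\simple}_{\Gamma'})$ with $\M^{\simple}_{\Gamma'}$ a disjoint union of fiber-class Galois covers. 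Second, one must actually evaluate the contribution of those covers: the two-point fiber-class relative invariant equals $1/\prod_i\nu_i$ precisely when the intermediate cohomology weights match $\nu$, and zero otherwise, which cancels the gluing factor $\prod_i\nu_i/|\aut(\vec\nu)|$; what remains is
\[
\frac{ev_q^*(c_1(\L))+t}{t+\Psi_\0}\cap[\M^{\bullet,\sim}_\Gamma]^{\vir}=[\M^{\bullet,\sim}_\Gamma]^{\vir},
\]
the last equality holding for dimension reasons (only the degree-zero term of the expansion survives). Here $ev_q^*(c_1(\L))+t$ is the restriction to $\D_0$ of the chosen equivariant lift of $[\D_0]$, consistent with your remark about the Euler class of $\L$. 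The case of $[\D_\0]$ is symmetric. Your Step~3 (pulling back insertions through $\epsilon$ and applying the projection formula) is fine once \eqref{E rigidification-1} is in hand.
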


\begin{proof}
The proof is similar to the proof of \cite[Lemma 2]{Maulik-Pandharipande2006}. We use the localization formula \eqref{eq vir-local-formula} for relative orbifold Gromov--Witten theory. We only prove the first equality in \eqref{E rigidification-1}. The proof of the second one is similarly.

A $\cplane^\ast$--fixed  stable relative map in $\M_\Gamma^\bullet$ is a union of:
\begin{enumerate}
\item[(i)] a nonrigid stable map to the degeneration of $\Y$ over $\D_0$,
\item[(ii)] a nonrigid stable map to the degeneration of $\Y$ over $\D_\0$,
\item[(iii)] a collection of $\cplane^\ast$-invariant, fiber class,
            rational Galois covers joining (i), (ii), i.e. they are maps from orbifold Riemann spheres with two orbifold points and are total ramified over $\D_0$ and $\D_\0$.
\end{enumerate}
On the fixed locus, the forgetful map $\epsilon$ simply contracts the intermediate rational curves (iii).

Obviously, simple fixed loci contributes nothing since there is at least one absolute marked point. If we have a proper degeneration on both sides of $\Y$, the (complex) virtual dimension of the $\cplane^\ast$-fixed  locus is $2$ less than the virtual dimension of $\M_\Gamma^\bullet$. However, the dimension of
\[
\epsilon_*(ev^*_p([\D_0])\cap[\M^\bullet_\Gamma]^{\text{vir}})
\]
is only 1 less than the virtual dimension of $\M^\bullet_\Gamma$. Therefore we only have to consider fixed loci whose target degenerates on only one side of $\Y$.

Since the absolute marked point $q$ is constrained by an insertion of $[\D_0]$, we need only consider degenerations along $\D_0$. Then we see that there is a unique composite $\cplane^\ast$-fixed  locus
\[
\F_{\Gamma',\Gamma}=gl( \M_\Gamma^{\bullet,\sim}
\times_{(\I\D)^{\ell(\nu)}}
\M_{\Gamma'}^{\text{simple}}).
\]
where $\M_{\Gamma'}^{\text{simple}}$ consists of curves (iii), hence has no absolute marked points, and the relative marked points relative to $\D_\0$ are constrained by $\nu$, the relative marked point relative to $\D_0$ are constrained by $\vec{\check\nu}$.

Then by \eqref{eq vir-local-formula} we get
\begin{align*}
&\epsilon_*(ev^*_p([\D_0])\cap[\M^\bullet_\Gamma]^{\text{vir}})\\
=&\frac{\prod_i \nu_i}{\aut(\vec\nu)} \Big((ev^*_p(c_1(\L))+t)\cap
\frac{gl_*[ \M_\Gamma^{\bullet,\sim}
\times_{(\I\D)^{\ell(\nu)}}
\M_{\Gamma'}^{\text{simple}}]^{\text{vir}}}
{e(\cN_{\Gamma'})(t+\Psi_\0)}\Big)
\\=&
\prod_i \nu_i\cdot
\sum_{
\substack{
\rho =(\rho_1,\ldots,\rho_{\ell(\nu)}) \text{ in \eqref{eq basis-of-D}}
\\ \text{constrained by } \vec\nu}}\left\{
\Big(\frac{(ev^*_p(c_1(\L))+t) ev_{\D_\0}^*\rho}{(t+\Psi_\0)}\cap
[\M_\Gamma^{\bullet,\sim}]^{\text{vir}}\Big)\cdot \Big(\frac{ev_{\D_0}^*\check\rho
}{e(\cN_{\Gamma'})}\cap [\M_{\Gamma'}^{\text{simple}}]^{\text{vir}}\Big)
\right\}
\end{align*}
where $ev^*_p(c_1(\L))+t$ is the restriction of the equivariant lifting of the class $[\D_0]$ to the zero section $\D_0$. The last term in previous equation corresponds to the following relative invariant of $(\D_0|\Y|\D_\0)$
\begin{align*}
&|\aut(\rho)| \cdot|\aut(\nu)|\cdot
\bl\check\rho\bb\varnothing\bb\nu\br_{0,|\nu|,[F]}
=\int_{[\M_{\Gamma'}]^\vir} ev_{\D_0}^*\check\rho\wedge ev_{\D_\0}^*\nu \\&
=\Big(\frac{ev_{\D_0}^*\check\rho\cup ev_{\D_0}^*\nu}{e(\cN_{\Gamma'})}\cap
[\M_{\Gamma'}^{\text{simple}}]^{\text{vir}}\Big)=
\left\{\begin{array}{ll}
\frac{1}{\prod_i \nu_i} &\text{if $\rho=\nu$;}\\
0                       &\text{if $\rho\neq\nu$.}
\end{array}\right.
\end{align*}
Therefore by dimension counting
\begin{align*}
\epsilon_*(ev^*_p([\D_0])\cap[\M^\bullet_\Gamma]^{\text{vir}})
=\Big(\frac{(ev^*_p(c_1(\L))+t)}{(t+\Psi_\0)}\cap
[\M_\Gamma^{\bullet,\sim}]^{\text{vir}}\Big)
= [\M_\Gamma^{\bullet,\sim}]^{\text{vir}}.
\end{align*}
This finishes the proof.
\end{proof}

\subsubsection{Dilaton and divisor equations}\label{subsubsec dilaton-divisor-eqn}

As the smooth case, the rubber invariants for orbifolds also satisfy the dilaton equation and divisor equation as in \cite{Maulik-Pandharipande2006}\footnote{As noticed by Tseng--You \cite{Tseng-You2016a}, there is a typo in \cite[\S 1.5.4]{Maulik-Pandharipande2006}: $\tau_1(\one)$ should not appear on the right side of the dilaton equation.}.
The dilaton equation is
\[\begin{split}
\bl \mu\bb \tau_1(\one)\cdot &\prod_{i=1}^m
\tau_{k_i}(\gamma_{l_i})\cdot\Psi_\0^k\bb \nu\br^{\bullet,\sim}_{g,\beta}
=(2g-2+m+\ell(\mu)+\ell(\nu))\bl \mu\bb\prod_{i=1}^{m}
\tau_{k_i}(\gamma_{l_i})\cdot\Psi_\0^k\bb \nu\br^{\bullet,\sim}_{g,\beta}.
\end{split}\]
The divisor equation for $H\in H^2(\D)$ is
\[
\begin{split}
\bl\mu\bb \tau_0(H)\cdot\prod_{i=1}^m\tau_{k_i}(\gamma_{l_i})\cdot\Psi_\0^k \bb\nu\br^{\bullet,\sim}_{g,\beta} =(\int_{\pi_*\beta}^{\mathrm{orb}}H) \bl \mu\bb \prod_{i=1}^m \tau_{k_i}(\gamma_{l_i})\cdot\Psi_\0^k\bb \nu\br^{\bullet,\sim}_{g,\beta}\\
+\sum_{i=1}^m\bl\mu\bb\ldots \tau_{k_i-1}(\gamma_{l_i}\cup_{\text{CR}} H)\ldots \Psi^k_\0\bb\nu\br^{\bullet,\sim}_{g,\beta}\\
-\sum_{j=1}^{\ell(\nu)}\bl \mu\bb\prod_{i=1}^m\tau_{k_i}(\gamma_{l_i})\cdot\Psi^{k-1}_\0\bb\{\ldots (\nu_j,\delta_{s_j}\cup_{\text{CR}} H)\ldots\}\br^{\bullet,\sim}_{g,\beta}\cdot\nu_j.
\end{split}
\]
Since here the marked points corresponding to the two insertions $\tau_1(\mathbf 1)$ and $\tau_0(H)$ are both smooth marked points, i.e. without orbifold structure, the standard cotangent line comparison method proves the dilaton and divisor equations above.

\subsubsection{Rubber calculus I: fiber class invariants}
\label{subsubsec rubber-cal-fiber}

We first consider a fiber class rubber invariant in \eqref{eq notation-of-rubber-invariants} with descendant insertion $\varpi$
\begin{align}\label{eq fiber-rubber}
\bl \mu\bb\varpi\cdot\Psi^k_\0\bb\nu\br^{\bullet,\sim}_{g,\beta},
\end{align}
i.e. $\beta$ is a multiple of the fiber class of $\Y\rto\D$.

Note that by stability, a contracted genus zero irreducible component of the domain curve must carry at least three absolute marked points, a contracted genus one irreducible component of the domain curve must carry at least one absolute marked point. A non-contracted component of the domain curve must carry at least two relative marked points, i.e. the intersection points with $\D_0$ and $\D_\0$. Finally, by target stability, not all components of domain curve can be genus 0 and fully ramified over $\D_0$ and $\D_\0$. Hence we conclude
\[
2g-2+m+\ell(\mu)+\ell(\nu)>0,
\]
where $m=\|\varpi\|$ is the length of $\varpi$. Therefore the fiber class rubber invariants \eqref{eq fiber-rubber} is determined by
\begin{align}\label{eq fiber-rubber-1}
\bl \mu\bb\tau_1(\one)\cdot\varpi\cdot \Psi^k_\0\bb\nu\br^{\bullet,\sim}_{g,\beta}
\end{align}
and the dilaton equation in \S \ref{subsubsec dilaton-divisor-eqn}. Let $q$ denote the smooth absolute marked point that carries the insertion $\tau_1(\one)$ in the rubber invariants \eqref{eq fiber-rubber-1}. Denote the twisted sectors of the $1+||\varpi||$ marked points by $(\bar\h)$ with $(h_1)=(1)$ corresponding to $q$.

Denote the topological data of the moduli space of the invariant \eqref{eq fiber-rubber-1} by $\Gamma$ and the corresponding rubber moduli space by $\M_\Gamma^{\bullet,\sim}(\D_0|\Y|\D_\0)$. Consider a splitting $\Gamma_1\wedge_\D\Gamma_2$ of $\Gamma$ with
\begin{align}\label{E spliting-Gamma-fiber-class}
\Gamma_1=(g_1,\beta_1,(\bar\h_1),\vec\mu,\vec\eta),\qq\mbox{and}\qq \Gamma_2=(g_2,\beta_2,(\bar\h_2),\check{\vec\eta},\vec\nu),
\end{align}
satisfying
\[
(h_1)=(1)\in(\bar\h_1),
\]
i.e. the marked point corresponding to $\tau(\one)$ is distributed to the $\Gamma_1$ side. Then we could glue two stable maps in the rubber moduli spaces
\[
\M_{\Gamma_1}^{\bullet,\sim}(\D_0|\Y|\D_\0) ,\qq\mbox{and}\qq
\M_{\Gamma_2}^{\bullet,\sim}(\D_0|\Y|\D_\0)
\]
together to get a stable map in the rubber moduli space $\M_\Gamma^{\bullet,\sim}(\D_0|\Y|\D_\0)$. In this way we get a boundary component of $\M_\Gamma^{\bullet,\sim}(\D_0|\Y|\D_\0)$, that is we have the following map
\[
gl\co \M_{\Gamma_1}^{\bullet,\sim}(\D_0|\Y|\D_\0)
\times_{(\I\D)^{\ell(\vec\eta)}}
\M_{\Gamma_2}^{\bullet,\sim}(\D_0|\Y|\D_\0)
\rto   \M_\Gamma^{\bullet,\sim}(\D_0|\Y|\D_\0),
\]
which is a $|\aut(\vec\eta)|$ cover to its image, i.e. a boundary component of $\M_\Gamma^{\bullet,\sim}(\D_0|\Y|\D_\0)$ corresponding to the splitting \eqref{E spliting-Gamma-fiber-class}. Denote the image by $\scr B_{\Gamma_1,\Gamma_2}$, and the normal bundle of this boundary component by $\cN_{\Gamma_1,\Gamma_2}$. There is a line bundle $\cN$ (see for example \cite{Katz2007}) over $\M_\Gamma^{\bullet,\sim}$ whose zero set consists of disjoint union of all these boundary components $\scr B_{\Gamma_1,\Gamma_2}$, and its restriction on $\scr B_{\Gamma_1,\Gamma_2}$ is $\cN_{\Gamma_1,\Gamma_2}^{\otimes \prod_i\eta_i}$. The first Chern class of $\cN$ is
\[
c_1(\cN)=-\Psi_\0+ev_q^*(c_1(\L)).
\]
Therefore for the invariant \eqref{eq fiber-rubber-1} we have
\begin{align}\label{E fiber-class-rubber-spliting-eqn}
&\bl \mu\bb \tau_1(\one)\cdot\varpi\cdot \Psi_\0^k\bb\nu\br^{\bullet,\sim}_{g,\beta}
\nonumber\\
&=-\bl \mu\bb \tau_1(\one)\cdot\varpi\cdot \Psi_\0^{k-1} \cdot
c_1(\cN) \bb\nu\br^{\bullet,\sim}_{g,\beta} +\bl \mu\bb \tau_1(c_1(\L))\cdot\varpi\cdot \Psi_\0^{k-1}
\bb\nu\br^{\bullet,\sim}_{g,\beta}\nonumber\\
&=-\sum \bl\mu \bb \tau_1(\one)\cdot \varpi_1\bb \eta \br^{\bullet,\sim}_{g_1,\beta_1} \cdot\fk z(\eta)\cdot \bl \check\eta \bb  \varpi_2 \cdot \Psi_\0^{k-1} \bb \nu \br^{\bullet,\sim}_{g_2,\beta_2}
+\bl \mu \bb \tau_1(c_1(\L))\cdot\varpi \cdot \Psi_\0^{k-1} \bb
\nu \br^{\bullet,\sim}_{g,\beta},
\end{align}
where the sum is taken over all splittings $\Gamma_1\wedge_\D\Gamma_2$ of $\Gamma$ of the form in \eqref{E spliting-Gamma-fiber-class}, and all intermediate cohomology weighted partitions $\eta$, and $(\varpi_1,\varpi_2)$ is a distribution of $\varpi$ according the splittings of absolute marked points in $\Gamma_1$ and $\Gamma_2$.

The firt term $\bl \mu \bb \tau_1(\one)\cdot \varpi_1\bb  \eta \br^{\bullet,\sim}_{g_1,\beta_1}$ in the summation in \eqref{E fiber-class-rubber-spliting-eqn} can be expressed as a fiber class invariants of $(\D_0|\Y|\D_\0)$ by Lemma \ref{lem rigidification}:
\[
\bl \mu \bb \tau_1(\one)\cdot \varpi_1\bb  \eta \br^{\bullet,\sim}_{g_1,\beta_1}
=\bl \mu \bb \tau_1([\D_0])\cdot \varpi_1 \bb \eta \br^{\bullet}_{g_1,\beta_1}.
\]
Therefore we have reduced the original fiber class rubber invariant \eqref{eq fiber-rubber} to rubber invariants of the same type with {\em strictly fewer} $\Psi_\0$ insertions and fiber class invariants of $(\D_0|\Y|\D_\0)$. Repeating this cycle we reduce the original fiber class rubber invariant \eqref{eq fiber-rubber} to fiber class rubber invariants without $\Psi_\0$ insertions
\begin{align}\label{eq fiber-rubber-without-Psi}
\bl \mu\bb\tau_1(c_1(\L)^k)\cdot
\varpi'\bb\eta\br^{\bullet,\sim}_{g,\beta},\qq k\geq 0,
\end{align}
and fiber class invariants of $(\D_0|\Y|\D_\0)$. By Lemma \ref{lem rigidification}, the rubber invariants in \eqref{eq fiber-rubber-without-Psi} are determined by fiber class invariants of $(\D_0|\Y|\D_\0)$. Therefore the fiber class rubber invariant \eqref{eq fiber-rubber} is determined by fiber class invariants of $(\D_0|\Y|\D_\0)$ of the forms
\begin{align*}
\bl \mu\bb\tau_1([\D_0]\cdot c_1(\L)^k)\cdot
\varpi'\bb\eta\br^{\bullet}_{g,\beta},\qq k\geq 0,
\end{align*}
i.e. of the form of \eqref{E fiber-class-invaraint-3.3}. We have computed all fiber class invariants of $(\D_0|\Y|\D_\0)$ in \S \ref{sec fiber-class-inv}, so all fiber class rubber invariants are determined.

\subsubsection{Rubber calculus II: non-fiber class invariants}
\label{subsubsec rubber-cal-non-fiber}

Now suppose that the homology class $\beta$ in \eqref{eq notation-of-rubber-invariants} is not a fiber class, i.e. $\pi_*(\beta)\neq 0$. Consider a non-fiber class rubber invariants:
\begin{align}\label{eq non-fiber-rubber}
\bl \mu\bb \prod_{i=1}^m \tau_{k_i}(\gamma_{l_i})\cdot\Psi^k_\0\bb\nu\br^{\bullet,\sim}_{g,\beta}.
\end{align}
There exists an $H\in H^2(\D)$ such that
\[
\int_{\pi_*\beta}^{\mathrm{orb}}H>0.
\]
Now consider the rubber invariant
\begin{align}\label{eq non-fiber-rubber-1}
\bl \mu\bb \tau_0(H) \cdot\prod_{i=1}^m \tau_{k_i}(\gamma_{l_i})\cdot\Psi^k_\0\bb\nu\br^{\bullet,\sim}_{g,\beta}.
\end{align}
By the divisor equation in \S \ref{subsubsec dilaton-divisor-eqn}, modulo rubber invariants with strictly fewer $\Psi_\0$ insertions, the rubber invariant \eqref{eq non-fiber-rubber} is determined by the rubber invariant \eqref{eq non-fiber-rubber-1} and
\begin{align}\label{eq non-fiber-rubber-2}
\bl \mu\bb \ldots \tau_{k_i-1}(H\cup_{\mathrm{CR}}\gamma_{l_i})\ldots\Psi^k_\0
\bb\nu\br^{\bullet,\sim}_{g,\beta},\qq i=1,\cdots m.
\end{align}
For invariants in \eqref{eq non-fiber-rubber-2} consider rubber invariants of the form
\begin{align}\label{eq non-fiber-rubber-3}
\bl \mu\bb  \tau_0(H)\ldots \tau_{k_i-1}(H\cup_{\mathrm{CR}}\gamma_{l_i})\ldots\Psi^k_\0\bb\nu
\br^{\bullet,\sim}_{g,\beta},\qq i=1,\cdots m,
\end{align}
by adding an insertion $\tau_0(H)$. Again by the divisor equation in \S \ref{subsubsec dilaton-divisor-eqn}, modulo rubber invariants with strictly fewer $\Psi_\0$ insertion or strictly fewer $\overline\psi_i$ insertions, the rubber invariants in \eqref{eq non-fiber-rubber-2} are determined by rubber invariants in \eqref{eq non-fiber-rubber-3}.

Finally, combining \eqref{eq non-fiber-rubber-1} and \eqref{eq non-fiber-rubber-3} we see that, modulo rubber invariants with strictly
fewer $\Psi_\0$ insertions, the rubber invariants \eqref{eq non-fiber-rubber} is determined by rubber invariants of the form
\begin{align}\label{eq non-fiber-rubber-4}
\bl \mu\bb  \tau_0(H)\cdot\prod_{i=1}^m
\tau_{k_i-n_i}(H^{n_i}\cup_{\mathrm{CR}}\gamma_{l_i})\cdot\Psi^k_\0
\bb\nu\br^{\bullet,\sim}_{g,\beta},
\end{align}
where $1\leq n_i\leq k_i, 1\leq i\leq m$. We then apply the boundary relation in previous subsection \S \ref{subsubsec rubber-cal-fiber} to \eqref{eq non-fiber-rubber-4}.
By repeating this cycle we could reduce the rubber invariant
\eqref{eq non-fiber-rubber} to
\begin{align}\label{eq non-fiber-rubber-inv-outof-rubber}
\bl \mu\bb\tau_0(H\cdot c_1(\L)^k)\cdot
\varpi'\bb\eta\br^{\bullet,\sim}_{g,\beta},\qq k\geq 0.
\end{align}
Finally, we apply Lemma \ref{lem rigidification} to \eqref{eq non-fiber-rubber-inv-outof-rubber}. So we can express the original rubber invariant \eqref{eq non-fiber-rubber} in terms of distinguished type II invariants of $(\D_0|\Y|\D_\0)$ of the forms
\begin{align*}
\bl \mu\bb\tau_0([\D_0]\cdot H\cdot c_1(\L)^k)\cdot
\varpi'\bb\eta\br^{\bullet}_{g,\beta},\qq k\geq 0,
\end{align*}
i.e. distinguished type II invariants in \eqref{E distinguished-type-II-3.3}. We will determines these invariants in next subsection
 \S \ref{subsec dertermine-dis-type-ii}.

\subsection{Distinguished type II invariants}
\label{subsec dertermine-dis-type-ii}

In this subsection we determine all distinguished Type II invariants via an induction algorithm with initial datum being fiber class invariants determined in \S \ref{sec fiber-class-inv}. We give a partial order ``$\stackrel{\circ}{\prec}$'' over all distinguished Type II invariants. The partial order is different from the partial order used in \cite{Maulik-Pandharipande2006}, since here in general we can not compare cohomology weights coming from different twisted sectors. The partial order we give here is a modification of the partial order given in \cite{Chen-Du-Hu2019}. The partial order given in \cite{Chen-Du-Hu2019} is more geometric, and follows from the degeneration formula in \cite{Chen-Li-Sun-Zhao2011,Abramovich-Fantechi2016} directly.

\subsubsection{Partial order}\label{subsubsec partial order}

Consider a distinguished type II invariant
\begin{align}\label{eq type-ii-inv-partial-order}
\bl\mu\bb \tau_0([\D_0]\cdot \theta)\cdot\varpi\bb \nu\br_{g,\beta},
\end{align}
where $\theta\in H^{>0}(\D)$. We also assume that $\mu$ and $\nu$ have cohomological weights from the chosen basis \eqref{eq basis-of-D} and $\varpi$ has cohomology weights from the chosen basis \eqref{eq basis-of-Y}. We have a degeneration of $(\D_0|\Y|\D_\0)$ along $\D_\0$
\begin{align}\label{eq degenerate Y}
(\D_0|\Y|\D_\0)\xrightarrow{\text{degenerate}}\Y_1\wedge_\D \Y_2=(\D_0| \Y|\D_\0)\wedge_\D(\D_0| \Y|\D_\0)
\end{align}
where we glue $\D_\0$ in $\Y_1$ with $\D_0$ in $\Y_2$ via $\D_0\cong\D_\0\cong \D$.

For the invariant $\bl\mu \bb \tau_0([\D_0]\cdot \theta)\cdot\varpi \bb \nu \br_{g,\beta}$ we distribute $\tau_0([\D_0]\cdot \theta)$ and insertions in $\varpi$ with cohomology classes being divisible by one of $\{[\D(h)_0]|(h)\in\T_\D\}$ to $\Y_1$. Then by the degeneration formula (cf. \cite{Chen-Li-Sun-Zhao2011,Abramovich-Fantechi2016}) we have
\begin{align}\label{eq degenerate formula}
\bl\mu\bb \tau_0([\D_0]\cdot \theta)\cdot\varpi\bb \nu\br_{g,\beta}
=\sum\bl\mu\bb \tau_0([\D_0]\cdot \theta)\cdot \varpi_1\bb \eta
\br_{g_1,\beta_1}^\bullet\fk z(\eta)
\bl \check\eta\bb   \varpi_2\bb \nu\br_{g_2,\beta_2}^\bullet
\end{align}
with summation taking over all splittings of $(g,\beta)$, all distribution of $\varpi$, all immediate cohomology weights and all configurations of connected components that yield a connected total domain.

There are some special summands in \eqref{eq degenerate formula} with $g_2=0$, $\varpi_2=\varnothing$, $\beta_2$ being a fiber class and $\vec\eta=\vec\nu$. For such a summand, the invariant
\[
\bl \check\eta \bb \varnothing \bb\nu
\br_{0,d[F]}^\bullet=\left\{\begin{array}{ll}
\frac{1}{\nu_1\cdot\ldots\cdot \nu_{\ell(\nu)}} &\textrm{if $\eta=\nu$,}\\
0                    &\textrm{if $\eta\neq\nu$.}
\end{array}\right.\]
Therefore there is no summands, of such type and with $\eta\neq \nu$, on the right side of \eqref{eq degenerate formula}. And when $\eta=\nu$ we get the original invariant \eqref{eq type-ii-inv-partial-order} on the right side of \eqref{eq degenerate formula}. From the summands in \eqref{eq degenerate formula} and $\Y_1\cong\Y$ we get a lot of distinguished type II invariants
\[
\bl \mu \bb \tau_0([\D_0]\cdot \theta)\cdot\varpi'\bb \nu'\br_{g',\beta'}.
\]
We say that these distinguished type II invariants are all lower than $\big\langle\mu \big| \tau_0([\D_0]\cdot \theta)\cdot\varpi \big| \nu \big\rangle_{g,\beta}$ and denote this relation by
\begin{align}\label{E partial-order}
\bl \mu \bb \tau_0([\D_0]\cdot \theta)\cdot\varpi'\bb \nu'\br_{g',\beta'}
\prec \bl\mu\bb \tau_0([\D_0]\cdot \theta)\cdot\varpi\bb \nu\br_{g,\beta}.
\end{align}
The following theorem is a special case of \cite[Theorem 6.5]{Chen-Du-Hu2019}.
\begin{theorem}\label{thm general-partial-order}
The relation \eqref{E partial-order} is a partial order among all distinguished type II invariants of $(\D_0|\Y|\D_\infty)$.
\end{theorem}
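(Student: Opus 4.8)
The plan is to establish that the relation $\po$ defined via the degeneration formula \eqref{eq degenerate formula} is genuinely a partial order on the set of (disconnected) distinguished type II invariants of $(\D_0|\Y|\D_\0)$; that is, one must verify reflexivity (or rather that the relation generates a well-founded strict order), antisymmetry, and most importantly that there are no infinite descending chains, so that the inductive algorithm of \S \ref{subsec dertermine-dis-type-ii} terminates. Since the statement is asserted to be a special case of \cite[Theorem 6.5]{Chen-Du-Hu2019}, the honest strategy is to extract exactly the portion of that general machinery that applies here and check that the hypotheses are met in the present restricted setting ($\Y$ a projectified line bundle rather than a general weighted projectification, with only distinguished type II insertions). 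So first I would recall the precise statement of \cite[Theorem 6.5]{Chen-Du-Hu2019} and the combinatorial invariant (a ``complexity'' or ``weight'' function on topological types) used there to prove well-foundedness.

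Second, I would introduce a numerical complexity function $\mathfrak c(\mu,\varpi,\nu,g,\beta)$ on distinguished type II invariants — built from the genus $g$, the (orbifold) degree pairing $\int_\beta^{\mathrm{orb}} [\D_0]$ (equivalently $\sum_j \mu_j$ and $\sum_j\nu_j$), the number of absolute insertions $\|\varpi\|$, the total cohomological degree $\deg_{\text{CR}}\varpi + \deg_{\text{CR}}\mu + \deg_{\text{CR}}\nu$, and the pushforward class $\pi_*\beta \in H_2(|\D|)$ (with respect to the cone of effective curve classes, which is discrete and has no infinite descending chains by Gromov compactness / finiteness of the Mori cone). The key computation is then to inspect a generic summand on the right-hand side of \eqref{eq degenerate formula}: the splitting data $(g_1,\beta_1,\varpi_1,\eta)\wedge_\D(g_2,\beta_2,\varpi_2,\nu)$ with the new invariant coming from the $\Y_1\cong\Y$ factor having type $(\mu,\tau_0([\D_0]\cdot\theta)\cdot\varpi_1, \eta, g_1,\beta_1)$. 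One must show that unless this summand \emph{is} the original invariant — which by the analysis following \eqref{eq degenerate formula} happens exactly when $g_2=0$, $\varpi_2=\varnothing$, $\beta_2$ is a fiber class and $\eta=\nu$ — the complexity strictly drops: either $g_1<g$, or $\|\varpi_1\|<\|\varpi\|$, or $\beta_1$ is strictly smaller than $\beta$ in the effective cone, or (when the class, genus and insertion count are unchanged because the degeneration is entirely absorbed by a genus-zero fiber-class $\Y_2$ piece) the relative cohomological data of $\eta$ versus $\nu$ decreases in the appropriate sense. This case analysis is the heart of the argument and mirrors \cite[\S 6.1]{Chen-Du-Hu2019}.

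Third, with strict monotonicity of $\mathfrak c$ under $\po$ in hand, well-foundedness follows: any $\po$-descending chain induces a strictly decreasing sequence of values of $\mathfrak c$ in a well-ordered (or at least well-founded) target, hence is finite; transitivity is built into the definition as the transitive closure of the one-step relation, and antisymmetry follows from strict monotonicity of $\mathfrak c$ (if $A\po B$ and $B\po A$ then $\mathfrak c(A)<\mathfrak c(B)<\mathfrak c(A)$, impossible). Finally I would note the one subtlety that distinguishes the orbifold case from \cite{Maulik-Pandharipande2006}: cohomology weights attached to relative partitions live in different twisted sectors and cannot always be linearly ordered, so the $\deg_{\text{CR}}$ components of $\mathfrak c$ must be used rather than a monomial order on weight vectors — this is precisely why the paper invokes the more geometric partial order of \cite{Chen-Du-Hu2019} instead of that of \cite{Maulik-Pandharipande2006}, and it is the main conceptual (though not technically deep) obstacle.

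I expect the main obstacle to be bookkeeping: carefully enumerating all the types of summands that can occur in \eqref{eq degenerate formula} for a \emph{disconnected} source curve (the ``$\bullet$'' decorations), separating the ``trivial'' summands (genus-zero fiber-class bubbles over $\D_\0$ that reproduce the original invariant up to the factor $1/\prod_i\nu_i$) from the genuinely lower ones, and confirming that in every genuinely lower summand at least one component of $\mathfrak c$ strictly decreases while none increases. Once that verification is done, the partial order claim is immediate, and it feeds directly into the termination of the induction in \S \ref{subsec dertermine-dis-type-ii}.
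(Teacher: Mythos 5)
You should first note what the paper actually does here: it gives \emph{no} proof of Theorem \ref{thm general-partial-order}, but simply declares it a special case of \cite[Theorem 6.5]{Chen-Du-Hu2019}. So your proposal is not competing with an argument in the text; it is a reconstruction of the cited result. Your overall strategy --- define a complexity function on topological types, show it strictly decreases on every non-identity summand of \eqref{eq degenerate formula}, and deduce irreflexivity, antisymmetry and well-foundedness of the transitive closure --- is the right one and is indeed how the argument in \cite[\S 6.1]{Chen-Du-Hu2019} is organized. One small but real confusion: the theorem concerns the relation $\prec$ of \eqref{E partial-order}, which compares only invariants with the \emph{same} $\mu$ and $\theta$; the refined relation $\po$ of Definition \ref{def partial-order} is a separate statement (Proposition \ref{def partial-order}'s successor), proved \emph{from} Theorem \ref{thm general-partial-order} together with the explicit lexicographic inequalities. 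Your opening sentence conflates the two, and your complexity function is really tailored to $\po$.

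The substantive gap is that the one step where the proof actually lives is announced but not performed. Everything hinges on the case analysis of the summands of \eqref{eq degenerate formula} with $\beta_1=\beta$ (so $\beta_2$ a fiber class): there you must (i) do the arithmetic-genus bookkeeping for a possibly disconnected $\C_2$ to show $g_1\leq g$ with equality only for the extremal configurations of rational components totally ramified over the divisor, and (ii) handle the summands with $(\beta_1,g_1,\varpi_1)=(\beta,g,\varpi)$ but $\eta\neq\nu$, where the needed strict decrease is the inequality $\deg_{\text{CR}}\eta>\deg_{\text{CR}}\nu$ (or vanishing of the fiber-class coefficient $\bl\check\eta\bb\varnothing\bb\nu\br^\bullet_{0,d[F]}$ when $\vec\eta=\vec\nu$, $\eta\neq\nu$). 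Point (ii) is not formal: it rests on the dimension count for fiber-class invariants of $(\D_0|\Y|\D_\0)$, i.e.\ the computation of $\dim\D_{\ugamma}$ and $\operatorname{rank}E_\ugamma$ leading to \eqref{eq deg+deg<dimD--c}, which the paper only carries out later inside the proof of Relation \ref{relation 1}. Without that input your complexity function is not known to be monotone, and the claim that ``in every genuinely lower summand at least one component of $\mathfrak c$ strictly decreases'' remains an assertion. So: right architecture, correct identification of where the difficulty sits, but the decisive verification is deferred rather than done.
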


We will also discuss this partial order in \S \ref{sec rel-GW-of-weit-blp} for relative orbifold Gromov--Witten invariants of $(\uX_\wa|\D_\wa)$.

Given a distinguished type II invariant, by the Gromov compactness there are only finite distinguished type II invariants lower than it according to the ordering ``$\prec$''. However, according to this partial order, two comparable distinguished type II invariants have the same relative datum over $\D_0$. Next we modify the partial order ``$\prec$'' so that we could compare distinguished Type II invariants with different relative datum over $\D_0$.
\begin{defn}\label{def partial-order}
We say that
\[
\bl\mu'\bb \tau_0([\D_0]\cdot \theta')\cdot\varpi'\bb \nu'\br_{g',\beta'} \po
\bl\mu\bb \tau_0([\D_0]\cdot \theta)\cdot\varpi\bb \nu\br_{g,\beta},
\]
if
\begin{enumerate}
\item[(1)] $\beta'<\beta$, i.e. $\beta-\beta'$ is an effective class in
       $H_2(|\Y|;\integer)$,
\item[(2)]  equality in (1) and $g'<g$,
\item[(3)]  equality in (1)-(2) and $\|\varpi'\|<\|\varpi\|$,
\item[(4)]  equality in (1)-(3) and $\deg_{\text{CR}}\mu'>\deg_{\text{CR}}\mu$,
\item[(5)]  equality in (1)-(4) and $\deg_{\text{CR}}\nu'>\deg_{\text{CR}}\nu$,
\item[(6)]  equality in (1)-(5) and $\deg\theta'>\deg\theta$,
\item[(7)]  equality in (1)-(6), $\mu'=\mu$, $\theta'=\theta$ and
\[
\bl\mu\bb \tau_0([\D_0]\cdot \theta)\cdot\varpi'
\bb \nu'\br_{g',\beta'}\prec
\bl\mu\bb \tau_0([\D_0]\cdot \theta)\cdot\varpi\bb \nu\br_{g,\beta}.
\]
\end{enumerate}
This partial order could be generalized to invariants with disconnected domains directly.
\end{defn}

\begin{prop}
The relation ``$\po$'' is an partial order over all distinguished type II invariants. Moreover, given a distinguished type II invariant, there are only finite distinguished type II invariants are lower than it under ``$\po$''.
\end{prop}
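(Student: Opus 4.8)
The plan is to verify the two assertions of the proposition separately: first that ``$\po$'' is a partial order (reflexive, antisymmetric, transitive), and second that any given distinguished type II invariant dominates only finitely many others. For the partial-order claim, the key observation is that ``$\po$'' is built as a lexicographic refinement along a finite list of totally ordered ``coordinates'': the homology class (ordered by effectivity of the difference), the genus, the number of absolute insertions $\|\varpi\|$, the Chen--Ruan degree $\deg_{\mathrm{CR}}\mu$ of the relative partition over $\D_0$, the analogous $\deg_{\mathrm{CR}}\nu$, the ordinary degree $\deg\theta$, and finally, on the locus where all the previous coordinates agree and moreover $\mu'=\mu$ and $\theta'=\theta$, the already-established partial order ``$\prec$'' of Theorem~\ref{thm general-partial-order}. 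The standard fact that a lexicographic combination of partial orders (with the tie-breaking order in each coordinate being a genuine partial order, and all but the last being linearly ordered on the relevant value set) is again a partial order then applies. The only subtlety to flag is the jump at clause (7): one must check that this clause is only invoked when items (1)--(6) are equalities \emph{and} $\mu'=\mu$, $\theta'=\theta$, which is exactly the hypothesis under which ``$\prec$'' was defined in \S\ref{subsubsec partial order} (two ``$\prec$''-comparable invariants share relative data over $\D_0$ and the divisor insertion $\tau_0([\D_0]\cdot\theta)$), so the two pieces are compatible and transitivity of ``$\prec$'' feeds correctly into transitivity of ``$\po$''.

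First I would write out antisymmetry: if $A\po B$ and $B\po A$, then comparing coordinate by coordinate forces equality in (1)--(6), then $\mu=\mu'$, $\theta=\theta'$, and then $A\prec B$ and $B\prec A$, whence $A=B$ by antisymmetry of ``$\prec$'' (Theorem~\ref{thm general-partial-order}). Transitivity: given $A\po B\po C$, if strict inequality occurs at some coordinate in either comparison, it propagates to the first coordinate where $A$ and $C$ differ; if all of (1)--(6) are equalities throughout and $\mu$'s and $\theta$'s all agree, then $A\prec B\prec C$ gives $A\prec C$ and hence $A\po C$. Reflexivity is immediate since $A\prec A$ (the original invariant appears as a summand in \eqref{eq degenerate formula} when $\eta=\nu$). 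None of this requires any computation beyond bookkeeping.

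For the finiteness statement, the plan is: fix a distinguished type II invariant $B=\bl\mu\bb\tau_0([\D_0]\cdot\theta)\cdot\varpi\bb\nu\br_{g,\beta}$ and bound the invariants $A\po B$. By clause (1)--(2) the homology class $\beta'$ satisfies $0\le\beta'\le\beta$ and the genus satisfies $g'\le g$; by Gromov compactness (symplectic area is bounded by $\omega\cdot\beta$, which is finite) there are only finitely many effective classes $\beta'\le\beta$, and $g'$ ranges over a finite set. By clause (3), $\|\varpi'\|\le\|\varpi\|$, so the number of absolute marked points is bounded; combined with the bound on $\beta'$, the total orbifold intersection numbers $\sum\mu'_j=\int^{\mathrm{orb}}_{\beta'}[\D_0]$ and $\sum\nu'_k=\int^{\mathrm{orb}}_{\beta'}[\D_\0]$ are bounded, so $\ell(\mu')$ and $\ell(\nu')$ are bounded and there are finitely many underlying partitions $\vec\mu',\vec\nu'$. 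Since the twisted sectors of $\D$ form a finite set and each cohomology group $H^*(\D(h))$ is finite-dimensional, there are finitely many ways to choose the cohomological weights of $\mu',\nu'$ from the fixed basis \eqref{eq basis-of-D}, finitely many choices for $\theta'$ (degree bounded by $\deg\theta$ via clause (6), but in any case $H^*(\D)$ is finite-dimensional), and finitely many choices for the descendant orders and basis elements in $\varpi'$ (the total $\overline\psi$-degree is bounded by the virtual dimension, which is determined by the already-bounded discrete data). Hence the set of candidate invariants $A$ with $A\po B$ is finite.

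The main obstacle I anticipate is not any of the individual bounds, which are routine, but the combinatorial care needed to make the finiteness argument airtight: one must argue that all the \emph{discrete} invariants entering the definition of a distinguished type II invariant --- genus, homology class, partition shapes $\vec\mu',\vec\nu'$, twisted sectors, cohomological weights, descendant orders --- are simultaneously constrained to lie in finite sets, and for the descendant orders this ultimately rests on the virtual dimension constraint (an integral is nonzero only if the total cohomological degree of the insertions matches the virtual dimension of the moduli space), together with the fact that all the other contributions to that virtual dimension are already bounded. I would isolate this as a short lemma: for fixed $(g,\beta)$ and fixed target $(\D_0|\Y|\D_\0)$, the number of topological types $\Gamma$ with nonzero invariant and with absolute-insertion length, partition shapes, etc., bounded as above, is finite. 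With that in hand the proposition follows immediately, and this also justifies the use of ``$\po$'' as the basis of the induction in \S\ref{subsec dertermine-dis-type-ii}.
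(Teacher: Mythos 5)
Your proposal is correct and follows essentially the same route as the paper, which simply asserts that the partial-order property follows from Theorem \ref{thm general-partial-order} together with the explicit inequalities (1)--(6) of Definition \ref{def partial-order} (i.e.\ the lexicographic structure you spell out), and that finiteness follows from Gromov compactness. Your write-up is just a more detailed elaboration of that two-line argument, and the extra care you take with the virtual-dimension bound on descendant orders is a reasonable way to make the finiteness claim airtight.
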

The first assertion follows from Theorem \ref{thm general-partial-order} and the explicit inequalities in Definition \ref{def partial-order}. The second assertion follows from the Gromov compactness.

We next use the weighted-blowup correspondence in \cite{Chen-Du-Hu2019} to find three relations to determine all distinguished type II invariants.

\subsubsection{First relation}
By the weighted-blowup correspondence in \cite{Chen-Du-Hu2019}, the relative data $\nu$ at $\D_\0$ determines a sequence of absolute insertions relative to $\D_\0$, which we denote by $\nu_\0$. See \S \ref{sec rel-GW-of-weit-blp} for a brief introduction of weighted-blowup correspondence and the procedure that determines $\mu_\os$ via $\mu$ for a weight-$\wa$ blowup $\Xa$ of $\X$ along $\os$, in particular \eqref{eq cj}.

Now consider a distinguished type II invariant
\begin{align}\label{E a-distinguished-type-ii-inv}
\bl\mu\bb \tau_0([\D_0]\cdot \theta)\cdot\varpi\bb \nu\br_{g,\beta},
\end{align}
where $\theta\in H^{>0}(\D)$ and $\pi_*\beta\neq 0$. Suppose $\nu$ is of the form
\begin{align}\label{E nu}
\nu=\left( (\nu_1,\theta_{(h_1)}^{s_1}) ,\ldots, (\nu_{\ell(\nu)},\theta_{(h_{\ell(\nu)})}^{s_{\ell(\nu)}}) \right).
\end{align}
We could view $\Y$ as the weight-$(1)$ blowup of $\Y$ along $\D_\0$, i.e. the trivial weight blowup. Then following the weighted-blowup correspondence in \cite{Chen-Du-Hu2019} (see also \S \ref{sec rel-GW-of-weit-blp}) we set
\begin{align}
\label{eq rel-insertion-from-t-II}
\nu_\0:=\left(
\tau_{[\nu_1]-\fk l_{(h_1)}}([\D(h_1)_\0] \cdot \theta_{(h_1)}^{s_1}), \ldots,
\tau_{[\nu_{\ell(\nu)}]-\fk l_{(h_{\ell_{\nu}})}}
   ([\D(h_{\ell(\nu)})_\0]\cdot
    \theta_{(h_{\ell(\nu)})}^{s_{\ell(\nu)}})
\right),
\end{align}
where the integer $[\nu_j]-\fk l_{(h_j)}$, which indicates the power of the psi-class, is obtained by applying the formula \eqref{eq cj} of $c_j$ to the relative insertions in $\nu$ at $\D_\0\subseteq \Y$, and $[\nu_j]$ means the integral part of $\nu_j$. See for Definition \ref{def L} for the definition of $\fk l_{(h_j)}$.

For instance when applying the formula \eqref{eq cj}
\[
c_j=\sum_{i=1}^n\left[-\frac{\fk b(g_j\inv)_\N^i}{\fk o(g_j)}+a_i\mu_j\right] +n-1-m_j
\]
to $(\nu_j,\theta_{(h_j)}^{s_j})$ of $\nu$ in \eqref{E nu} we have $\N=\L^*$, $(g_j)=(h_j\inv)$, $\mu_j=\nu_j$, $n=\text{codim}_\cplane \D_\0=1$, $\wa=(a_1,\ldots,a_n)=(1)$, $m_j=0$. As $\nu_j$ is the contact order of the $j$-th marked point mapped to $\D_\0(h_j)$ we have
\[
\nu_j=\left\{\begin{array}{ll}
[\nu_j]+\frac{\fk o(h_j)-\fk b(h_j)_\L}{\fk o(h_j)}
&\textrm{if $1\leq \fk b(h_j)_\L<\fk o(h_j)$, i.e. $\fk l_{(h_j)}=0$,}\\
{[}\nu_j{]}
&\textrm{if $\fk b(h_j)_\L=\fk o(h_j)$, i.e. $\fk l_{(h_j)}=1$,}
\end{array}\right.
\]
where $\fk o(h_j)$ is the order of $(h_j)$ and $\fk b(h_j)_\L$ is the action weight of $h_j$ on $\L$. On the other hand, note that $\fk b(g_j\inv)_\N^u$ is the action weight of $(g_j\inv)$ on $\N$. So for the $\N=\L^*$ and $(g_j\inv)=(h_j)$ we have
\[
\fk b(h_j)_{\L^*}=\left\{\begin{array}{ll}
\fk o(h_j)-\fk b(h_j)_\L &\textrm{if $1\leq \fk b(h_j)_\L< \fk o(h_j)$, i.e. $\fk l_{(h_j)}=0$,}\\
\fk o(h_j)  &\textrm{if $\fk b(h_j)_\L=\fk o(h_j)$, i.e. $\fk l_{(h_j)}=1$,.}
\end{array}\right.
\]
Therefore by noticing that $\fk o(h_j)=\fk o(h_j\inv)$ and $\fk b(g_j\inv)_\N^1=\fk b(h_j)_{\L^*}$ we have
\begin{align*}
\sum_{i=1}^1\left[-\frac{\fk b(g_j\inv)_\N^i}{\fk o(g_j)}+a_i\nu_j\right]
=\left[-\frac{\fk b(h_j)_{\L^*}}{\fk o(h_j\inv)}+\nu_j\right]
=[\nu_j]-\fk l_{(h_j)}.
\end{align*}

Now via replacing the relative insertions in $\nu$ by the absolute insertions in $\nu_\0$ in \eqref{eq rel-insertion-from-t-II} we get a relative invariant of $(\Y|\D_0)$
\begin{align}\label{eq rel-inv-from-II}
\bl\mu \bb
\tau_0([\D_0]\cdot \theta)\cdot\varpi \cdot\nu_\0
\br_{g,\beta}.
\end{align}

We denote the distinguished type II invariant \eqref{E a-distinguished-type-ii-inv} by $\mathbf R$. In the following we view relative invariants of $(\Y|\D_0), (\Y|\D_\0)$ and $(\D_0|\Y|\D_\0)$ with genus $g$ and class $\beta$ as {\em principle terms}, and relative invariants of $(\Y|\D_0), (\Y|\D_\0)$ and $(\D_0|\Y|\D_\0)$ with
\[
\beta'<\beta,
\]
or
\[
\beta'=\beta \qq\mbox{and}\qq g'<g
\]
as {\em non-principle terms}.

\begin{relation}\label{relation 1}
We have
\begin{eqnarray*}
C\cdot \fk z(\nu)\cdot
\bl
\mu\bb \tau_0([\D_0]\cdot \theta) \cdot\varpi\bb \nu
\br_{g,\beta}
&=&\bl\mu\bb  \tau_0([\D_0]\cdot \theta) \cdot\varpi \cdot \nu_\0 \br_{g,\beta}
\\&&-
\sum_{\substack{\mathbf R'_{g,\beta}\text{distinguished type II}\\ \mathbf R'\stackrel{\circ}{\prec} \mathbf R}}C_{\mathbf R,\mathbf R'} \cdot \mathbf R' \\
&&-\sum_{\substack{\|\varpi'\|\leq \|\varpi\|\\
\deg_{\text{CR}}\mu'\geq \deg_{\text{CR}}\mu+1}}
C_{\mu',\varpi'}\cdot \bl \mu'\bb \varpi' \cdot \nu_\0\br_{g,\beta}\\&&
-\cdots,
\end{eqnarray*}
where\footnote{The relative invariants of this form were computed by Hu and the first two authors in \cite{Chen-Du-Hu2019}. See for \eqref{eq 1+1-point-rel-inv} the expression of these relative invariants.}
\[
C=\bl \check\nu\bb\nu_\0 \br_{0,d[F]}^\bullet =
\prod_{j=1}^{\ell(\nu)}  \frac{1}{\text{{ord}}(h_j)\cdot \nu_j!}\neq 0,
\]
with
\[
\nu_j!=\left\{\begin{array}{ll}
\nu_j\cdot\ldots\cdot(\nu_j-[\nu_j])&\textrm {if $[\nu_j]<\nu_j$,}\\
\nu_j\cdot\ldots\cdot 1 &\textrm {if $[\nu_j]=\nu_j$,}
\end{array}\right.
\]
$C_{\ast,\ast}$ are fiber class invariants of $(\Y|\D_0)$ or $(\D_0|\Y|\D_\0)$ and ``$\cdots$'' stands for combinations of non-principle relative invariants of $(\Y|\D_0)$ and non-principle distinguished Type II invariants.
\end{relation}

\begin{proof}
We degenerate $\Y$ as \eqref{eq degenerate Y}. Then the degeneration formula express the invariant \eqref{eq rel-inv-from-II} in terms of relative invariants of $(\D_0|\Y_1|\D_\0)$ and relative invariants of $(\Y_2|\D_0)$:
\[\begin{split}
\bl\mu \bb \tau_0([\D_0] \cdot \theta)\cdot\varpi\cdot \nu_\0 \br_{g,\beta}
=&\sum \bl \mu \bb \tau_0([\D_0] \cdot \theta)\cdot \varpi_1 \bb\eta \br^\bullet_{g_1,\beta_1} \cdot \fk z(\eta) \cdot \bl \check\eta \bb \varpi_2 \cdot \nu_\0 \br^\bullet_{g_2,\beta_2},
\end{split}\]
where the summation is over all splittings of $g$ and $\beta$, all distributions of the insertions of $\varpi$, all intermediate cohomology weighted partitions $\eta$, and all configurations of connected components that yield a connected total domain. The invariants on the right side are possible disconnected, indicated by the superscript $\bullet$. The subscript $g_i$ denotes the arithmetic genus of the total map to $\Y_i$.

The invariants of $(\D_0|\Y_1|\D_\0)$ are all distinguished type II invariants. Since we only concern principle terms. We could assume that $\beta_1=\beta$ or $\beta_2=\beta$.
\v\n
{\bf Case I: $\beta_1=\beta$.}
Let $\f_i\co \C_i\rto\Y_i$ be the elements of the relative moduli spaces for a fixed splitting. $\beta_1=\beta$ forces $\beta_2$ to be a fiber class. For the arithmetic genus of the glued stable map we have
\[
g=g_1+g_2+\ell(\eta)-1.
\]
Since $\beta_2$ is a fiber class, every connected component of $\C_2$ contains at least one relative marked point. Therefore the arithmetic genus satisfies
\[
g_2\geq 1-\ell(\eta).
\]
We conclude $g\geq g_1$ with equality if and only if $C_2$ consists of rational components, each totally ramified over $\D_0$ and every component contains exactly one relative marked point. If $g>g_1$, we get non-principle terms. Hence we only consider extremal configurations.

If any insertions of $\varpi$ is distributed into $\Y_2$, we get distinguished type II invariants coming from $\Y_1$ which are lower than $\mathbf R$ in the order ``$\po$''. Therefore we assume that all insertions in $\varpi$ are distributed to $\Y_1$. Hence the absolute insertions for $\Y_2$ all come from $\nu_\0$.

Now every connected component of $\C_2$ has exactly one relative marked point. Therefore the decomposition of $\check\eta$ (hence $\eta$) decomposes $\nu_\0$, hence $\nu$, into $\ell(\eta)$ components
\[
\nu=\coprod_{k=1}^{\ell(\eta)} \pi^{(k)}
\]
where empty partition is allowed as $\nu_\0$ are absolute insertions.

Write
\[
\eta^{(k)}=\left((\eta_k,\rho_k)\right),\qq \pi^{(k)}=\left((\pi_1^{(k)}, \theta_{(h_1^{(k)})}^{s_1^{(k)}}),\ldots, (\pi_{\ell(\pi^{(k)})}^{(k)}, \theta_{(h_{\ell(\pi^{(k)})}^{(k)})}^{s_{\ell(\pi^{(k)})}^{(k)}}) \right),
\]
where for a pair $(\ast,\star)$, ``$\ast$'' stands for the contact order and ``$\star$'' stands for the relative insertion, i.e. cohomological weight.

For $1\leq k\leq \ell(\eta)$, each component of $\C_2$ gives us a fiber class relative invariant of $(\Y|\D_0)$
\begin{align}\label{E fiber-class-inv-Relation-1}
\bl \check\eta^{(k)}\bb \pi^{(k)}_\0 \br_{0,d_k[F]}
\end{align}
where the homology class is determined by contact order $\eta_k$, the relative data is $\check\eta^{(k)}:=((\eta_k,\check\rho_k))$, and the absolute insertions $\pi^{(k)}_\0$ are determined by $\pi^{(k)}$ via \eqref{eq rel-insertion-from-t-II}. Denote the topological data of this fiber class invariant by $\Gamma^{(k)}$. Following \S \ref{sec fiber-class-inv}, the fiber class invariants are obtained by integrating insertion coming from fiber, descendent classes and Hodge class over the relative moduli space of $([\P^1\rtimes K_{\underline{\Gamma^{(k)}}}]|[0\rtimes K_{\underline{\Gamma^{(k)}}}])$ of topological type $\overline{\Gamma^{(k)}}$ (determined by $\Gamma^{(k)}$), and then integrating the results together with $e(E_{\underline{\Gamma^{(k)}}})$ and insertions from $\D$ (in fact $\I\D$) over $\D_{\underline{\Gamma^{(k)}}}'$.

Recall that in \S \ref{subsec notation-for-rel-inv-DYD} we have introduced the notation
\[
\deg_{\text{CR}}(\mu):=\sum_{j=1}^{\ell(\mu)}(\deg \theta_{(h_j)}+2\iota^\D(h_i))
\]
for a relative insertion $\mu=((\mu_1,\theta_{(h_1)}),\ldots, (\mu_{\ell(\mu)},\theta_{(h_{\ell(\mu)})}))$. Furthermore we set $\deg \mu=\sum\limits_{j=1}^{\ell(\mu)}\deg \theta_{(h_j)}$. Then for the fiber class relative invariant \eqref{E fiber-class-inv-Relation-1}, to get a non-zero invariants we must have
\[
\deg \check \eta^{(k)}+\deg \pi^{(k)}+\mbox{rank}\,E_{\underline{\Gamma^{(k)}}}\leq \dim_\rone\D_{\underline{\Gamma^{(k)}}}' =\dimr\D_{\underline{\Gamma^{(k)}}}.
\]
By the same proof of \cite[Theorem 3.2]{Hu-Wang2013} of Hu--Wang we have
\[
\dimc\D-\dimc\D_{\underline{\Gamma^{(k)}}} +\mbox{rank}_\cplane\,E_{\underline{\Gamma^{(k)}}}
=\iota^\D(\check\eta^{(k)})+\iota^\D(\pi^{(k)})
\]
where $\iota^\D(\check\eta^{(k)})$ is the degree shifting number of the twisted sector to which $\check\rho_k$ belongs and $\iota^\D(\pi^{(k)})$ is the sum of degree shifting numbers of the twisted sectors $\D(h^{(k)}_i),1\leq i\leq \ell(\pi^{(k)})$. As a consequence we get
\[
\deg \check\eta^{(k)}+\deg\pi^{(k)}\leq\dimr\D
-2\big(\iota^\D(\check\eta^{(k)})+\iota^\D(\pi^{(k)})\big),
\]
which is equivalent to
\[
\deg_{\text{CR}} \check\eta^{(k)}+\deg_{\text{CR}}\pi^{(k)}\leq \dimr\D.
\]
Hence by the orbifold Poincar\'e duality we get
\begin{align}\label{eq deg+deg<dimD--c}
\deg_{\text{CR}}\eta^{(k)}\geq \deg_{\text{CR}}\pi^{(k)}.
\end{align}
Summing over all components for $1\leq k\leq \ell(\eta)$ we get
\[
\deg_{\text{CR}}\eta \geq \deg_{\text{CR}}\nu.
\]

When there is at least one strictly inequality in \eqref{eq deg+deg<dimD--c}, we get a strictly lower distinguished Type II invariant from $\Y_1$. So we consider the case that equality holds in \eqref{eq deg+deg<dimD--c} for all $1\leq k\leq \ell(\eta)$. Then all equalities in (1)-(6) in Definition \ref{def partial-order} of the partial order ``$\po$'' holds, and we only have to consider the following summands in the degeneration formula
\begin{align}\label{eq degene-2}
\sum_{ \deg_{\text{CR}}\eta=\deg_{\text{CR}}\nu}
\bl
\mu\bb \tau_0([\D_0]\cdot\theta)\cdot\varpi \bb \eta \br^\bullet_{g,\beta}
\cdot\fk z(\eta)\cdot
\bl
\check\eta\bb \nu_\0
\br^\bullet_{0,d[F]}.
\end{align}
There is exactly one summand with $\eta=\nu$, for which (cf. \cite[Theorem 5.29]{Chen-Du-Hu2019})
\[
C=\bl \check\nu \bb \nu_\0\br^\bullet_{0,d[F]}
=\prod_{j=1}^{\ell(\nu)}\frac{1}{\text{ord}(h_j)\cdot \nu_j!}\neq 0.
\]
This gives us the left side of Relation \ref{relation 1}. For the rest summands in \eqref{eq degene-2}, we must have $\vec\eta\neq \vec\nu$, and then by definition of $\po$, we have
\[
\bl \mu \bb \tau_0([\D_0]\cdot\theta)\cdot\varpi \bb \eta \br^\bullet_{g,\beta} \po \bl \mu \bb \tau_0([\D_0]\cdot\theta)\cdot\varpi\bb \nu \br_{g,\beta}
\]
and $\bl\eta\bb \nu_\0 \br^\bullet_{0,d[F]}$ are fiber class invariants of $(\Y|\D_0)$, which are determined in \S \ref{sec fiber-class-inv}. This gives us the second term on the right side of the Relation \ref{relation 1}.

\v\n{\bf Case II: $\beta_2=\beta$.} The principal terms from $\Y_2$ will be shown to be of the form of the third term on the right side of Relation \ref{relation 1}.

Let $\f_i\co \C_i\rto\Y_i$ be the elements of the relative moduli spaces for a fixed splitting. The condition $\beta_2=\beta$ forces $\beta_1$ to be a multiple of the fiber class $[F]$. After ignoring lower terms, we may assume $\C_1$ consists of $\ell(\eta)$ rational components, each totally ramified over $\D_\0$ and contains exactly one relative making. Then the decomposition of $\eta$ induces a decomposition of $\mu$ into components
\[
\mu=\coprod_k\pi^{(k)}
\]
where empty weighted partitions are {\em not} allowed, since insertions in $\mu$ are relative insertions.

For $1\leq k\leq \ell(\eta)$, each component of $\f_1$ gives us a fiber class relative invariant of $(\D_0|\Y|\D_\0)$
\[
\bl \pi^{(k)}\bb\cdots\varpi_1^{(k)}\bb \eta^{(k)}\br_{0,d_k[F]}.
\]
Here ``$\cdots$'' is empty or $\tau_0([\D_0]\cdot\theta)$. We also denote the topological data of this fiber class invariant by $\Gamma^{(k)}$.

By the analysis in \S \ref{sec fiber-class-inv} and {\bf Case I}, to get a nonzero invariant we must have
\[
\deg \eta^{(k)}+\deg\pi^{(k)}+\mbox{rank}\,E_{\underline{\Gamma^{(k)}}} \leq \dim_\rone\D_{\underline{\Gamma^{(k)}}}' =\dimr\D_{\underline{\Gamma^{(k)}}}
\]
for those components which do not contain the insertion $\tau_0([\D_0]\cdot\theta)$, and
\[
\deg \eta^{(k)}+\deg\pi^{(k)}+\deg \theta+\mbox{rank}\,E_{\underline{\Gamma^{(k)}}} \leq \dim_\rone\D_{\underline{\Gamma^{(k)}}}'= \dimr\D_{\underline{\Gamma^{(k)}}}
\]
for the component which contains the insertion $\tau_0([\D_0]\cdot\theta)$.

Then since by the assumption $\deg_{\text{CR}}\theta=\deg\theta\geq 1$ we get
\[
\deg_{\text{CR}} \check\eta\geq\deg_{\text{CR}}\mu+1.
\]
This gives us the third term on the right side of Relation \ref{relation 1}.
\end{proof}

\subsubsection{Second relation}

Next we consider the following non-fiber relative invariant of $(\Y|\D_\0)$
\[
\bl \mu
\bb \tau_0([\D_0]\cdot\theta)\cdot\varpi\cdot\nu_\0 \br_{g,\beta},
\]
which is the first term on the right side of Relation \ref{relation 1}.

\begin{relation}\label{relation 2}
We have
\begin{eqnarray*}
&&
\bl \mu
\bb \tau_0([\D_0]\cdot\theta)\cdot\varpi\cdot\nu_\0 \br_{g,\beta}%
\\
&=&\pm\sum_{\substack{\|\varpi'\|\leq \|\varpi\|\\
             \deg_{\text{CR}}\mu'\geq \deg_{\text{CR}}\mu\\
             \deg_{\text{CR}}\nu'\geq \deg_{\text{CR}}\nu+1\\
                  m\geq 0}}
                  C_{\mu',\varpi',\nu'}
                  \cdot \bl\mu'\bb \tau_0([\D_0]\cdot
                  H\cdot c_1(\L)^m)\cdot\varpi' \bb \nu' \br_{g,\beta}\\
&&\pm\sum_{\substack{\|\varpi'\|\leq \|\varpi\|\\
             \deg_{\text{CR}}\mu'\geq \deg_{\text{CR}}\mu+1\\
                  \deg_{\text{CR}}\nu'\geq \deg_{\text{CR}}\nu\\
                  m\geq 0}}
                  C_{\mu',\varpi',\nu'}\cdot
                  \bl\mu'\bb \tau_0([\D_0]\cdot H\cdot
                  c_1(\L)^m)\cdot\varpi' \bb \nu' \br_{g,\beta}\\
&&\pm\sum_{\substack{\|\varpi'\|\leq \|\varpi\|\\
                  \deg_{\text{CR}}\mu'\geq \deg_{\text{CR}}\mu\\
                  \deg_{\text{CR}}\nu'\geq \deg_{\text{CR}}\nu\\
                  m\geq 0}}
                  C_{\mu',\varpi',\nu'}\cdot
                  \bl\mu'\bb \tau_0([\D_0]\cdot \theta
                  \cdot c_1(\L)^{m+1})\cdot\varpi' \bb \nu' \br_{g,\beta}\\
&&+\cdots
\end{eqnarray*}
where $H\in H^2(\D)$ is a class such that $\int_{\pi_*(\beta)}^{\mathrm{orb}}H>0$, $C_{\ast,\ast,\ast}$ are fiber class invariants of $(\Y|\D_0),(\Y|\D_\0)$, and $(\D_0|\Y|\D_\0)$, and ``$\cdots$" stands for combinations of non-principle distinguished type II invariants, non-principle relative invariants of $(\Y|\D_0),(\Y|\D_\0)$, and descendent orbifold Gromov--Witten invariants of $\D$.
\end{relation}

\begin{proof}
We use
\[
[\D_0]=[\D_\0]+c_1(\L)
\]
to write the invariant $\bl \mu\bb \tau_0([\D_0]\cdot\theta)\cdot\varpi\cdot\nu_\0 \br_{g,\beta}$
into
\[
I_1+I_2:=\bl \mu
\bb \tau_0([\D_\0]\cdot\theta)\cdot\varpi\cdot\nu_\0 \br_{g,\beta}+
\bl \mu
\bb \tau_0(c_1(\L)\cdot\theta)\cdot\varpi\cdot\nu_\0 \br_{g,\beta}.
\]

We first compute the invariant $I_1$ via virtual localization with respect to the canonical $\cplane^*$-action on $\Y$. The insertions of $I_1$ all have canonical equivariant lifts. The moduli space have two kind of fixed loci, the simple type $\M^\simple_\Gamma$ and the composite type $\F_{\Gamma',\Gamma''}$.

The simple type fixed locus $\M^\simple_\Gamma$ consists of those stable maps $\f\co \C_1\cup\C_2\rto\Y$ such that $\C_1$ is a disjoint union of rational components and the restriction $\f_1\co \C_1\rto(\D_0|\Y|\D_\0)$ are totally ramified over $\D_0$ and $\D_\0$, and the restriction $\f_2\co \C_2\rto\Y$ maps into $\D_\0$. The insertions $\tau_0([\D_\0]\cdot \theta)$ and $\nu_\0$ are all distributed to $\D_\0$. Therefore by the localization analysis in \S \ref{subsec localization} the contribution of $\M^\simple_\Gamma$ is Hodge integrals in the twisted Gromov--Witten theory of $\D_\0\cong\D$ with twisting coming from $\N_{\D_\0|\Y}=\L$. Then by the orbifold quantum Riemann--Roch \cite{Tseng2010} of Tseng, these twisted invariants reduced to descendent Gromov--Witten invariants of $\D$.

We next consider composite fixed loci. Since we only consider principle terms we only need to consider $\F_{\Gamma',\Gamma''} =gl(\M_{\Gamma'}^\simple \times_{\I\D^{\ell(\rho)}}\M_{\Gamma''}^\sim)$ with maps in $\M_{\Gamma''}^\sim$ having genus $g$ and degree $\beta$. Let $\C_0$ be the sub-curve of the domain mapped to rubber, and $\C_\0$ be the sub-curve of the domain mapped to $\D_\0$. Then $\C_0$ and $\C_\0$ is connected by a disjoint union of rational components $\C_1$ which are totally ramified over $\D_0$ and $\D_\0$.

$\C_\0\cup\C_1$ gives a morphism to $(\Y|\D_0)$ of fiber class. Note that the insertion $\tau_0([\D_\0]\cdot\theta)$ and insertions in $\nu_\0$ must be distributed to $\C_\0$. There would be some insertions $\varpi''$ of $\varpi$ are distributed to $\C_\0$ too. The rest $\varpi'$ of $\varpi$ are distributed to $\C_0$. We next insert relative insertions to the connecting nodal points $\{\msf n_1,\dots, \msf n_m\}$ of $\C_0$ and rational components $\C_1$ by assigning $\eta$ to $\C_0$ and $\check\eta$ to $\C_1$. Then the argument used for fiber class invariants in the proof of Relation \ref{relation 1} and the analysis for fiber class invariants in \S \ref{sec fiber-class-inv} shows
\[
\deg_{\text{CR}}\eta \geq \deg_{\text{CR}}^\D\varpi''+\deg_{\text{CR}}\nu +\deg_{\text{CR}}\theta \geq\deg_{\text{CR}}\nu+\deg\theta
\]
where $\deg_{\text{CR}}^\D \varpi''$ means the CR-degree of non-fiber parts of $\varpi''$, i.e. those classes pulled back from $H^*_{\text{CR}}(\D)$. Since $\deg \theta>0$, the principle term of the localization formula of $I_1$ is of the form
\[
\bl\mu\bb\varpi'\cdot\Psi_\0^k\bb \eta\br_{g,\beta}^\sim,\qq \mbox{where}\qq
\|\varpi'\|\leq \|\varpi\|,\,\, \deg_{\text{CR}}(\eta)\geq \deg_{\text{CR}}(\nu)+1,
\]
with coefficient being a fiber class relative invariant of $(\Y|\D_0)$. Then by repeating the rubber calculus for non-fiber class rubber invariants in \S \ref{subsubsec rubber-cal-non-fiber} we obtain the first term on the right hand side of Relation \ref{relation 2} modulo non-principle terms and Gromov--Witten invariants of $\D$.

We next compute $I_2$ by virtual localization too. As above, the contribution from simple fixed loci reduces to descendent orbifold Gromov--Witten invariants of $\D$. We next consider contribution from composite fixed loci. As above since we only consider principle terms we only need to consider $\F_{\Gamma',\Gamma''} =gl(\M_{\Gamma'}^\simple \wedge_{\I\D^{\ell(\rho)}}\M_{\Gamma''}^\sim)$ with maps in $\M_{\Gamma''}^\sim$ having genus $g$ and degree $\beta$. We also denote by $\C_0$  the sub-curve of the domain mapped to rubber, and $\C_\0$ the sub-curve of the domain mapped to $\D_\0$. Then $\C_0$ and $\C_\0$ is connected by a disjoint union of rational components $\C_1$ which are totally ramified over $\D_0$ and $\D_\0$. Then $\C_1\cup\C_\0$ also gives a morphism to $(\Y|\D_0)$ with fiber class. However, although the insertions in $\nu_\0$ must be distributed to $\C_\0$, the insertion $\tau_0(c_1(\L)\cdot\theta)$ may be distributed to $\C_\0$ or $\C_0$. Suppose there are some insertions $\varpi''$ of $\varpi$ are distributed to $\C_\0$ too. The rest $\varpi'$ of $\varpi$ are distributed to $\C_0$. We next insert relative insertions to the connecting nodes $\{\msf n_1,\dots, \msf n_m\}$ of $\C_0$ and rational components $\C_1$ by assigning $\eta$ to $\C_0$ and $\check\eta$ to $\C_1$.

If the insertion $\tau_0(c_1(\L)\cdot\theta)$ is distributed to $\C_\0$, by the argument used for fiber class invariants in the proof of Relation \ref{relation 1} and the analysis for fiber class invariants in \S \ref{sec fiber-class-inv} we have
\[
\deg_{\text{CR}}(\eta) \geq \deg_{\text{CR}}^\D(\varpi'')+\deg_{\text{CR}}(\nu) +\deg_{\text{CR}}(\theta) \geq\deg_{\text{CR}}(\nu)+\deg(\theta).
\]
Therefore the principle term coming from this case is of the form
\[
\bl\mu\bb\varpi'\cdot\Psi_\0^k\bb \eta\br_{g,\beta}^\sim,\qq \mbox{with}\qq
\|\varpi'\|\leq \|\varpi\|,\,\, \deg_{\text{CR}}(\eta)\geq \deg_{\text{CR}}(\nu)+1,
\]
whose coefficient are fiber class relative invariants of $(\Y|\D_0)$. Then by repeating the rubber calculus for non-fiber class we obtain the first term on the right side of Relation \ref{relation 2} modulo non-principle terms and descendent orbifold Gromov--Witten invariants of $\D$.

If the insertion $\tau_0(c_1(\L)\cdot\theta)$ is distribute to $\D_0$ we have
\[
\deg_{\text{CR}}(\eta) \geq \deg_{\text{CR}}^\D(\varpi'')+\deg_{\text{CR}}(\nu) \geq\deg_{\text{CR}}(\nu).
\]
Therefore the principle term coming from this case is of the form
\begin{align}\label{E rubber-inv-relation-II-I-2}
\bl\mu\bb\tau_0(c_1(\L)\cdot\theta)\cdot\varpi'\cdot\Psi_\0^k
\bb \eta\br_{g,\beta}^\sim,
\end{align}
with $\|\varpi'\|\leq \|\varpi\|,\,\, \deg_{\text{CR}}(\eta)\geq \deg_{\text{CR}}(\nu), k\geq 0$, whose coefficient are fiber class relative invariants of $(\Y|\D_0)$. By the boundary relation in \S \ref{subsubsec rubber-cal-fiber}, after modulo non-principle rubber invariants, the rubber invariants in \eqref{E rubber-inv-relation-II-I-2} reduce to rubber invariants of the form
\begin{enumerate}
\item[(i)]
    $\bl\mu\bb \tau_0(c_1(\L)\cdot\theta)\cdot\varpi''\bb \eta\br_{g,\beta}^\sim$, with
    $\|\varpi''\|\leq \|\varpi'\|\leq \|\varpi\|$ and $\deg_{\text{CR}}(\eta)\geq \deg_{\text{CR}}(\nu)$,

\item[(ii)] $\bl\mu'\bb\varpi''\cdot\Psi_\0^{k-1}
     \bb\eta\br_{g,\beta}^\sim$,
    with $\|\varpi''\|\leq \|\varpi'\|\leq \|\varpi\|$,
    $\deg_{\text{CR}}(\eta)\geq \deg_{\text{CR}}(\nu)$ and
    $\deg_{\text{CR}}(\mu')\geq \deg_{\text{CR}}(\mu)+1$.

\item[(iii)]
    $\bl\mu\bb\tau_0(c_1(\L)^2\cdot\theta)
    \cdot\varpi'\cdot\Psi_\0^{k-1}
    \bb\eta\br_{g,\beta}^\sim$, with
    $\|\varpi'\|\leq \|\varpi\|$ and
    $\deg_{\text{CR}}(\eta)\geq \deg_{\text{CR}}(\nu)$,

\end{enumerate}
These three kinds of rubber invariants reduce to the second and third summands of the right hand side of Relation \ref{relation 2} and non-principle terms as follows:
\begin{enumerate}
\item[(1)] By rigidification, i.e. Lemma \ref{lem rigidification}, the invariants in (i) give rise to
\[
\bl\mu\bb\tau_0([\D_0]\cdot c_1(\L)\cdot\theta)\cdot\varpi'
    \bb \eta\br_{g,\beta},
\]
with $\|\varpi'\|\leq \|\varpi\|$, and $\deg_{\text{CR}}(\eta)\geq \deg_{\text{CR}}(\nu)$. These are part of the third summand of the right hand side of Relation \ref{relation 2}.

\item[(2)] By repeating the rubber calculus for non-fiber class, we reduce the invariants in (ii) to rubber invariants of the form
\[
\bl\mu'\bb\tau_0(H)\cdot \varpi''\cdot \Psi_\0^{k-1}\bb\eta\br^\sim_{g,\beta},
\]
with $\|\varpi''\|\leq \|\varpi'\|\leq \|\varpi\|$, $\deg_{\text{CR}}\mu'\geq\deg_{\text{CR}}\mu+1$, and $\deg_{\text{CR}}\eta\geq\deg_{\text{CR}}\nu$. By using the boundary relation, modulo non-principle terms, these rubber invariants reduce to rubber invariants of the form
\begin{itemize}
\item $\bl\mu\bb \tau_0(H)\cdot\varpi'''\bb\eta\br_{g,\beta}^\sim$, with
    $\|\varpi'''\|\leq \|\varpi''\|$ and $\deg_{\text{CR}}(\eta)\geq \deg_{\text{CR}}(\nu)$,

\item $\bl\mu'\bb\varpi'''\cdot \Psi_\0^{k-2}\bb\eta\br_{g,\beta}^\sim$, with
    $\|\varpi'''\|\leq \|\varpi''\|$,
    $\deg_{\text{CR}}(\eta)\geq \deg_{\text{CR}}(\nu)$ and
    $\deg_{\text{CR}}(\mu')\geq \deg_{\text{CR}}(\mu)+1$.

\item $\bl\mu\bb\tau_0(H\cdot c_1(\L))\cdot\varpi''\cdot \Psi_\0^{k-2}\bb\eta\br_{g,\beta}^\sim$, with
    $\|\varpi''\|\leq \|\varpi'\|$ and
    $\deg_{\text{CR}}(\eta)\geq \deg_{\text{CR}}(\nu)$,
\end{itemize}
Then by applying the boundary relation and rigidification to these resulting rubber invariants, modulo non-principle terms we obtain the second and third summands on the right of Relation \ref{relation 2}.

\item[(3)] At last, applying the boundary relation to the invariants in (iii), modulo non-principle terms we get
\begin{itemize}
\item $\bl\mu\bb\tau_0(c_1(\L)^2\cdot\theta)\cdot\varpi''
    \bb\eta\br_{g,\beta}^\sim$, with
    $\|\varpi''\|\leq \|\varpi'\|$ and $\deg_{\text{CR}}(\eta)\geq \deg_{\text{CR}}(\nu)$,

\item $\bl\mu'\bb\varpi''\cdot \Psi_\0^{k-2}\bb\eta\br_{g,\beta}^\sim$,
    with $\|\varpi''\|\leq \|\varpi'\|$,
    $\deg_{\text{CR}}(\eta)\geq \deg_{\text{CR}}(\nu)$ and
    $\deg_{\text{CR}}(\mu')\geq \deg_{\text{CR}}(\mu)+1$.

\item $\bl\mu\bb\tau_0(c_1(\L)^2\cdot \theta)
    \cdot\varpi'\cdot\Psi_\0^{k-2}
    \bb\eta\br_{g,\beta}^\sim$, with
    $\|\varpi'\|\leq \|\varpi\|$ and
    $\deg_{\text{CR}}(\eta)\geq \deg_{\text{CR}}(\nu)$,
\end{itemize}
Similarly, by applying the boundary relation and rigidification to these resulting rubber invariants, modulo non-principle terms we obtain the second and third summands on the right of Relation \ref{relation 2}.
\end{enumerate}

This finishes the proof of Relation \ref{relation 2}.
\end{proof}

\subsubsection{Third relation}

Now we compute the relative invariants of $(\Y|\D_0)$ in the third term of the right side of Relation \ref{relation 1}, i.e. relative invariants of the form
\[
\bl\mu'\bb \varpi' \cdot \nu_\0 \br_{g,\beta}
\]
with $\deg_{\text{CR}}\mu'\geq \deg_{\text{CR}}\mu+1$ and $\|\varpi'\|\leq \|\varpi\|$. By the proof of Relation \ref{relation 2} we have
\begin{relation}\label{relation 3}
\[
\bl\mu'\bb \varpi' \cdot \nu_\0 \br_{g,\beta}
=\pm\sum_{\substack{\|\varpi''\|\leq \|\varpi'\|\\
            \deg_{\text{CR}}\mu''\geq \deg_{\text{CR}}\mu'\\
            \deg_{\text{CR}}\nu''\geq \deg_{\text{CR}}\nu\\
                  m\geq 0}}
                  C_{\mu'',\varpi'',\nu''}\cdot
                  \bl\mu''\bb \tau_0([\D_0]\cdot
                  H\cdot c_1(\L)^m)\cdot\varpi'' \bb \nu''\br_{g,\beta}
+\cdots
\]
where $H\in H^2(\D)$ satisfying $\int_{\pi_*(\beta)}^{\mathrm{orb}}H>0$, $C_{\mu'',\varpi'',\nu''}$ are fiber class invariants of $(\Y|\D_0),(\Y|\D_\0)$ and $(\D_0|\Y|\D_\0)$, and ``$\cdots$'' stands for non-principle terms and descendent orbifold Gromov--Witten invariants of $\D$.
\end{relation}

\subsection{Proof of Theorem \ref{thm rel-GW-of-P(E)} and Theorem \ref{thm orb-LH}}\label{subsec proof-thm-1.1}

In this subsection we give a proof of Theorem \ref{thm rel-GW-of-P(E)} and Theorem \ref{thm orb-LH}. We restate them here for reader's convenience.

\begin{theorem}[Theorem \ref{thm rel-GW-of-P(E)}]
The relative descendent orbifold Gromov--Witten theory of the pair
$(\oE_\wa|\PE_\wa)$ can be effectively and uniquely reconstructed from the absolute descendent orbifold Gromov--Witten theories of $\os$ and $\PE_\wa$, the Chern classes of $\E$ and $\mc O_{\PE_\wa}(-1)$.
\end{theorem}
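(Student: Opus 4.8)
The plan is to assemble the four ingredients developed above---the localization reduction of \S\ref{subsec localization}, the rubber calculus of \S\ref{subsec rubber-calculus}, the inductive scheme of \S\ref{subsec dertermine-dis-type-ii}, and the fiber class computations of \S\ref{sec fiber-class-inv}---into a single effective reconstruction algorithm. I would start with an arbitrary relative descendent invariant of $(\oE_\wa|\PE_\wa)$ as in \eqref{eq a-rel-inv-of-oE-PE}, with absolute insertions from the basis \eqref{eq basis-of-oE} and relative insertions from \eqref{eq basis-of-PE}. The first move is to apply the relative virtual localization formula \eqref{eq vir-local-formula} for the fiberwise $\cplane^*$-action on $\oE_\wa$, whose fixed loci are the zero section $\os$ and the infinity section $\PE_\wa$. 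By \S\ref{subsec localization}, the simple fixed locus $\M_\Gamma^\simple$ contributes, through its gluing description, a Hodge integral in the $\E$-twisted orbifold Gromov--Witten theory of $\os$ times a product of $(1{+}1)$-point fiber class relative invariants of $(\oE_\wa|\PE_\wa)$ along $\PE_\wa$, while each composite fixed locus contributes a rubber invariant carrying $\Psi_\0$-integrals of $(\D_0|\Y|\D_\0)$, where $\Y=\P(\L\oplus\mc O_{\PE_\wa})$ and $\L=\mc O_{\PE_\wa}(1)$ is the normal bundle of $\PE_\wa$ in $\oE_\wa$.

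Next I would dispose of these building blocks. The $\E$-twisted Hodge integrals over moduli of orbifold stable maps to $\os$ reduce to the absolute orbifold Gromov--Witten theory of $\os$ together with $c(\E)$ by Tseng's orbifold quantum Riemann--Roch \cite{Tseng2010}; the $(1{+}1)$-point fiber class relative invariants of $(\oE_\wa|\PE_\wa)$ were computed explicitly in \cite{Chen-Du-Hu2019} and involve only $\os$ together with the Chern classes of $\E$ and of $\mc O_{\PE_\wa}(-1)$. The problem is thus reduced to the rubber invariants of $(\D_0|\Y|\D_\0)$ carrying $\Psi_\0$-integrals. Applying the rubber calculus of \S\ref{subsec rubber-calculus}---rigidification (Lemma \ref{lem rigidification}), the dilaton and divisor equations of \S\ref{subsubsec dilaton-divisor-eqn}, and the boundary relation \eqref{E fiber-class-rubber-spliting-eqn}---strips off all $\Psi_\0$-insertions and expresses these rubber invariants in terms of fiber class invariants of $(\D_0|\Y|\D_\0)$ and the distinguished type II invariants \eqref{E distinguished-type-II-3.3}.

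The core step is the inductive determination of the distinguished type II invariants, which I would carry out along the partial order $\po$ of Definition \ref{def partial-order} using Relations \ref{relation 1}, \ref{relation 2} and \ref{relation 3}. Relation \ref{relation 1} writes a distinguished type II invariant $\mathbf R$, up to the nonzero constant $C\cdot\fk z(\nu)$, as a non-fiber relative invariant of $(\Y|\D_0)$, plus strictly $\po$-lower distinguished type II invariants, plus relative invariants $\langle\mu'\mid\varpi'\cdot\nu_\0\rangle$ with $\deg_{\mathrm{CR}}\mu'\ge\deg_{\mathrm{CR}}\mu+1$, plus non-principle terms; Relations \ref{relation 2} and \ref{relation 3} then rewrite the first and the third of these back in terms of distinguished type II invariants with strictly larger $\deg_{\mathrm{CR}}\mu$ or $\deg_{\mathrm{CR}}\nu$ (or strictly smaller $\beta$ or $g$), fiber class invariants, and descendent orbifold Gromov--Witten invariants of $\D=\PE_\wa$. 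Since $\deg_{\mathrm{CR}}\mu$ and $\deg_{\mathrm{CR}}\nu$ are bounded above by dimension constraints while $\beta$ and $g$ are bounded below, one full pass through Relations \ref{relation 1}--\ref{relation 3} expresses $\mathbf R$ in terms of invariants that are either already known or strictly lower in the well-founded ordering built from $\beta$, $g$, $\|\varpi\|$, $\deg_{\mathrm{CR}}\mu$, $\deg_{\mathrm{CR}}\nu$ and $\po$; by Gromov compactness only finitely many invariants occur at each stage, so the recursion terminates, with the fiber class invariants as its base case.

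The remaining fiber class invariants of $(\D_0|\Y|\D_\0)$, $(\Y|\D_0)$ and $(\Y|\D_\0)$ are handled by \S\ref{sec fiber-class-inv}: by Theorem \ref{thm structure-fiber-cls-moduli} and \eqref{eq cput-fiber-inv}, their moduli spaces fiber over multisectors of $\D=\PE_\wa$ with fibers the relative moduli of $\P^1$-orbifolds, and the resulting Hodge and descendent integrals reduce---via virtual localization, the admissible covers computation of Tseng--You \cite{Tseng-You2016b}, Tseng's quantum Riemann--Roch \cite{Tseng2010}, and Witten's conjecture and Kontsevich's theorem \cite{Witten1990,Kontsevich1992}---to the absolute orbifold Gromov--Witten theory of $\PE_\wa$ and $c_1(\mc O_{\PE_\wa}(1))=-c_1(\mc O_{\PE_\wa}(-1))$. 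Reading the chain of reductions backwards then reconstructs the original invariant from the data in the statement, and since every reduction is an explicit finite linear operation the reconstruction is effective and unique; the same reductions in the case $n=1$, $\wa=(1)$, $\E=\L$, applied to $\Y$, $(\Y|\D_0)$, $(\Y|\D_\0)$ and $(\D_0|\Y|\D_\0)$, also yield Theorem \ref{thm orb-LH}. The step I expect to be the main obstacle is the bookkeeping inside the induction over distinguished type II invariants: one must verify that every term on the right-hand side of Relations \ref{relation 1}--\ref{relation 3} and of the rubber-calculus identities is either already ``known'' (a fiber class invariant, an invariant of $\os$ or of $\D$, or a $(1{+}1)$-point invariant) or strictly lower in the combined ordering, so that the algorithm genuinely closes with no hidden circularity.
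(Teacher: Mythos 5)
Your proposal is correct and follows essentially the same route as the paper's own proof: localization to reduce to $\E$-twisted invariants of $\os$ plus rubber invariants of $(\D_0|\Y|\D_\0)$, rubber calculus to strip the $\Psi_\0$-insertions, the $\po$-induction via Relations \ref{relation 1}--\ref{relation 3} for the distinguished type II invariants, and the fiber class computations of \S\ref{sec fiber-class-inv} as the base case. The potential circularity you flag at the end is exactly what the partial order of Definition \ref{def partial-order} and Theorem \ref{thm general-partial-order} is designed to rule out, so no gap remains.
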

\begin{proof}
By localization calculation in \S \ref{subsec localization}, relative invariants of $(\oE_\wa|\PE_\wa)$ are determined by descendent absolute invariants of $\os$ and rubber invariants of $(\D_0|\Y|\D_\0)=(\D_0|\P(\L\oplus\mc O_{\PE_\wa})|\D_\0)$ with $\L=\N_{\PE_\wa|\oE_\wa}=\mc O_{\PE_\wa}(1)$ and $\D=\PE_\wa\cong \D_0\cong \D_\0$. By the rubber calculus in \S \ref{subsec rubber-calculus}, these rubber invariants are reduced to fiber class invariants of $(\D_0|\Y|\D_\0)$ and distinguished type II invariants of $(\D_0|\Y|\D_\0)$. The fiber class invariants of $(\D_0|\Y|\D_\0)$ are determined in \S \ref{sec fiber-class-inv}. On the other hand, by Relation \ref{relation 1}, Relation \ref{relation 2} and Relation \ref{relation 3}, we determined distinguished type II invariants of $(\D_0|\Y|\D_\0)$ by induction with respect to the partial order $\po$ with initial values being fiber class invariants determined in \S \ref{sec fiber-class-inv}. This finishes the proof of Theorem \ref{thm rel-GW-of-P(E)}.
\end{proof}

\begin{theorem}[Theorem \ref{thm orb-LH}]
All four theories, i.e. the absolute descendent orbifold
Gromov--Witten theory of $\Y=\P(\L\oplus\mc O_\D)$ and the relative descendent orbifold Gromov--Witten theories of the three pairs
\[
(\Y|\D_0),\qq (\Y|\D_\0),\qq \text{and}\qq (\D_0|\Y|\D_\0),
\]
can be uniquely and effectively reconstructed from the absolute descendent orbifold Gromov--Witten theory of $\D$ and the first Chern class of the line bundle $\L$.
\end{theorem}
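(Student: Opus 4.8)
The plan is to obtain the four theories in turn, bootstrapping off Theorem~\ref{thm rel-GW-of-P(E)} and the fiber-class and rubber machinery of \S\ref{sec fiber-class-inv} and \S\ref{subsec rubber-calculus}--\S\ref{subsec dertermine-dis-type-ii}. First I would note that the relative theory of $(\Y|\D_\0)$ is precisely Theorem~\ref{thm rel-GW-of-P(E)} in the special case $\os=\D$, $\E=\L$, $\wa=(1)$: then $\oE_\wa=\P(\L\oplus\mc O_\D)=\Y$, $\PE_\wa=\D_\0\cong\D$, and $\mc O_{\PE_\wa}(\pm1)=\L^{\mp1}$, so that theorem reconstructs $(\Y|\D_\0)$ from the absolute descendent orbifold Gromov--Witten theory of $\D$ and $c_1(\L)$. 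For $(\Y|\D_0)$ I would use the canonical isomorphism $\P(\L\oplus\mc O_\D)\cong\P(\L^*\oplus\mc O_\D)$ obtained by tensoring with $\L^*$, under which $\D_0$ goes to the infinity section and $\D_\0$ to the zero section; applying the case just treated with $\L$ replaced by $\L^*$, and using $c_1(\L^*)=-c_1(\L)$, reconstructs $(\Y|\D_0)$ from the same data.

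For the absolute theory of $\Y$ I would degenerate $\Y$ along a fiber of $\Y\to\D$, so that the central fiber is $\Y\wedge_\D\Y$; making the gluing divisor relative, its two components are the pairs $(\Y|\D_0)$ and $(\Y|\D_\0)$. The orbifold degeneration formula \cite{Chen-Li-Sun-Zhao2011,Abramovich-Fantechi2016} then expresses any absolute descendent invariant of $\Y$ through relative invariants of $(\Y|\D_0)$ and of $(\Y|\D_\0)$, once one checks that every absolute insertion extends over both components of the central fiber; this is automatic, since $H^*_{\text{CR}}(\Y)$ is generated over $\pi^*H^*_{\text{CR}}(\D)$ by the class $[\D_0]$ (equivalently by $[\D_\0]$, cf.\ \eqref{eq D0=D1+c1L}), and both a pullback class and $[\D_0]$ manifestly extend. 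Hence the absolute theory of $\Y$ reduces to the two relative theories already obtained, and so to the absolute theory of $\D$ and $c_1(\L)$.

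It remains to reconstruct the full relative theory of $(\D_0|\Y|\D_\0)$. Its fiber-class invariants are computed in \S\ref{sec fiber-class-inv}. For a non-fiber-class invariant $\bl\mu\bb\varpi\bb\nu\br^{(\D_0|\Y|\D_\0)}_{g,\beta}$ I would run the argument of Relation~\ref{relation 1} with the roles of $\D_0$ and $\D_\0$ interchanged: absorb the $\D_0$-relative datum $\mu$ into $[\D_0(h)]$-supported absolute descendent insertions $\mu_0$ as in \eqref{eq rel-insertion-from-t-II}, forming the relative invariant $\bl\varpi\cdot\mu_0\bb\nu\br^{(\Y|\D_\0)}_{g,\beta}$, which is already determined above. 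Degenerating $\Y$ as in \eqref{eq degenerate Y} and applying the degeneration formula exactly as in the proof of Relation~\ref{relation 1} yields an identity in which a nonzero fiber-class constant times $\fk z(\mu)$ times $\bl\mu\bb\varpi\bb\nu\br^{(\D_0|\Y|\D_\0)}_{g,\beta}$ equals $\bl\varpi\cdot\mu_0\bb\nu\br^{(\Y|\D_\0)}_{g,\beta}$, corrected by fiber-class contributions (\S\ref{sec fiber-class-inv}) and by relative invariants of $(\D_0|\Y|\D_\0)$ and of $(\Y|\D_\0)$ of strictly smaller homology class or genus. Since by Gromov compactness only finitely many lower terms occur, induction on $(\beta,g)$, with the fiber-class invariants of \S\ref{sec fiber-class-inv} as initial data, determines all of them. (One may instead run the relative virtual localization of \S\ref{subsec localization} directly on the doubly relative target $(\D_0|\Y|\D_\0)$, reducing to rubber invariants and invoking \S\ref{subsec rubber-calculus}, \S\ref{subsec dertermine-dis-type-ii} and \S\ref{sec fiber-class-inv}; the conclusion is the same.) This completes the proof of Theorem~\ref{thm orb-LH}.

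The step I expect to require the most care is the last one: one must check that the fiber-class constant appearing there --- a $(1{+}1)$-point fiber-class relative invariant of the type computed in \cite{Chen-Du-Hu2019} --- is nonzero for every admissible $\mu$, that the recursion strictly decreases in the partial order $\po$ of \S\ref{subsec dertermine-dis-type-ii}, and that it covers arbitrary absolute insertions, in particular descendants of pullback classes $\pi^*\theta$, which are not of distinguished type II form. All three points are settled in \S\ref{subsec dertermine-dis-type-ii} and its inputs; what is needed here is simply to observe that those arguments go through verbatim with $\D_0$ and $\D_\0$ exchanged and with the distinguished-insertion hypothesis dropped. The remaining three steps are routine, the only genuinely new ingredients being the isomorphism $\P(\L\oplus\mc O_\D)\cong\P(\L^*\oplus\mc O_\D)$ and the fiber degeneration of $\Y$.
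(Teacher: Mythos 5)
Your proposal is correct, but it reorganizes the argument in a way that genuinely differs from the paper's proof. The paper treats all four theories uniformly by $\cplane^*$-virtual localization on $\Y$: the absolute theory reduces to Hodge integrals in the Gromov--Witten theory of $\D$ (removed by orbifold quantum Riemann--Roch), and each relative theory reduces to $\L$- or $\L^*$-twisted invariants of $\D$ plus rubber invariants of $(\D_0|\Y|\D_\0)$, which the rubber calculus of \S\ref{subsec rubber-calculus} converts into the fiber-class and distinguished type II invariants determined in \S\ref{sec fiber-class-inv} and \S\ref{subsec dertermine-dis-type-ii}. You instead establish the two singly relative theories first as special cases of Theorem \ref{thm rel-GW-of-P(E)} (with the $\P(\L\oplus\mc O_\D)\cong\P(\L^*\oplus\mc O_\D)$ trick for $(\Y|\D_0)$), obtain the absolute theory by degeneration rather than localization, and then attack $(\D_0|\Y|\D_\0)$ by a mirrored Relation \ref{relation 1}. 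What your ordering buys is that the analogue of the Relation \ref{relation 1}--Relation \ref{relation 3} loop collapses: the troublesome principal terms of Case II of the degeneration analysis --- singly relative invariants whose relative $\deg_{\text{CR}}$ is only weakly controlled --- are already known quantities by the time you need them, so the distinguished insertion $\tau_0([\D_0]\cdot\theta)$ with $\deg\theta>0$ (whose only role in the paper is to force the strict inequality $\deg_{\text{CR}}\check\eta\geq\deg_{\text{CR}}\mu+1$ that makes that loop terminate) is genuinely dispensable. That said, ``go through verbatim with the distinguished-insertion hypothesis dropped'' undersells the point: you should say explicitly that the hypothesis is removable \emph{because} the Case II contributions land in the already-determined theories of $(\Y|\D_0)$ and $(\Y|\D_\0)$, while the Case I principal corrections remain doubly relative and must be shown lower in a $\po$-type order whose well-foundedness is Theorem \ref{thm general-partial-order} (itself a special case of a general statement about the degeneration formula, so it does apply to arbitrary $\varpi$). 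Two minor points: ``degenerate $\Y$ along a fiber'' should be ``deform to the normal cone of $\D_0$'' (one cuts along a hypersurface, not a fiber), and the conversion of $\mu$ into $\mu_0$ must use the normal bundle $\L$ of $\D_0$ rather than $\L^*$ in \eqref{eq cj}. Since your parenthetical fallback for the last step --- relative localization plus rubber calculus on $(\D_0|\Y|\D_\0)$ --- is exactly the paper's proof, the theorem is secured either way.
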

\begin{proof}
There is a $\cplane^*$-action on $\Y$ induced from the $\cplane^*$-dilation on $\L$.

First of all, for an absolute invariant of $\Y$, apply the virtual localization with respect to the $\cplane^*$-action. Then fixed loci of the corresponding moduli space is obtained by gluing moduli space of $\D_0$ and $\D_\0$ via disjoint union of genus zero 2-marked fiber class relative moduli space of $(\D_0|\Y|\D_\0)$. Then as in \S \ref{subsec localization},
the absolute invariants are determined by Hodge integrals of Gromov--Witten theories of $\D_0,\D_\0$, hence $\D$. By orbifold quantum Riemann--Roch, Hodge integrals are removed.

Next consider relative invariants. By relative virtual localization
\begin{enumerate}
\item[(a)] relative invariants of $(\Y|\D_0)$ are determined by Hodge integrals in twisted invariants of $\D_0\cong\D$ twisted by $\L$ and rubber invariants of $(\D_0|\Y|\D_\0)$.

\item[(b)] relative invariants of $(\Y|\D_\0)$ are determined by Hodge integrals in twisted invariants of $\D_\0\cong\D$ twisted by $\L^*$ and rubber invariants of $(\D_0|\Y|\D_\0)$.

\item[(c)] relative invariants of $(\D_0|\Y|\D_\0)$ are determined by Hodge integrals in twisted invariants of $\D_0\cong\D$ twisted by $\L$, Hodge integrals in twisted invariants of $\D_\0\cong\D$ twisted by $\L^*$ and rubber invariants of $(\D_0|\Y|\D_\0)$.
\end{enumerate}

By the rubber calculus in \S \ref{subsec rubber-calculus}, rubber invariants of $(\D_0|\Y|\D_\0)$ are determined by fiber class invariants of $(\D_0|\Y|\D_\0)$ and distinguished Type II invariants of $(\D_0|\Y|\D_\0)$. It is proved in \S \ref{sec fiber-class-inv} and
\S \ref{subsec dertermine-dis-type-ii} that fiber class invariants and
distinguished Type II invariants of $(\D_0|\Y|\D_\0)$ are all determined by Gromov--Witten theory of $\D$ and $c_1(\L)$. Again Hodge integrals are removed by orbifold quantum Riemann--Roch. This finishes the proof of Theorem \ref{thm orb-LH}.
\end{proof}

\section{Relative orbifold Gromov--Witten theory of weighted blowups}
\label{sec rel-GW-of-weit-blp}

In this section we determine all admissible relative descendent orbifold Gromov--Witten invariants of weighted blowups, hence prove Theorem \ref{thm rel-GW-of-weighted-blp}.

We first recall the notations. Let $\X$ be a compact symplectic orbifold groupoids with $\os$ being a symplectic suborbifold groupoid with codimension $\text{codim}\,\os=2n$, and normal bundle $\N$. Recall from \eqref{E degenerate-X-in-sec1}, the weight-$\wa$ blowup of $\X$ along $\os$ gives rise to a degeneration of $\X$ into
\[
\X\xrightarrow{\text{degenerate}} (\uX_\wa|\D_\wa)\wedge_{\D_\wa}(\oN_\wa|\D_\wa).
\]
Here $\uX_\wa$ is the weight-$\wa$ blowup of $\X$ along $\os$, $\oN_\wa$ is the weight-$\wa$ projectification of $\N$, $\D_\wa$ is the exceptional divisor in $\uX_\wa$ and also the infinity section $\PN_\wa$ of $\oN_\wa$, i.e. $\D_\wa=\PN_\wa$. So we have
\[
\N_{\D_\wa|\uX_\wa}=\N^\ast_{\D_\wa|\oN_\wa}=\mc O_{\PN_\wa}(-1),\qq\mbox{ and}\qq
c_1(\N_{\D_\wa|\uX_\wa})=-c_1(\N_{\D_\wa|\oN_\wa}).
\]

We have natural maps
\[
\kappa\co (\uX_\wa|\D_\wa)\rto(\X,\os),\qq \pi\co \oN_\wa\rto\os.
\]
Recall that in \S \ref{sec weitblp-and-chom} we have fixed a basis $\Sigma_\star$, i.e. \eqref{eq basis-of-PE}, of $H^*_{\text{CR}}(\D_\wa)=H^*_{\text{CR}}(\PN_\wa)$ via fixing a basis $\sigma_\star$ of $H^*_{\text{CR}}(\os)$. There is also a dual basis which we denoted by $\Sigma^\star$ in \S \ref{sec weitblp-and-chom}.

Let $\iota\co\os\rto\X$ being the inclusion map. Then we have an induced restriction map
\[
\iota^*\co H^*_{\text{CR}}(\X)\rto H^*_{\text{CR}}(\os).
\]
By orbifold Poincar\'e dual, $\iota^*$ determines the push-forward
\[
\iota_*\co H^*_{\text{CR}}(\os)\rto H^*_{\text{CR}}(\X).
\]
The total Chern class of $\N$ is also determined by $\iota^*$ via
\[
c(\N)\cup c(T\os)=\iota^*(c(T\X)).
\]
In particular,
\[
c_1(\N)=\iota^*(c_1(T\X))-c_1(T\os).
\]

We next prove Theorem \ref{thm rel-GW-of-weighted-blp} by using Theorem \ref{thm rel-GW-of-P(E)} and the weighted-blowup correspondence result in \cite{Chen-Du-Hu2019}. The main tool is the degeneration formula for orbifold Gromov--Witten theory in \cite{Chen-Li-Sun-Zhao2011,Abramovich-Fantechi2016}.

Take a relative invariant of $(\uX_\wa|\D_\wa)$
\begin{align}\label{eq rel-of-X/D}
\bl \omega\bb\mu\br_{g,\beta}^{(\uX_\wa|\D_\wa)}
\end{align}
with $\mu$ being weighted by the basis $\Sigma^\star$, the dual of the basis $\Sigma_\star$ of $H^*_{\text{CR}}(\D_\wa)$. The degeneration \eqref{E degenerate-Xa-in-sec1} is also a degeneration of $(\uX_\wa|\D_\wa)$
\[
(\uX_\wa|\D_\wa)\xrightarrow{\text{degenerate}} (\uX_\wa|\D_\wa)\wedge_{\D_\wa}(\D_{\wa,\0}|\overline{\mc O_{\D_\wa}(-1)}|\D_{\wa,0}).
\]
Here $\overline{\mc O_{\D_\wa}(-1)}:=\P(\mc O_{\D_\wa}(-1)\oplus
\mc O_{\D_\wa})$, $\D_{\wa,0}$ and $\D_{\wa,\0}$ are the zero and infinity sections of $\P(\mc O_{\D_\wa}(-1)\oplus
\mc O_{\D_\wa})$, and both are isomorphic to $\D_\wa$. The gluing is along the $\D_\wa\subseteq \uX_\wa$ and $\D_{\wa,\0}\subseteq \overline{\mc O_{\D_\wa}(-1)}$. Then by degeneration formula we get
\begin{align}\label{eq degene-3}
\bl \omega\bb\mu\br_{g,\beta}^{(\uX_\wa|\D_\wa)}
=\sum \bl \omega_1\bb\eta
\br_{g_1,\beta_1}^{\bullet, (\uX_\wa|\D_\wa)}
\cdot\fk z(\eta)\cdot
\bl \check \eta\bb\omega_2\bb\mu
\br_{g_2,\beta_2}^{\bullet,(\D_{\wa,\0}|\overline{\mc O_{\D_\wa}(-1)}|\D_{\wa,0})}
\end{align}
with summation taking over all splittings of $(g,\beta)$, and distributions of insertions $\omega$, and all immediate cohomology weights $\eta$ coming from the basis $\Sigma^\star$ of $H^*_{\text{CR}}(\D_\wa)$. Therefore the cohomology weights $\check\eta$ come from the basis $\Sigma_\star$. As in \S \ref{subsubsec partial order}, for a summand on the right hand side of\eqref{eq degene-3} of the form
\[
\bl \omega\bb\eta\br_{g,\beta}^{\bullet, (\uX_\wa|\D_\wa)}
\cdot\fk z(\eta)\cdot
\bl \check \eta\bb\varnothing
\bb\mu\br_{0,d[F]}^{\bullet,(\D_{\wa,\0}|\overline{\mc O_{\D_\wa}(-1)}|\D_{\wa,0})}
\]
such that every component of $\bl \check \eta\bb\varnothing \bb\mu\br_{0,d[F]}^{\bullet,(\D_{\wa,\0}|\overline{\mc O_{\D_\wa}(-1)}|\D_{\wa,0})}$ is a fiber class invariant and $\vec\eta=\vec\mu$ but $\eta\neq \mu$, we have
\[
\bl \check \eta\bb\varnothing
\bb\mu\br_{0,d[F]}^{\bullet,(\D_{\wa,\0}|\overline{\mc O_{\D_\wa}(-1)}|\D_{\wa,0})}=0.
\]
Therefore, such summand would not appear on the right hand side of \eqref{eq degene-3}. For the rest summands on the right side of \eqref{eq degene-3}, we define (cf. \cite[Definition 6.3]{Chen-Du-Hu2019})
\begin{align}\label{eq def-prec-sec4}
\bl \omega_1\bb\eta\br_{g_1,\beta_1}^{\bullet,(\uX_\wa|\D_\wa)} \prec
\bl \omega\bb\mu\br_{g,\beta}^{(\uX_\wa|\D_\wa)}.
\end{align}

The following theorem is \cite[Theorem 6.5]{Chen-Du-Hu2019}.
\begin{theorem}
The ``$\prec$'' in \eqref{eq def-prec-sec4} is a partial order on the set of (possibly disconnected) relative invariants of $(\uX_\wa|\D_\wa)$ of the form \eqref{eq rel-of-X/D}, i.e. the relative insertions coming from the chosen basis \eqref{eq basis-of-PE} of $H^*_{\mathrm{CR}}(\D_\wa)$. For a fixed relative invariants, there are only finite relative invariants are lower than it with respect to ``$\prec$''.
\end{theorem}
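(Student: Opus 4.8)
The plan is to equip the set of relative invariants of the form \eqref{eq rel-of-X/D} with a well-founded auxiliary weight that strictly decreases along every instance of ``$\prec$''; acyclicity (so that the transitive closure of ``$\prec$'' is a genuine strict partial order) follows immediately, and finiteness of lower sets then follows from Gromov compactness. Concretely I would attach to $\bl\omega\bb\mu\br^{(\uX_\wa|\D_\wa)}_{g,\beta}$ the tuple
\[
\big(\beta,\ g,\ \|\omega\|,\ \deg_{\text{CR}}\mu\big),
\]
ordered lexicographically, where $\beta$ is ordered by reverse effectiveness in $H_2(|\uX_\wa|;\integer)$ (so ``smaller'' means $\beta-\beta_1$ effective and nonzero), $g$ and $\|\omega\|$ by the usual order, and $\deg_{\text{CR}}\mu$ by the reverse order; this is the weighted-blowup analogue of the scheme already used in Definition \ref{def partial-order} and in \cite[\S 1.5]{Maulik-Pandharipande2006}. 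Since both the relation and the weight are additive over connected components, it suffices to treat connected targets and sum.

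The first step is to unpack ``$\prec$''. By \eqref{eq degene-3}, $\bl\omega_1\bb\eta\br^{\bullet,(\uX_\wa|\D_\wa)}_{g_1,\beta_1}\prec\bl\omega\bb\mu\br^{(\uX_\wa|\D_\wa)}_{g,\beta}$ precisely when the left side is the $\uX_\wa$-factor of a non-excluded summand of the degeneration of $(\uX_\wa|\D_\wa)$ into $(\uX_\wa|\D_\wa)\wedge_{\D_\wa}(\D_{\wa,\0}|\overline{\mc O_{\D_\wa}(-1)}|\D_{\wa,0})$, the companion factor being a (possibly disconnected) invariant of the $\P^1$-bundle $\overline{\mc O_{\D_\wa}(-1)}\to\D_\wa$ relative to both sections, of class $\beta_2$ and arithmetic genus $g_2$, glued to the $\uX_\wa$-piece along $\ell(\eta)$ relative points. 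Contracting $\overline{\mc O_{\D_\wa}(-1)}$ onto the section $\D_{\wa,\0}\cong\D_\wa\subseteq\uX_\wa$ identifies $\beta=\beta_1+(\text{pushforward of }\beta_2)$ with the pushforward effective, whence $\beta_1\leq\beta$; and if $\beta_1=\beta$ then, there being no vanishing cycle for a blowup, $\beta_2$ is a multiple of the fiber class $[F]$. In that situation stability forces every connected component of the $\overline{\mc O_{\D_\wa}(-1)}$-domain to meet a section, so genus additivity $g=g_1+g_2+\ell(\eta)-1$ together with $g_2\geq 1-\ell(\eta)$ gives $g_1\leq g$, with equality only for the extremal configuration in which that domain is a disjoint union of genus-$0$ bridges totally ramified over both sections, carrying exactly one relative point and (when $\|\omega_1\|=\|\omega\|$) no absolute point.

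In the extremal case the companion factor is a product of $2$-pointed fiber-class invariants $\bl\check\eta^{(k)}\bb\varnothing\bb\mu^{(k)}\br_{0,d_k[F]}$ of $(\D_{\wa,\0}|\overline{\mc O_{\D_\wa}(-1)}|\D_{\wa,0})$, where $\{\mu^{(k)}\}$ splits $\mu$ into singletons. Here I would rerun, essentially verbatim, the dimension estimate from the proof of Relation \ref{relation 1}: by the structure theorem for fiber-class moduli spaces (Theorem \ref{thm structure-fiber-cls-moduli}) and the index computation of Hu--Wang \cite[Theorem 3.2]{Hu-Wang2013} for the relevant multi-sector of $\D_\wa$ and its obstruction bundle, non-vanishing forces $\deg_{\text{CR}}\eta^{(k)}\geq\deg_{\text{CR}}\mu^{(k)}$, hence $\deg_{\text{CR}}\eta\geq\deg_{\text{CR}}\mu$; equality for every $k$ forces, via the explicit diagonal form of these $2$-point fiber-class invariants computed in \cite[\S 5]{Chen-Du-Hu2019}, that $\eta=\mu$, which is exactly the excluded trivial summand. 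Thus every genuine instance of ``$\prec$'' strictly decreases the tuple, so ``$\prec$'' is a strict partial order. Finiteness of lower sets is then Gromov compactness: only finitely many effective classes $\beta_1\leq\beta$ occur, for each $\beta_1$ only finitely many $g_1\leq g$ and $\|\omega_1\|\leq\|\omega\|$, and $\deg_{\text{CR}}\eta$ and the number of relative points are bounded, so only finitely many topological types — hence finitely many invariants — lie below a fixed one.

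The main obstacle is the argument of the previous paragraph: pinning down exactly which $2$-pointed fiber-class invariants of $(\D_{\wa,\0}|\overline{\mc O_{\D_\wa}(-1)}|\D_{\wa,0})$ are non-zero, which needs the dimension bookkeeping for the fiber-class moduli spaces of \S\ref{sec fiber-class-inv} and their obstruction bundles, and checking that the relation read off from the orbifold degeneration formula of \cite{Chen-Li-Sun-Zhao2011} is literally the ``$\prec$'' of \eqref{eq def-prec-sec4}. Everything else — effectiveness of $\beta_2$, genus additivity, conservation of absolute markings — is formal, and the remaining lexicographic bookkeeping is identical to that already carried out in Definition \ref{def partial-order} and \cite{Maulik-Pandharipande2006}.
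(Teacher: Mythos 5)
First, a remark on context: the paper does not prove this statement at all --- it is quoted verbatim from \cite[Theorem 6.5]{Chen-Du-Hu2019} (``The following theorem is \cite[Theorem 6.5]{Chen-Du-Hu2019}''), so there is no in-paper proof to compare against. Your strategy of exhibiting a well-founded weight that strictly decreases along every non-trivial instance of ``$\prec$'' is a legitimate way to prove acyclicity, and your reductions in the cases $\beta_1<\beta$, $g_1<g$, $\|\omega_1\|<\|\omega\|$ are fine, as is the finiteness argument via Gromov compactness.

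The gap is at the crux, in the all-equalities case. You assert that when $\beta_1=\beta$, $g_1=g$ and $\|\omega_1\|=\|\omega\|$, ``the companion factor is a product of $2$-pointed fiber-class invariants \dots where $\{\mu^{(k)}\}$ splits $\mu$ into singletons.'' Genus additivity $g=g_1+g_2+\ell(\eta)-1$ together with $g_2\geq 1-c$ ($c$ the number of connected components of the $\overline{\mc O_{\D_\wa}(-1)}$-side domain) only forces each such component to be rational with \emph{exactly one gluing node}; it does not force it to carry exactly one relative point of $\mu$ at $\D_{\wa,0}$. A component carrying $k\geq 2$ points of $\mu$ and one gluing node still preserves the genus and the class, so the extremal stratum contains summands with $\ell(\eta)<\ell(\mu)$ and $\vec\eta\neq\vec\mu$. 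For those the dimension estimate only yields $\deg_{\text{CR}}\eta\geq\deg_{\text{CR}}\mu$, with equality achievable (e.g.\ when some $\theta_{(h_i)}$ has degree $0$), and the ``diagonal'' evaluation of $2$-pointed fiber-class invariants you invoke simply does not apply. Hence your tuple $(\beta,g,\|\omega\|,\deg_{\text{CR}}\mu)$ need not strictly decrease, and acyclicity of ``$\prec$'' is not established. This is exactly the point where the paper itself refuses to rely on a purely numerical weight: item (7) of Definition \ref{def partial-order} falls back on the geometric order ``$\prec$'' precisely because summands with all of (1)--(6) equalities and $\eta\neq\mu$ do occur, and the proof of Relation \ref{relation 1} explicitly retains such summands as genuinely lower terms rather than showing they vanish.

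To close the gap you would have to either (a) prove that every companion component with $\geq 2$ relative points of $\mu$, one gluing node, fiber class and no absolute insertions has vanishing invariant --- plausibly true by a positive-virtual-dimension argument in the fiber direction of Theorem \ref{thm structure-fiber-cls-moduli}, but this needs the orbifold dimension bookkeeping with degree-shifting corrections, which you have not carried out --- or (b) replace the last entry of your tuple by a finer invariant (this is in effect what \cite[Theorem 6.5]{Chen-Du-Hu2019} does by a direct analysis of the degeneration formula). As written, the acyclicity claim is unproved.
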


We restrict this partial order on the set of all admissible relative invariants of $(\uX_\wa|\D_\wa)$.

Now take an admissible relative invariant
\begin{align}\label{E an-admissible-reltive-inv-of-(Xa,Da)}
\bl \gamma  \bb\mu\br_{g,\beta}^{(\uX_\wa|\D_\wa)}
\end{align}
of $(\uX_\wa|\D_\wa)$ with $\gamma=\prod_i\tau_{k_i}\gamma_i$, $\gamma_i\in\mc K$ and $\mu$ weighted by the chosen basis $\Sigma^\star$. Then $\check\mu$ is weighted by the chosen basis $\Sigma_\star$ in \eqref{eq basis-of-PE}. Suppose
\[
\check\mu=\Big((\mu_1,\delta_{(g_1)}^{s_1}\cup H_{(h_1)}^{m_1}),\ldots,
(\mu_{\ell(\mu)},\delta_{(g_{\ell_{\mu}})}^{s_{\ell(\mu)}}
\cup H^{m_{\ell(\mu)}}_{(h_{\ell(\mu)})}) \Big).
\]
For each $1\leq j\leq \ell(\mu)$, since $\I\pi(h_j)=(g_j)$ and the contact order at $j$-th marked points is $\mu_j$, $(h_j)$ has a representative of the form $(g_j,\exp^{2\pi \sqrt{-1} \mu_j})$ in local group. We next assign to each $(\mu_j,\delta_{(g_j)}^{s_j}\cup H_{(h_j)}^{m_j})$ in $\check\mu$ a fiber class relative invariant of $(\oN_\wa|\D_\wa)$ as follows (cf. \cite[\S 6.2.1]{Chen-Du-Hu2019}).
\begin{enumerate}
\item[(a)] The topological data is $\Gamma_j=\big(g=0,A=\mu_j[F],(g_j\inv),(\mu_j,(h_j))\big)$, where $(g_j\inv)$ indicates the twisted sector of the unique absolute marked point and $(\mu_j,(h_j))$ indicates the contact order and twisted sector of the unique relative marked point.
\item[(b)] The relative insertion is $\delta_{(g_j)}^{s_j} \cup H_{(h_j)}^{m_j}$.
\item[(c)] The absolute insertion is $\tau_{c_j}(\iota_*(\check\delta_{(g_j)}^{s_j}))$, where $\check\delta_{(g_j)}^{s_j}\in H^*(\os(g_j\inv))$ is the orbifold Poincar\'e dual of $\delta_{(g_j)}^{s_j}$ in $H^*_{\text{CR}}(\os)$ (cf. \eqref{eq dual-basis-of-S}) and $c_j\in \integer_{\geq 0}$ is determined by the equation \eqref{eq cj} in the following.
\end{enumerate}
Denote the corresponding moduli space by $\M_{\Gamma_j}(\oN_\wa|\D_\wa)$. The number $c_j$ is determined by dimension constraint, i.e. we must have
\begin{align*}
&\text{virdim}_\cplane\M_{\Gamma_j}(\oN_\wa|\D_\wa)\\
&=\deg \tau_{c_j}(\iota_*(\check\delta_{(g_j)}^{s_j}))+\deg \delta_{(g_j)}^{s_j} \cup H_{(h_j)}^{m_j}
\\
&=c_j+\codimc(\os(g_j\inv),\X(g_j\inv ))
+\frac{1}2(\deg \delta_{(g_j)}^{s_j}+
\deg \check\delta_{(g_j)}^{s_j})+m_j\\
&=c_j+\codimc(\os(g_j),\N(g_j))
+\dimc \os(g_j)+m_j\\
&=c_j+\dimc \N(g_j)+m_j.
\end{align*}
The virtual dimension of such a moduli space $\M_{\Gamma_j}(\oN_\wa|\D_\wa)$ was computed in \cite[Proposition 5.11]{Chen-Du-Hu2019}. Suppose $g_j\inv$ acts on the fiber of $\N$ via
\[
g_j\inv\cdot(z_1,\ldots,z_n)
=(\exp^{2\pi \sqrt{-1} \frac{\fk b(g_j\inv)_\N^1}{\fk o(g_j)}} z_1,\ldots,\exp^{2\pi \sqrt{-1} \frac{\fk b(g_j\inv)_\N^n}{\fk o(g_j)}} z_n)
\]
where $\fk o(g_j)$ is the order of $g_j$ and $1\leq \fk b(g_j\inv)_\N^i\leq \fk o(g_j), 1\leq i\leq n$ are the action weights of $g_j\inv$ on $\N$. Then we have
\begin{align*}
\text{virdim}_\cplane\M_{\Gamma_j}(\oN_\wa|\D_\wa)
&= \sum_{i=1}^n\left[-\frac{\fk b(g_j\inv)_\N^i}{\fk o(g_j)}+a_i\mu_j\right]+ n-1+\dim_\cplane \N(g_j),
\end{align*}
where $[\cdot]$ means the integer part of a real number.
Therefore $c_j$ is determined by
\begin{align}\label{eq cj}
c_j=\sum_{i=1}^n\left[-\frac{\fk b(g_j\inv)_\N^i}{\fk o(g_j)}+a_i\mu_j\right] +n-1-m_j.
\end{align}
So to each $(\mu_j,\delta_{(g_j)}^{s_j}\cup H_{(h_j)}^{m_j})$ in $\check \mu$, the assigned fiber class relative invariant is
\[
\bl \tau_{c_j}(\iota_*(\check\delta_{(g_j)}^{s_j}))  \bb (\mu_j,\delta_{(g_j)}^{s_j}\cup H_{(h_j)}^{m_j})\br^{\oN_\wa|\D_\wa}_{\Gamma_j}
\]
with $c_j$ determined by \eqref{eq cj}. Now we glue these relative invariants to the relative invariant \eqref{E an-admissible-reltive-inv-of-(Xa,Da)} to get an absolute invariant of $\X$:
\begin{align}\label{eq abs-to-rel}
\bl \bar\gamma\cdot \mu_\os \br_{g,\beta}^{\X}
:=\bl \prod_i\tau_{k_i}\bar\gamma_i\cdot
\prod_j \tau_{c_j}(\iota_*(\check\delta_{(g_j)}^{s_j})) \br_{g,\beta}^{\X}
\end{align}
where $\bar\gamma_i$ is the pre-image of $\gamma_i$ in $H^*_{\text{CR}}(\X)$, and
\[
\mu_\os:=(\tau_{c_1}(\iota_*(\check\delta_{(g_1)}^{s_1})) ,\ldots,\tau_{c_{\ell(\mu)}} (\iota_*(\check\delta_{(g_{\ell(\mu)})}^{s_{\ell(\mu)}}))).
\]

Hu and the first two authors proved in \cite[Theorem 5.29, 6.10, 6.12]{Chen-Du-Hu2019} the following weighted-blowup correspondence results.
\begin{theorem}\label{thm 1-to-1 correspondence}
The correspondence
\[
\eqref{E an-admissible-reltive-inv-of-(Xa,Da)}\rto\eqref{eq abs-to-rel}
\]
is a one-to-one map from the set of admissible relative invariants of $(\uX_\wa|\D_\wa)$ to the set of absolute invariants of $\X$ of the form \eqref{eq abs-to-rel}. This means that $c_j$ is uniquely determined by
$(\mu_j,\delta_{(g_j)}^{s_j}\cup H_{(h_j)}^{m_j})$, i.e. when we fix $(g_j)$, hence $\fk o(g_j)$ and $\fk b(g_j\inv)_\N^u, 1\leq u\leq n$ are fixed, then $c_j$ is determined by these datum via \eqref{eq cj}. Conversely, given $c_j$ and $(g_j)$ we could get $(h_j), \mu_j$ and $m_j$ uniquely.

Moreover

\begin{align}\label{eq lower-triangle}
\bl&\bar\gamma\cdot\mu_\os \br_{g,\beta}^{\X}
=\bl  \gamma\bb\mu\br_{g,\beta}^{(\uX_\wa|\D_\wa)}\cdot \fk z(\mu)\cdot C+
\\&
\sum_{\bl \gamma^-\bb\eta\br_{g_1,\beta_1}^{\bullet, \uX_\wa,\D_\wa}\prec
\bl \gamma\bb\mu\br_{g,\beta}^{(\uX_\wa|\D_\wa)}}
\bl \gamma^-\bb\eta
\br_{g_1,\beta_1}^{\bullet, (\uX_\wa|\D_\wa)}
\cdot\fk z(\eta)\cdot
\bl \check \eta\bb\gamma^+\cdot
\prod_j \tau_{c_j}(\iota_*(\check \delta_{(g_j)}^{s_j}))
\br_{g_2,\beta_2}^{\bullet,(\oN_\wa|\D_\wa)}\nonumber
\end{align}
with
\begin{align}\label{eq 1+1-point-rel-inv}
C&=\bl \check \mu \bb \prod_j \tau_{c_j}(\iota_*(\check\delta_{(g_j)}^{s_j}))
\br_{0,d[F]}^{\bullet,(\oN_\wa|\D_\wa)}
=\prod_j\Big\{\frac{1}{\fk o(g_j)} \mu_j^{m_j}
\prod_{i=1}^n\frac{1}{(\frac{\fk b(g_j\inv)_\N^i}{\fk o(g_j)}+[-\frac{\fk b(g_j\inv)_\N^i}{\fk o(g_j)}+a_i\mu_j])!}\Big\}
\neq 0.
\end{align}
Here the definition of $a!$ for a rational number $a$ is the same as the one in Relation \ref{relation 1}, and $\gamma^-$ and $\gamma^+$ are extensions of parts of $\gamma$ over $\uX_\wa$ and $\oN_\wa$ respectively. For $\bar\gamma_i$ we have $\gamma_i^-=\gamma_i$. Therefore $\big\langle \gamma^-\big|\eta \big\rangle_{g_1,\beta_1}^{\bullet, (\uX_\wa|\D_\wa)}$ are admissible relative invariants of $(\uX_\wa|\D_\wa)$.
\end{theorem}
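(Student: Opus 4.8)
The plan is to prove Theorem~\ref{thm 1-to-1 correspondence} (the weighted-blowup correspondence, due to \cite{Chen-Du-Hu2019}) by applying the degeneration formula of \cite{Chen-Li-Sun-Zhao2011,Abramovich-Fantechi2016} to the degeneration \eqref{E degenerate-X-in-sec1} of $\X$ into $(\uX_\wa|\D_\wa)\wedge_{\D_\wa}(\oN_\wa|\D_\wa)$, and then extracting the combinatorial structure of the resulting sum. First I would set up the bijection between admissible relative invariants \eqref{E an-admissible-reltive-inv-of-(Xa,Da)} of $(\uX_\wa|\D_\wa)$ and absolute invariants \eqref{eq abs-to-rel} of $\X$. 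On the $(\oN_\wa|\D_\wa)$ side the leading contribution of the degeneration formula forces every connected component to be a genus-zero, degree-$\mu_j[F]$ fiber-class relative map with a single absolute marked point carrying $\iota_*(\check\delta_{(g_j)}^{s_j})$ and a single relative marked point of contact order $\mu_j$ in the sector $(h_j)$; the power $c_j$ of the $\overline\psi$-class on that absolute point is then pinned down uniquely by the virtual dimension of $\M_{\Gamma_j}(\oN_\wa|\D_\wa)$ computed in \cite[Proposition 5.11]{Chen-Du-Hu2019}, which yields \eqref{eq cj}. Conversely, given $c_j$ and the absolute sector $(g_j)$, solving \eqref{eq cj} has a \emph{unique} solution for $(\mu_j,m_j,(h_j))$ because of the constraints $0\le m_j\le \fk r(h_j)-1$ and the prescribed fractional part of $\mu_j$; this is the one-to-one statement.

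Next I would compute the distinguished fiber-class $(1+1)$-point invariant $C$ of \eqref{eq 1+1-point-rel-inv}. By the structure of fiber-class moduli spaces (Theorem~\ref{thm structure-fiber-cls-moduli} and Remark~\ref{R 3.11}), the relevant moduli space is the relative moduli space of maps from a twice-marked orbifold $\P^1$ totally ramified over the two ends, quotiented by a finite group; it is essentially a point, and the integral evaluates to an explicit Hurwitz-type expression — the displayed product of reciprocals of $\fk o(g_j)$ and of the ``$a!$'' factorials — which is manifestly nonzero. Then the core of the argument is the lower-triangularity. Running the degeneration formula on $\bl\bar\gamma\cdot\mu_\os\br^\X_{g,\beta}$ produces a diagonal term — all of $\beta$ and $g$ on the blowup side, the $\oN_\wa$ side fiber class with $\vec\eta=\vec\mu$ — equal to $\fk z(\mu)\cdot C\cdot\bl\gamma\bb\mu\br^{(\uX_\wa|\D_\wa)}_{g,\beta}$, plus terms in which either the $\oN_\wa$-side degree or genus is strictly smaller, or $\vec\eta\ne\vec\mu$; after factoring out the $\oN_\wa$-side fiber-class pieces each such term is a sum of admissible relative invariants of $(\uX_\wa|\D_\wa)$ strictly below $\bl\gamma\bb\mu\br$ in the partial order \eqref{eq def-prec-sec4}, with coefficients that are fiber-class invariants. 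Gromov compactness bounds the number of elements below any fixed one, so this is a genuine invertible lower-triangular system — that is, formula \eqref{eq lower-triangle}.

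The step I expect to be the main obstacle is the bookkeeping that certifies every non-leading summand of the degeneration formula is truly an \emph{admissible} relative invariant of $(\uX_\wa|\D_\wa)$ lying strictly lower in the order: one must check that the extension $\gamma^-=\gamma_i$ of an admissible insertion $\gamma_i\in\mc K$ stays in $\mc K$, that the intermediate cohomology weights $\eta$ drawn from $\Sigma^\star$ dualize into $\Sigma_\star$ so that the relative partitions remain weighted by the fixed bases \eqref{eq basis-of-PE}, and that the codimension/degree inequalities at the gluing nodes — via the same Chen--Ruan index identity and Hu--Wang-type computation already invoked in \S\ref{sec rel-GW-of-weight--proj} — force the CR-degree monotonicity entering the definition of ``$\prec$''. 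Handling the disconnected invariants $\bl\gamma^-\bb\eta\br^{\bullet,(\uX_\wa|\D_\wa)}_{g_1,\beta_1}$ consistently (there is no product rule, as noted in the rubber discussion) is the delicate point.

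Finally I would deduce Theorem~\ref{thm rel-GW-of-weighted-blp} from this: inverting \eqref{eq lower-triangle} expresses each admissible $\bl\gamma\bb\mu\br^{(\uX_\wa|\D_\wa)}_{g,\beta}$ in terms of absolute invariants of $\X$, the explicit fiber-class data, and strictly-lower admissible relative invariants of $(\uX_\wa|\D_\wa)$, so one closes the induction on the order ``$\prec$''; the $(\oN_\wa|\D_\wa)$-side fiber and rubber contributions appearing as coefficients are reconstructed by Theorem~\ref{thm rel-GW-of-P(E)} from the orbifold Gromov--Witten theories of $\os$ and $\D_\wa=\PN_\wa$, the Chern classes of $\N$ and of $\mc O_{\D_\wa}(-1)$; and $c(\N)$ is itself determined by $\iota^\ast$ through $c(\N)\cup c(T\os)=\iota^\ast(c(T\X))$. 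This gives the claimed effective and unique reconstruction.
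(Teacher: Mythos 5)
The paper does not actually prove this theorem: it is imported wholesale from \cite[Theorems 5.29, 6.10, 6.12]{Chen-Du-Hu2019}, and the only ``proof'' in the present text is that citation together with the surrounding setup (the degeneration \eqref{eq degene-3} and the partial order \eqref{eq def-prec-sec4}). Your outline is consistent with the approach that setup points to --- degeneration formula for $\X\rto(\uX_\wa|\D_\wa)\wedge_{\D_\wa}(\oN_\wa|\D_\wa)$, extension of $\iota_*(\check\delta^{s_j}_{(g_j)})$ by zero to the blowup side, dimension count on $\M_{\Gamma_j}(\oN_\wa|\D_\wa)$ to pin down $c_j$, and degree inequalities at the gluing nodes to get lower-triangularity --- so as a reconstruction of the cited argument it is on the right track.

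That said, the two claims that carry the real content are asserted rather than established in your sketch, and they are exactly what \cite[Prop.\ 5.11, Thm.\ 5.29]{Chen-Du-Hu2019} supply. First, the converse direction of the bijection: your phrase ``the prescribed fractional part of $\mu_j$'' is backwards --- the fractional part is not prescribed; it varies with the unknown sector $(h_j)$, and one must show that as $(\mu_j,m_j,(h_j))$ ranges over all compatible triples (with $0\le m_j\le \fk r(h_j)-1$ and $(h_j)$ represented by $(g_j,e^{2\pi\sqrt{-1}\mu_j})$), the values of $c_j$ in \eqref{eq cj} are pairwise distinct and exhaust the admissible nonnegative integers. This requires tracking how $\sum_i\big[-\fk b(g_j\inv)^i_\N/\fk o(g_j)+a_i\mu_j\big]$ jumps as $\mu_j$ increases through the discrete set of allowed contact orders, and it is not a formality. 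Second, the closed-form evaluation of $C$ in \eqref{eq 1+1-point-rel-inv}: identifying the $(1{+}1)$-point fiber-class moduli space as ``essentially a point'' and reading off the product of reciprocal factorials is a genuine localization/Hurwitz computation on $(\msf P_{\wa,1}\mid\msf P_\wa)$ quotients, not an immediate consequence of Theorem \ref{thm structure-fiber-cls-moduli}. If you intend this as a self-contained proof rather than a pointer to \cite{Chen-Du-Hu2019}, those two computations must be carried out; as a blind reconstruction of the cited proof's architecture, the proposal is sound.
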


Now we can prove Theorem \ref{thm rel-GW-of-weighted-blp}. For reader's convenience we restate it here.
\begin{theorem}[Theorem \ref{thm rel-GW-of-weighted-blp}]
The admissible relative descendent orbifold Gromov--Witten theory of $ {(\uX_\wa| \D_\wa)}$ can be uniquely and effectively reconstructed from the orbifold Gromov--Witten theories of $\X$, $\os$ and $\D_\wa$, the restriction map $H^*_{\text{\em CR}}(\X)\rto H^*_{\text{\em CR}}(\os)$ and the first Chern class of $\mc O_{\D_\wa}(-1)$.
\end{theorem}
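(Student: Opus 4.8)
The plan is to deduce Theorem~\ref{thm rel-GW-of-weighted-blp} from the weighted-blowup absolute/relative correspondence of Chen--Du--Hu (Theorem~\ref{thm 1-to-1 correspondence}) combined with Theorem~\ref{thm rel-GW-of-P(E)} applied to the normal bundle $\E=\N$. By $\cplane$-linearity of Gromov--Witten invariants it suffices to reconstruct every admissible relative invariant $\langle\gamma\mid\mu\rangle_{g,\beta}^{(\uX_\wa|\D_\wa)}$ of the form \eqref{E an-admissible-reltive-inv-of-(Xa,Da)}, with $\gamma=\prod_i\tau_{k_i}\gamma_i$, $\gamma_i\in\mc K$, and $\mu$ weighted by the chosen basis $\Sigma^\star$. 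To such an invariant the correspondence attaches the absolute invariant $\langle\bar\gamma\cdot\mu_\os\rangle_{g,\beta}^\X$ of $\X$ in \eqref{eq abs-to-rel}; since $\mu_\os$ is built from $\iota_*$, which is determined by the restriction map $\iota^*\co H^*_{\text{CR}}(\X)\rto H^*_{\text{CR}}(\os)$ via orbifold Poincar\'e duality, this absolute invariant lies within the data allowed by the statement.

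The next step is to invert the identity \eqref{eq lower-triangle}. Its leading coefficient $\fk z(\mu)\cdot C$ is nonzero by \eqref{eq 1+1-point-rel-inv} --- indeed $C$ is an explicit fiber-class $(1+1)$-point invariant of $(\oN_\wa|\D_\wa)$ whose value depends only on the blowup weight $\wa$ and the action weights $\fk b(g_j^{-1})_\N^i$, hence is manifestly known --- so one solves for $\langle\gamma\mid\mu\rangle_{g,\beta}^{(\uX_\wa|\D_\wa)}$ as a combination of $\langle\bar\gamma\cdot\mu_\os\rangle_{g,\beta}^\X$ and correction terms, each a product $\langle\gamma^-\mid\eta\rangle_{g_1,\beta_1}^{\bullet,(\uX_\wa|\D_\wa)}\cdot\fk z(\eta)\cdot\langle\check\eta\mid\gamma^+\cdot\prod_j\tau_{c_j}(\iota_*(\check\delta_{(g_j)}^{s_j}))\rangle_{g_2,\beta_2}^{\bullet,(\oN_\wa|\D_\wa)}$ with $\langle\gamma^-\mid\eta\rangle\prec\langle\gamma\mid\mu\rangle$. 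Because $\gamma_i^-=\gamma_i\in\mc K$ the first factor is again a (possibly disconnected) admissible relative invariant of $(\uX_\wa|\D_\wa)$, and disconnected relative invariants factor as products of connected ones.

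The second factor, a relative descendent invariant of $(\oN_\wa|\D_\wa)$, is handled by Theorem~\ref{thm rel-GW-of-P(E)} with $\E=\N$: then $\oE_\wa=\oN_\wa$, $\PE_\wa=\D_\wa=\PN_\wa$, $\mc O_{\PE_\wa}(-1)=\mc O_{\D_\wa}(-1)$, so such invariants are reconstructible from the absolute orbifold Gromov--Witten theories of $\os$ and $\D_\wa$ and the Chern classes of $\N$ and $\mc O_{\D_\wa}(-1)$. The total Chern class $c(\N)$ is itself recovered from $\iota^*$ through $c(\N)\cup c(T\os)=\iota^*(c(T\X))$ as in \eqref{eq determine-c(N)}. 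Thus every $(\oN_\wa|\D_\wa)$-factor appearing in \eqref{eq lower-triangle} lies in the permitted data. Finally one runs the induction: by \cite[Theorem 6.5]{Chen-Du-Hu2019} the relation $\prec$ is a partial order on admissible relative invariants of $(\uX_\wa|\D_\wa)$ with only finitely many predecessors below any given element; for the minimal elements the correction sum contributes only reconstructible data, and for a general element the inductive hypothesis reconstructs all $\langle\gamma^-\mid\eta\rangle\prec\langle\gamma\mid\mu\rangle$, whence \eqref{eq lower-triangle} reconstructs $\langle\gamma\mid\mu\rangle$ effectively in finitely many steps.

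The main obstacle is not the structural idea but the bookkeeping that makes the induction legitimate: one must verify that $\prec$ genuinely restricts to the admissible subclass (using $\gamma_i^-=\gamma_i$ and that the intermediate relative weights $\eta$, $\check\eta$ in the degeneration formula stay within the dual bases $\Sigma^\star$, $\Sigma_\star$), that no correction term produces a non-admissible or $\prec$-larger relative invariant of $(\uX_\wa|\D_\wa)$, and that every disconnected factor decomposes into connected admissible $(\uX_\wa|\D_\wa)$-invariants and known $(\oN_\wa|\D_\wa)$-invariants. These all follow from the shape of the degeneration formula of \cite{Chen-Li-Sun-Zhao2011,Abramovich-Fantechi2016} and the cited correspondence, but need to be spelled out carefully; once they are in place the proof is a direct assembly of Theorem~\ref{thm rel-GW-of-P(E)}, Theorem~\ref{thm 1-to-1 correspondence}, and the well-foundedness of $\prec$.
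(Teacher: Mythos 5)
Your proposal is correct and follows essentially the same route as the paper: invert the lower triangular system \eqref{eq lower-triangle} from the weighted-blowup correspondence by induction on the partial order $\prec$, using Theorem \ref{thm rel-GW-of-P(E)} with $\E=\N$ to handle the $(\oN_\wa|\D_\wa)$-factors and \eqref{eq determine-c(N)} to recover $c(\N)$ from the restriction map. The bookkeeping points you flag (nonvanishing of $C$, admissibility of the lower terms via $\gamma_i^-=\gamma_i$, well-foundedness of $\prec$) are exactly what the paper delegates to Theorem \ref{thm 1-to-1 correspondence} and \cite[Theorem 6.5]{Chen-Du-Hu2019}.
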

\begin{proof}
The equation \eqref{eq lower-triangle} gives us a lower triangular system determining the admissible relative invariants of $(\uX_\wa|\D_\wa)$ in terms of the Gromov--Witten invariants of $\X$ and the relative Gromov--Witten invariants of $(\oN_\wa|\D_\wa)$. By Theorem \ref{thm rel-GW-of-P(E)}, the relative Gromov--Witten invariants of $(\oN_\wa|\D_\wa)$ are determined by the Gromov--Witten invariants of $\os$ and $\D_\wa$, $c(\N)$ and $c_1(\N_{\D_\wa|\oN_\wa})=-c_1(\N_{\D_\wa|\uX_\wa})=-c_1(\mc O_{\D_\wa}(-1))$. Therefore, the relative Gromov--Witten theory of $(\uX_\wa|\D_\wa)$ can be uniquely and effectively reconstructed from the Gromov--Witten theories of $\X$, $\os$ and $\D_\wa$, the first chern class $c_1(\N_{\D_\wa|\uX_\wa})$ and the restriction map $H^*_{\text{CR}}(\X)\rto H^*_{\text{CR}}(\os)$. This finishes the proof.
\end{proof}

\bibliographystyle{abbrv}

\bibliography{chengyong}

\begin{thebibliography}{10}

\bibitem{Abramovich-Fantechi2016}
D.~Abramovich and B.~Fantechi.
\newblock Orbifold thechniques in degeneration formulas.
\newblock {\em Annali della Scuola Normale Superiore di Pisa. Classe di
  Scienze. Serie V.}, 16(2):519--579, 2016.

\bibitem{Adem-Leida-Ruan2007}
A.~Adem, J.~Leida, and Y.~Ruan.
\newblock {\em Orbifolds and {Stringy Topology}}, volume 171 of {\em Cambridge
  Tracts in Mathematics}.
\newblock Cambridge University Press, Cambridge, 2007.

\bibitem{Andreini-Jiang-Tseng2011}
E.~Andreini, Y.~Jiang, and H.-H. Tseng.
\newblock {Gromov--Witten} theory of banded gerbes over schemes.
\newblock {\em arXiv:1101.5996 [math. AG]}, 2011.

\bibitem{Andreini-Jiang-Tseng2015}
E.~Andreini, Y.~Jiang, and H.-H. Tseng.
\newblock {Gromov--Witten} theory of root gerbes {I}: structure of genus 0
  moduli spaces.
\newblock {\em Journal of Differential Geometry}, 99(1):1--45, 2015.

\bibitem{Andreini-Jiang-Tseng2016}
E.~Andreini, Y.~Jiang, and H.-H. Tseng.
\newblock {Gromov--Witten} theory of product stacks.
\newblock {\em Communications in Analysis and Geometry}, 24(2):223--277, 2016.

\bibitem{Bott-Tu82}
R.~Bott and L.~W. Tu.
\newblock {\em {Differential Forms in Algebraic Topology}}.
\newblock Springer, New York, NY, 1982.

\bibitem{Chen-Du-Hu2019}
B.~Chen, C.-Y. Du, and J.~Hu.
\newblock Weighted-blowup correspondence of orbifold {Gromov--Witten}
  invariants and applications.
\newblock {\em Mathematische Annalen}, 374(3):1459--1523, 2019.

\bibitem{Chen-Du-WangR2019}
B.~Chen, C.-Y. Du, and R.~Wang.
\newblock The groupoid structure of groupoid morphisms.
\newblock {\em Journal of Geometry and Physics}, 145:103486, 2019.

\bibitem{Chen-Du-WangR2020a}
B.~Chen, C.-Y. Du, and R.~Wang.
\newblock Double ramification cycles with orbifold targets.
\newblock {\em arXiv:2008.06484 [math.AG]}, 2020.

\bibitem{Chen-Du-WangY2020}
B.~Chen, C.-Y. Du, and Y.~Wang.
\newblock On fibrations of lie groupoids.
\newblock {\em Journal of Geometry and Physics}, 152:103644, 2020.

\bibitem{Chen-Hu2006}
B.~Chen and S.~Hu.
\newblock A {de Rham} model for {Chen--Ruan} cohomology ring of abelian
  orbifolds.
\newblock {\em Mathematische Annalen}, 336(1):51--71, 2006.

\bibitem{Chen-Li2006}
B.~Chen and A.-M. Li.
\newblock Symplectic virtual localization of {Gromov--Witten} invariants.
\newblock {\em arXiv:math/0610370 [math. DG]}, 2006.

\bibitem{Chen-Li-Sun-Zhao2011}
B.~Chen, A.-M. Li, S.~Sun, and G.~Zhao.
\newblock Relative orbifold {Gromov--Witten} theory and degeneration formula.
\newblock {\em arXiv:1110.6803 [math. SG]}, 2011.

\bibitem{Chen-Li-Wang2016}
B.~Chen, A.-M. Li, and B.-L. Wang.
\newblock Gluing principle for orbifold stratified spaces.
\newblock In A.~Futaki, R.~Miyaoka, Z.~Tang, and W.~Zhang, editors, {\em
  {Springer Proceedings in Mathematics {\&} Statistics}}, volume 154 of {\em
  Geometry and Topology of Manifolds, 10th China-Japan Conference 2014}, pages
  15--57. Springer Japan, 2016.

\bibitem{Chen-Ruan2002}
W.~Chen and Y.~Ruan.
\newblock Orbifold {Gromov--Witten} theory.
\newblock In A.~Adem, J.~Morava, and Y.~Ruan, editors, {\em {Orbifolds in
  Mathematics and Physics}}, volume 310 of {\em Contemporary Mathematics},
  pages 25--86. American Mathematical Society, Providence, Rhode Island, 2002.

\bibitem{Chen-Ruan2004}
W.~Chen and Y.~Ruan.
\newblock A new cohomology theory of orbifold.
\newblock {\em Communications in Mathematical Physics}, 248(1):1--31, 2004.

\bibitem{Du2019}
C.-Y. Du.
\newblock Relative {Gromov--Witten} invariants of projective completions of
  vector bundles.
\newblock {\em Science China Mathematics}, 62(7):1429--1438, 2019.

\bibitem{Du-Chen-Wang2018}
C.-Y. Du, B.~Chen, and R.~Wang.
\newblock Symplectic neighborhood theorem for symplectic orbifold groupoids
  (chinese).
\newblock {\em Acta Mathematica Sinica, Chinese Series}, 61(2):217--232, 2018.

\bibitem{Fan2017}
H.~Fan.
\newblock Chern classes and {Gromov--Witten} theory of projective bundles.
\newblock {\em arXiv:1705.07421 [math. AG]}, 2017.
\newblock to appear in {American Journal of Mathematics}.

\bibitem{Graber-Pandharipande1999}
T.~Graber and R.~Pandharipande.
\newblock Localization of virtual classes.
\newblock {\em Inventiones mathematicae}, 135(2):487--518, 1999.

\bibitem{Graber-Vakil2005}
T.~Graber and R.~Vakil.
\newblock Relative virtual localization and vanishing of tautological classes
  on moduli spaces of curves.
\newblock {\em Duke Mathematical Journal}, 130(1):1--37, 2005.

\bibitem{Guillemin-Sternberg1989}
V.~Guillemin and S.~Sternberg.
\newblock Birational equivalence in the symplectic category.
\newblock {\em Inventiones Mathematicae}, 97(3):485--522, 1989.

\bibitem{Hu-Li-Ruan2008}
J.~Hu, T.-J. Li, and Y.~Ruan.
\newblock Birational cobordism invariance of uniruled symplectic manifolds.
\newblock {\em Inventiones mathematicae}, 172(2):231--275, 2008.

\bibitem{Hu-Ruan2013}
J.~Hu and Y.~Ruan.
\newblock Positive divisors in symplectic geometry.
\newblock {\em Science China Mathematics}, 56(6):1129--1144, 2013.

\bibitem{Hu-Wang2013}
J.~Hu and B.-L. Wang.
\newblock Delocalized {Chern} character for stringy orbifold {K}-theory.
\newblock {\em Transactions of the American Mathematical Society},
  365(12):6309--6341, 2013.

\bibitem{Janda-Pandharipande-Pixton-Zvonkine2017}
F.~Janda, R.~Pandharipande, A.~Pixton, and D.~Zvonkine.
\newblock Double ramification cycles on the moduli spaces of curves.
\newblock {\em Publications math{\'{e}}matiques de {l'}{IH}{\'{E}}S},
  125(1):221--266, 2017.

\bibitem{Janda-Pandharipande-Pixton-Zvonkine2018}
F.~Janda, R.~Pandharipande, A.~Pixton, and D.~Zvonkine.
\newblock Double ramification cycles with target varieties.
\newblock {\em arXiv:1812.10136 [math. AG]}, 2018.

\bibitem{Katz2007}
E.~Katz.
\newblock An algebraic formulation of symplectic field theory.
\newblock {\em Journal of Symplectic Geometry}, 5(4):385--437, 2007.

\bibitem{Kollar98}
J.~Koll{\'{a}}r.
\newblock Low degree polynomial equations: arithmetic, geometry and topology.
\newblock In {\em European Congress of Mathematics: Budapest, July 22-26, 1996,
  Volume 1}, volume 168 of {\em Progress in Mathematics}, pages 255--288.
  Birkh{\"a}user Verlag Basel/Switzerland, 1998.

\bibitem{Kontsevich1992}
M.~Kontsevich.
\newblock Intersection theory on the moduli space of curves and the matrix airy
  function.
\newblock {\em Communications in Mathematical Physics}, 147(1):1--23, 1992.

\bibitem{Kontsevich1995}
M.~Kontsevich.
\newblock Enumeration of fational curves via torus actions.
\newblock In R.~H. Dijkgraaf, C.~F. Faber, and G.~B.~M. van~der Geer, editors,
  {\em {The Moduli Space of Curves}}, pages 335--368, Boston, MA, 1995.
  Birkhäuser Boston.

\bibitem{Lit-Ruan2009}
T.-J. Li and Y.~Ruan.
\newblock Symplectic birational geometry.
\newblock In M.~Abreu, F.~Lalonde, and L.~Polterovich, editors, {\em {New
  Perspectives and Challenges in Symplectic Field Theory}}, volume~49 of {\em
  CRM Proceedings {\&} Lectures Notes}, pages 307--326. American Mathematical
  Society, Providence, Rhode Island, 2009.

\bibitem{Liu2013}
C.-C.~M. Liu.
\newblock {Handbook of Moduli, Volume II}.
\newblock In G.~Farkas and I.~Morrison, editors, {\em Handbook of moduli,
  Volume II}, volume~25 of {\em Advanced Lectures in Mathematics (ALM)},
  chapter Localization in {Gromov--Witten} theory and orbifold {Gromov--Witten}
  theory, pages 353--425. International Press and Higher Education Press of
  China, Somerville, MA, 2013.

\bibitem{Maulik-Pandharipande2006}
D.~Maulik and R.~Pandharipande.
\newblock A topological view of {Gromov--Witten} theory.
\newblock {\em Topology}, 45(5):887--918, 2006.

\bibitem{Moerdijk-Mrcun2003}
I.~Moerdijk and J.~Mr$\check{\text{c}}$un.
\newblock {\em {Introduction to Foliations and {Lie} Groupoids}}, volume~91 of
  {\em Cambridge Studies in Advanced Mathematics}.
\newblock Cambridge University Press, Cambridge, 2003.

\bibitem{Moerdijk-Pronk1997}
I.~Moerdijk and D.~Pronk.
\newblock Orbifolds, sheaves and groupoids.
\newblock {\em K-Theory}, 12:3--21, 1997.

\bibitem{Ruan1999b}
Y.~Ruan.
\newblock Virtual neighborhoods and pseudo-holomorphic curves.
\newblock {\em Turkish Journal of Mathematics}, 23:161--231, 1999.
\newblock Proceedings of 6th G{\"o}kova Geometry-Topology Conference.

\bibitem{Satake1956}
I.~Satake.
\newblock On a generalization of the notion of manifold.
\newblock {\em Proceedings of the National Academy of Sciences of the United
  States of America}, 42(6):359--363, 1956.

\bibitem{Tang-Tseng2016}
X.~Tang and H.-H. Tseng.
\newblock A quantum {Leray--Hirsch} theorem for banded gerbes.
\newblock {\em arXiv:1602.03564 [math.AG]}, 2016.
\newblock To appear in Journal of Differential Geometry.

\bibitem{Tian2012}
Z.~Tian.
\newblock Symplectic geometry of rationally connected threefolds.
\newblock {\em Duke Mathematical Journal}, 161(5):803--843, 2012.

\bibitem{Tian2015}
Z.~Tian.
\newblock Symplectic geometry and rationally connected 4-folds.
\newblock {\em Journal f{\"{u}}r die reine und angewandte Mathematik (Crelles
  Journal)}, 2015(698), 2015.

\bibitem{Tseng2010}
H.-H. Tseng.
\newblock Orbifold quantum {Riemann--Roch, Lefschetz and Serre}.
\newblock {\em Geometry {\&} Topology}, 14(1):1--81, 2010.

\bibitem{Tseng2019}
H.-H. Tseng.
\newblock On the geometry of orbifold {Gromov--Witten} invariants.
\newblock In L.~Ji, L.~Yang, and S.-T. Yau, editors, {\em {Proceedings of the
  Seventh International Congress of Chinese Mathematicians, Volume I}},
  volume~43 of {\em Advanced Lectures in Mathematics (ALM)}, page 439–456,
  Boston, 2019. International Press.

\bibitem{Tseng-You2016b}
H.-H. Tseng and F.~You.
\newblock Double ramification cycles on the moduli spaces of admissible covers.
\newblock {\em arXiv:1606.03770 [math. AG]}, 2016.

\bibitem{Tseng-You2016a}
H.-H. Tseng and F.~You.
\newblock On orbifold {Gromov--Witten} theory in codimension one.
\newblock {\em Journal of Pure and Applied Algebra}, 220(10):3567--3571, 2016.

\bibitem{Voisin2008}
C.~Voisin.
\newblock Rationally connected 3-folds and symplectic geometry.
\newblock {\em Ast{\'{e}}risque}, 322:1--21, 2008.
\newblock G{\'{e}}eom{\'{e}}etrie Diff{\'{e}}erentielle, Physique
  Math{\'{e}}ematique, Math{\'{e}}matiques et Soci{\'{e}}et{\'{e}}e, II.

\bibitem{Witten1990}
E.~Witten.
\newblock Two-dimensional gravity and intersection theory on moduli space.
\newblock {\em Surveys in Differential Geometry}, 1(1):243--310, 1990.

\end{thebibliography}

\end{document}